\title{Refining the arithmetical hierarchy of classical principles}
\author{Makoto Fujiwara\footnote{Email: makotofujiwara@rs.tus.ac.jp}
\footnote{Department of Applied Mathematics, Faculty of Science Division I, Tokyo University of Science, 1-3 Kagurazaka, Shinjuku-ku, Tokyo 162-8601, Japan.}
and Taishi Kurahashi\footnote{Email: kurahashi@people.kobe-u.ac.jp}
\footnote{Graduate School of System Informatics,
Kobe University,
1-1 Rokkodai, Nada, Kobe 657-8501, Japan.}}
\date{}
\theoremstyle{plain}
\newtheorem{thm}{Theorem}[section]
\newtheorem{prop}[thm]{Proposition}
\newtheorem{cor}[thm]{Corollary}
\newtheorem{fact}[thm]{Fact}
\newtheorem{prob}[thm]{Problem}
\theoremstyle{definition}
\newtheorem{defn}[thm]{Definition}
\newtheorem{remark}[thm]{Remark}
\newcommand{\PA}{\mathsf{PA}}
\newcommand{\HA}{\mathsf{HA}}
\newcommand{\n}[1]{#1^{\mathrm{n}}}
\newcommand{\dn}[1]{#1^{\mathrm{dn}}}
\newcommand{\FV}{\mathrm{FV}}
\newcommand{\Fml}{\mathrm{Fml}}
\newcommand{\DNE}[1]{#1\text{-}\mathbf{DNE}}
\newcommand{\DNS}[1]{#1\text{-}\mathbf{DNS}}
\newcommand{\LEM}[1]{#1\text{-}\mathbf{LEM}}
\newcommand{\DML}[1]{#1\text{-}\mathbf{DML}}
\newcommand{\LLPO}[1]{#1\text{-}\mathbf{LLPO}}
\newcommand{\DUAL}[1]{#1\text{-}\mathbf{DUAL}}
\newcommand{\WDUAL}[1]{#1\text{-}\mathbf{WDUAL}}
\newcommand{\CD}[1]{#1\text{-}\mathbf{CD}}
\newcommand{\COLL}[1]{#1\text{-}\mathbf{COLL^{cp}}}
\newcommand{\LN}[1]{#1\text{-}\mathbf{LN}}
\newcommand{\PEIRCE}[1]{#1\text{-}\mathbf{PEIRCE}}
\begin{document}

\maketitle

\begin{abstract}
We refine the arithmetical hierarchy of various classical principles by finely investigating the derivability relations between these principles over Heyting arithmetic. 
We mainly investigate some restricted versions of the law of excluded middle, de Morgan's law, the double negation elimination, the collection principle and the constant domain axiom. 
\end{abstract}

\section{Introduction}\label{section:intro}

The interrelations between weak logical principles over intuitionistic arithmetic have been studied extensively in these three decades (cf.~\cite{ABHK,F20,FIN,FuKa,FuKo,Ishi,Kohl}).
In particular, Akama et al.~\cite{ABHK} systematically studied the structure of the law of excluded middle $\mathbf{LEM}$ and the double negation elimination $\mathbf{DNE}$ restricted to prenex formulas and some related principles over intuitionistic first-order arithmetic $\HA$.
Interestingly, the derivability relation between them forms a beautiful hierarchy as presented in Figure \ref{fig:ABHK} (cf. \cite[Figure 2]{ABHK}).

\begin{figure}[h]
\centering
\begin{tikzpicture}
\node (S-LEM) at (0,0) {$\LEM{\Sigma_{k-1}}$};
\node (DLEM) at (0,1) {$\LEM{\Delta_k}$};
\node (PPDNE) at (2,2) {$\DNE{(\Pi_k \lor \Pi_k)}$};
\node (PLEM) at (2,3) {$\LEM{\Pi_k}$};
\node (SDNE) at (-2,2.5) {$\DNE{\Sigma_k}$};
\node (SLEM) at (0,4) {$\LEM{\Sigma_k}$};

\draw [-] (-1, -0.7)--(S-LEM);
\draw [-] (1, -0.7)--(S-LEM);
\draw [<-] (S-LEM)--(DLEM);
\draw [<-] (DLEM)--(PPDNE);
\draw [<-] (PPDNE)--(PLEM);
\draw [<-] (DLEM)--(SDNE);
\draw [<-] (PLEM)--(SLEM);
\draw [<-] (SDNE)--(SLEM);

\end{tikzpicture}
\caption{An arithmetical hierarchy of classical principles}\label{fig:ABHK}
\end{figure}

\noindent
By the prenex normal form theorem, which is first presented in \cite{ABHK} and corrected recently in \cite{FuKu}, this arithmetical hierarchy covers ${\bf LEM}$ for arbitrary formulas.
In this sense, the infinite hierarchy in Figure \ref{fig:ABHK} represents a gradual transition of strength of semi-classical arithmetic from $\HA$ to the classical arithmetic $\PA=\HA +{\bf LEM}$.
This hierarchy plays an important role in several aspects.
First, it is employed for the relativization of the relation between classical and intuitionistic arithmetic into the context of semi-classical arithmetic.
For example, $\PA$ is $\Pi_{k+2}$-conservative over $\HA+\LEM{\Sigma_k}$ for all natural numbers $k$ (see \cite[Section 6]{FuKu} and \cite{Ber04,FuKu21}).
In addition, for any theory $T$ in-between $\HA$ and $\PA$, the prenex normal form theorem for the classes of formulas $\mathrm{U}_{k'}$ (introduced in \cite{ABHK}) and $\Pi_{k'}$ holds in $T$ for all $k' \leq k$, if and only if, $T$ proves $\DNE{(\Pi_k \lor \Pi_k)}$ (see \cite[Section 7]{FuKu}).
Then the refinement of  the hierarchy is also important for analyzing the results on the relation between classical and intuitionistic arithmetic in more detail.
Secondly, the hierarchy is employed as a framework for a sort of constructive reverse mathematics over $\HA$ (cf.~\cite{BS14, BS17, Tof04}).
For example, Ramsey's theorem for pairs and recursive assignments of $2$ colors is located in the place of $\DNE{(\Pi_3 \lor \Pi_3)}$ (see \cite{BS14}).
Despite the fact that mathematical statements are usually not in prenex normal form, many of them are shown to be equivalent to some restricted logical principle in the arithmetical hierarchy (seemingly because the prenex normal form theorem is partly available in semi-classical arithmetic containing such logical principles).
Then the refinement of the hierarchy makes it possible to classify the logical strength of mathematical statements in finer classes.
After \cite{ABHK}, in connection with the development of constructive reverse mathematics \cite{Ishi05} over intuitionistic second-order arithmetic, further fine-grained analysis has been done for the principles with $k=1$ in the hierarchy (\cite{FIN, FuKo, Kohl}).
More recently, some connection between those principles and some other principles has been also found (\cite{F20,FuKa}).
Then it should be expected to recast the hierarchy in \cite{ABHK} based on these recent developments.
The history of the research of this line until \cite{FuKo} is summarized in \cite[Section 2.1]{FuKo}.

Motivated from them, we study the interrelations between various principles from the previous research and the related principles comprehensively in the context of $\HA$.
In particular, we investigate principles more finely and more systematically than ever before.
Such a fine-grained analysis reveals a more detailed hierarchical structure which the logical principles have.
In addition to the principles dealt with in \cite{ABHK}, we deal with de Morgan's law ${\bf DML}$, the (contrapositive) collection principle ${\bf COLL^{cp}}$ and the constant domain axiom ${\bf CD}$ systematically.
Among many other things, we show that $\DNE{(\Pi_k \lor \Pi_k)}$, $\DML{\Sigma_k}$ with respect to duals (which is $\Sigma_k$-${\bf LLPO}$ in \cite{ABHK}), $\DML{\Sigma_k} + \DNE{\Sigma_{k-1}}$, $\COLL{\Pi_k}$ and $\CD{(\Pi_k, \Pi_k)}$ are pairwise equivalent over $\HA$ for all natural numbers $k$ greater than $0$ (see Corollary \ref{cor:MC}).

The structure of the paper is as follows.
In Section \ref{section:dual}, we extract and investigate the principles concerning duals $\varphi^\bot$ (which are prenex formulas classically equivalent to $\neg \varphi$) of prenex formulas $\varphi$.
In Section \ref{section:LEM}, we investigate  variants of ${\bf LEM}$.
Section \ref{section:DML} is devoted to investigate several variations of ${\bf DML}$.
In particular, ${\bf LEM}$ for negated formulas is shown to be a variation of ${\bf DML}$. 
In Section \ref{section:DNE}, we investigate variants of ${\bf DNE}$.
In particular,  ${\bf DML}$ is shown to be a variation of ${\bf DNE}$.
Finally, we investigate ${\bf CD}$ in Section \ref{section:CD}. 
The results established in this paper are summarized in Section \ref{section:rem}, to which we refer the reader who merely wants to consult the results.

\section{Preliminaries}\label{section:pre}

In this paper, we work within the framework of first-order intuitionistic arithmetic with the logical connectives $\land, \lor, \to, \exists, \forall$ and $\bot$, where $\neg \varphi$ is the abbreviation of $\varphi \to \bot$. 
We may assume that the language of first-order arithmetic contains function symbols corresponding to all primitive recursive functions. 
Heyting arithmetic $\HA$ is an intuitionistic theory in the language of first-order arithmetic consisting of basic axioms for arithmetic, induction axiom scheme and axioms corresponding to defining equations of primitive recursive functions (see \cite[Section 3.2]{Kohl08}). 
Recall that $\varphi \to \neg \neg \varphi$, $(\varphi \to \psi) \to (\neg \psi \to \neg \varphi)$, $\neg \neg (\varphi \to \psi) \leftrightarrow (\neg \neg \varphi \to \neg \neg \psi)$, $\neg \neg \neg \varphi \to \neg \varphi$ and $\forall x\, \neg \varphi \leftrightarrow\, \neg\, \exists x \varphi$ etc.~are intuitionistically derivable. 
For more information about the logical implications over intuitionistic logic, we refer the reader to \cite[Section 6.2]{vD13}. 

Throughout this paper, we assume that $k$ always denotes a natural number $k \geq 0$. 
We define the family $\{\Sigma_k, \Pi_k : k \geq 0\}$ of sets of formulas inductively as follows: 
\begin{itemize}
	\item Let $\Sigma_0 = \Pi_0$ be the set of all quantifier-free formulas; 
	\item $\Sigma_{k+1} : = \{\exists x_1 \cdots \exists x_n \varphi \mid \varphi \in \Pi_k$, $n \geq 1$ and $x_1, \ldots, x_n$ are variables$\}$;  
	\item $\Pi_{k+1} : = \{\forall x_1 \cdots \forall x_n \varphi \mid \varphi \in \Sigma_k$, $n \geq 1$ and $x_1, \ldots, x_n$ are variables$\}$.  
\end{itemize}
For convenience, we assume that $\Sigma_m$ and $\Pi_m$ denote the empty set for any negative integer $m$. 
We say that a formula is in \textit{prenex normal form} if it is in $\Sigma_k$ or $\Pi_k$ for some $k$. 
Let $\FV(\varphi)$ denote the set of all free variables in $\varphi$. 
It is known that every formula $\varphi$ in $\Sigma_{k+1}$ (resp.~$\Pi_{k+1}$) is $\HA$-equivalent to a formula $\psi$ in $\Sigma_{k+1}$ (resp.~$\Pi_{k+1}$) such that $\FV(\varphi) = \FV(\psi)$ and $\psi$ is of the form $\exists x \psi'$ (resp.~$\forall x \psi'$) where $\psi'$ is $\Pi_k$ (resp.~$\Sigma_k$). 

Let $\Gamma$ and $\Theta$ be sets of formulas. 
We define $\Gamma \lor \Theta$, $\n{\Gamma}$ and $\dn{\Gamma}$ to be the sets $\{\varphi \lor \psi \mid \varphi \in \Gamma$ and $\psi \in \Theta\}$, $\{\neg \varphi \mid \varphi \in \Gamma\}$ and $\{\neg \neg \varphi \mid \varphi \in \Gamma\}$ of formulas, respectively. 
We adopt a convention that we write $\Gamma \subseteq \Theta$ if for any formula $\varphi \in \Gamma$, there exists a formula $\psi \in \Theta$ such that $\FV(\varphi) = \FV(\psi)$ and $\HA$ proves $\varphi \leftrightarrow \psi$. 
Then it is shown that $\Sigma_k \subseteq \Sigma_{k+1} \cap \Pi_{k+1}$ and $\Pi_k \subseteq \Sigma_{k+1} \cap \Pi_{k+1}$ (cf.~\cite{FuKu}). 

We introduce several principles which give semi-classical arithmetic as follows: 

\begin{defn}\label{defn:principles}
Let $\Gamma$ be any set of formulas. 
\begin{tabbing}
\hspace{25mm} \= \hspace{40mm} \= \hspace{50mm} \kill
$\LEM{\Gamma}$ \> $\varphi \lor \neg \varphi$ \> ($\varphi \in \Gamma$) \\
$\LEM{\Delta_k}$ \> $(\varphi \leftrightarrow \psi) \to \varphi \lor \neg \varphi$ \> ($\varphi \in \Sigma_k$ and $\psi \in \Pi_k$) \\
$\DNE{\Gamma}$ \> $\neg \neg \varphi \to \varphi$ \> ($\varphi \in \Gamma$) 
\end{tabbing}
\end{defn}

For each theory $T$ and principle $P$, let $T + P$ denote the theory obtained from $T$ by adding universal closures of all instances of $P$ as axioms. 
Since $\HA$ proves $\varphi \lor \neg \varphi \to (\neg \neg \varphi \to \varphi)$ for any formula $\varphi$, the following fact trivially holds. 

\begin{fact}\label{fact:LEM_DNE}
For any set $\Gamma$  of formulas, $\HA + \LEM{\Gamma} \vdash \DNE{\Gamma}$. 
\end{fact}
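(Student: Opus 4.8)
The statement to prove is Fact~\ref{fact:LEM_DNE}: for any set $\Gamma$ of formulas, $\HA + \LEM{\Gamma} \vdash \DNE{\Gamma}$.

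The plan is to reduce everything to the single propositional tautology that the paper itself flags just before the statement, namely that $\HA$ (indeed pure intuitionistic logic) proves $\varphi \lor \neg\varphi \to (\neg\neg\varphi \to \varphi)$ for every formula $\varphi$. Granting this, the argument is almost immediate: fix an arbitrary $\varphi \in \Gamma$; the corresponding instance of $\LEM{\Gamma}$ is (the universal closure of) $\varphi \lor \neg\varphi$, and the corresponding instance of $\DNE{\Gamma}$ is (the universal closure of) $\neg\neg\varphi \to \varphi$. Working inside $\HA + \LEM{\Gamma}$ under the relevant universal quantifiers, we have $\varphi \lor \neg\varphi$ as an axiom, we have the tautology $\varphi \lor \neg\varphi \to (\neg\neg\varphi \to \varphi)$ from intuitionistic logic, so modus ponens yields $\neg\neg\varphi \to \varphi$; re-generalizing gives the universal closure, which is exactly the instance of $\DNE{\Gamma}$ we wanted. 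Since $\varphi \in \Gamma$ was arbitrary, $\HA + \LEM{\Gamma}$ proves every instance of $\DNE{\Gamma}$.

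First I would justify the key tautology, so that the proof is self-contained rather than merely citing the parenthetical remark: reason by cases from $\varphi \lor \neg\varphi$. In the case $\varphi$, we trivially get $\neg\neg\varphi \to \varphi$ by weakening. In the case $\neg\varphi$, note that $\neg\varphi$ together with $\neg\neg\varphi$ (i.e.\ $\neg\varphi \to \bot$) yields $\bot$, hence $\varphi$ by ex falso; so again $\neg\neg\varphi \to \varphi$. By $\lor$-elimination we obtain $\neg\neg\varphi \to \varphi$ outright, establishing the tautology. All of this is available in intuitionistic first-order logic and hence in $\HA$.

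There is essentially no obstacle here; the only point requiring a word of care is the handling of free variables and universal closures, since the paper's convention (stated right after Definition~\ref{defn:principles}) is that $T + P$ adds the \emph{universal closures} of instances of $P$. But this is harmless: the tautology $\varphi \lor \neg\varphi \to (\neg\neg\varphi \to \varphi)$ holds for $\varphi$ with arbitrary free variables, so one may derive it, combine it with the universal-closure axiom $\LEM{\Gamma}$ instance after instantiating the closure, and then universally generalize the conclusion to recover the universal closure of the $\DNE{\Gamma}$ instance. Thus the whole of Fact~\ref{fact:LEM_DNE} follows, and indeed the inclusion can be sharpened to the observation that it holds over pure intuitionistic predicate logic, not just over $\HA$.
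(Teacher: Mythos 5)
Your proof is correct and follows exactly the paper's route: the paper proves this fact by the single observation that $\HA$ derives $\varphi \lor \neg\varphi \to (\neg\neg\varphi \to \varphi)$ for every formula, from which the implication is immediate. Your extra details (the intuitionistic case analysis using ex falso, and the handling of universal closures) are correct but only flesh out what the paper leaves as trivial.
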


Nontrivial implications between the principles defined in Definition \ref{defn:principles} are investigated by Akama et al.~\cite{ABHK}. 
The following fact is visualized in Figure \ref{fig:ABHK} in Section \ref{section:intro}.  

\begin{fact}[Akama et al.~\cite{ABHK}]\label{fact:ABHK}\leavevmode
\begin{enumerate}
	\item $\LEM{\Sigma_k}$ and $\LEM{\Pi_k} + \DNE{\Sigma_k}$ are equivalent over $\HA$; 
	\item $\HA + \LEM{\Pi_k} \vdash \DNE{(\Pi_k \lor \Pi_k)}$; 
	\item $\HA + \DNE{(\Pi_k \lor \Pi_k)} \vdash \LEM{\Delta_k}$; 
	\item $\HA + \DNE{\Sigma_k} \vdash \LEM{\Delta_k}$;
	\item $\HA + \LEM{\Delta_{k+1}} \vdash \LEM{\Sigma_k}$; 
	\item $\DNE{\Sigma_k}$ and $\DNE{\Pi_{k+1}}$ are equivalent over $\HA$. 
\end{enumerate}
\end{fact}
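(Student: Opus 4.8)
\emph{Plan.} I would first dispatch clauses (2), (5), (6) and one half of (1), which are immediate. For (6): by the prenex normalization recalled in Section~\ref{section:pre} write $\psi\in\Pi_{k+1}$ as $\forall x\,\sigma$ with $\sigma\in\Sigma_k$; since $\HA\vdash\neg\neg\forall x\,\sigma\to\forall x\,\neg\neg\sigma$ and $\HA+\DNE{\Sigma_k}\vdash\forall x(\neg\neg\sigma\to\sigma)$, we get $\HA+\DNE{\Sigma_k}\vdash\DNE{\Pi_{k+1}}$, while the reverse implication follows at once from $\Sigma_k\subseteq\Pi_{k+1}$. For (5): since $\Sigma_k\subseteq\Sigma_{k+1}\cap\Pi_{k+1}$, any $\varphi\in\Sigma_k$ is $\HA$-equivalent, with the same free variables, both to some $\varphi_1\in\Sigma_{k+1}$ and to some $\varphi_2\in\Pi_{k+1}$, and the instance $(\varphi_1\leftrightarrow\varphi_2)\to\varphi_1\lor\neg\varphi_1$ of $\LEM{\Delta_{k+1}}$ yields $\varphi\lor\neg\varphi$. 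For (2): given $\varphi,\psi\in\Pi_k$ and $\neg\neg(\varphi\lor\psi)$, case-split on the instances $\varphi\lor\neg\varphi$ and $\psi\lor\neg\psi$ of $\LEM{\Pi_k}$; the only nontrivial case, $\neg\varphi\land\neg\psi$, contradicts the hypothesis. Finally $\HA+\LEM{\Sigma_k}\vdash\DNE{\Sigma_k}$ is the case $\Gamma=\Sigma_k$ of Fact~\ref{fact:LEM_DNE}.

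\emph{Negation normal form.} The remaining content of (1), (3), (4) rests on the following lemma, which I would prove by induction on $j$: for every $j\le k$, over $\HA+\DNE{\Sigma_k}$ each $\neg\varphi$ with $\varphi\in\Sigma_j$ (resp.\ $\varphi\in\Pi_j$) is equivalent, with the same free variables, to a $\Pi_j$ (resp.\ $\Sigma_j$) formula. For $\varphi=\exists\vec x\,\varphi'\in\Sigma_j$ with $\varphi'\in\Pi_{j-1}$, one has $\HA\vdash\neg\varphi\leftrightarrow\forall\vec x\,\neg\varphi'$ and applies the induction hypothesis to $\neg\varphi'$. For $\varphi=\forall\vec x\,\varphi'\in\Pi_j$ with $\varphi'\in\Sigma_{j-1}$, the key point is $\HA+\DNE{\Sigma_{j-1}}\vdash\neg\forall\vec x\,\varphi'\leftrightarrow\neg\neg\exists\vec x\,\neg\varphi'$ (the forward direction using $\DNE{\Sigma_{j-1}}$ at each $\vec x$); the induction hypothesis then rewrites $\neg\varphi'$ as a $\Pi_{j-1}$ formula, so $\neg\varphi$ becomes $\neg\neg$ of a $\Sigma_j$ formula, and a final appeal to $\DNE{\Sigma_j}$ removes the double negation. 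Since $\Sigma_{k-1}\subseteq\Pi_k$ gives $\HA+\DNE{\Pi_k}\vdash\DNE{\Sigma_{k-1}}$, the lemma already holds at all levels $j\le k-1$ over $\HA+\DNE{\Pi_k}$, hence over $\HA+\DNE{(\Pi_k\lor\Pi_k)}$.

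\emph{Clauses (1) and (3).} Given $\varphi\in\Sigma_k$, the lemma (over $\HA+\DNE{\Sigma_k}$) yields $\chi\in\Pi_k$ with $\neg\varphi\leftrightarrow\chi$; then $\LEM{\Pi_k}$ applied to $\chi$ gives $\neg\varphi\lor\neg\neg\varphi$, and $\DNE{\Sigma_k}$ gives $\varphi\lor\neg\varphi$, so $\HA+\LEM{\Pi_k}+\DNE{\Sigma_k}\vdash\LEM{\Sigma_k}$. Conversely $\LEM{\Sigma_k}$ gives $\DNE{\Sigma_k}$, hence $\DNE{\Pi_k}$ by clause (6) and $\Pi_k\subseteq\Pi_{k+1}$; for $\psi\in\Pi_k$ the lemma yields $\chi\in\Sigma_k$ with $\neg\psi\leftrightarrow\chi$, $\LEM{\Sigma_k}$ applied to $\chi$ gives $\neg\psi\lor\neg\neg\psi$, and $\DNE{\Pi_k}$ then gives $\psi\lor\neg\psi$; this completes clause (1). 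For clause (3): $\DNE{(\Pi_k\lor\Pi_k)}$ proves $\DNE{\Pi_k}$ (the instance with equal disjuncts), so the lemma is available at levels $\le k-1$; given $\varphi\leftrightarrow\psi$ with $\varphi\in\Sigma_k$ and $\psi\in\Pi_k$, write $\varphi=\exists\vec x\,\varphi'$ with $\varphi'\in\Pi_{k-1}$, so $\neg\varphi\leftrightarrow\forall\vec x\,\neg\varphi'$ and the lemma turns $\neg\varphi'$ into a $\Sigma_{k-1}$ formula, giving $\neg\varphi\leftrightarrow\chi$ with $\chi\in\Pi_k$. Then $\varphi\lor\neg\varphi$ is $\HA$-equivalent to the $\Pi_k\lor\Pi_k$ formula $\psi\lor\chi$, whose double negation is intuitionistically provable, so $\DNE{(\Pi_k\lor\Pi_k)}$ delivers $\psi\lor\chi$, hence $\varphi\lor\neg\varphi$.

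\emph{Clause (4) and the main obstacle.} The same manoeuvre reduces (4) to the following: given $\varphi\leftrightarrow\psi$ with $\varphi\in\Sigma_k$ and $\psi\in\Pi_k$, the lemma at level $k$ rewrites $\neg\varphi$ (which is equivalent to $\neg\psi$) as a $\Sigma_k$ formula, so $\varphi\lor\neg\varphi$ becomes a disjunction of two $\Sigma_k$ formulas whose double negation is provable. When $k=1$ this disjunction is again (equivalent to) a $\Sigma_1$ formula, so $\DNE{\Sigma_1}$ finishes. For $k\ge 2$, however, $\Sigma_k\lor\Sigma_k$ is no longer contained in $\Sigma_k$, and one is left with a formula of the shape $\exists\vec z\exists\vec w\,(\varphi'\lor\nu)$ with $\varphi',\nu\in\Pi_{k-1}$, to which $\DNE{\Sigma_k}$ does not directly apply; disposing of the inner disjunction $\varphi'\lor\nu$ — essentially a de Morgan phenomenon not available from $\DNE{\Sigma_k}$ in general — is the delicate step, and I expect the real work to lie here. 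For this I would follow Akama et al.~\cite{ABHK}, exploiting the explicit prenex normal forms of $\varphi$ and of $\neg\psi$ furnished by the normal form lemma rather than treating $\varphi\lor\neg\varphi$ as a black box. Throughout, one must respect the dependency order — clause (6) before the second half of (1), and the normal form lemma before (1), (3), (4) — to avoid circular reasoning.
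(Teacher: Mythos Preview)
Your arguments for clauses (1), (2), (3), (5), (6) are correct and standard. The paper itself does not prove this Fact---it is quoted from \cite{ABHK}---so there is no paper proof to compare against for those clauses.

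For clause (4), however, you have invented an obstacle that does not exist. Your claim that ``for $k\ge 2$, $\Sigma_k\lor\Sigma_k$ is no longer contained in $\Sigma_k$'' is false: in $\HA$ the disjunction of two $\Sigma_k$ formulas is always $\HA$-equivalent to a $\Sigma_k$ formula. The standard coding is
\[
\varphi\lor\psi\ \leftrightarrow\ \exists w\bigl((w=0\to\varphi)\land(w\ne 0\to\psi)\bigr),
\]
and one checks by induction on $m$ that $q\to\rho$ is $\HA$-equivalent to a $\Sigma_m$ (resp.\ $\Pi_m$) formula whenever $q$ is quantifier-free and $\rho\in\Sigma_m$ (resp.\ $\rho\in\Pi_m$), using decidability of $q$ to commute $q\to(\cdot)$ past quantifiers. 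This is exactly what the paper invokes in Proposition~\ref{prop:DNE1}.2 (citing \cite[Lemma 4.4]{FuKu}). Hence your own line of argument already finishes (4): under the hypothesis $\varphi\leftrightarrow\psi$, your negation normal form lemma gives $\neg\varphi\leftrightarrow\neg\psi\leftrightarrow\sigma$ for some $\sigma\in\Sigma_k$, so $\varphi\lor\neg\varphi$ is $\HA$-equivalent to a $\Sigma_k$ formula, its double negation is provable, and $\DNE{\Sigma_k}$ concludes. There is no ``delicate step''.

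For comparison, the paper does later reprove clause (4) by a genuinely different route: it first establishes $\HA+\DNE{\Sigma_k}\vdash\DML{\Pi_k}$ (Proposition~\ref{prop:SDNE_DML}) and then $\HA+\DML{\Pi_k}+\DNE{\Sigma_{k-1}}\vdash\LEM{\Delta_k}$ (Corollary~\ref{cor:PDML_DLEM}.2). Your direct approach via closure of $\Sigma_k$ under disjunction is more elementary.
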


In the present paper, we also deal with other important principles based on such as the double negation shift, de Morgan's law and the constant domain axiom. 

\begin{defn}\label{defn:principles2}
Let $\Gamma$ and $\Theta$ be any sets of formulas. 
\begin{tabbing}
\hspace{20mm} \= \hspace{55mm} \= \hspace{50mm} \kill
$\DNS{\Gamma}$ \> $\forall x\, \neg \neg \varphi(x) \to \neg \neg\, \forall x\varphi(x)$ \> ($\varphi(x) \in \Gamma$) \\
$\DML{\Gamma}$ \> $\neg (\varphi \land \psi) \to \neg \varphi \lor \neg \psi$ \> ($\varphi, \psi \in \Gamma$) \\
$\CD{(\Gamma, \Theta)}$ \> $\forall x (\varphi \lor \psi(x)) \to \varphi \lor \forall x \psi(x)$ \> ($\varphi \in \Gamma$, $\psi(x) \in \Theta$ and $x \notin \FV(\varphi)$) 
\end{tabbing}
\end{defn}

The principle $\DML{\Sigma_k}$ is introduced in \cite{BS14}. 
The principles defined in Definition \ref{defn:principles2} have mainly been investigated for $k=1$ in the literature. 
For example, $\DML{\Sigma_1}$ and $\DML{\Pi_1}$ correspond to the principle $\mathbf{LLPO}$ and disjunctive Markov's principle, respectively (see \cite{Ishi}). 
Also the principle $\LEM{\Delta_1}$ corresponds to the principle (IIIa) in \cite{FIN} and to the principle $\LEM{\Delta_a}$ in \cite{FuKo}. 
Notice that \cite{FIN,FuKa,Ishi} are studied in the context of second-order arithmetic. 
We have the following results from the proofs of the corresponding results in these papers. 

\begin{fact}[Ishihara {\cite[Proposition 1]{Ishi}}]\label{fact:Ishi}\leavevmode
\begin{enumerate}
	\item $\HA + \DNE{\Sigma_1} \vdash \DML{\Pi_1}$; 
	\item $\HA + \DML{\Sigma_1} \vdash \DML{\Pi_1}$. 
\end{enumerate}
\end{fact}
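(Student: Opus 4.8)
The plan is to handle both items with a common setup. Write the given $\Pi_1$ formulas as $\varphi \equiv \forall x\,\alpha(x)$ and $\psi \equiv \forall y\,\beta(y)$ with $\alpha,\beta$ quantifier-free (possible by the normal form remark in Section~\ref{section:pre}), assume the premise $\neg(\varphi \land \psi)$ of the relevant instance of $\DML{\Pi_1}$, and aim at $\neg\varphi \lor \neg\psi$. The key preliminary observation is that $\varphi \land \psi$ is $\HA$-equivalent to $\forall z\,(\alpha(z) \land \beta(z))$, so, since quantifier-free formulas are decidable in $\HA$, the premise is $\HA$-equivalent to $\neg\neg\exists z\,(\neg\alpha(z) \lor \neg\beta(z))$; also $\HA$ proves $\neg\alpha(z) \to \neg\varphi$ and $\neg\beta(z) \to \neg\psi$ for every $z$.

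For part~(1), I would take the $\Sigma_1$ formula $\sigma :\equiv \exists z\,(\neg\alpha(z) \lor \neg\beta(z))$. By the observation, $\HA$ proves $\neg(\varphi \land \psi) \to \neg\neg\sigma$, so $\DNE{\Sigma_1}$ gives $\sigma$; fixing a witness $z$ and deciding $\neg\alpha(z) \lor \neg\beta(z)$ yields $\neg\varphi$ or $\neg\psi$. The verification is routine.

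For part~(2) the point is that $\DNE{\Sigma_1}$ is unavailable, so instead I would manufacture two $\Sigma_1$ formulas that $\HA$ already proves incompatible. Put $D(z) :\equiv \forall z' < z\,(\alpha(z') \land \beta(z'))$, which is quantifier-free up to $\HA$-equivalence, and set $A :\equiv \exists z\,(D(z) \land \neg\alpha(z))$ and $B :\equiv \exists z\,(D(z) \land \alpha(z) \land \neg\beta(z))$, both in $\Sigma_1$. A short case analysis on two candidate witnesses, using trichotomy for $<$, shows $\HA \vdash \neg(A \land B)$, so $\DML{\Sigma_1}$ yields $\neg A \lor \neg B$ with no side hypothesis. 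It then remains to prove in $\HA$, under the standing premise $\neg(\varphi \land \psi)$, the implications $\neg A \to \neg\psi$ and $\neg B \to \neg\varphi$, whence $\neg\varphi \lor \neg\psi$. For the first: $\neg A$ is equivalent to $\forall z\,(D(z) \to \alpha(z))$; assuming in addition $\psi$ makes the conjunct $\beta(z')$ in $D(z)$ redundant, so $\forall z\,(\forall z' < z\,\alpha(z') \to \alpha(z))$ holds, and course-of-values induction gives $\varphi$, contradicting the premise. The argument for $\neg B \to \neg\varphi$ is symmetric: assuming $\varphi$ makes the conjunct $\alpha(z)$ in $B$ redundant and yields $\psi$ by course-of-values induction.

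The main obstacle is the design of $A$ and $B$ in part~(2): they must be $\Sigma_1$, be provably incompatible over $\HA$ alone (so that $\DML{\Sigma_1}$ applies without a side hypothesis), and have negations that, together with $\neg(\varphi \land \psi)$, each pin down a definite disjunct of $\neg\varphi \lor \neg\psi$. The asymmetric extra conjunct $\alpha(z)$ in $B$ is exactly what makes both the incompatibility check and the two induction arguments go through; getting that bookkeeping right, and confirming that the bounded quantifier in $D(z)$ keeps $A$ and $B$ inside $\Sigma_1$ up to $\HA$-provable equivalence, is the only delicate point.
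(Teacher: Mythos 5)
Your proof is correct. Note that the paper itself does not prove this Fact directly (it is cited from Ishihara), but it does prove generalizations: Proposition \ref{prop:SDNE_DML} ($\HA + \DNE{\Sigma_k} \vdash \DML{\Pi_k}$) and Proposition \ref{prop:PSDML} ($\HA + \DML{\Sigma_k} + \DNE{\Sigma_{k-1}} \vdash \DML{\Pi_k}$), which at $k=1$ give exactly the two items since $\DNE{\Sigma_0}$ is trivial. Your part~(2) is essentially the $k=1$ specialization of the paper's argument: your $A$ and $B$ are precisely the formulas $\exists x\, \xi(x)$ and $\exists y\, \eta(y)$ of Proposition \ref{prop:PSDML} (with $\forall z' < z(\alpha(z') \land \beta(z'))$ playing the role of $\forall z < x(\varphi(z) \land \psi(z))$ and the dual $(\cdot)^\bot$ collapsing to plain negation for quantifier-free matrices), including the asymmetric extra conjunct $\alpha(z)$ in $B$ that secures $\HA \vdash \neg(A \land B)$. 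Where the paper runs the final step through the least number principle (Proposition \ref{prop:LN}) and $\DNS{\Sigma_{k-1}}$, you instead use course-of-values induction directly, which at the quantifier-free level is an equivalent and somewhat cleaner bookkeeping, since decidability removes all double-negation issues. Your part~(1) is the direct argument (reduce $\neg(\varphi \land \psi)$ to $\neg\neg\sigma$ for a $\Sigma_1$ formula $\sigma$, apply $\DNE{\Sigma_1}$, and case-split on the witnessed disjunction); the paper's generalization instead detours through $\DML{\n{\Sigma_k}}$ and Corollary \ref{cor:DML2}, but the underlying idea is the same, and your elementary route is perfectly adequate at $k=1$.
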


\begin{fact}[Fujiwara, Ishihara and Nemoto {\cite[Proposition 2]{FIN}}]\label{fact:FIN3}\leavevmode
$\HA + \DML{\Pi_1} \vdash \LEM{\Delta_1}$.  
\end{fact}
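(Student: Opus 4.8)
The plan is to obtain each instance of $\LEM{\Delta_1}$ from a \emph{single} application of $\DML{\Pi_1}$. Fix $\varphi \in \Sigma_1$ and $\psi \in \Pi_1$ and work inside $\HA + \DML{\Pi_1}$ under the hypothesis $\varphi \leftrightarrow \psi$; the goal is $\varphi \lor \neg \varphi$. Writing $\varphi$ as $\exists x\, \alpha(x)$ with $\alpha$ quantifier-free, the formula $\neg \varphi$ is $\HA$-equivalent to the $\Pi_1$ formula $\forall x\, \neg \alpha(x)$. From $\varphi \leftrightarrow \psi$ we get $\psi \land \neg \varphi \to \varphi \land \neg \varphi \to \bot$, so $\HA$ proves $\neg(\psi \land \neg \varphi)$; applying $\DML{\Pi_1}$ to $\psi$ and $\neg \varphi$ (both $\Pi_1$ up to $\HA$-equivalence) then yields $\neg \psi \lor \neg \neg \varphi$.

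Next I split on this disjunction. In the first case $\neg \psi$ gives $\neg \varphi$ by $\varphi \leftrightarrow \psi$, which is the right disjunct of $\varphi \lor \neg \varphi$. In the second case $\neg \neg \varphi$ gives $\neg \neg \psi$ by $\varphi \leftrightarrow \psi$; since $\HA \vdash \DNE{\Pi_1}$ --- this follows from Fact \ref{fact:ABHK}(6) with $k = 0$ (as $\HA$ proves $\DNE{\Sigma_0}$), or directly because for $\psi = \forall \vec y\, \beta(\vec y)$ with $\beta$ quantifier-free one has $\neg \neg \psi \to \neg \neg \beta(\vec y_0) \to \beta(\vec y_0)$ for every $\vec y_0$ by decidability of $\beta$ --- we get $\psi$, hence $\varphi$ again by the hypothesis, which is the left disjunct. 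Either way $\varphi \lor \neg \varphi$ holds, establishing $\LEM{\Delta_1}$.

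There is no genuine obstacle here; the only point to watch is the bookkeeping of formula classes, namely that $\neg \varphi$ is not literally in $\Pi_1$ but only $\HA$-equivalent to a member of $\Pi_1$, so one uses that $\DML{\Pi_1}$ may be applied to instances replaced by $\HA$-provably equivalent formulas, as licensed by the convention on $\subseteq$ fixed in Section \ref{section:pre}. Everything else is routine intuitionistic reasoning together with the decidability of quantifier-free formulas in $\HA$.
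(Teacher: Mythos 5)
Your argument is correct: under the hypothesis $\varphi \leftrightarrow \psi$ the conjunction $\psi \land \neg\varphi$ is refutable, both conjuncts are ($\HA$-equivalent to) $\Pi_1$ formulas, and the resulting disjunction $\neg\psi \lor \neg\neg\varphi$ is upgraded to $\varphi \lor \neg\varphi$ using the hypothesis together with the $\HA$-provable $\DNE{\Pi_1}$ (Fact \ref{fact:ABHK}.6 with $k=0$), so the proof goes through. The paper itself does not reprove this fact directly --- it cites \cite{FIN} and later derives the generalization $\HA + \DML{\Gamma} + \DNE{\Sigma_{k-1}} \vdash \LEM{\Delta_k}$ (Corollary \ref{cor:PDML_DLEM}.2) via the chain $\DML{(\Delta_k,\Gamma)} \Rightarrow \LEM{\n{\Delta_k}}$ over $\DNS{\Sigma_{k-1}}$ (Corollary \ref{cor:DML2}.4, Proposition \ref{prop:WLEM_DML2}) followed by $\LEM{\n{\Delta_k}} + \DNE{\Sigma_{k-1}} \Leftrightarrow \LEM{\Delta_k}$ (Proposition \ref{prop:NLEM}.2). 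Specialized to $k=1$ that machinery unwinds to essentially your computation: de Morgan applied to an $\HA$-refutable pair, yielding a weak excluded middle, then eliminated double negations on the $\Pi_1$ side. The only genuine difference is that you apply $\DML{\Pi_1}$ to the pair $(\psi, \neg\varphi)$ rather than to $(\varphi, \neg\varphi)$ with $\varphi$ treated as $\Delta_1$, which lets you use the plain $\Pi_1$ schema instead of its $\Delta$-relativized form; this is a small but pleasant simplification, at the cost of being tied to the $k=1$ case where $\neg\varphi$ is outright $\Pi_1$-equivalent, whereas the paper's route is what generalizes to higher $k$ (where one needs $\DNS{\Sigma_{k-1}}$ or $\DNE{\Sigma_{k-1}}$ to make the analogous moves).
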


\begin{fact}[Fujiwara and Kawai {\cite[Proposition 4.2]{FuKa}}]\label{FuKaCD}
$\CD{(\Pi_1, \Pi_1)}$ and $\DML{\Sigma_1}$ are equivalent over $\HA$. 
\end{fact}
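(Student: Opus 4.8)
The plan is to prove the two implications of the equivalence separately, both by elementary manipulations inside $\HA$ together with two standard facts: every $\Sigma_1$-formula is $\HA$-equivalent (with the same free variables) to one of the form $\exists u\,\varphi_0$ with $\varphi_0$ quantifier-free, and dually for $\Pi_1$; and $\HA \vdash \varphi_0 \lor \neg\varphi_0$ for every quantifier-free $\varphi_0$, so that in particular $\HA \vdash \neg\neg\varphi_0 \to \varphi_0$ and $\HA \vdash (\varphi_0 \to \chi) \leftrightarrow (\neg\varphi_0 \lor \chi)$ for such $\varphi_0$. I shall also freely use the intuitionistic equivalences $\neg\exists x\,\theta \leftrightarrow \forall x\,\neg\theta$ and $\neg(\theta \land \eta) \leftrightarrow (\theta \to \neg\eta)$, the distribution of existential quantifiers out of conjunctions, and the embedding $\Sigma_0 \subseteq \Pi_1$.

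\emph{From $\CD{(\Pi_1,\Pi_1)}$ to $\DML{\Sigma_1}$.} Let $\varphi,\psi \in \Sigma_1$; renaming bound variables apart, write $\varphi = \exists u\,\varphi_0(u)$ and $\psi = \exists v\,\psi_0(v)$ with $\varphi_0,\psi_0$ quantifier-free. Working in $\HA$, assume $\neg(\varphi \land \psi)$. Pulling the two existentials out of the conjunction and negating, this is equivalent to $\forall u\,\forall v\,\neg(\varphi_0(u) \land \psi_0(v))$, hence to $\forall u\bigl(\varphi_0(u) \to \forall v\,\neg\psi_0(v)\bigr)$, hence, by decidability of $\varphi_0$, to $\forall u\bigl(\neg\varphi_0(u) \lor \forall v\,\neg\psi_0(v)\bigr)$. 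Now $\forall v\,\neg\psi_0(v) \in \Pi_1$, $\neg\varphi_0(u) \in \Sigma_0 \subseteq \Pi_1$, and $u \notin \FV(\forall v\,\neg\psi_0(v))$, so applying $\CD{(\Pi_1,\Pi_1)}$ yields $\forall v\,\neg\psi_0(v) \lor \forall u\,\neg\varphi_0(u)$, which is exactly $\neg\psi \lor \neg\varphi$. This gives the required instance of $\DML{\Sigma_1}$.

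\emph{From $\DML{\Sigma_1}$ to $\CD{(\Pi_1,\Pi_1)}$.} Let $\varphi \in \Pi_1$ and $\psi(x) \in \Pi_1$ with $x \notin \FV(\varphi)$, and write $\varphi = \forall u\,\varphi_0(u)$, $\psi(x) = \forall v\,\psi_0(x,v)$ with $\varphi_0,\psi_0$ quantifier-free. Put $A := \exists u\,\neg\varphi_0(u)$ and $B := \exists x\,\exists v\,\neg\psi_0(x,v)$, both in $\Sigma_1$; by decidability of $\varphi_0,\psi_0$ we have $\HA \vdash \neg A \leftrightarrow \varphi$ and $\HA \vdash \neg B \leftrightarrow \forall x\,\psi(x)$. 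Working in $\HA$, assume $\forall x(\varphi \lor \psi(x))$; I claim $\neg(A \land B)$. Indeed, suppose $\neg\varphi_0(u) \land \neg\psi_0(x,v)$ for some $u,x,v$; instantiating the assumption at $x$ gives $\varphi \lor \psi(x)$, where the first disjunct yields $\varphi_0(u)$ and the second yields $\psi_0(x,v)$, each contradicting one of the conjuncts, so $\neg(\neg\varphi_0(u) \land \neg\psi_0(x,v))$, and pulling out the quantifiers gives $\neg(A \land B)$ as claimed. Then $\DML{\Sigma_1}$ gives $\neg A \lor \neg B$, i.e.\ $\varphi \lor \forall x\,\psi(x)$, the desired instance of $\CD{(\Pi_1,\Pi_1)}$.

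I do not expect a genuine obstacle: the whole argument is a direct translation once one passes to prenex normal forms. The only points that need a little care are the bookkeeping of free and bound variables (so that the side condition $x \notin \FV(\varphi)$ of $\CD$ is met and no variable capture occurs when renaming apart), and the repeated appeals to decidability of quantifier-free formulas in $\HA$, which is precisely what licenses the equivalences $\neg\neg\varphi_0 \leftrightarrow \varphi_0$, $(\varphi_0 \to \chi) \leftrightarrow (\neg\varphi_0 \lor \chi)$, and the identifications $\neg A \leftrightarrow \varphi$ and $\neg B \leftrightarrow \forall x\,\psi(x)$.
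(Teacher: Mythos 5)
Your proof is correct, and both directions go through in $\HA$ as written; the only points needing care (the embedding $\Sigma_0 \subseteq \Pi_1$ modulo $\HA$-equivalence so that $\neg\varphi_0(u)$ can serve as the $\Pi_1$ formula in the $\CD{(\Pi_1,\Pi_1)}$ instance, and the renaming of bound variables apart) are exactly the ones you flag, and they are unproblematic. Note, however, that the paper itself does not prove this statement directly: it is imported from Fujiwara--Kawai and then subsumed by the general Proposition \ref{prop:CDP1}, which shows that $\CD{(\Pi_k, \Pi_k)}$ is equivalent to $\DML{\Sigma_k}^\bot$ over $\HA$, together with Corollary \ref{cor:LLPO2}.1, which identifies $\DML{\Sigma_1}^\bot$ with $\DML{\Sigma_1}$ since $\DNE{\Sigma_0}$ is free. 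Your argument is essentially the $k=1$ specialization of that general proof with the dual machinery unfolded: your formulas $A$ and $B$ are precisely $\varphi^\bot$ and $(\forall x\,\psi(x))^\bot$, your appeals to decidability of quantifier-free formulas play the role that $\DNE{\Sigma_{k-1}}$ and $\LEM{\Pi_{k-1}}$ (obtained from $\CD{}$ via Proposition \ref{prop:CDS1}) play for general $k$, and your use of plain $\DML{\Sigma_1}$ instead of $\DML{\Sigma_1}^\bot$ is licensed exactly because these coincide at level $1$. What your route buys is a short, self-contained proof of the $k=1$ case needing no dual apparatus and no bootstrapping of lower-level principles; what the paper's route buys is the uniform statement for all $k$, at the cost of the extra hypothesis hidden in $\DML{\Sigma_k}^\bot$.
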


In the following sections, we investigate those principles more finely than ever before. 
In the process of the investigation, we also generalize the facts stated above. 

Concerning $\DNS{\Gamma}$, we easily obtain the following proposition. 

\begin{prop}\label{prop:DNS1}\leavevmode
\begin{enumerate}
	\item $\HA + \DNE{\Sigma_k} \vdash \DNS{\Sigma_k}$; 
	\item $\DNS{\Sigma_k}$ and $\DNS{\Pi_{k+1}}$ are equivalent over $\HA$.
\end{enumerate}
\end{prop}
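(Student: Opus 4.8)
The plan is to treat the two items separately: item~1 and one direction of item~2 are immediate, and essentially all the work lies in $\HA+\DNS{\Sigma_k}\vdash\DNS{\Pi_{k+1}}$, which I would prove as the $\DNS$-analogue of the standard argument that $\DNE{\Sigma_k}$ and $\DNE{\Pi_{k+1}}$ coincide over $\HA$ (Fact~\ref{fact:ABHK}(6)), the one twist being that the leading universal quantifier of $\DNS$ must be contracted against the leading quantifier of the $\Pi_{k+1}$ matrix by a primitive recursive pairing. For item~1, I would fix $\varphi(x)\in\Sigma_k$, assume $\forall x\,\neg\neg\varphi(x)$, and use the instance $\forall x\,(\neg\neg\varphi(x)\to\varphi(x))$ of $\DNE{\Sigma_k}$ to conclude $\forall x\,\varphi(x)$, hence a fortiori $\neg\neg\forall x\,\varphi(x)$ by the intuitionistic validity $\psi\to\neg\neg\psi$; thus item~1 needs nothing beyond $\DNE{\Sigma_k}$ and pure logic (and in fact yields the stronger $\forall x\,\varphi(x)$).

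For item~2, the implication $\HA+\DNS{\Pi_{k+1}}\vdash\DNS{\Sigma_k}$ is immediate from $\Sigma_k\subseteq\Pi_{k+1}$ as recorded in Section~\ref{section:pre}: a formula $\varphi(x)\in\Sigma_k$ is $\HA$-equivalent, with the same free variables, to some $\psi(x)\in\Pi_{k+1}$, whence $\forall x\,\neg\neg\varphi(x)\leftrightarrow\forall x\,\neg\neg\psi(x)$ and $\neg\neg\forall x\,\varphi(x)\leftrightarrow\neg\neg\forall x\,\psi(x)$ over $\HA$, so the corresponding instance of $\DNS{\Pi_{k+1}}$ transfers.

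The substantive part is $\HA+\DNS{\Sigma_k}\vdash\DNS{\Pi_{k+1}}$. By the prenex normal form fact in Section~\ref{section:pre} it suffices to consider a $\Pi_{k+1}$ formula of the form $\forall y\,\sigma(x,y)$ with $\sigma\in\Sigma_k$ (possibly carrying further parameters), i.e.\ to derive $\forall x\,\neg\neg\forall y\,\sigma(x,y)\to\neg\neg\forall x\forall y\,\sigma(x,y)$. First, from $\forall x\,\neg\neg\forall y\,\sigma(x,y)$ I would get $\forall x\forall y\,\neg\neg\sigma(x,y)$ purely intuitionistically, since $\forall y\,\sigma(x,y)\to\sigma(x,y')$ yields $\neg\neg\forall y\,\sigma(x,y)\to\neg\neg\sigma(x,y')$ by monotonicity of $\neg\neg$ under implication. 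Next, using a primitive recursive pairing function with projections $(\,\cdot\,)_0,(\,\cdot\,)_1$, put $\tau(z):=\sigma((z)_0,(z)_1)$; this is again a $\Sigma_k$ formula, as substituting terms for free variables leaves the quantifier prefix intact, and over $\HA$ one has both $\forall x\forall y\,\neg\neg\sigma(x,y)\leftrightarrow\forall z\,\neg\neg\tau(z)$ and $\forall z\,\tau(z)\leftrightarrow\forall x\forall y\,\sigma(x,y)$. Applying $\DNS{\Sigma_k}$ to $\tau$ gives $\neg\neg\forall z\,\tau(z)$, and the second equivalence, taken under $\neg\neg$, delivers $\neg\neg\forall x\forall y\,\sigma(x,y)$, which is the desired conclusion.

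I do not expect any genuinely hard step. The only points needing care are bookkeeping: verifying that $\tau(z)$ lies in $\Sigma_k$ and that the two pairing equivalences are $\HA$-provable (both routine, since the language contains symbols for all primitive recursive functions), and checking that replacing an arbitrary $\Pi_{k+1}$ formula by its normal form $\forall y\,\sigma(x,y)$ is legitimate, which is exactly the $\subseteq$-convention together with the prenex normal form fact. The one mild pitfall to keep in mind is that $\DNS{(-)}$ is stated with a single leading quantifier, which is precisely why the pairing step is required rather than a direct quantifier-by-quantifier imitation of the $\DNE$ argument.
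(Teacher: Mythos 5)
Your proposal is correct and follows essentially the same route as the paper: item 1 via applying $\DNE{\Sigma_k}$ pointwise and then $\psi \to \neg\neg\psi$, and the substantive direction of item 2 by pulling $\neg\neg$ inside the universal quantifiers intuitionistically, contracting the two quantifiers with a primitive recursive pairing so that $\DNS{\Sigma_k}$ applies, and then un-pairing under $\neg\neg$. The only difference is cosmetic: you spell out the easy direction $\DNS{\Pi_{k+1}} \vdash \DNS{\Sigma_k}$ via $\Sigma_k \subseteq \Pi_{k+1}$, which the paper leaves implicit.
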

\begin{proof}
1. Let $\varphi$ be any $\Sigma_k$ formula. 
Then $\HA + \DNE{\Sigma_k} \vdash \forall x\, \neg \neg \varphi \to \forall x \varphi$. 
We obtain $\HA + \DNE{\Sigma_k} \vdash \forall x\, \neg \neg \varphi \to \neg \neg\, \forall x \varphi$. 

2. We prove $\HA + \DNS{\Sigma_k} \vdash \DNS{\Pi_{k+1}}$. 
Let $\forall y \varphi(x, y)$ be any $\Pi_{k+1}$ formula where $\varphi(x, y) \in \Sigma_k$. 
Then $\HA \vdash \forall x\, \neg \neg\, \forall y \varphi(x, y) \to \forall x \forall y\, \neg \neg \varphi(x, y)$. 
Let $(z)_0$ and $(z)_1$ be primitive recursive inverse functions of a fixed pairing function which calculate the first and the second components of $z$ as a pair, respectively. 
Then $\HA \vdash \forall x\, \neg \neg\, \forall y \varphi(x, y) \to \forall z\, \neg \neg \varphi((z)_0, (z)_1)$. 
By applying $\DNS{\Sigma_k}$, we obtain $\HA + \DNS{\Sigma_k} \vdash \forall x\, \neg \neg\, \forall y \varphi(x, y) \to \neg \neg\, \forall z \varphi((z)_0, (z)_1)$.
We conclude $\HA + \DNS{\Sigma_k} \vdash \forall x\, \neg \neg\, \forall y \varphi(x, y) \to \neg \neg\, \forall x \forall y \varphi(x, y)$.
\end{proof}

A detailed investigation of the principle $\DNS{\Sigma_1}$ including Proposition \ref{prop:DNS1}.1 for $k=1$ is in \cite{FuKo}.

\section{The dual principles}\label{section:dual}

In \cite{FuKu}, the following result is proved. 

\begin{fact}[Fujiwara and Kurahashi {\cite[Lemma 4.7]{FuKu}}]\label{fact:DUAL}\leavevmode
\begin{enumerate}
	\item For any $\Sigma_k$ formula $\varphi$, there exists a $\Pi_k$ formula $\varphi'$ such that $\HA + \DNE{\Sigma_{k-1}} \vdash \neg \varphi \leftrightarrow \varphi'$; 
	\item For any $\Pi_k$ formula $\varphi$, there exists a $\Sigma_k$ formula $\varphi'$ such that $\HA + \DNE{\Sigma_k} \vdash \neg \varphi \leftrightarrow \varphi'$. 
\end{enumerate}
\end{fact}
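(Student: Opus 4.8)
The plan is to prove both statements simultaneously by induction on $k$, pushing negations inward through the prenex quantifier blocks and using the available double negation elimination exactly where a $\neg\exists$ must be converted into a $\forall\neg$ with the inner negation absorbed. More precisely, I would first fix the convenient normal forms mentioned in the preliminaries: every $\Sigma_{k+1}$ formula is $\HA$-equivalent (with the same free variables) to $\exists x\,\psi$ with $\psi\in\Pi_k$, and every $\Pi_{k+1}$ formula to $\forall x\,\psi$ with $\psi\in\Sigma_k$. So it suffices to build the dual $\varphi'$ for formulas in these shapes.

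For the base case $k=0$: a $\Sigma_0=\Pi_0$ formula $\varphi$ is quantifier-free, hence decidable in $\HA$, so $\HA\vdash \neg\varphi\leftrightarrow\varphi'$ with $\varphi':=\neg\varphi$ itself (which is again $\Sigma_0=\Pi_0$), and no instance of $\DNE{}$ is needed — consistent with the statement since $\DNE{\Sigma_{-1}}$ and $\DNE{\Sigma_0}$ are both trivially available ($\Sigma_{-1}=\emptyset$, and quantifier-free $\DNE$ is provable in $\HA$). For the inductive step of part 1, take $\varphi=\exists x\,\psi(x)\in\Sigma_{k+1}$ with $\psi(x)\in\Pi_k$. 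Intuitionistically $\HA\vdash\neg\exists x\,\psi(x)\leftrightarrow\forall x\,\neg\psi(x)$. By the induction hypothesis for part 2 applied to $\psi(x)\in\Pi_k$, there is a $\Sigma_k$ formula $\psi'(x)$ with $\HA+\DNE{\Sigma_k}\vdash\neg\psi(x)\leftrightarrow\psi'(x)$; then $\varphi':=\forall x\,\psi'(x)\in\Pi_{k+1}$ works, and the cost incurred is $\DNE{\Sigma_k}$, which for $\varphi\in\Sigma_{k+1}$ is exactly $\DNE{\Sigma_{(k+1)-1}}$ as required. For the inductive step of part 2, take $\varphi=\forall x\,\psi(x)\in\Pi_{k+1}$ with $\psi(x)\in\Sigma_k$. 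Here the naive move $\neg\forall x\,\psi(x)\leftrightarrow\exists x\,\neg\psi(x)$ fails intuitionistically; instead I would use that over $\HA+\DNE{\Sigma_{k+1}}$ one has $\neg\varphi\leftrightarrow\neg\neg\exists x\,\neg\psi(x)$ — the left-to-right direction is intuitionistic, and the right-to-left direction follows since $\exists x\,\neg\psi(x)\to\neg\forall x\,\psi(x)$ is intuitionistic so $\neg\neg\exists x\,\neg\psi(x)\to\neg\neg\neg\forall x\,\psi(x)\to\neg\varphi$. Now $\neg\psi(x)$ is (by part 1's induction hypothesis, using $\DNE{\Sigma_{k-1}}$, which is subsumed) $\HA$-equivalent to a $\Pi_k$ formula, so $\exists x\,\neg\psi(x)$ is $\HA$-equivalent to a $\Sigma_{k+1}$ formula $\chi$, and $\DNE{\Sigma_{k+1}}$ gives $\neg\neg\chi\leftrightarrow\chi$; setting $\varphi':=\chi$ finishes the step with total cost $\DNE{\Sigma_{k+1}}=\DNE{\Sigma_k}$-strength-wise... — here one must be a little careful, since what is literally needed is $\DNE{\Sigma_{k+1}}$ for $\varphi\in\Pi_{k+1}$, but the statement asks only for $\DNE{\Sigma_k}$; so the right bookkeeping is to observe $\Pi_{k+1}$ formulas are classically dual to $\Sigma_{k+1}$ formulas and one genuinely applies $\DNE$ at level $\Sigma_{k+1}$, hence the statement should be read with that matching — in the paper's indexing, part 2 for level $k$ uses $\DNE{\Sigma_k}$ applied to a $\Sigma_k$-form obtained from the block of existential quantifiers, which is exactly the shape produced when $\varphi\in\Pi_k$ is written as $\forall x\,\psi(x)$ with $\psi\in\Sigma_{k-1}$ and one dualizes to $\exists x\,(\neg\psi(x))'$ with $(\neg\psi(x))'\in\Pi_{k-1}$, giving a $\Sigma_k$ formula, and $\DNE{\Sigma_k}$ collapses its double negation.

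The main obstacle is precisely this index-chasing at the $\Pi$-step: one must verify that dualizing a $\Pi_k$ formula consumes only $\DNE{\Sigma_k}$ and not $\DNE{\Sigma_{k+1}}$, and that the $\DNE{\Sigma_{k-1}}$ used inside (from part 1's hypothesis on the matrix) is absorbed by $\DNE{\Sigma_k}$ — for which I would invoke the obvious monotonicity $\HA+\DNE{\Sigma_k}\vdash\DNE{\Sigma_{k-1}}$ together with Fact \ref{fact:ABHK}.6 relating $\DNE{\Sigma_k}$ and $\DNE{\Pi_{k+1}}$ if needed. Everything else is a routine unwinding of intuitionistically valid equivalences ($\neg\exists\leftrightarrow\forall\neg$, $\neg\neg\neg\to\neg$, $\varphi\to\neg\neg\varphi$, and propagation of $\HA$-provable equivalences through quantifiers and negation), so the proof is short once the induction is set up with the correct hypotheses and the normal-form reductions are in place.
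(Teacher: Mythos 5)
Your construction is essentially the argument the paper relies on: in the paper the witness is the syntactic dual $\varphi^\bot$, the direction $\varphi^\bot \to \neg \varphi$ is Proposition \ref{prop:DUAL_BASIC}.3, and the direction $\neg \varphi \to \varphi^\bot$ is exactly the dual principle, which Proposition \ref{prop:DNE_DUAL} derives from $\DNE{\Sigma_{k-1}}$ (for $\Sigma_k$) and $\DNE{\Sigma_k}$ (for $\Pi_k$) by the same negation-pushing induction you set up; your recursively built $\varphi'$ is $\varphi^\bot$ up to $\HA$-provable equivalence, and your final index bookkeeping (after the momentary detour) lands where it should: $\DNE{\Sigma_k}$ for $\Sigma_{k+1}$ formulas, $\DNE{\Sigma_{k+1}}$ for $\Pi_{k+1}$ formulas, which is what the statement permits, so the worry you raise mid-proof is a misreading of your own indexing rather than a real mismatch.

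One local claim is wrong, though it does not sink the proof: in the $\Pi_{k+1}$ step you assert that the implication $\neg \forall x\, \psi(x) \to \neg \neg \exists x\, \neg \psi(x)$ is intuitionistically valid. It is not. Since $\HA \vdash \neg \neg \exists x\, \neg \psi \leftrightarrow \neg \forall x\, \neg \neg \psi$, that implication is $\HA$-equivalent to the double negation shift instance $\forall x\, \neg \neg \psi \to \neg \neg \forall x\, \psi$, i.e.\ to $\DNS{\Sigma_k}$ for the matrix (equivalently to an instance of $\WDUAL{\Pi_{k+1}}$, Proposition \ref{prop:WDUAL}), and this is not provable in $\HA$ for $k \geq 1$ (a Kripke model with a growing predicate refutes the underlying logical schema). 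The repair is immediate: you are working over $\HA + \DNE{\Sigma_{k+1}}$, which proves $\DNE{\Sigma_k}$ and hence $\DNS{\Sigma_k}$ (Proposition \ref{prop:DNS1}.1), so the equivalence $\neg \varphi \leftrightarrow \neg \neg \exists x\, \neg \psi(x)$ you need does hold over that theory; just attribute the left-to-right direction to the available semi-classical principle rather than to intuitionistic logic. Likewise drop the aside suggesting $\DNE{\Sigma_{k+1}}$ and $\DNE{\Sigma_k}$ are of the same strength; your own concluding bookkeeping shows no such identification is needed.
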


In this section, we investigate the dual principles and the weak dual principles (see Definitions \ref{defn:DUAL} and \ref{defn:WDUAL}) motivated from Fact \ref{fact:DUAL}. 

\subsection{The dual principles}

First, we recall the notion of duals of formulas in prenex normal form, which is defined in \cite{ABHK} informally. 

\begin{defn}[cf.~\cite{ABHK}]\label{defn:DUAL}
For any formula $\varphi$ in prenex normal form, we define the dual $\varphi^\bot$ of $\varphi$ inductively as follows: 
\begin{enumerate}
	\item $\varphi^\bot : \equiv \neg \varphi$ if $\varphi$ is quantifier-free; 
	\item $(\forall x \varphi(x))^\bot : \equiv \exists x \varphi^\bot(x)$; 
	\item $(\exists x \varphi(x))^\bot : \equiv \forall x \varphi^\bot(x)$. 
\end{enumerate}
\end{defn}

The following proposition is a basic property of duals.  

\begin{prop}\label{prop:DUAL_BASIC}
Let $\varphi$ be any formula in prenex normal form. 
\begin{enumerate}
	\item If $\varphi$ is $\Sigma_k$ \textup{(}resp.~$\Pi_k$\textup{)}, then $\varphi^\bot$ is $\Pi_k$ \textup{(}resp.~$\Sigma_k$\textup{)}; 
	\item $\HA \vdash \varphi^{\bot \bot} \leftrightarrow \varphi$; 
	\item $\HA \vdash \varphi^\bot \to \neg \varphi$; 
	\item $\HA \vdash \neg (\varphi \land \varphi^\bot)$. 
\end{enumerate}
\end{prop}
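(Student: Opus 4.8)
The plan is to prove all four parts by induction on the build-up of the prenex normal form of $\varphi$, following exactly the recursive clauses of Definition \ref{defn:DUAL}. The base case in each part is where $\varphi$ is quantifier-free, so that $\varphi^\bot \equiv \neg \varphi$; the inductive step splits according to whether the outermost quantifier is $\forall$ or $\exists$, and I will use the fact that both $(\cdot)^\bot$ and the logical operations in question commute with prenexing in a way that matches the clauses.

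For part 1, the base case is immediate since a quantifier-free formula is $\Sigma_0 = \Pi_0$ and its dual $\neg\varphi$ is again quantifier-free. For the inductive step, if $\varphi \equiv \forall x\, \psi$ is $\Pi_k$ (so $\psi$ is $\Sigma_{k-1}$, possibly after absorbing a block of like quantifiers — I will appeal to the normal-form remark in Section \ref{section:pre} that lets me treat quantifiers one at a time), then by the induction hypothesis $\psi^\bot$ is $\Pi_{k-1}$, hence $\varphi^\bot \equiv \exists x\, \psi^\bot$ is $\Sigma_k$; the case $\varphi \equiv \exists x\, \psi$ is symmetric. For part 2, the base case uses $(\neg\varphi)^\bot \equiv \neg\neg\varphi$ and the intuitionistic equivalence… wait — here is the one subtlety: when $\varphi$ is quantifier-free, $(\varphi^\bot)^\bot \equiv \neg\neg\varphi$, which is \emph{not} intuitionistically equivalent to $\varphi$ in general. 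So the clause "$\varphi^\bot \equiv \neg\varphi$ if $\varphi$ is quantifier-free" must be read together with the understanding that $\neg\varphi$ is itself quantifier-free (negation of a quantifier-free formula), so that $(\varphi^\bot)^\bot \equiv \neg\neg\varphi$ and $\HA \vdash \neg\neg\varphi \leftrightarrow \varphi$ holds because quantifier-free formulas are decidable in $\HA$. This is the key point I expect to be the main obstacle: one must invoke decidability of $\Sigma_0$ formulas over $\HA$ (provable by quantifier-free induction) to get the double-negation elimination at the base. Once that is in hand, the inductive step for part 2 is purely a matter of pushing $\exists$ and $\forall$ through: $((\forall x\,\psi)^\bot)^\bot \equiv (\exists x\, \psi^\bot)^\bot \equiv \forall x\, (\psi^\bot)^\bot$, which $\HA$ proves equivalent to $\forall x\, \psi$ by the induction hypothesis under a universal quantifier.

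For part 3, the base case is trivial since $\varphi^\bot \equiv \neg\varphi$ literally. For the inductive step with $\varphi \equiv \forall x\,\psi$: by IH, $\HA \vdash \psi^\bot \to \neg\psi$, hence $\HA \vdash \exists x\, \psi^\bot \to \exists x\, \neg\psi$, and $\HA \vdash \exists x\,\neg\psi \to \neg\forall x\,\psi$ (an intuitionistic tautology), so $\HA \vdash \varphi^\bot \to \neg\varphi$. For $\varphi \equiv \exists x\,\psi$: by IH, $\HA \vdash \psi^\bot \to \neg\psi$, hence $\HA \vdash \forall x\,\psi^\bot \to \forall x\,\neg\psi$, and $\HA \vdash \forall x\,\neg\psi \leftrightarrow \neg\exists x\,\psi$; this closes the case. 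Part 4 then follows from parts 2 and 3 with essentially no extra work: by part 3 applied to $\varphi$ we have $\HA \vdash \varphi^\bot \to \neg\varphi$, i.e.\ $\HA \vdash \neg(\varphi \land \varphi^\bot)$ directly — indeed $\neg(\varphi\land\varphi^\bot)$ is just the contrapositive-free restatement of $\varphi^\bot \to \neg\varphi$ since $\HA \vdash (\varphi^\bot \to \neg\varphi) \leftrightarrow \neg(\varphi\land\varphi^\bot)$. So part 4 is an immediate corollary of part 3 and needs no separate induction. Overall the proof is a routine structural induction; the only place requiring care is the use of decidability of quantifier-free formulas in the base case of part 2.
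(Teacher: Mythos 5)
Your proposal is correct and follows essentially the same route as the paper: a straightforward induction on the quantifier structure, with the base case of part 2 resting on $\HA \vdash \neg\neg\varphi \leftrightarrow \varphi$ for quantifier-free $\varphi$ (decidability of $\Sigma_0$), part 3 using the intuitionistic facts $\exists x\,\neg\psi \to \neg\forall x\,\psi$ and $\forall x\,\neg\psi \leftrightarrow \neg\exists x\,\psi$, and part 4 as an immediate consequence of part 3. The subtlety you flag at the base of part 2 is exactly the point the paper's proof invokes.
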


\begin{proof}
1. Trivial. 

2. It is known that if $\varphi$ is $\Sigma_0$, then $\HA \vdash \neg \neg \varphi \leftrightarrow \varphi$. 
Then clause 2 is proved by induction on the number of quantifiers contained in $\varphi$. 

3. Notice that $\HA$ proves the formulas $\exists x\, \neg \varphi \to \neg\, \forall x \varphi$ and $\forall x\, \neg \varphi \to \neg\, \exists x \varphi$. 
Then clause 3 is also proved by induction on the number of quantifiers in $\varphi$. 

4. This is because $\HA \vdash \varphi \land \varphi^\bot \to \varphi \land \neg \varphi$ by clause 3. 
\end{proof}

From Propositions \ref{prop:DUAL_BASIC}.(1) and (2), we have that the mapping $(\cdot)^\bot$ is a bijection between $\Sigma_k$ (resp.~$\Pi_k$) and $\Pi_k$ (resp.~$\Sigma_k$) modulo $\HA$-provable equivalence. 

\begin{remark}
It is possible to extend the notion of duals in Definition \ref{defn:DUAL} (from \cite{ABHK}) to arbitrary formulas by the operation $(\cdot)^d$ defined inductively as
\begin{enumerate}
	\item $\varphi^d : \equiv \neg \varphi$ if $\varphi$ is prime; 
	\item
	$(\varphi \land \psi)^d  : \equiv  \varphi^d \lor \psi^d 	$;
	\item
	$(\varphi \lor \psi)^d  : \equiv  \varphi^d \land \psi^d 	$;
\item
	$(\varphi \to \psi)^d  : \equiv  \varphi \land \psi^d 	$;
	\item $(\forall x \varphi(x))^d : \equiv \exists x \varphi^d(x)$; 
	\item $(\exists x \varphi(x))^d : \equiv \forall x \varphi^d(x)$. 
\end{enumerate}
In fact, $\varphi^d$ is $\HA$-equivalent to $\neg \varphi$ for quantifier-free $\varphi$, and hence,  $\varphi^d$ is $\HA$-equivalent to $\varphi^\bot$ for prenex $\varphi$.
On the one hand, clauses 3 and 4 in Proposition \ref{prop:DUAL_BASIC} hold for the operation $(\cdot)^d$.
On the other hand, for clause 2, $\varphi \to \left({\varphi^{d}} \right)^d$ is not provable in $\HA$ for some (non-prenex) $\varphi$ whereas the converse is always provable in $\HA$.
\end{remark}

In contrast to Proposition \ref{prop:DUAL_BASIC}.(3), the formula $\neg \varphi \to \varphi^\bot$ cannot be proved in $\HA$ even for some prenex $\varphi$.
For example, $\neg \mathrm{Con}(\HA) \to \mathrm{Con}(\HA)^\bot$ is not provable in $\HA$, where $\mathrm{Con}(\HA)$ is a conventional $\Pi_1$ consistency statement of $\HA$ (cf.~\cite[Section 4]{Smor}). 
Thus, we introduce the following principle. 

\begin{defn}[The dual principles]
Let $\Gamma$ be any set of formulas in prenex normal form. 
\begin{tabbing}
\hspace{25mm} \= \hspace{40mm} \= \hspace{50mm} \kill
$\DUAL{\Gamma}$ \> $\neg \varphi \to \varphi^\bot$ \> ($\varphi \in \Gamma$) 
\end{tabbing}
\end{defn}

The principle $\DUAL{\Sigma_1}$ is provable in $\HA$. 

\begin{prop}\label{prop:S1DUAL}
$\HA \vdash \DUAL{\Sigma_1}$. 
\end{prop}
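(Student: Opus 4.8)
The plan is to unwind the two definitions involved and reduce the claim to the intuitionistically valid equivalence between $\neg\exists$ and $\forall\neg$. Fix an arbitrary $\Sigma_1$ formula $\varphi$. By the definition of $\Sigma_1$, $\varphi$ is literally of the form $\exists x_1 \cdots \exists x_n \varphi_0$ with $\varphi_0 \in \Pi_0$ quantifier-free and $n \geq 1$. Since $\varphi_0$ is quantifier-free, clause~1 of Definition~\ref{defn:DUAL} gives $(\varphi_0)^\bot \equiv \neg \varphi_0$, and then clauses~2--3 of that definition, applied $n$ times, give $\varphi^\bot \equiv \forall x_1 \cdots \forall x_n \neg \varphi_0$. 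Hence it suffices to prove $\HA \vdash \neg \exists x_1 \cdots \exists x_n \varphi_0 \to \forall x_1 \cdots \forall x_n \neg \varphi_0$.

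For this I would argue by induction on $n$ (equivalently, iterate the basic equivalence $n$ times). Recall from the preliminaries that $\HA$ proves $\forall x \neg \chi \leftrightarrow \neg \exists x \chi$ for every formula $\chi$. For $n = 1$ this is exactly the required implication. For the induction step, set $\chi :\equiv \exists x_2 \cdots \exists x_n \varphi_0$; then $\HA \vdash \neg \exists x_1 \chi \to \forall x_1 \neg \chi$, while the induction hypothesis yields $\HA \vdash \neg \chi \to \forall x_2 \cdots \forall x_n \neg \varphi_0$, an implication that can be carried under the outer quantifier $\forall x_1$ to give $\HA \vdash \forall x_1 \neg \chi \to \forall x_1 \cdots \forall x_n \neg \varphi_0$. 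Composing the two implications gives the claim. Note that combined with Proposition~\ref{prop:DUAL_BASIC}.3 this in fact yields $\HA \vdash \neg \varphi \leftrightarrow \varphi^\bot$ for $\Sigma_1$ formulas $\varphi$.

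I do not expect a genuine obstacle here: the content of the statement is just the observation that for a purely existential prefix the (classically defined) dual already coincides intuitionistically with the negation, so the only care needed is the bookkeeping with the block of $n$ quantifiers, which is routine and entirely parallel to the inductions in the proof of Proposition~\ref{prop:DUAL_BASIC}. One could alternatively contract $\exists x_1 \cdots \exists x_n$ to a single existential quantifier via the primitive recursive pairing functions (as done elsewhere in the paper) and then invoke only the one-quantifier equivalence; but the direct induction is cleaner since it avoids having to relate the dual of the contracted formula to $\varphi^\bot$.
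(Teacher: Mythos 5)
Your proof is correct and follows essentially the same route as the paper: the paper writes $\varphi \equiv \exists x \psi$ with $\psi$ quantifier-free, notes $\varphi^\bot \equiv \forall x \neg\psi$, and invokes the intuitionistic equivalence $\neg\exists x \psi \leftrightarrow \forall x \neg\psi$, leaving the iteration over a block of existential quantifiers implicit where you spell it out by induction on $n$. The extra bookkeeping is harmless but not a genuinely different argument.
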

\begin{proof}
Let $\varphi \equiv \exists x \psi$ be any $\Sigma_1$ formula where $\psi$ is $\Sigma_0$. 
Then $\varphi^\bot$ is $\forall x\, \neg \psi$, and hence $\neg \varphi$ is equivalent to $\varphi^\bot$ over $\HA$. 
\end{proof}

\begin{prop}\label{prop:DNE_DUAL}
The following are equivalent over $\HA$: 
\begin{enumerate}
	\item $\DUAL{\Sigma_{k+1}}$. 
	\item $\DUAL{\Pi_k}$. 
	\item $\DNE{\Sigma_k}$. 
\end{enumerate}
\end{prop}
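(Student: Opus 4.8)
The plan is to establish $(1)\Leftrightarrow(2)$ and $(2)\Leftrightarrow(3)$, where $(1)\Leftrightarrow(2)$ and $(2)\Rightarrow(3)$ are short manipulations with duals based on Proposition~\ref{prop:DUAL_BASIC} — in particular on $\HA\vdash(\varphi^\bot)^\bot\leftrightarrow\varphi$, $\HA\vdash\varphi^\bot\to\neg\varphi$ and $\HA\vdash\neg(\varphi\wedge\varphi^\bot)$ — and the only substantial part is $(3)\Rightarrow(2)$, proved by induction on $k$. Throughout I use that a $\Pi_{k+1}$ (resp.\ $\Sigma_{k+1}$) formula is literally of the form $\forall\vec x\,\psi$ with $\psi\in\Sigma_k$ (resp.\ $\exists\vec x\,\psi$ with $\psi\in\Pi_k$), where $\vec x$ is a block of quantified variables; this is the definition of the classes and means that no prenex normalisation is needed and that the syntactic dual interacts cleanly with the quantifier blocks.

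For $(1)\Rightarrow(2)$ I would use a dummy quantifier: given $\varphi\in\Pi_k$, pick a variable $z$ not occurring in $\varphi$; then $\exists z\,\varphi$ is literally a $\Sigma_{k+1}$ formula, $(\exists z\,\varphi)^\bot\equiv\forall z\,\varphi^\bot$, and since $z$ is fresh $\HA$ proves $\neg\exists z\,\varphi\leftrightarrow\neg\varphi$ and $\forall z\,\varphi^\bot\leftrightarrow\varphi^\bot$, so the instance of $\DUAL{\Sigma_{k+1}}$ for $\exists z\,\varphi$ yields $\neg\varphi\to\varphi^\bot$. For $(2)\Rightarrow(1)$, take $\varphi\equiv\exists\vec x\,\psi\in\Sigma_{k+1}$ with $\psi\in\Pi_k$, so $\varphi^\bot\equiv\forall\vec x\,\psi^\bot$; the instance of $\DUAL{\Pi_k}$ for $\psi$ gives $\neg\psi\to\psi^\bot$, and universally quantifying over $\vec x$ together with $\HA\vdash\neg\exists\vec x\,\psi\to\forall\vec x\,\neg\psi$ gives $\neg\varphi\to\varphi^\bot$. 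For $(2)\Rightarrow(3)$: let $\varphi\in\Sigma_k$, so $\varphi^\bot$ is $\Pi_k$ by Proposition~\ref{prop:DUAL_BASIC}.1; the instance of $\DUAL{\Pi_k}$ for $\varphi^\bot$ gives $\neg\varphi^\bot\to(\varphi^\bot)^\bot$, hence $\neg\varphi^\bot\to\varphi$ by Proposition~\ref{prop:DUAL_BASIC}.2. On the other hand $\HA\vdash\neg(\varphi\wedge\varphi^\bot)$ gives $\HA\vdash\varphi\to\neg\varphi^\bot$, so $\HA\vdash\neg\neg\varphi\to\neg\neg\neg\varphi^\bot$ and therefore $\HA\vdash\neg\neg\varphi\to\neg\varphi^\bot$; composing, $\HA+\DUAL{\Pi_k}\vdash\neg\neg\varphi\to\varphi$, which is $\DNE{\Sigma_k}$.

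The main work is $(3)\Rightarrow(2)$, i.e.\ $\HA+\DNE{\Sigma_k}\vdash\DUAL{\Pi_k}$, by induction on $k$. For $k=0$ the dual of a quantifier-free formula is its negation, so the instance is trivial in $\HA$. For the step, let $\varphi\equiv\forall\vec x\,\psi\in\Pi_{k+1}$ with $\psi\in\Sigma_k$, so $\varphi^\bot\equiv\exists\vec x\,\psi^\bot$ with $\psi^\bot\in\Pi_k$. Applying the induction hypothesis to $\psi^\bot$ and using $\HA\vdash(\psi^\bot)^\bot\leftrightarrow\psi$ gives $\HA+\DNE{\Sigma_k}\vdash\neg\psi^\bot\to\psi$; quantifying over $\vec x$ and using $\HA\vdash\forall\vec x\,\psi\to\neg\neg\forall\vec x\,\psi$ and $\HA\vdash\forall\vec x\,\neg\psi^\bot\leftrightarrow\neg\exists\vec x\,\psi^\bot$ one gets $\HA+\DNE{\Sigma_k}\vdash\neg\exists\vec x\,\psi^\bot\to\neg\neg\forall\vec x\,\psi$, and contraposing (and contracting a triple negation) $\HA+\DNE{\Sigma_k}\vdash\neg\forall\vec x\,\psi\to\neg\neg\exists\vec x\,\psi^\bot$. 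Since $\exists\vec x\,\psi^\bot$ is $\Sigma_{k+1}$ (because $\psi^\bot\in\Pi_k$), the instance of $\DNE{\Sigma_{k+1}}$ removes the double negation and yields $\neg\varphi\to\varphi^\bot$, completing the induction. The delicate point, and the reason the induction is organised around $\DUAL{\Pi_k}$ rather than the $\Sigma$-side, is precisely that for $\varphi\in\Pi_{k+1}$ the matrix $\psi^\bot$ of $\varphi^\bot$ is again a $\Pi_k$ formula, so the induction hypothesis applies to it and lets us turn $\neg\psi^\bot$ back into $\psi$; a more direct approach (for instance via Fact~\ref{fact:DUAL}) would leave one having to match the syntactic dual $\varphi^\bot$ with whatever prenex formula that fact produces, which is exactly the difficulty the induction bypasses.
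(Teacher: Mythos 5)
Your proposal is correct, and its skeleton differs mildly but genuinely from the paper's. The paper closes the cycle $1\Rightarrow 2\Rightarrow 3\Rightarrow 1$: the step $1\Rightarrow 2$ is dismissed with ``$\Pi_k\subseteq\Sigma_{k+1}$'', the step $2\Rightarrow 3$ is essentially your argument (the paper just contraposes Proposition~\ref{prop:DUAL_BASIC}.3 to get $\neg\neg\varphi\to\neg\varphi^\bot$, instead of your slightly longer detour through $\neg(\varphi\land\varphi^\bot)$ and triple-negation contraction), and the inductive leg proves $\HA+\DNE{\Sigma_k}\vdash\DUAL{\Sigma_{k+1}}$: for $\exists x\forall y\psi\in\Sigma_{k+2}$ with $\psi\in\Sigma_k$ it derives $\neg\exists x\forall y\psi\to\forall x\,\neg\neg\exists y\,\neg\psi$ using $\DNE{\Sigma_k}$, replaces $\neg\psi$ by $\psi^\bot$ via the induction hypothesis in the form $\DUAL{\Sigma_k}$ over $\DNE{\Sigma_{k-1}}$, and then applies $\DNE{\Sigma_{k+1}}$ to $\exists y\,\psi^\bot$ under the $\forall x$. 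You instead run the induction on $\DUAL{\Pi_k}$ and recover $\DUAL{\Sigma_{k+1}}$ from it by the purely intuitionistic chain $\neg\exists\vec x\,\psi\to\forall\vec x\,\neg\psi\to\forall\vec x\,\psi^\bot$, an implication the paper never isolates (in the paper it is only available around the cycle, i.e.\ at the cost of $\DNE{\Sigma_k}$); you also make $1\Rightarrow 2$ precise with a dummy quantifier so that the syntactic duals match on the nose, which is exactly what the paper's appeal to $\Pi_k\subseteq\Sigma_{k+1}$ is glossing. What each buys: your organisation separates the intuitionistic content (the dual principles at levels $\Pi_k$ and $\Sigma_{k+1}$ are interderivable over $\HA$ alone) from the single classical move, one application of $\DNE{\Sigma_{k+1}}$ per induction step, and keeps each step to one quantifier block; the paper's induction on the $\Sigma$-side handles the $\exists\forall$ prefix in one pass and leaves only one nontrivial leg in the cycle. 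One small point, common to both proofs and worth a word in yours: in the induction step the hypothesis is invoked over $\HA+\DNE{\Sigma_k}$ and then combined with an instance of $\DNE{\Sigma_{k+1}}$, which is legitimate because $\Sigma_k\subseteq\Sigma_{k+1}$ and hence $\DNE{\Sigma_{k+1}}$ subsumes $\DNE{\Sigma_k}$ over $\HA$.
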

\begin{proof}
It is trivial that $\HA + \DUAL{\Sigma_{k+1}}$ proves $\DUAL{\Pi_k}$ because $\Pi_k \subseteq \Sigma_{k+1}$. 

We prove $\HA + \DUAL{\Pi_k} \vdash \DNE{\Sigma_k}$. 
Let $\varphi$ be any $\Sigma_k$ formula. 
By Proposition \ref{prop:DUAL_BASIC}.(3), we have $\HA \vdash \varphi^\bot \to \neg \varphi$. 
Then $\HA \vdash \neg \neg \varphi \to \neg \varphi^\bot$. 
Since $\varphi^\bot$ is $\Pi_k$ by Proposition \ref{prop:DUAL_BASIC}.(1), $\HA + \DUAL{\Pi_k}$ proves $\neg \varphi^\bot \to \varphi^{\bot \bot}$. 
By Proposition \ref{prop:DUAL_BASIC}.(2), we conclude $\HA + \DUAL{\Pi_k} \vdash \neg \neg \varphi \to \varphi$. 

Finally, we prove $\HA + \DNE{\Sigma_k} \vdash \DUAL{\Sigma_{k+1}}$ by induction on $k$. 
The case $k = 0$ follows from Proposition \ref{prop:S1DUAL}. 
Suppose that the statement holds for all $k' < k+1$, and we prove $\HA + \DNE{\Sigma_{k+1}} \vdash \DUAL{\Sigma_{k+2}}$. 

Let $\exists x \forall y \psi$ be any $\Sigma_{k+2}$ formula where $\psi$ is $\Sigma_k$. 
Since $\HA + \DNE{\Sigma_k}$ proves $\neg \neg \psi \to \psi$, we have $\HA + \DNE{\Sigma_k} \vdash \neg\, \exists x \forall y \psi \to \neg\, \exists x \forall y\, \neg \neg \psi$. 
Then, 
\[
	\HA + \DNE{\Sigma_k} \vdash \neg\, \exists x \forall y \psi \to \forall x\, \neg \neg\, \exists y\, \neg \psi.
\]
By induction hypothesis, $\HA + \DNE{\Sigma_{k-1}} \vdash \neg \psi \to \psi^\bot$. 
Then, 
\[
	\HA + \DNE{\Sigma_k} \vdash \neg\, \exists x \forall y \psi \to \forall x\, \neg \neg\, \exists y \psi^\bot.
\]
Since $\exists y \psi^\bot \equiv (\forall y \psi)^\bot$ is $\Sigma_{k+1}$, 
\[
	\HA + \DNE{\Sigma_{k+1}} \vdash \neg\, \exists x \forall y \psi \to \forall x (\forall y\psi)^\bot.
\]
We conclude $\HA + \DNE{\Sigma_{k+1}} \vdash \neg\, \exists x \forall y \psi \to (\exists x \forall y \psi)^\bot$. 
\end{proof}

From Propositions \ref{prop:DUAL_BASIC}.(3) and \ref{prop:DNE_DUAL}, we obtain Fact \ref{fact:DUAL}.

We may introduce the following $\Delta_k$-variations of the dual principle. 

\begin{defn}[$\Delta_k$ dual principles]\leavevmode
\begin{tabbing}
\hspace{25mm} \= \hspace{40mm} \= \hspace{50mm} \kill
$\DUAL{\Delta_k}^\Sigma$ \> $(\varphi \leftrightarrow \psi) \to (\neg \varphi \to \varphi^\bot)$ \> ($\varphi \in \Sigma_k$ and $\psi \in \Pi_k$) \\
$\DUAL{\Delta_k}^\Pi$ \> $(\varphi \leftrightarrow \psi) \to (\neg \psi \to \psi^\bot)$ \> ($\varphi \in \Sigma_k$ and $\psi \in \Pi_k$) 
\end{tabbing}
\end{defn}

However, each of them is trivially equivalent to the corresponding original dual principle. 

\begin{prop}\label{prop:DDUAL}\leavevmode
\begin{enumerate}
	\item $\DUAL{\Delta_k}^\Sigma$ is equivalent to $\DUAL{\Sigma_k}$ over $\HA$; 
	\item $\DUAL{\Delta_k}^\Pi$ is equivalent to $\DUAL{\Pi_k}$ over $\HA$. 
\end{enumerate}
\end{prop}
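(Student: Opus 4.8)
The plan is to prove each of the two equivalences by establishing its two implications separately; in both cases one direction is immediate and the other rests on a single small observation.

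For the trivial directions, $\HA + \DUAL{\Sigma_k} \vdash \DUAL{\Delta_k}^\Sigma$ and $\HA + \DUAL{\Pi_k} \vdash \DUAL{\Delta_k}^\Pi$ hold by mere weakening: an instance $(\varphi \leftrightarrow \psi) \to (\neg \varphi \to \varphi^\bot)$ of $\DUAL{\Delta_k}^\Sigma$ has $\varphi \in \Sigma_k$, so its consequent $\neg \varphi \to \varphi^\bot$ is already an instance of $\DUAL{\Sigma_k}$, and likewise $\DUAL{\Delta_k}^\Pi$ uses $\psi \in \Pi_k$.

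For the converse of clause~1, fix $\varphi \in \Sigma_k$; the goal is $\HA + \DUAL{\Delta_k}^\Sigma \vdash \neg \varphi \to \varphi^\bot$. The key point is that under the hypothesis $\neg \varphi$ the formula $\varphi$ is provably equivalent to $\bot$, and $\bot$ admits a $\Pi_k$ ``padding'' carrying exactly the free variables of $\varphi$: choose a quantifier-free $\theta$ with $\FV(\theta) = \FV(\varphi)$ and $\HA \vdash \neg \theta$, and then, using $\Sigma_0 \subseteq \Pi_k$ (which follows by iterating $\Pi_j \subseteq \Pi_{j+1}$ from the preliminaries), obtain $\psi \in \Pi_k$ with $\FV(\psi) = \FV(\varphi)$ and $\HA \vdash \psi \leftrightarrow \bot$. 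Now argue inside $\HA + \DUAL{\Delta_k}^\Sigma$ under the assumption $\neg \varphi$: then $\varphi \leftrightarrow \psi$ is derivable, so the instance $(\varphi \leftrightarrow \psi) \to (\neg \varphi \to \varphi^\bot)$ of $\DUAL{\Delta_k}^\Sigma$ (legitimate since $\varphi \in \Sigma_k$ and $\psi \in \Pi_k$) together with $\neg \varphi$ yields $\varphi^\bot$. Discharging $\neg \varphi$ and taking the universal closure gives $\DUAL{\Sigma_k}$. The converse of clause~2 is symmetric: given $\psi \in \Pi_k$, take a $\Sigma_k$ formula $\varphi$ with $\FV(\varphi) = \FV(\psi)$ and $\HA \vdash \varphi \leftrightarrow \bot$ via $\Sigma_0 \subseteq \Sigma_k$, and run the same argument with $\DUAL{\Delta_k}^\Pi$ under the assumption $\neg \psi$.

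I expect no genuine obstacle here; the only point requiring care is the free-variable bookkeeping, namely arranging that the witness equivalent to $\bot$ lies in $\Pi_k$ (resp.~$\Sigma_k$) with precisely the free variables of $\varphi$ (resp.~$\psi$), so that the relevant instance of the $\Delta_k$ dual principle applies verbatim. (For $k = 0$ all four principles are outright provable in $\HA$, since then $\varphi^\bot \equiv \neg \varphi$.)
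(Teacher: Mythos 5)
Your proposal is correct and matches the paper's proof: both directions are handled the same way, with the nontrivial one obtained by noting $\HA \vdash \neg\varphi \to (\varphi \leftrightarrow \bot)$ and instantiating the $\Delta_k$ dual principle with a $\Pi_k$ (resp.\ $\Sigma_k$) formula provably equivalent to $\bot$, then contracting the assumption $\neg\varphi$ (resp.\ $\neg\psi$). The paper simply plugs in $\bot$ itself (tacitly viewing it as $\Pi_k$/$\Sigma_k$), so your padding and free-variable bookkeeping is just a more explicit rendering of the same step.
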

\begin{proof}
1. $\HA + \DUAL{\Sigma_k}$ obviously proves $\DUAL{\Delta_k}^\Sigma$. 
On the other hand, let $\varphi$ be any $\Sigma_k$ formula. 
Then $\HA \vdash \neg \varphi \to (\varphi \leftrightarrow \bot)$. 
Hence $\HA + \DUAL{\Delta_k}^\Sigma$ proves $\neg \varphi \to (\neg \varphi \to \varphi^\bot)$. 
We conclude $\HA + \DUAL{\Delta_k}^\Sigma \vdash \neg \varphi \to \varphi^\bot$. 

2 is proved in a similar way. 
\end{proof}

Thus it follows from Proposition \ref{prop:DNE_DUAL} that $\DUAL{\Delta_k}^\Sigma$ and $\DUAL{\Delta_k}^\Pi$ are $\HA$-equivalent to $\DNE{\Sigma_{k-1}}$ and $\DNE{\Sigma_k}$, respectively. 
In fact, $\DUAL{\Delta_1}^\Pi$ corresponds to the principle (VIb) in \cite{FIN}, and it is proved to be $\HA$-equivalent to $\DNE{\Sigma_1}$ (see \cite[Proposition 1]{FIN}). 

\subsection{The weak dual principles}

In this subsection, we investigate weak variations of the dual principle, which we call the weak dual principles. 
\begin{defn}[The weak dual principles]\label{defn:WDUAL}
Let $\Gamma$ be any set of formulas in prenes normal form. 
\begin{tabbing}
\hspace{25mm} \= \hspace{40mm} \= \hspace{50mm} \kill
$\WDUAL{\Gamma}$ \> $\neg \varphi^\bot \to \neg \neg \varphi$ \> ($\varphi \in \Gamma$) 
\end{tabbing}
\end{defn}

Of course $\DUAL{\Gamma}$ implies $\WDUAL{\Gamma}$ over $\HA$. 
It is known that $\DNE{\Sigma_1}$ is not provable in $\HA$ (cf.~\cite{ABHK}), and so is $\DUAL{\Pi_1}$ by Proposition \ref{prop:DNE_DUAL}. 
On the other hand, the following proposition shows that $\WDUAL{\Pi_1}$ is $\HA$-provable. 

\begin{prop}\label{prop:1WDUAL}\leavevmode
\begin{enumerate}
	\item $\HA \vdash \WDUAL{\Sigma_1}$; 
	\item $\HA \vdash \WDUAL{\Pi_1}$. 
\end{enumerate}
\end{prop}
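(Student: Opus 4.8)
The plan is to prove both clauses directly by exhibiting, for a given prenex formula $\varphi$, a concrete intuitionistic derivation of $\neg \varphi^\bot \to \neg \neg \varphi$. For clause 1, let $\varphi \equiv \exists x \psi$ with $\psi \in \Sigma_0$. By Definition \ref{defn:DUAL}, $\varphi^\bot \equiv \forall x \neg \psi$. So I must derive $\neg \forall x \neg \psi \to \neg \neg \exists x \psi$; but this is just the instance $\neg \forall x \neg \psi \to \neg \neg \exists x \psi$, which holds intuitionistically since $\HA \vdash \neg \exists x \psi \leftrightarrow \forall x \neg \psi$ (indeed $\WDUAL{\Sigma_1}$ follows even more directly: by Proposition \ref{prop:S1DUAL}, $\HA \vdash \DUAL{\Sigma_1}$, and $\DUAL{\Gamma}$ implies $\WDUAL{\Gamma}$ over $\HA$). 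So clause 1 is essentially immediate.

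For clause 2, let $\varphi \equiv \forall x \psi$ with $\psi \in \Sigma_0$, so that $\varphi^\bot \equiv \exists x \neg \psi$, and I must derive $\neg \exists x \neg \psi \to \neg \neg \forall x \psi$. First I would use that $\HA \vdash \neg \exists x \neg \psi \leftrightarrow \forall x \neg \neg \psi$ (again $\forall x \neg \chi \leftrightarrow \neg \exists x \chi$ applied with $\chi \equiv \neg \psi$). Then the goal reduces to $\forall x \neg \neg \psi \to \neg \neg \forall x \psi$, which is an instance of $\DNS{\Sigma_0}$. The key step is therefore to verify $\HA \vdash \DNS{\Sigma_0}$, i.e. $\HA \vdash \forall x \neg \neg \psi(x) \to \neg \neg \forall x \psi(x)$ for quantifier-free $\psi$. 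This holds because a quantifier-free formula is decidable in $\HA$: $\HA \vdash \neg \neg \psi(x) \to \psi(x)$ for $\psi \in \Sigma_0$ (this is the fact already invoked in the proof of Proposition \ref{prop:DUAL_BASIC}.2), hence $\HA \vdash \forall x \neg \neg \psi(x) \to \forall x \psi(x)$, and then trivially $\HA \vdash \forall x \neg \neg \psi(x) \to \neg \neg \forall x \psi(x)$ using $\theta \to \neg \neg \theta$. Equivalently, one can observe that $\HA + \DNE{\Sigma_0} \vdash \DNS{\Sigma_0}$ is the $k=0$ case of Proposition \ref{prop:DNS1}.1, and $\DNE{\Sigma_0}$ is provable in $\HA$.

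I do not anticipate a genuine obstacle here; the only point requiring a little care is bookkeeping the dual: one must correctly compute $\varphi^\bot$ from Definition \ref{defn:DUAL} (pushing the negation inside past the single quantifier and hitting the quantifier-free kernel with $\neg$), and then recognize the resulting implication as an intuitionistic consequence of the decidability of $\Sigma_0$ formulas together with the basic intuitionistic equivalences $\forall x \neg \chi \leftrightarrow \neg \exists x \chi$ and $\theta \to \neg\neg\theta$ listed in the Preliminaries. Writing it out, clause 1 is a one-line appeal to Propositions \ref{prop:S1DUAL} and the remark that $\DUAL{\Gamma}$ implies $\WDUAL{\Gamma}$, and clause 2 is the reduction to $\DNS{\Sigma_0}$ described above.
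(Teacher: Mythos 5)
Your proposal is correct and follows essentially the same route as the paper: clause 1 via Proposition \ref{prop:S1DUAL} together with the observation that $\DUAL{\Gamma}$ implies $\WDUAL{\Gamma}$, and clause 2 by computing $(\forall x\,\psi)^\bot \equiv \exists x\,\neg\psi$ and chaining $\neg\exists x\,\neg\psi \to \forall x\,\neg\neg\psi \to \forall x\,\psi \to \neg\neg\forall x\,\psi$, using stability of quantifier-free formulas. Your framing of the middle step as an instance of $\DNS{\Sigma_0}$ is just a relabeling of the same argument, so there is nothing to add.
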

\begin{proof}
1. This follows from Proposition \ref{prop:S1DUAL}. 

2. Let $\forall x \varphi$ be any $\Pi_1$ formula where $\varphi$ is $\Sigma_0$. 
Since $\neg (\forall x \varphi)^\bot \equiv \neg\, \exists x\, \neg \varphi$, we have 
\begin{align*}
	\HA \vdash \neg (\forall x \varphi)^\bot & \to \forall x\, \neg \neg \varphi, \\ 
& \to \forall x \varphi, \tag{because $\varphi \in \Sigma_0$}\\
& \to \neg \neg\, \forall x \varphi. \qedhere
\end{align*}
\end{proof}

Unlike the situation of the dual principles, we show that $\WDUAL{\Sigma_{k+1}}$ and $\WDUAL{\Pi_{k+1}}$ are equivalent over $\HA$. 

\begin{prop}\label{prop:WDUAL}
The following are equivalent over $\HA$: 
\begin{enumerate}
	\item $\WDUAL{\Sigma_{k+1}}$. 
	\item $\WDUAL{\Pi_{k+1}}$. 
	\item $\DNS{\Sigma_k}$. 
\end{enumerate}
\end{prop}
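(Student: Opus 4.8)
The plan is to establish the four implications $1 \Rightarrow 3$, $2 \Rightarrow 3$, $3 \Rightarrow 1$ and $3 \Rightarrow 2$; the first two will be direct, while the last two go by induction on $k$ together with the whole statement, so that at stage $k$ one may invoke the Proposition at stage $k-1$. The base case $k = 0$ is immediate: $\HA \vdash \WDUAL{\Sigma_1}$ and $\HA \vdash \WDUAL{\Pi_1}$ by Proposition \ref{prop:1WDUAL}, and $\HA \vdash \DNS{\Sigma_0}$ since $\HA$ decides quantifier-free formulas, so all three principles are theorems of $\HA$.

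For $2 \Rightarrow 3$, given $\chi(x) \in \Sigma_k$ I apply $\WDUAL{\Pi_{k+1}}$ to the $\Pi_{k+1}$ formula $\forall x\, \chi(x)$. Since $(\forall x\, \chi(x))^\bot \equiv \exists x\, \chi^\bot(x)$, this yields $\HA + \WDUAL{\Pi_{k+1}} \vdash \forall x\, \neg \chi^\bot(x) \to \neg\neg\forall x\, \chi(x)$; as $\HA \vdash \chi^\bot(x) \to \neg \chi(x)$ by Proposition \ref{prop:DUAL_BASIC}.3, contraposition gives $\HA \vdash \forall x\, \neg\neg\chi(x) \to \forall x\, \neg\chi^\bot(x)$, and composing yields the instance of $\DNS{\Sigma_k}$ for $\chi$. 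For $1 \Rightarrow 3$, given $\chi(x) \in \Sigma_k$, note $\chi^\bot \in \Pi_k$ by Proposition \ref{prop:DUAL_BASIC}.1, so $\exists x\, \chi^\bot(x) \in \Sigma_{k+1}$; applying $\WDUAL{\Sigma_{k+1}}$ to it and using $(\exists x\, \chi^\bot(x))^\bot \equiv \forall x\, (\chi^\bot(x))^\bot$ together with $\HA \vdash (\chi^\bot(x))^\bot \leftrightarrow \chi(x)$ (Proposition \ref{prop:DUAL_BASIC}.2) gives $\HA + \WDUAL{\Sigma_{k+1}} \vdash \neg\forall x\, \chi(x) \to \neg\forall x\, \neg\chi^\bot(x)$. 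Contraposing and using $B \to \neg\neg B$ turns this into $\forall x\, \neg\chi^\bot(x) \to \neg\neg\forall x\, \chi(x)$, and composing again with $\HA \vdash \forall x\, \neg\neg\chi(x) \to \forall x\, \neg\chi^\bot(x)$ yields $\DNS{\Sigma_k}$.

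For the converse directions I would first prove two combination lemmas, namely $\HA + \WDUAL{\Pi_k} + \DNS{\Sigma_k} \vdash \WDUAL{\Sigma_{k+1}}$ and $\HA + \WDUAL{\Sigma_k} + \DNS{\Sigma_k} \vdash \WDUAL{\Pi_{k+1}}$. For the first, take $\exists y\, \psi(y) \in \Sigma_{k+1}$ with $\psi \in \Pi_k$, so $\psi^\bot \in \Sigma_k$; from $\WDUAL{\Pi_k}$ applied to $\psi$ one gets $\neg\psi(y) \to \neg\neg\psi^\bot(y)$ by contraposition, hence $\forall y\, \neg\psi(y) \to \forall y\, \neg\neg\psi^\bot(y)$, and then $\DNS{\Sigma_k}$ applied to $\psi^\bot$ gives $\forall y\, \neg\neg\psi^\bot(y) \to \neg\neg\forall y\, \psi^\bot(y)$; contraposing the composite and using $\HA \vdash \neg\forall y\, \neg\psi(y) \leftrightarrow \neg\neg\exists y\, \psi(y)$ produces the instance of $\WDUAL{\Sigma_{k+1}}$. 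The second lemma is analogous, this time using $\WDUAL{\Sigma_k}$ directly on $\psi \in \Sigma_k$ and $\DNS{\Sigma_k}$ on $\psi$. Now for $k \geq 1$ the induction hypothesis says that $\WDUAL{\Sigma_k}$, $\WDUAL{\Pi_k}$ and $\DNS{\Sigma_{k-1}}$ are $\HA$-equivalent; since $\Sigma_{k-1} \subseteq \Sigma_k$ we have $\HA + \DNS{\Sigma_k} \vdash \DNS{\Sigma_{k-1}}$, hence $\HA + \DNS{\Sigma_k}$ proves both $\WDUAL{\Sigma_k}$ and $\WDUAL{\Pi_k}$, and the two combination lemmas then deliver $\WDUAL{\Sigma_{k+1}}$ and $\WDUAL{\Pi_{k+1}}$, completing $3 \Rightarrow 1$ and $3 \Rightarrow 2$.

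The main obstacle is exactly these converse directions: $\DNS{\Sigma_k}$ on its own does not seem to yield the full principle $\WDUAL{\Sigma_{k+1}}$ or $\WDUAL{\Pi_{k+1}}$ in one stroke, because stripping the outermost quantifier block off a $\Sigma_{k+1}$ (resp.\ $\Pi_{k+1}$) formula leaves a weak-dual obligation at level $k$; this is what forces the inductive set-up and the two combination lemmas. A smaller point to watch is that in the $1 \Rightarrow 3$ argument one must feed the dual $\exists x\, \chi^\bot(x)$ --- rather than $\chi$ itself --- into $\WDUAL{\Sigma_{k+1}}$ and then unwind the double dual via Proposition \ref{prop:DUAL_BASIC}.2, and that every passage between a negated prenex formula and its dual must be used only where it is actually $\HA$-provable (or, at level $k-1$, follows from the induction hypothesis).
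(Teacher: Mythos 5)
Your proposal is correct and follows essentially the same route as the paper: the two forward directions are obtained by feeding $\exists x\,\chi^\bot$ (resp.\ $\forall x\,\chi$) into the weak dual principle, and the converse is proved by induction on $k$, where your two ``combination lemmas'' are exactly the paper's induction steps (i) and (ii), with the monotonicity $\DNS{\Sigma_k} \vdash \DNS{\Sigma_{k-1}}$ made explicit.
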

\begin{proof}
First, we prove $\HA + \WDUAL{\Sigma_{k+1}} \vdash \DNS{\Sigma_k}$. 
Let $\varphi$ be any $\Sigma_k$ formula. 
Since $\exists x \varphi^\bot$ is $\Sigma_{k+1}$, 
\[
	\HA + \WDUAL{\Sigma_{k+1}} \vdash \neg (\exists x \varphi^\bot)^\bot \to \neg \neg\, \exists x \varphi^\bot.
\]
By Propositions \ref{prop:DUAL_BASIC}.(2) and \ref{prop:DUAL_BASIC}.(3), $\HA + \WDUAL{\Sigma_{k+1}} \vdash \neg\, \forall x \varphi \to \neg \neg\, \exists x\, \neg \varphi$, and thus $\HA + \WDUAL{\Sigma_{k+1}}$ proves $\neg\, \exists x\, \neg \varphi \to \neg \neg\, \forall x \varphi$. 
Then, we obtain  
\[
	\HA + \WDUAL{\Sigma_{k+1}} \vdash \forall x\, \neg \neg \varphi \to \neg \neg\, \forall x \varphi. 
\]

Secondly, we prove $\HA + \WDUAL{\Pi_{k+1}} \vdash \DNS{\Sigma_k}$. 
Let $\varphi$ be any $\Sigma_k$ formula. 
By Proposition \ref{prop:DUAL_BASIC}.(3), $(\forall x \varphi)^\bot \equiv \exists x \varphi^\bot$ implies $\exists x\, \neg \varphi$ in $\HA$. 
Thus $\HA \vdash \neg\, \exists x\, \neg \varphi \to \neg (\forall x \varphi)^\bot$. 
Since $\forall x \varphi$ is $\Pi_{k+1}$, we obtain
\[
	\HA + \WDUAL{\Pi_{k+1}} \vdash \forall x\, \neg \neg \varphi \to \neg \neg\, \forall x \varphi. 
\]

Finally, we show that $\HA + \DNS{\Sigma_k}$ proves both $\WDUAL{\Sigma_{k+1}}$ and $\WDUAL{\Pi_{k+1}}$ by induction on $k$. 
The case $k=0$ follows from Proposition \ref{prop:1WDUAL}. 
Suppose that the statement holds for $k$, and we prove 
\begin{itemize}
	\item [{\rm (i)}] $\HA + \DNS{\Sigma_{k+1}} \vdash \WDUAL{\Sigma_{k+2}}$; and 
	\item [{\rm (ii)}] $\HA + \DNS{\Sigma_{k+1}} \vdash \WDUAL{\Pi_{k+2}}$. 
\end{itemize}

(i): Let $\exists x \varphi$ be any $\Sigma_{k+2}$ formula where $\varphi$ is $\Pi_{k+1}$. 
By induction hypothesis, 
\[
	\HA + \DNS{\Sigma_k} \vdash \neg \varphi^\bot \to \neg \neg \varphi.  
\]
Then, $\HA + \DNS{\Sigma_k}$ proves the formula $\neg \varphi \to \neg \neg \varphi^\bot$, and hence it proves $\forall x\, \neg \varphi \to \forall x\, \neg \neg \varphi^\bot$. 
Since $\varphi^\bot$ is $\Sigma_{k+1}$, by applying $\DNS{\Sigma_{k+1}}$, we obtain
\[
	\HA + \DNS{\Sigma_{k+1}} \vdash \forall x\, \neg \varphi \to \neg \neg\, \forall x \varphi^\bot.
\]
Then $\HA + \DNS{\Sigma_{k+1}} \vdash \neg\, \forall x \varphi^\bot \to \neg\, \forall x\, \neg \varphi$. 
Therefore we conclude
\[
	\HA + \DNS{\Sigma_{k+1}} \vdash \neg (\exists x \varphi)^\bot \to \neg \neg\, \exists x \varphi.
\]

(ii): Let $\forall x \varphi$ be any $\Pi_{k+2}$ formula where $\varphi$ is $\Sigma_{k+1}$. 
Since $\neg (\forall x \varphi)^\bot \equiv \neg\, \exists x \varphi^\bot$ implies $\forall x\, \neg \varphi^\bot$ in $\HA$, by induction hypothesis, we obtain
\[
	\HA + \DNS{\Sigma_k} \vdash \neg (\forall x \varphi)^\bot \to \forall x\, \neg \neg \varphi. 
\]
Since $\varphi$ is $\Sigma_{k+1}$, we conclude
\[
	\HA + \DNS{\Sigma_{k+1}} \vdash \neg (\forall x \varphi)^\bot \to \neg \neg\, \forall x \varphi. \qedhere
\]
\end{proof}

As in the case of the dual principles, we can introduce the $\Delta_k$-variations of the weak dual principle, namely, $\WDUAL{\Delta_k}^\Sigma$ and $\WDUAL{\Delta_k}^\Pi$. 
Notice that any instance of $\WDUAL{\Gamma}$ is $\HA$-equivalent to a formula of the form $\neg \varphi \to \neg \neg \varphi^\bot$. 
Then, as in the proof of Proposition \ref{prop:DDUAL}, it is shown that $\WDUAL{\Delta_k}^\Sigma$ and $\WDUAL{\Delta_k}^\Pi$ are equivalent to $\WDUAL{\Sigma_k}$ and $\WDUAL{\Pi_k}$ over $\HA$, respectively. 
So they are also equivalent to $\DNS{\Sigma_{k-1}}$ by Proposition \ref{prop:WDUAL}.

\section{The law of excluded middle}\label{section:LEM}

In this section, we investigate variations of the law of excluded middle. 
This section consists of two subsections. 
First, we investigate the law of excluded middle with respect to duals. 
Secondly, we investigate the law of excluded middle for negated formulas.

\subsection{The law of excluded middle with respect to duals}

From the observations in Section \ref{section:dual}, $\varphi^\bot$ is stronger than $\neg \varphi$. 
Hence by replacing $\neg \varphi$ in $\LEM{\Gamma}$ with $\varphi^\bot$, we can expect to get a stronger principle. 
As an example of an application of the investigations in Section \ref{section:dual}, in this subsection, we study this kind of variation of the law of excluded middle.

\begin{defn}[The law of excluded middle with respect to duals]\label{defn:LEMD}
Let $\Gamma$ be any set of formulas in prenex normal form. 
\begin{tabbing}
\hspace{25mm} \= \hspace{40mm} \= \hspace{50mm} \kill
$\LEM{\Gamma}^\bot$ \> $\varphi \lor \varphi^\bot$ \> ($\varphi \in \Gamma$) \\
$\LEM{\Delta_k}^{\bot, \Sigma}$ \> $(\varphi \leftrightarrow \psi) \to \varphi \lor \varphi^\bot$ \> ($\varphi \in \Sigma_k$ and $\psi \in \Pi_k$) \\
$\LEM{\Delta_k}^{\bot, \Pi}$ \> $(\varphi \leftrightarrow \psi) \to \psi \lor \psi^\bot$ \> ($\varphi \in \Sigma_k$ and $\psi \in \Pi_k$) 
\end{tabbing}
\end{defn}

The principle $\LEM{\Delta_1}^{\bot, \Pi}$ corresponds to the principle (IIIb) in \cite{FIN} and to the principle $\LEM{\Delta_b}$ in \cite{FuKo}. 
The following fact is already known. 

\begin{fact}[Fujiwara, Ishihara and Nemoto {\cite[Proposition 1]{FIN}}]\label{fact:FIN1}
The following are equivalent over $\HA$: 
\begin{enumerate}
	\item $\LEM{\Delta_1}^{\bot, \Pi}$. 
	\item $\DNE{\Sigma_1}$. 
\end{enumerate}  
\end{fact}

The following proposition shows interrelations between the laws of excluded middle and their counterparts with respect to duals. 

\begin{prop}\label{prop:LEM}
Let $\Gamma$ be any set of formulas in prenex normal form. 
\begin{enumerate}
	\item $\LEM{\Gamma}^\bot$ is equivalent to $\LEM{\Gamma} + \DUAL{\Gamma}$ over $\HA$; 
	\item $\HA + \LEM{\Delta_k}^{\bot, \Sigma} \vdash \LEM{\Delta_k}$; 
	\item $\HA + \LEM{\Delta_k}^{\bot, \Pi} \vdash \LEM{\Delta_k}$; 
	\item $\HA + \LEM{\Delta_k} + \DUAL{\Sigma_k} \vdash \LEM{\Delta_k}^{\bot, \Sigma}$; 
	\item $\HA + \LEM{\Delta_k} + \DUAL{\Pi_k} \vdash \LEM{\Delta_k}^{\bot, \Pi}$. 
\end{enumerate}
\end{prop}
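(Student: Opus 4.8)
The plan is to prove the five clauses in the order listed, each by a short direct argument using only the basic properties of duals from Proposition \ref{prop:DUAL_BASIC} and the definitions of the principles. For clause 1, I would argue both directions. For the forward direction, given $\varphi \in \Gamma$, the instance $\varphi \lor \varphi^\bot$ of $\LEM{\Gamma}^\bot$ yields $\varphi \lor \neg\varphi$ via Proposition \ref{prop:DUAL_BASIC}.3 (which gives $\varphi^\bot \to \neg\varphi$), establishing $\LEM{\Gamma}$; and to get $\DUAL{\Gamma}$, assume $\neg\varphi$ and case on $\varphi \lor \varphi^\bot$: the left disjunct contradicts $\neg\varphi$, so $\varphi^\bot$ holds. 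For the converse, from $\varphi \lor \neg\varphi$ (by $\LEM{\Gamma}$) case-split; the right disjunct $\neg\varphi$ gives $\varphi^\bot$ by $\DUAL{\Gamma}$, so in either case $\varphi \lor \varphi^\bot$.

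For clauses 2 and 3, the point is simply that $\LEM{\Delta_k}^\Sigma$ and $\LEM{\Delta_k}^\Pi$ are formally stronger than $\LEM{\Delta_k}$: under the hypothesis $\varphi \leftrightarrow \psi$ (with $\varphi \in \Sigma_k$, $\psi \in \Pi_k$), clause 2 gives $\varphi \lor \varphi^\bot$, and Proposition \ref{prop:DUAL_BASIC}.3 turns this into $\varphi \lor \neg\varphi$, which is exactly the conclusion of $\LEM{\Delta_k}$; clause 3 is identical but with $\psi$ in place of $\varphi$, noting that $\psi \lor \neg\psi$ together with $\varphi \leftrightarrow \psi$ yields $\varphi \lor \neg\varphi$. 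For clauses 4 and 5, assume $\varphi \leftrightarrow \psi$. Clause 4: $\LEM{\Delta_k}$ gives $\varphi \lor \neg\varphi$; in the case $\neg\varphi$, apply $\DUAL{\Sigma_k}$ (legitimate since $\varphi \in \Sigma_k$) to obtain $\varphi^\bot$, so $\varphi \lor \varphi^\bot$ holds. Clause 5: from $\varphi \leftrightarrow \psi$ and $\LEM{\Delta_k}$ we get $\psi \lor \neg\psi$, and in the case $\neg\psi$ apply $\DUAL{\Pi_k}$ (since $\psi \in \Pi_k$) to get $\psi^\bot$, hence $\psi \lor \psi^\bot$.

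I do not anticipate a genuine obstacle here; every step is a routine intuitionistic manipulation once Proposition \ref{prop:DUAL_BASIC} is in hand. The only point requiring a little care is bookkeeping the syntactic classes: in clause 4 one must check that the instance of $\DUAL{\Sigma_k}$ being invoked is applied to the $\Sigma_k$ formula $\varphi$ (not $\psi$), and symmetrically in clause 5 that $\DUAL{\Pi_k}$ is applied to the $\Pi_k$ formula $\psi$; this is precisely why the proposition splits into a $\Sigma$-version and a $\Pi$-version rather than collapsing to one statement. It is also worth remarking that clauses 1--5 together show $\LEM{\Delta_k}^\Sigma$ is $\HA$-equivalent to $\LEM{\Delta_k} + \DNE{\Sigma_{k-1}}$ and $\LEM{\Delta_k}^\Pi$ is $\HA$-equivalent to $\LEM{\Delta_k} + \DNE{\Sigma_k}$, by combining with Proposition \ref{prop:DNE_DUAL}; but that observation belongs after the proposition rather than inside its proof.
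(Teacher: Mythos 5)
Your proof is correct and follows essentially the same route as the paper: clause 1 is argued exactly as in the paper (Proposition \ref{prop:DUAL_BASIC}.3 for one direction, the observation that $\varphi \lor \varphi^\bot$ yields $\neg\varphi \to \varphi^\bot$ for the other), and your case-splitting details for clauses 2--5 are precisely the ``proved similarly'' the paper leaves implicit, with the correct bookkeeping of which dual principle applies to which formula. One small caution about your closing aside (which, as you note, lies outside the proof): clauses 1--5 combined with Proposition \ref{prop:DNE_DUAL} give only one direction of each stated equivalence, since e.g.\ $\HA + \LEM{\Delta_k}^\Pi \vdash \DNE{\Sigma_k}$ is not among them and needs the separate argument the paper supplies in Proposition \ref{prop:LEMD}.4 (and similarly $\HA + \LEM{\Delta_k}^\Sigma \vdash \DNE{\Sigma_{k-1}}$ needs the trick of Proposition \ref{prop:DDUAL} or Fact \ref{fact:ABHK}).
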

\begin{proof}
1. By Proposition \ref{prop:DUAL_BASIC}.(3), $\HA + \LEM{\Gamma}^\bot \vdash \LEM{\Gamma}$. 
Also $\HA + \LEM{\Gamma}^\bot \vdash \DUAL{\Gamma}$ is evident because $\HA$ proves $\varphi \lor \varphi^\bot \to (\neg \varphi \to \varphi^\bot)$. 
On the other hand, $\HA + \LEM{\Gamma} + \DUAL{\Gamma} \vdash \LEM{\Gamma}^\bot$ is easily obtained. 

Clauses 2, 3, 4 and 5 are proved similarly.
\end{proof}

From Proposition \ref{prop:LEM}, we obtain the exact strengths of the principles defined in Definition \ref{defn:LEMD}. 

\begin{prop}\label{prop:LEMD}\leavevmode
\begin{enumerate}
	\item $\LEM{\Sigma_k}^\bot$ is equivalent to $\LEM{\Sigma_k}$ over $\HA$; 
	\item $\LEM{\Pi_k}^\bot$ is equivalent to $\LEM{\Sigma_k}$ over $\HA$; 
	\item $\LEM{\Delta_k}^{\bot, \Sigma}$ is equivalent to $\LEM{\Delta_k}$ over $\HA$; 
	\item $\LEM{\Delta_k}^{\bot, \Pi}$ is equivalent to $\DNE{\Sigma_k}$ over $\HA$. 
\end{enumerate}
\end{prop}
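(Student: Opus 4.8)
The strategy is to reduce each clause of Proposition~\ref{prop:LEMD} to the combination of the relevant instance of $\LEM{}$ (already placed in Figure~\ref{fig:ABHK} by Fact~\ref{fact:ABHK}) together with the appropriate dual principle $\DUAL{}$ (whose exact strength was pinned down in Proposition~\ref{prop:DNE_DUAL}), using Proposition~\ref{prop:LEM} as the bridge. Concretely: for clause~1, Proposition~\ref{prop:LEM}.1 gives $\LEM{\Sigma_k}^\bot \equiv \LEM{\Sigma_k} + \DUAL{\Sigma_k}$ over $\HA$, so it suffices to show $\HA + \LEM{\Sigma_k} \vdash \DUAL{\Sigma_k}$. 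Now $\LEM{\Sigma_k}$ implies $\DNE{\Sigma_{k-1}}$ by Fact~\ref{fact:LEM_DNE} together with $\Sigma_{k-1} \subseteq \Sigma_k$ (or directly, since $\LEM{\Sigma_k}$ trivially gives $\LEM{\Sigma_{k-1}}$ hence $\DNE{\Sigma_{k-1}}$), and by Proposition~\ref{prop:DNE_DUAL} $\DNE{\Sigma_{k-1}}$ is equivalent to $\DUAL{\Sigma_k}$; this establishes clause~1.

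For clause~2, again by Proposition~\ref{prop:LEM}.1 it suffices to show $\LEM{\Pi_k}^\bot$ and $\LEM{\Sigma_k}$ are interderivable, i.e. that $\HA + \LEM{\Pi_k} + \DUAL{\Pi_k}$ and $\HA + \LEM{\Sigma_k}$ prove one another. By Proposition~\ref{prop:DNE_DUAL}, $\DUAL{\Pi_k}$ is $\HA$-equivalent to $\DNE{\Sigma_k}$, so $\LEM{\Pi_k} + \DUAL{\Pi_k}$ is $\HA$-equivalent to $\LEM{\Pi_k} + \DNE{\Sigma_k}$, which by Fact~\ref{fact:ABHK}.1 is exactly $\LEM{\Sigma_k}$. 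Clause~3 follows from Proposition~\ref{prop:LEMD} analogues at the $\Delta_k$ level: by Proposition~\ref{prop:LEM}.2 we have $\HA + \LEM{\Delta_k}^\Sigma \vdash \LEM{\Delta_k}$, and for the converse Proposition~\ref{prop:LEM}.4 reduces it to showing $\HA + \LEM{\Delta_k} \vdash \DUAL{\Sigma_k}$; but $\DUAL{\Sigma_k}$ is $\HA$-equivalent to $\DNE{\Sigma_{k-1}}$ by Proposition~\ref{prop:DNE_DUAL}, and $\HA + \LEM{\Delta_k} \vdash \LEM{\Sigma_{k-1}} \vdash \DNE{\Sigma_{k-1}}$ by Fact~\ref{fact:ABHK}.5 (for $k \geq 1$; the case $k=0$ is degenerate since $\DNE{\Sigma_{-1}}$ is vacuous) and Fact~\ref{fact:LEM_DNE}.

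For clause~4, by Proposition~\ref{prop:LEM}.3 we get $\HA + \LEM{\Delta_k}^\Pi \vdash \LEM{\Delta_k}$, and one checks directly that $\HA + \LEM{\Delta_k}^\Pi \vdash \DNE{\Sigma_k}$: given $\Sigma_k$ formula $\varphi$ with $\neg\neg\varphi$, take $\psi :\equiv \varphi^\bot{}^\bot$ which is $\Pi_k$ by Proposition~\ref{prop:DUAL_BASIC}.1--2 and satisfies $\varphi \leftrightarrow \psi$ in $\HA$ (Proposition~\ref{prop:DUAL_BASIC}.2), so $\LEM{\Delta_k}^\Pi$ yields $\psi \lor \psi^\bot$; the disjunct $\psi^\bot$ (which is $\HA$-equivalent to $\varphi^\bot$, hence implies $\neg\varphi$ by Proposition~\ref{prop:DUAL_BASIC}.3) is refuted by $\neg\neg\varphi$, leaving $\psi$, hence $\varphi$. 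Conversely, $\DNE{\Sigma_k}$ implies $\LEM{\Delta_k}$ by Fact~\ref{fact:ABHK}.4 and implies $\DUAL{\Pi_k}$ by Proposition~\ref{prop:DNE_DUAL}, so by Proposition~\ref{prop:LEM}.5 it proves $\LEM{\Delta_k}^\Pi$. The only genuinely delicate point is bookkeeping the boundary/degenerate cases (small $k$, where $\Sigma_{k-1}$ or $\DNE{\Sigma_{-1}}$ is empty/vacuous, and $\Delta_0$), and making sure the $\Delta_k$-witness formula $\psi$ used in clause~4 has the right free variables — but these are routine once one invokes Proposition~\ref{prop:DUAL_BASIC}. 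No single step is a real obstacle; the proposition is essentially an assembly of Propositions~\ref{prop:LEM}, \ref{prop:DNE_DUAL} and Fact~\ref{fact:ABHK}.
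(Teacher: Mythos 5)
Clauses 1--3 of your proposal, and the direction $\HA + \DNE{\Sigma_k} \vdash \LEM{\Delta_k}^\Pi$ of clause 4, are correct and coincide with the paper's own assembly of Proposition~\ref{prop:LEM}, Proposition~\ref{prop:DNE_DUAL} and Fact~\ref{fact:ABHK}. The genuine gap is in your proof of $\HA + \LEM{\Delta_k}^\Pi \vdash \DNE{\Sigma_k}$. You instantiate $\LEM{\Delta_k}^\Pi$ with the pair $(\varphi, \psi)$ where $\psi :\equiv (\varphi^\bot)^\bot$, claiming $\psi \in \Pi_k$. But by Proposition~\ref{prop:DUAL_BASIC}.1 applied twice, $(\varphi^\bot)^\bot$ lies in $\Sigma_k$, the same class as $\varphi$, so $(\varphi,\psi)$ is not an admissible instance of $\LEM{\Delta_k}^\Pi$, whose second component must be $\Pi_k$. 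The delicate point is not free-variable bookkeeping, as you suggest: for an arbitrary $\Sigma_k$ formula $\varphi$ there is in general no $\Pi_k$ formula provably equivalent to it --- that is precisely what the $\Delta_k$ restriction is about. A sanity check shows the step cannot be repaired as stated: since $\HA \vdash \varphi \leftrightarrow (\varphi^\bot)^\bot$ outright, your instantiation (were it legitimate) would yield $\psi \lor \psi^\bot$, hence $\varphi \lor \varphi^\bot$, for \emph{every} $\Sigma_k$ formula $\varphi$; that is $\LEM{\Sigma_k}^\bot$, which by clause 1 is $\LEM{\Sigma_k}$, and since $\DNE{\Sigma_k}$ is known not to imply $\LEM{\Sigma_k}$ for $k \geq 1$, this would contradict the very equivalence you are proving.

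The missing idea (and the paper's argument) is to use the hypothesis $\neg\neg\varphi$ to manufacture the $\Delta_k$ premise on the $\Pi_k$ side rather than on the $\Sigma_k$ side: from Proposition~\ref{prop:DUAL_BASIC}.3 one has $\varphi^\bot \to \neg\varphi$, hence $\HA \vdash \neg\neg\varphi \to (\varphi^\bot \leftrightarrow \bot)$. Now apply $\LEM{\Delta_k}^\Pi$ to the pair $(\bot, \varphi^\bot)$ with $\bot \in \Sigma_k$ and $\varphi^\bot \in \Pi_k$: under $\neg\neg\varphi$ this gives $\varphi^\bot \lor (\varphi^\bot)^\bot$, hence $\neg\varphi \lor \varphi$ by Propositions~\ref{prop:DUAL_BASIC}.2 and \ref{prop:DUAL_BASIC}.3, and then $\varphi$ follows because $\neg\neg\varphi$ refutes the disjunct $\neg\varphi$. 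With this replacement for the second half of clause 4, the rest of your proposal goes through.
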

\begin{proof}
1. By Proposition \ref{prop:LEM}.(1), $\LEM{\Sigma_k}^\bot$ is equivalent to $\LEM{\Sigma_k} + \DUAL{\Sigma_k}$. 
Since $\HA + \LEM{\Sigma_k}$ proves $\DUAL{\Sigma_k}$ by Fact \ref{fact:ABHK} and Proposition \ref{prop:DNE_DUAL}, $\LEM{\Sigma_k}^\bot$ is equivalent to $\LEM{\Sigma_k}$. 

2. Since $\HA + \LEM{\Pi_k}^\bot$ proves $\varphi^\bot \lor \varphi^{\bot \bot}$ for each $\Sigma_k$ sentence $\varphi$, $\HA + \LEM{\Pi_k}^\bot \vdash \LEM{\Sigma_k}^\bot$ follows from Proposition \ref{prop:DUAL_BASIC}.(2). 
In a similar way, we have $\HA + \LEM{\Sigma_k}^\bot \vdash \LEM{\Pi_k}^\bot$. 
Hence by clause 1, $\LEM{\Pi_k}^\bot$ equivalent to $\LEM{\Sigma_k}$ over $\HA$. 

3. Since $\HA + \LEM{\Delta_k} \vdash \DNE{\Sigma_{k-1}}$, this is immediately obtained from Propositions \ref{prop:DNE_DUAL}, \ref{prop:LEM}.(2) and \ref{prop:LEM}.(4). 

4. Since $\HA + \DNE{\Sigma_k}$ proves $\LEM{\Delta_k}$ and $\DUAL{\Pi_k}$ by Fact \ref{fact:ABHK} and Proposition \ref{prop:DNE_DUAL}, we obtain $\HA + \DNE{\Sigma_k} \vdash \LEM{\Delta_k}^{\bot, \Pi}$ by Proposition \ref{prop:LEM}.(5). 

On the other hand, we prove $\HA + \LEM{\Delta_k}^{\bot, \Pi} \vdash \DNE{\Sigma_k}$. 
Let $\varphi$ be any $\Sigma_k$ formula. 
Since $\neg \neg \varphi \to \neg \varphi^\bot$ is $\HA$-provable by Proposition \ref{prop:DUAL_BASIC}.(3), we obtain $\HA \vdash \neg \neg \varphi \to (\varphi^\bot \leftrightarrow \bot)$. 
Since $\varphi^\bot \in \Pi_k$ and $\bot \in \Sigma_k$, 
\[
	\HA + \LEM{\Delta_k}^{\bot, \Pi} \vdash \neg \neg \varphi \to \varphi^\bot \lor \varphi^{\bot \bot}. 
\]
Since $\HA + \LEM{\Delta_k}^{\bot, \Pi} \vdash \neg \neg \varphi \to \neg \varphi \lor \varphi$ by Proposition \ref{prop:DUAL_BASIC}, we conclude that $\HA + \LEM{\Delta_k}^{\bot, \Pi}$ proves $\neg \neg \varphi \to \varphi$. 
\end{proof}

Proposition \ref{prop:LEMD}.(4) is a generalization of Fact \ref{fact:FIN1}.

\subsection{The law of excluded middle for negated formulas}

In this subsection, we investigate the law of excluded middle for negated formulas, which are investigated in \cite{F20,FIN} for $k=1$. 

\begin{defn}[The law of excluded middle for negated formulas]
Let $\Gamma$ be any set of formulas. 
\begin{tabbing}
\hspace{25mm} \= \hspace{40mm} \= \hspace{50mm} \kill
$\LEM{\n{\Gamma}}$ \> $\neg \varphi \lor \neg \neg \varphi$ \> ($\varphi \in \Gamma$, in other words, $\neg \varphi \in \n{\Gamma}$) \\
$\LEM{\n{\Delta_k}}$ \> $(\varphi \leftrightarrow \psi) \to \neg \varphi \lor \neg \neg \varphi$ \> ($\varphi \in \Sigma_k$ and $\psi \in \Pi_k$)
\end{tabbing}
\end{defn}

Although the definition of $\LEM{\n{\Gamma}}$ is included in Definition \ref{defn:principles}, we defined it individually to pay attention to its properties.
The principle $\LEM{\n{\Delta_1}}$ corresponds to the principle (IVa) in \cite{FIN} and $\Delta_a$-$\mathbf{WLEM}$ in \cite{F20}. 
The following fact is already obtained. 

\begin{fact}[Fujiwara, Ishihara and Nemoto {\cite[Proposition 3]{FIN}}]\label{fact:FIN2}
The following are equivalent over $\HA$:
\begin{enumerate}
	\item $\LEM{\n{\Delta_1}}$. 
	\item $\LEM{\Delta_1}$. 
\end{enumerate}
\end{fact}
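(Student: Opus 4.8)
The plan is to prove the two implications $(2)\Rightarrow(1)$ and $(1)\Rightarrow(2)$ separately; the first is essentially trivial and all the content is in the second.

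First I would dispose of $(2)\Rightarrow(1)$. Assume $\varphi\leftrightarrow\psi$ with $\varphi\in\Sigma_1$ and $\psi\in\Pi_1$; then the matching instance $\varphi\lor\neg\varphi$ of $\LEM{\Delta_1}$ immediately gives $\neg\varphi\lor\neg\neg\varphi$, since $\HA\vdash\varphi\to\neg\neg\varphi$. So $\HA+\LEM{\Delta_1}\vdash\LEM{\n{\Delta_1}}$ with no further work.

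For $(1)\Rightarrow(2)$, the key preliminary step is to record that $\HA\vdash\DNE{\Pi_1}$. This is the $k=0$ case of Fact \ref{fact:ABHK}.6 (as $\DNE{\Sigma_0}$ is trivially provable), and it can also be seen directly: for quantifier-free $\beta$, contraposing the instantiation axiom gives $\neg\neg\forall x\,\beta(x)\to\neg\neg\beta(a)$ for every $a$, and $\neg\neg\beta(a)\to\beta(a)$ holds in $\HA$ because $\beta(a)$ is decidable, whence $\neg\neg\forall x\,\beta(x)\to\forall x\,\beta(x)$. With this in hand, I would take any $\varphi\in\Sigma_1$, $\psi\in\Pi_1$ together with the hypothesis $\varphi\leftrightarrow\psi$, apply the corresponding instance of $\LEM{\n{\Delta_1}}$ (i.e. modus ponens against $\varphi\leftrightarrow\psi$) to obtain $\neg\varphi\lor\neg\neg\varphi$, and argue by cases: in the case $\neg\varphi$, the disjunction $\varphi\lor\neg\varphi$ is immediate; in the case $\neg\neg\varphi$, the implication $\varphi\to\psi$ gives $\neg\neg\psi$, hence $\psi$ by $\DNE{\Pi_1}$, hence $\varphi$ by $\psi\to\varphi$. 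Either way $\varphi\lor\neg\varphi$ holds, and discharging the hypothesis yields $\LEM{\Delta_1}$.

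I do not expect a genuine obstacle here. The one point that has to be observed is that, although $\neg\neg\varphi\to\varphi$ is not available in $\HA$ for an arbitrary $\Sigma_1$ formula $\varphi$, it does become available when $\varphi$ is $\Delta_1$: the double negation is eliminated by routing through the $\Pi_1$-companion $\psi$ and invoking $\DNE{\Pi_1}$, which is $\HA$-provable. (In fact the same argument gives $\HA+\LEM{\n{\Delta_k}}+\DNE{\Sigma_{k-1}}\vdash\LEM{\Delta_k}$ for every $k$, the side hypothesis $\DNE{\Sigma_{k-1}}$ being vacuous precisely when $k=1$.)
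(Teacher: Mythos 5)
Your proof is correct and follows essentially the same route as the paper: the paper cites this fact from [FIN] and proves the generalization $\LEM{\n{\Delta_k}} + \DNE{\Sigma_{k-1}} \equiv \LEM{\Delta_k}$ (Proposition \ref{prop:NLEM}.2) by exactly the argument you give, namely a case split on $\neg\varphi \lor \neg\neg\varphi$ with the double negation eliminated through the $\Pi_k$-companion via $\DNE{\Pi_k} \equiv \DNE{\Sigma_{k-1}}$ (Fact \ref{fact:ABHK}.6), which for $k=1$ is $\HA$-provable as you note. No gaps.
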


Obviously, $\LEM{\n{\Gamma}}$ is weaker than $\LEM{\Gamma}$, and we obtain the following proposition. 
Proposition \ref{prop:NLEM}.(2) is a generalization of Fact \ref{fact:FIN2}. 

\begin{prop}\label{prop:NLEM}
Let $\Gamma$ be any set of formulas. 
\begin{enumerate}
	\item $\LEM{\n{\Gamma}} + \DNE{\Gamma}$ is equivalent to $\LEM{\Gamma}$ over $\HA$; 
	\item $\LEM{\n{\Delta_k}} + \DNE{\Sigma_{k-1}}$ is equivalent to $\LEM{\Delta_k}$ over $\HA$; 
	\item $\HA + \LEM{\n{\Sigma_k}} \vdash \LEM{\n{\Delta_k}}$; 
	\item $\HA + \LEM{\n{\Pi_k}} \vdash \LEM{\n{\Delta_k}}$. 
\end{enumerate}
\end{prop}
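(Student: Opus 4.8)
The plan is to prove the four clauses in order, using the basic intuitionistic facts recalled in Section~\ref{section:pre} together with the earlier results on $\LEM{\Delta_k}$ and $\DNE{\Sigma_{k-1}}$. For clause~1, one direction is immediate: $\HA + \LEM{\Gamma}$ proves both $\LEM{\n{\Gamma}}$ (since $\varphi \lor \neg\varphi$ yields $\neg\varphi \lor \neg\neg\varphi$ by monotonicity, using $\varphi \to \neg\neg\varphi$) and $\DNE{\Gamma}$ (this is Fact~\ref{fact:LEM_DNE}). For the converse, given $\varphi \in \Gamma$, use $\LEM{\n{\Gamma}}$ to get $\neg\varphi \lor \neg\neg\varphi$; in the left disjunct we already have $\neg\varphi$, and in the right disjunct we apply $\DNE{\Gamma}$ to $\neg\neg\varphi \to \varphi$, obtaining $\varphi$ in either case, hence $\varphi \lor \neg\varphi$.

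For clause~2, the argument is the same but relativized to $\Delta_k$: assume the hypothesis $\varphi \leftrightarrow \psi$ with $\varphi \in \Sigma_k$, $\psi \in \Pi_k$. From $\LEM{\n{\Delta_k}}$ we get $\neg\varphi \lor \neg\neg\varphi$; the left disjunct already gives $\neg\varphi$, so to derive $\varphi \lor \neg\varphi$ it suffices to show that $\neg\neg\varphi$ together with $\DNE{\Sigma_{k-1}}$ yields $\varphi$. This is exactly the content of $\HA + \DNE{\Sigma_{k-1}} \vdash \LEM{\Delta_k}$, which follows from Fact~\ref{fact:ABHK}.4 (via $\DNE{\Sigma_{k-1}} \vdash \DNE{\Pi_k}$ from Fact~\ref{fact:ABHK}.6, and then $\DNE{\Pi_k} \vdash \LEM{\Delta_k}$), or can be seen directly: under $\varphi \leftrightarrow \psi$ with $\psi \in \Pi_k \equiv \forall x \sigma(x)$, $\sigma \in \Sigma_{k-1}$, we have $\neg\neg\varphi \to \neg\neg\psi \to \forall x \neg\neg\sigma(x) \to \forall x \sigma(x) \equiv \psi \to \varphi$ using $\DNE{\Sigma_{k-1}}$ and the intuitionistic validity of $\neg\neg\forall \to \forall\neg\neg$. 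For the reverse inclusion, $\LEM{\Delta_k}$ clearly proves $\LEM{\n{\Delta_k}}$ (weaken $\varphi \lor \neg\varphi$ to $\neg\varphi \lor \neg\neg\varphi$), and $\LEM{\Delta_k} \vdash \DNE{\Sigma_{k-1}}$ since by Fact~\ref{fact:ABHK}.5 $\LEM{\Delta_k}$ proves $\LEM{\Sigma_{k-1}}$, which proves $\DNE{\Sigma_{k-1}}$ by Fact~\ref{fact:LEM_DNE}.

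Clauses~3 and~4 are routine $\Delta_k$-instantiation arguments of the same shape as Proposition~\ref{prop:DDUAL}: given $\varphi \in \Sigma_k$ (resp.\ $\psi \in \Pi_k$), the hypothesis $\varphi \leftrightarrow \psi$ lets us treat $\varphi$ (resp.\ $\psi$) as a member of the other class, and then $\LEM{\n{\Sigma_k}}$ (resp.\ $\LEM{\n{\Pi_k}}$) applied to the appropriate formula yields $\neg\varphi \lor \neg\neg\varphi$; note that $\neg\varphi \leftrightarrow \neg\psi$ and $\neg\neg\varphi \leftrightarrow \neg\neg\psi$ are intuitionistically derivable from $\varphi \leftrightarrow \psi$, so it does not matter which of $\varphi, \psi$ we feed into the negated-formula $\LEM$. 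I expect no serious obstacle here; the only point requiring a little care is clause~2, where one must cite the correct chain of implications from Fact~\ref{fact:ABHK} to see that $\DNE{\Sigma_{k-1}}$ (rather than the a priori weaker $\DNE{\Sigma_{k-1}}$ restricted further) is exactly what upgrades $\LEM{\n{\Delta_k}}$ to $\LEM{\Delta_k}$, and to check that the upgrade is an equivalence rather than merely an implication.
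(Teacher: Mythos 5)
Your overall route coincides with the paper's (which simply cites Fact \ref{fact:LEM_DNE} and Fact \ref{fact:ABHK}): clauses 1, 3, 4 and the forward direction of clause 2 are exactly the intended arguments and are fine. The one genuine error is the first justification you offer for the backward direction of clause 2: the claim $\HA + \DNE{\Sigma_{k-1}} \vdash \LEM{\Delta_k}$ is false, and so is the step ``$\DNE{\Pi_k} \vdash \LEM{\Delta_k}$'' you extract from Fact \ref{fact:ABHK}.4 --- that fact requires $\DNE{\Sigma_k}$, whereas $\DNE{\Pi_k}$ is only equivalent to the strictly weaker $\DNE{\Sigma_{k-1}}$ (Fact \ref{fact:ABHK}.6). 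Indeed, if $\DNE{\Sigma_{k-1}}$ proved $\LEM{\Delta_k}$, then by Fact \ref{fact:ABHK}.5 it would prove $\LEM{\Sigma_{k-1}}$, contradicting the known strictness of the hierarchy (for $k=2$ this would say Markov's principle yields $\LEM{\Sigma_1}$). Moreover, what you actually need at that point is not $\LEM{\Delta_k}$ at all, but only $(\varphi \leftrightarrow \psi) \to (\neg\neg\varphi \to \varphi)$, i.e.\ $\DNE{\Pi_k}$ applied to $\psi$.

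Fortunately, the alternative ``direct'' argument you give is precisely this and is correct: $\neg\neg\varphi \to \neg\neg\psi \to \forall x\, \neg\neg\sigma \to \forall x\, \sigma$ by $\DNE{\Sigma_{k-1}}$ (using the intuitionistic shift $\neg\neg\forall \to \forall\neg\neg$), and then back to $\varphi$ via the hypothesis $\varphi \leftrightarrow \psi$; combined with $\neg\varphi \lor \neg\neg\varphi$ from $\LEM{\n{\Delta_k}}$ this yields $\varphi \lor \neg\varphi$, and the converse direction you give (via Fact \ref{fact:ABHK}.5 and Fact \ref{fact:LEM_DNE}) is the paper's. So the proof stands once you delete the false intermediate claim and keep the direct computation, or equivalently cite the equivalence $\DNE{\Sigma_{k-1}} \leftrightarrow \DNE{\Pi_k}$ of Fact \ref{fact:ABHK}.6 as the source of $\neg\neg\varphi \to \varphi$ under the $\Delta_k$ hypothesis.
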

\begin{proof}
1. This follows from Fact \ref{fact:LEM_DNE}. 

2. This is a consequence of Facts \ref{fact:LEM_DNE} and \ref{fact:ABHK}. 

3 and 4 are obvious. 
\end{proof}

From Fact \ref{fact:ABHK}, $\LEM{\Sigma_k}$ and $\LEM{\Pi_k}$ are equivalent modulo $\DNE{\Sigma_k}$. 
We prove an analogous result concerning $\LEM{\n{\Sigma_k}}$ and $\LEM{\n{\Pi_k}}$. 

\begin{prop}\label{prop:WLEMSP}
The following are equivalent over $\HA + \DNS{\Sigma_{k-1}}$:
\begin{enumerate}
	\item $\LEM{\n{\Sigma_k}}$. 
	\item $\LEM{\n{\Pi_k}}$. 
\end{enumerate}
\end{prop}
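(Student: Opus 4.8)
The plan is to exploit the duality machinery of Section \ref{section:dual}. Recall that by Proposition \ref{prop:DUAL_BASIC}.1 the dual of a $\Sigma_k$ formula lies in $\Pi_k$ and the dual of a $\Pi_k$ formula lies in $\Sigma_k$, and that by Proposition \ref{prop:WDUAL} both $\WDUAL{\Sigma_k}$ and $\WDUAL{\Pi_k}$ are provable over $\HA + \DNS{\Sigma_{k-1}}$ whenever $k \geq 1$ (the case $k = 0$ is trivial since $\Sigma_0 = \Pi_0$). The two implications are then established symmetrically: given a formula in one of the two classes, one feeds its dual, which lies in the other class, into the hypothesised instance of $\LEM{\n{\cdot}}$, and converts the two resulting disjuncts back — using the appropriate weak dual principle on one disjunct and Proposition \ref{prop:DUAL_BASIC}.3 together with the intuitionistic equivalence $\neg\neg\neg\chi \leftrightarrow \neg\chi$ on the other.

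Concretely, for $\HA + \DNS{\Sigma_{k-1}} + \LEM{\n{\Sigma_k}} \vdash \LEM{\n{\Pi_k}}$: let $\varphi \in \Pi_k$, so $\varphi^\bot \in \Sigma_k$. Applying $\LEM{\n{\Sigma_k}}$ to $\varphi^\bot$ yields $\neg \varphi^\bot \lor \neg\neg \varphi^\bot$. In the left disjunct, $\WDUAL{\Pi_k}$ gives $\neg\varphi^\bot \to \neg\neg\varphi$. In the right disjunct, from $\HA \vdash \varphi^\bot \to \neg\varphi$ (Proposition \ref{prop:DUAL_BASIC}.3) we get $\HA \vdash \neg\neg\varphi^\bot \to \neg\neg\neg\varphi$, hence $\neg\varphi$. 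Either way $\neg\varphi \lor \neg\neg\varphi$, as required. The converse $\HA + \DNS{\Sigma_{k-1}} + \LEM{\n{\Pi_k}} \vdash \LEM{\n{\Sigma_k}}$ is obtained identically, now starting from $\varphi \in \Sigma_k$, noting $\varphi^\bot \in \Pi_k$, applying $\LEM{\n{\Pi_k}}$ to $\varphi^\bot$, and invoking $\WDUAL{\Sigma_k}$ on the first disjunct.

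I do not expect a serious obstacle: once Proposition \ref{prop:WDUAL} is available, the argument is a routine manipulation. The only points requiring care are checking that the dual lands in the correct class so that the weak dual principle is applied at index $k$ (rather than $k \pm 1$), handling the degenerate index $k = 0$ separately, and the elementary fact that $\neg\neg\neg\chi$ and $\neg\chi$ are interderivable in intuitionistic logic, which is what lets the $\neg\neg\varphi^\bot$ case collapse to $\neg\varphi$.
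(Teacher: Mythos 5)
Your proposal is correct and is essentially the paper's own proof: the paper also applies $\LEM{\n{\Sigma_k}}$ (resp.\ $\LEM{\n{\Pi_k}}$) to $\varphi^\bot$, handles one disjunct via $\WDUAL{\Pi_k}$ (resp.\ $\WDUAL{\Sigma_k}$), which Proposition \ref{prop:WDUAL} identifies with $\DNS{\Sigma_{k-1}}$, and collapses the other disjunct to $\neg\varphi$ via Proposition \ref{prop:DUAL_BASIC}.3. Your extra remark about the degenerate case $k=0$ is a harmless refinement the paper leaves implicit.
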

\begin{proof}
First, we show $\HA + \DNS{\Sigma_{k-1}} + \LEM{\n{\Sigma_k}} \vdash \LEM{\n{\Pi_k}}$. 
Let $\varphi$ be any $\Pi_k$ formula.  
Since $\varphi^\bot$ is $\Sigma_k$, we have 
\[
	\HA + \LEM{\n{\Sigma_k}} \vdash \neg \varphi^\bot \lor \neg \neg \varphi^\bot.
\]
Then, $\HA + \LEM{\n{\Sigma_k}} \vdash \neg \varphi^\bot \lor \neg \varphi$ by Proposition \ref{prop:DUAL_BASIC}.(3). 
Since $\WDUAL{\Pi_k}$ is equivalent to $\DNS{\Sigma_{k-1}}$ over $\HA$ by Proposition \ref{prop:WDUAL}, we obtain
\[
	\HA + \DNS{\Sigma_{k-1}} + \LEM{\n{\Sigma_k}} \vdash \neg \neg \varphi \lor \neg \varphi. 
\]

In a similar way, it is proved that $\HA + \DNS{\Sigma_{k-1}} + \LEM{\n{\Pi_k}}$ proves $\LEM{\n{\Sigma_k}}$ because $\WDUAL{\Sigma_k}$ is also equivalent to $\DNS{\Sigma_{k-1}}$ over $\HA$ by Proposition \ref{prop:WDUAL}. 
\end{proof}

From Fact \ref{fact:ABHK}.(6), Propositions \ref{prop:DNS1}.(1), \ref{prop:NLEM} and \ref{prop:WLEMSP}, we obtain the following corollaries. 

\begin{cor}\label{cor:NLEM1}
The following are equivalent over $\HA$:
\begin{enumerate}
	\item $\LEM{\Pi_k}$. 
	\item $\LEM{\n{\Sigma_k}} + \DNE{\Sigma_{k-1}}$. 
	\item $\LEM{\n{\Pi_k}} + \DNE{\Sigma_{k-1}}$. 
\end{enumerate}
\end{cor}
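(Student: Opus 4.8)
The plan is to prove the corollary by establishing the two equivalences $(1) \Leftrightarrow (3)$ and $(2) \Leftrightarrow (3)$; everything will be assembled from the propositions stated just before it, so no genuinely new argument is needed.

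For $(1) \Leftrightarrow (3)$, I would instantiate Proposition \ref{prop:NLEM}.1 with $\Gamma = \Pi_k$, obtaining that $\LEM{\n{\Pi_k}} + \DNE{\Pi_k}$ is equivalent to $\LEM{\Pi_k}$ over $\HA$. It then suffices to replace $\DNE{\Pi_k}$ by $\DNE{\Sigma_{k-1}}$, which is legitimate because these are $\HA$-equivalent: for $k \geq 1$ this is Fact \ref{fact:ABHK}.6 read with $k-1$ in place of $k$, and for $k = 0$ both $\DNE{\Pi_0}$ and $\DNE{\Sigma_{-1}}$ are $\HA$-provable (the latter vacuously). Hence $\LEM{\n{\Pi_k}} + \DNE{\Sigma_{k-1}}$ and $\LEM{\Pi_k}$ are equivalent over $\HA$.

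For $(2) \Leftrightarrow (3)$, I would pass to the base theory $\HA + \DNE{\Sigma_{k-1}}$. By Proposition \ref{prop:DNS1}.1 (with $k-1$ in place of $k$) this theory proves $\DNS{\Sigma_{k-1}}$, so Proposition \ref{prop:WLEMSP} shows that $\LEM{\n{\Sigma_k}}$ and $\LEM{\n{\Pi_k}}$ are equivalent over $\HA + \DNE{\Sigma_{k-1}}$. Re-adjoining $\DNE{\Sigma_{k-1}}$ on both sides, this says precisely that $\LEM{\n{\Sigma_k}} + \DNE{\Sigma_{k-1}}$ is equivalent to $\LEM{\n{\Pi_k}} + \DNE{\Sigma_{k-1}}$ over $\HA$, which is $(2) \Leftrightarrow (3)$.

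Since each ingredient is already available, there is essentially no obstacle; the only thing to watch is the index bookkeeping — consistently shifting the index from $k$ to $k-1$ when invoking Fact \ref{fact:ABHK}.6 and Proposition \ref{prop:DNS1}.1 — together with the degenerate case $k = 0$, where $\Sigma_{-1} = \emptyset$ renders $\DNE{\Sigma_{-1}}$ vacuous and all three principles become $\HA$-provable. If one preferred to avoid Proposition \ref{prop:WLEMSP}, one could instead derive $(1) \Rightarrow (2)$ directly: $\LEM{\Pi_k}$ proves $\DNE{\Sigma_{k-1}}$ via Fact \ref{fact:ABHK}.2 and Fact \ref{fact:ABHK}.6, and for $\varphi \in \Sigma_k$ Fact \ref{fact:DUAL}.1 supplies a $\Pi_k$ formula $\varphi'$ with $\HA + \DNE{\Sigma_{k-1}} \vdash \neg \varphi \leftrightarrow \varphi'$, so the $\LEM{\Pi_k}$ instance $\varphi' \lor \neg \varphi'$ yields $\neg \varphi \lor \neg\neg\varphi$; but the route through Proposition \ref{prop:WLEMSP} is cleaner and matches the list of results the corollary points to.
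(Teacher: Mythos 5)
Your proposal is correct and follows essentially the same route as the paper, which derives this corollary precisely from Fact \ref{fact:ABHK}.6, Proposition \ref{prop:DNS1}.1, Proposition \ref{prop:NLEM} and Proposition \ref{prop:WLEMSP}; your index shifts ($k-1$ in Fact \ref{fact:ABHK}.6 and Proposition \ref{prop:DNS1}.1) and the treatment of the degenerate case $k=0$ are handled correctly.
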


\begin{cor}\label{cor:NLEM2}
The following are equivalent over $\HA$:
\begin{enumerate}
	\item $\LEM{\Sigma_k}$. 
	\item $\LEM{\n{\Sigma_k}} + \DNE{\Sigma_k}$. 
	\item $\LEM{\n{\Pi_k}} + \DNE{\Sigma_k}$. 
\end{enumerate}
\end{cor}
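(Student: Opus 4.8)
The plan is to reduce everything to results already established, taking $\Gamma = \Sigma_k$ throughout. For the equivalence of (1) and (2) there is nothing new to do: Proposition \ref{prop:NLEM}.1 applied with $\Gamma = \Sigma_k$ states precisely that $\LEM{\n{\Sigma_k}} + \DNE{\Sigma_k}$ is equivalent to $\LEM{\Sigma_k}$ over $\HA$. (Unwinding this, the forward direction uses Fact \ref{fact:LEM_DNE} to get $\DNE{\Sigma_k}$ and the triviality $\LEM{\Sigma_k}\vdash\LEM{\n{\Sigma_k}}$; the backward direction case-splits on $\neg\varphi \lor \neg\neg\varphi$ and uses $\DNE{\Sigma_k}$ in the second case.)

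It remains to prove $(2) \Leftrightarrow (3)$. Since both (2) and (3) contain $\DNE{\Sigma_k}$ as a conjunct, it suffices to show that $\LEM{\n{\Sigma_k}}$ and $\LEM{\n{\Pi_k}}$ are equivalent over $\HA + \DNE{\Sigma_k}$. First I would observe that $\Sigma_{k-1} \subseteq \Sigma_k$, so $\HA + \DNE{\Sigma_k} \vdash \DNE{\Sigma_{k-1}}$, and hence by Proposition \ref{prop:DNS1}.1 we obtain $\HA + \DNE{\Sigma_k} \vdash \DNS{\Sigma_{k-1}}$. Now Proposition \ref{prop:WLEMSP} asserts that $\LEM{\n{\Sigma_k}}$ and $\LEM{\n{\Pi_k}}$ are equivalent over $\HA + \DNS{\Sigma_{k-1}}$, hence a fortiori over $\HA + \DNE{\Sigma_k}$. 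Adjoining back the common conjunct $\DNE{\Sigma_k}$ yields $(2) \Leftrightarrow (3)$, completing the proof. (For $k = 0$ the statement is degenerate: $\LEM{\Sigma_0}$, $\DNE{\Sigma_0}$, $\LEM{\n{\Sigma_0}}$ and $\LEM{\n{\Pi_0}}$ are all provable in $\HA$ and $\DNS{\Sigma_{-1}}$ is vacuous, so each of (1)--(3) is a theorem of $\HA$.)

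I do not anticipate a genuine obstacle; the argument is just a careful chaining of the cited propositions. The only point demanding attention is that the full strength $\DNE{\Sigma_k}$ — not merely $\DNE{\Sigma_{k-1}}$ — appears in (2) and (3), since that is exactly what feeds Proposition \ref{prop:DNS1}.1 to produce the $\DNS{\Sigma_{k-1}}$ required to invoke Proposition \ref{prop:WLEMSP}. (Fact \ref{fact:ABHK}.6, the equivalence of $\DNE{\Sigma_k}$ and $\DNE{\Pi_{k+1}}$ over $\HA$, is not strictly needed for this corollary, though it may be used to present the hypotheses in an alternative form, as in Corollary \ref{cor:NLEM1}.)
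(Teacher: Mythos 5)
Your argument is correct and is essentially the paper's own: the paper derives this corollary by exactly the chain you describe (Proposition \ref{prop:NLEM}.1 for $(1)\Leftrightarrow(2)$, and Proposition \ref{prop:DNS1}.1 feeding Proposition \ref{prop:WLEMSP} for $(2)\Leftrightarrow(3)$), with Fact \ref{fact:ABHK}.6 cited mainly for the companion Corollary \ref{cor:NLEM1}, just as you note.
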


\section{De Morgan's law}\label{section:DML}

In this section, we extensively investigate principles based on de Morgan's law. 

\begin{defn}[De Morgan's law]
Let $\Gamma$ and $\Theta$ be any sets of formulas. 
\begin{tabbing}
\hspace{25mm} \= \hspace{60mm} \= \hspace{50mm} \kill
$\DML{(\Gamma, \Theta)}$ \> $\neg(\varphi \land \psi) \to \neg \varphi \lor \neg \psi$ \> ($\varphi \in \Gamma$ and $\psi \in \Theta$) \\
$\DML{\Delta_k}$ \> $(\varphi \leftrightarrow \varphi') \land (\psi \leftrightarrow \psi')$ \>  \\
 \> \hspace{0.5in} $\to (\neg (\varphi \land \psi) \to \neg \varphi \lor \neg \psi)$ \> ($\varphi, \psi \in \Sigma_k$ and $\varphi', \psi' \in \Pi_k$) \\
$\DML{(\Delta_k, \Theta)}$ \> $(\varphi \leftrightarrow \varphi') \to (\neg(\varphi \land \psi) \to \neg \varphi \lor \neg \psi)$ \> ($\varphi \in \Sigma_k$, $\varphi' \in \Pi_k$ and $\psi \in \Theta$) 
\end{tabbing}
\end{defn}

Several variations of $\DML{\Delta_1}$ are extensively investigated in \cite{F20}. 
As in the case of the law of excluded middle, we also deal with the principles of the forms $\DML{(\n{\Gamma}, \Theta)}$, $\DML{(\n{\Delta_k}, \Theta)}$, and so on. 
Of course, $\DML{(\Gamma, \Theta)}$ and $\DML{(\Theta, \Gamma)}$ are equivalent. 

This section consists of four subsections. 
First, we investigate several basic implications between the principles. 
Secondly, we study the interrelationship between de Morgan's law and the contrapositive version of the collection principle. 
Thirdly, $\Delta_k$ and $\n{\Delta_k}$ variants of de Morgan's law are explored. 
Finally, we investigate de Morgan's law with respect to duals.

\subsection{Basic implications}

In this subsection, we organize several versions of de Morgan's law. 
Some arguments in this subsection for $k=1$ can be found in \cite{F20}. 
The following proposition is trivially obtained. 

\begin{prop}\label{prop:DML0}\leavevmode
Let $\Gamma \in \{\Sigma_k, \Pi_k\}$ and $\Theta$ be any set of formulas. 
\begin{enumerate}
	\item $\HA + \DML{(\Gamma, \Theta)} \vdash \DML{(\Delta_k, \Theta)}$;  
	\item $\HA + \DML{(\n{\Gamma}, \Theta)} \vdash \DML{(\n{\Delta_k}, \Theta)}$. 
\end{enumerate}
\end{prop}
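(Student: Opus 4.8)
The plan is to treat the two clauses uniformly by case distinction on whether $\Gamma = \Sigma_k$ or $\Gamma = \Pi_k$: in each case the $\Delta_k$-parametrized principle is obtained from the corresponding unparametrized one, $\DML{(\Gamma,\Theta)}$ (resp.\ $\DML{(\n{\Gamma},\Theta)}$), either directly or after transporting the hypothesis $\varphi \leftrightarrow \varphi'$ through the propositional connectives. Recall that $\DML{(\Delta_k,\Theta)}$ asserts $(\varphi \leftrightarrow \varphi') \to (\neg(\varphi \land \psi) \to \neg\varphi \lor \neg\psi)$ for $\varphi \in \Sigma_k$, $\varphi' \in \Pi_k$, $\psi \in \Theta$, and $\DML{(\n{\Delta_k},\Theta)}$ is its analogue with $\neg\varphi$, $\neg\varphi'$ in place of $\varphi$, $\varphi'$.

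For clause~1, fix $\varphi \in \Sigma_k$, $\varphi' \in \Pi_k$, $\psi \in \Theta$ and argue under the assumption $\varphi \leftrightarrow \varphi'$, aiming at $\neg(\varphi \land \psi) \to \neg\varphi \lor \neg\psi$. If $\Gamma = \Sigma_k$, then $\varphi$ itself lies in $\Gamma$, so the desired implication is literally an instance of $\DML{(\Gamma,\Theta)}$ and the hypothesis is not even used. If $\Gamma = \Pi_k$, then $\varphi'$ lies in $\Gamma$, so $\DML{(\Gamma,\Theta)}$ gives $\neg(\varphi' \land \psi) \to \neg\varphi' \lor \neg\psi$; since (under the assumed equivalence) $\HA$ proves $\varphi \land \psi \leftrightarrow \varphi' \land \psi$ and $\neg\varphi \leftrightarrow \neg\varphi'$, one transfers the conclusion to obtain $\neg(\varphi \land \psi) \to \neg\varphi \lor \neg\psi$.

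Clause~2 is entirely parallel, with $\neg\varphi$ and $\neg\varphi'$ replacing $\varphi$ and $\varphi'$: if $\Gamma = \Sigma_k$ then $\neg\varphi \in \n{\Gamma}$ and $\DML{(\n{\Gamma},\Theta)}$ applies directly, while if $\Gamma = \Pi_k$ then $\neg\varphi' \in \n{\Gamma}$ and one transports the conclusion of $\DML{(\n{\Gamma},\Theta)}$ along the equivalences $\neg\varphi \leftrightarrow \neg\varphi'$ and $\neg\neg\varphi \leftrightarrow \neg\neg\varphi'$ that follow intuitionistically from $\varphi \leftrightarrow \varphi'$. There is no genuine obstacle here; the only thing to verify is the routine fact that an $\HA$-provable biconditional between $\varphi$ and $\varphi'$ propagates to $\varphi \land \psi$ versus $\varphi' \land \psi$, to $\neg\varphi$ versus $\neg\varphi'$, and to $\neg\neg\varphi$ versus $\neg\neg\varphi'$, all immediate in intuitionistic logic. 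This is exactly why the proposition is recorded as trivially obtained.
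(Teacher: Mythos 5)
Your argument is correct and is exactly the routine reasoning the paper has in mind when it records this proposition as "trivially obtained" without proof: when $\Gamma=\Sigma_k$ the instance is immediate, and when $\Gamma=\Pi_k$ one applies $\DML{(\Gamma,\Theta)}$ (resp.\ $\DML{(\n{\Gamma},\Theta)}$) to $\varphi'$ (resp.\ $\neg\varphi'$) and transports the result along the intuitionistically valid consequences of $\varphi\leftrightarrow\varphi'$. Nothing further is needed.
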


We show that $\LEM{\n{\Gamma}}$ and $\LEM{\n{\Delta_k}}$ are stronger than several versions of de Morgan's law. 

\begin{prop}\label{prop:DML1}
Let $\Gamma$ and $\Theta$ be any sets of formulas. 
\begin{enumerate}
	\item $\HA + \LEM{\n{\Gamma}} \vdash \DML{(\Gamma, \Theta)}$; 
	\item $\HA + \LEM{\n{\Gamma}} \vdash \DML{(\n{\Gamma}, \Theta)}$; 
	\item $\HA + \LEM{\n{\Delta_k}} \vdash \DML{(\Delta_k, \Theta)}$; 
	\item $\HA + \LEM{\n{\Delta_k}} \vdash \DML{(\n{\Delta_k}, \Theta)}$. 
\end{enumerate}
\end{prop}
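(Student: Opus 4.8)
The plan is to prove all four clauses uniformly from the single observation that, over $\HA$, the hypothesis $\neg(\varphi \land \psi)$ together with an instance of excluded middle for $\neg\varphi$ (or for $\neg$ applied to an appropriate $\Delta_k$ formula) suffices to conclude $\neg\varphi \lor \neg\psi$, regardless of what $\Theta$ is. Concretely, for clause 1, take $\varphi \in \Gamma$ and $\psi \in \Theta$ arbitrary and assume $\neg(\varphi\land\psi)$. Apply $\LEM{\n{\Gamma}}$ to $\varphi$ to get $\neg\varphi \lor \neg\neg\varphi$. In the first disjunct we are done; in the second disjunct, from $\neg\neg\varphi$ and $\neg(\varphi\land\psi)$ we intuitionistically derive $\neg\psi$ (since $\HA \vdash \psi \to (\varphi \to \varphi\land\psi)$, hence $\psi \to \neg\varphi \to \bot$ under $\neg(\varphi\land\psi)$... more directly: $\neg(\varphi\land\psi) \vdash \varphi \to \neg\psi$, and then $\neg\neg\varphi \vdash \neg\neg(\varphi\to\neg\psi)$... cleanest: $\neg(\varphi\land\psi)$ gives $\neg\neg\varphi \to \neg\psi$ because $\neg(\varphi\land\psi) \leftrightarrow \neg(\psi\land\varphi) \vdash \psi \to \neg\varphi$, so $\neg\neg\varphi \vdash \neg\psi$). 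Thus $\neg\varphi\lor\neg\psi$ follows. This establishes clause 1.

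For clause 2, the argument is identical but with $\varphi$ replaced by $\neg\varphi \in \n{\Gamma}$: apply $\LEM{\n{\Gamma}}$ to $\varphi$, obtaining $\neg\varphi \lor \neg\neg\varphi$, which is exactly $\LEM{\n{\Gamma}}$ read as excluded middle for the formula $\neg\varphi$ together with its negation; then the same intuitionistic manipulation with $\neg(\neg\varphi \land \psi)$ in place of $\neg(\varphi\land\psi)$ yields $\neg\neg\varphi \lor \neg\psi$, which is $\DML{(\n{\Gamma},\Theta)}$. For clauses 3 and 4, one repeats the same reasoning but now, given $\varphi \in \Sigma_k$, $\varphi' \in \Pi_k$ with $\varphi \leftrightarrow \varphi'$, one invokes $\LEM{\n{\Delta_k}}$ with this equivalence to obtain $\neg\varphi \lor \neg\neg\varphi$ (respectively the $\n{\Delta_k}$ instance for clause 4), and then proceeds exactly as before; the equivalence hypothesis $\varphi\leftrightarrow\varphi'$ is simply carried along as the antecedent of the conditional being proved.

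I do not anticipate a genuine obstacle here: every step is a routine intuitionistic propositional manipulation, and the role of $\Theta$ is entirely passive — no excluded-middle instance for formulas in $\Theta$ is ever needed, which is precisely why $\Theta$ can be an arbitrary set of formulas. The one point requiring a little care is the bookkeeping in clauses 3 and 4, namely checking that the antecedent $(\varphi \leftrightarrow \varphi')$ (and, for $\DML{\Delta_k}$-style statements, the conjunction of two such equivalences) is exactly what $\LEM{\n{\Delta_k}}$ needs and that it can be discharged correctly into the conditional form demanded by the definitions of $\DML{(\Delta_k,\Theta)}$ and $\DML{(\n{\Delta_k},\Theta)}$; but this is purely formal. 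Accordingly I would simply prove clause 1 in full and remark that clauses 2, 3, and 4 follow by the same argument with the indicated substitutions.
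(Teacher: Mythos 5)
Your argument is correct and is essentially the paper's own proof: both rest on the intuitionistic fact that $\HA \vdash \neg(\varphi \land \psi) \land \neg\neg\varphi \to \neg\psi$, followed by a case split on the instance $\neg\varphi \lor \neg\neg\varphi$ of $\LEM{\n{\Gamma}}$ (resp.\ $\LEM{\n{\Delta_k}}$ after assuming the equivalence), with clauses 2--4 handled by the same manipulation. The paper likewise proves clause 1 in this way and dismisses the remaining clauses as analogous, so nothing further is needed.
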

\begin{proof}
1. Let $\varphi \in \Gamma$ and $\psi \in \Theta$. 
Since $\HA \vdash \neg (\varphi \land \psi) \to \neg (\neg \neg \varphi \land \psi)$, we get
\[
	\HA \vdash (\neg \varphi \lor \neg \neg \varphi) \to (\neg (\varphi \land \psi) \to \neg \varphi \lor \neg \psi).
\]
It follows that $\HA + \LEM{\n{\Gamma}}$ proves $\DML{(\Gamma, \Theta)}$. 

2, 3 and 4 are proved as for clause 1. 
\end{proof}

\begin{cor}\label{cor:nLEM_DML}
\leavevmode
\begin{enumerate}
	\item For any set $\Gamma$ of formulas, $\HA + \LEM{\n{\Gamma}}$ proves $\DML{\Gamma}$ and $\DML{\n{\Gamma}}$; 
	\item $\HA + \LEM{\n{\Delta_k}}$ proves $\DML{\Delta_k}$ and $\DML{\n{\Delta_k}}$. 
\end{enumerate}
\end{cor}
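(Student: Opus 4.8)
The plan is to read the corollary off from Proposition \ref{prop:DML1} by specializing the auxiliary class $\Theta$ and unfolding the definitions, so the work is purely a matter of bookkeeping.

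For clause 1, I would first recall that $\DML{\Gamma}$ is by definition the same as $\DML{(\Gamma,\Gamma)}$ and $\DML{\n{\Gamma}}$ the same as $\DML{(\n{\Gamma},\n{\Gamma})}$. Then instantiating Proposition \ref{prop:DML1}.1 with $\Theta := \Gamma$ yields $\HA + \LEM{\n{\Gamma}} \vdash \DML{(\Gamma,\Gamma)}$, i.e.\ $\DML{\Gamma}$; and instantiating Proposition \ref{prop:DML1}.2 with $\Theta := \n{\Gamma}$ yields $\HA + \LEM{\n{\Gamma}} \vdash \DML{(\n{\Gamma},\n{\Gamma})}$, i.e.\ $\DML{\n{\Gamma}}$.

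For clause 2, the only point requiring a little attention is that $\DML{\Delta_k}$ carries the two-sided antecedent $(\varphi\leftrightarrow\varphi')\land(\psi\leftrightarrow\psi')$, whereas the $\Delta_k$-relativized de Morgan principle in Proposition \ref{prop:DML1} relativizes only one argument. But in any instance of $\DML{\Delta_k}$ the second formula $\psi$ lies in $\Sigma_k$, hence is a legitimate choice for the $\Theta$-component; so taking $\Theta := \Sigma_k$ in Proposition \ref{prop:DML1}.3 gives $\HA + \LEM{\n{\Delta_k}} \vdash \DML{(\Delta_k,\Sigma_k)}$, and each instance of the latter implies the corresponding instance of $\DML{\Delta_k}$ by merely weakening the antecedent (dropping the conjunct $\psi\leftrightarrow\psi'$). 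The argument for $\DML{\n{\Delta_k}}$ is identical: in an instance the second formula is the negation of a $\Sigma_k$ formula, so I would take $\Theta := \n{\Sigma_k}$ in Proposition \ref{prop:DML1}.4 and again weaken the antecedent.

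I do not expect a genuine obstacle here: the corollary is essentially a routine specialization of Proposition \ref{prop:DML1}, and the only care needed is to match the syntactic shape of the $\Delta_k$- and $\n{\Delta_k}$-variants (whose antecedents mention both equivalences) to the one-sided relativizations proved there, and to confirm that each $\Theta$-slot is filled by a formula from the class on which Proposition \ref{prop:DML1} has already been established.
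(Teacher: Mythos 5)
Your proposal is correct and matches the paper's (implicit) argument: the corollary is stated without proof precisely because it is the specialization of Proposition \ref{prop:DML1} with $\Theta := \Gamma$, $\n{\Gamma}$, $\Sigma_k$, $\n{\Sigma_k}$ respectively, together with the trivial antecedent-weakening you describe for the $\Delta_k$ and $\n{\Delta_k}$ cases. No gap.
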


Conversely, we show that the principles $\LEM{\n{\Gamma}}$ and $\LEM{\n{\Delta_k}}$ are equivalent to some variations of de Morgan's law. 

\begin{prop}\label{prop:WLEM_DML}
For any set $\Gamma$ of formulas, the following are equivalent over $\HA$:
\begin{enumerate}
	\item $\LEM{\n{\Gamma}}$. 
	\item $\DML{(\Gamma, \n{\Gamma})}$. 
\end{enumerate}
\end{prop}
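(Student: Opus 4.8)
The plan is to prove both implications directly from the definitions. For the direction $1 \Rightarrow 2$, note that $\DML{(\Gamma, \n{\Gamma})}$ is an instance of $\DML{(\Gamma, \Theta)}$ with $\Theta = \n{\Gamma}$, so this is immediate from Proposition~\ref{prop:DML1}.1. For the converse $2 \Rightarrow 1$, the idea is to feed the de Morgan instance the pair $\varphi$ and $\neg \varphi$: since $\HA \vdash \neg(\varphi \land \neg \varphi)$ holds trivially, the hypothesis $\neg(\varphi \land \neg\varphi)$ of the relevant instance of $\DML{(\Gamma, \n{\Gamma})}$ is $\HA$-provable, so applying that instance yields $\neg \varphi \lor \neg \neg \varphi$, which is exactly the instance of $\LEM{\n{\Gamma}}$ for $\varphi$.

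The key steps, in order: first I would observe that for $\varphi \in \Gamma$, the formula $\neg \varphi$ belongs to $\n{\Gamma}$ by definition, so $\neg(\varphi \land \neg \varphi) \to \neg \varphi \lor \neg \neg \varphi$ is a genuine instance of $\DML{(\Gamma, \n{\Gamma})}$ (taking the first conjunct $\varphi$ from $\Gamma$ and the second $\neg \varphi$ from $\n{\Gamma}$). Second, I would recall that $\HA$ proves $\neg(\varphi \land \neg\varphi)$ for every $\varphi$. Third, combining these by modus ponens inside $\HA + \DML{(\Gamma, \n{\Gamma})}$ gives $\neg \varphi \lor \neg\neg\varphi$, establishing $\LEM{\n{\Gamma}}$. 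The $1 \Rightarrow 2$ direction is then just a citation of Proposition~\ref{prop:DML1}.1.

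I do not anticipate any real obstacle here; both directions are short and use only elementary intuitionistic reasoning together with facts already recorded in the excerpt. The only point requiring a little care is the bookkeeping of which slot of $\DML{(\Gamma, \n{\Gamma})}$ receives which formula — one must use $\varphi \in \Gamma$ in the first argument and $\neg\varphi \in \n{\Gamma}$ in the second — but since $\DML{(\Gamma,\Theta)}$ and $\DML{(\Theta,\Gamma)}$ are remarked to be equivalent, even this is inessential. No induction on $k$ or appeal to the dual machinery of Section~\ref{section:dual} is needed for this particular proposition.
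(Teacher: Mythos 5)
Your proposal is correct and follows essentially the same route as the paper: the forward direction is exactly the citation of Proposition~\ref{prop:DML1}.1, and the converse applies the instance of $\DML{(\Gamma, \n{\Gamma})}$ to the pair $\varphi$, $\neg\varphi$ using the $\HA$-provable premise $\neg(\varphi \land \neg\varphi)$. No issues.
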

\begin{proof}
By Proposition \ref{prop:DML1}, $\HA + \LEM{\n{\Gamma}} \vdash \DML{(\Gamma, \n{\Gamma})}$. 
On the other hand, let $\varphi$ be any $\Gamma$ formula. 
Since $\HA \vdash \neg (\varphi \land \neg \varphi)$, we obtain $\HA + \DML{(\Gamma, \n{\Gamma})} \vdash \neg \varphi \lor \neg \neg \varphi$. 
\end{proof}

\begin{prop}\label{prop:WLEM_DML2}
For $\Gamma \in \{\Sigma_k, \Pi_k\}$, the following are equivalent over $\HA$:
\begin{enumerate}
	\item $\LEM{\n{\Delta_k}}$. 
	\item $\DML{(\Delta_k, \n{\Gamma})}$. 
	\item $\DML{(\n{\Delta_k}, \Gamma)}$. 
	\item $\DML{(\Delta_k, \n{\Delta_k})}$. 
\end{enumerate}
\end{prop}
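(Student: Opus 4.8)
The plan is to follow the template of the proof of Proposition \ref{prop:WLEM_DML}, now keeping track of the $\Delta_k$-witnessing pairs and distinguishing the two cases $\Gamma = \Sigma_k$ and $\Gamma = \Pi_k$. Throughout I will freely use the standard facts that $\HA \vdash \neg(\alpha \land \neg\alpha)$ for every $\alpha$, and that $\HA$ proves $(\alpha \leftrightarrow \beta) \to (\neg\alpha \leftrightarrow \neg\beta)$ and $(\alpha \leftrightarrow \beta) \to (\neg\neg\alpha \leftrightarrow \neg\neg\beta)$.

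For the implications out of (1): by Proposition \ref{prop:DML1}.3, $\HA + \LEM{\n{\Delta_k}}$ proves $\DML{(\Delta_k, \Theta)}$ for every set $\Theta$ of formulas, so taking $\Theta := \n{\Gamma}$ yields (2); taking the second slot to range over negated formulas $\neg\theta$ with $\theta$ carrying a $\Delta_k$-equivalent $\theta'$ (the extra hypothesis $\theta \leftrightarrow \theta'$ being simply unused in the argument of Proposition \ref{prop:DML1}.3, which works for an arbitrary formula in the second slot) yields (4). Likewise Proposition \ref{prop:DML1}.4 with $\Theta := \Gamma$ yields (3). Hence it remains only to prove the three converses.

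For (4)$\,\Rightarrow\,$(1): given $\chi \in \Sigma_k$ and $\chi' \in \Pi_k$ with $\HA \vdash \chi \leftrightarrow \chi'$, I would instantiate the relevant instance of $\DML{(\Delta_k, \n{\Delta_k})}$ taking \emph{both} witnessing pairs to be $(\chi, \chi')$, so that the $\n{\Delta_k}$-component is $\neg\chi$; since $\HA \vdash \neg(\chi \land \neg\chi)$, all of the hypotheses $(\chi \leftrightarrow \chi') \land (\chi \leftrightarrow \chi')$ and $\neg(\chi \land \neg\chi)$ are discharged, and the conclusion $\neg\chi \lor \neg\neg\chi$ is precisely the desired instance of $\LEM{\n{\Delta_k}}$. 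This is the direct $\Delta_k$-analogue of the second half of the proof of Proposition \ref{prop:WLEM_DML}. For (2)$\,\Rightarrow\,$(1) and (3)$\,\Rightarrow\,$(1) the same idea applies, but the ambient class $\Gamma$ forces a small case split. Again fix $\chi \in \Sigma_k$, $\chi' \in \Pi_k$ with $\HA \vdash \chi \leftrightarrow \chi'$. If $\Gamma = \Sigma_k$: for (2) apply $\DML{(\Delta_k, \n{\Sigma_k})}$ with the pair $(\chi, \chi')$ in the $\Delta_k$-slot and $\chi \in \Sigma_k$ in the $\Gamma$-slot; for (3) apply $\DML{(\n{\Delta_k}, \Sigma_k)}$ with the pair $(\chi, \chi')$ and $\chi \in \Sigma_k$; in both cases $\HA$ proves the triggering negation $\neg(\chi \land \neg\chi)$, so the conclusion is $\neg\chi \lor \neg\neg\chi$. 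If $\Gamma = \Pi_k$: $\chi$ is not available for the $\Pi_k$-slot, so I would instead plug in $\chi' \in \Pi_k$; the triggering formulas $\neg(\chi \land \neg\chi')$ (for (2)) and $\neg(\neg\chi \land \chi')$ (for (3)) are still $\HA$-provable from $\chi \leftrightarrow \chi'$, and the conclusions come out as $\neg\chi \lor \neg\neg\chi'$ and $\neg\neg\chi \lor \neg\chi'$ respectively, which $\HA$ rewrites to $\neg\chi \lor \neg\neg\chi$ using the equivalences recalled above.

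The argument is entirely routine; the only point demanding mild care is the bookkeeping just described for $\Gamma = \Pi_k$ — ensuring that the formula substituted into the $\Pi_k$-slot genuinely lies in $\Pi_k$ while remaining $\HA$-equivalent, under the appropriate number of negations, to what is needed in the final disjunction — together with being careful that the $\Delta_k$-hypothesis of each instance is exactly the equivalence we already have in hand. No genuine obstacle arises beyond this: as in Proposition \ref{prop:WLEM_DML}, everything is ultimately driven by the single intuitionistic triviality $\HA \vdash \neg(\alpha \land \neg\alpha)$.
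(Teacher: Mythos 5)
Your proposal is correct and follows essentially the same route as the paper: the forward implications come from Proposition \ref{prop:DML1} (with the unused $\Delta_k$-hypothesis in the second slot harmlessly discarded), and each converse is obtained by the same self-instantiation trick as in Proposition \ref{prop:WLEM_DML}, triggered by $\HA \vdash \neg(\chi \land \neg\chi)$ (resp.\ $\neg(\chi\land\neg\chi')$ under $\chi\leftrightarrow\chi'$). The only cosmetic difference is that the paper funnels (2) and (3) through (4) via Proposition \ref{prop:DML0} before applying that trick once, whereas you apply it to each of (2), (3), (4) directly, handling the $\Gamma=\Pi_k$ slot with $\chi'$ — which is exactly the adjustment needed.
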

\begin{proof}
By Proposition \ref{prop:DML1}, $\LEM{\n{\Delta_k}}$ entails $\DML{(\Delta_k, \n{\Gamma})}$ and $\DML{(\n{\Delta_k}, \Gamma)}$. 
By Proposition \ref{prop:DML0}, each of $\DML{(\Delta_k, \n{\Gamma})}$ and $\DML{(\n{\Delta_k}, \Gamma)}$ implies $\DML{(\Delta_k, \n{\Delta_k})}$. 
On the other hand, we can show that $\HA + \DML{(\Delta_k, \n{\Delta_k})}$ proves $\LEM{\n{\Delta_k}}$ as in the proof of Proposition \ref{prop:WLEM_DML}. 
\end{proof}

Here we investigate several equivalences of some variations of de Morgan's law over the theory $\HA + \DNS{\Sigma_{k-1}}$. 

\begin{prop}\label{prop:DML_DNS}
Let $\Theta$ be any set of formulas. 
\begin{enumerate}
	\item $\DML{(\n{\Sigma_k}, \Theta)}$ is equivalent to $\DML{(\Pi_k, \Theta)}$ over $\HA + \DNS{\Sigma_{k-1}}$; 
	\item $\DML{(\n{\Pi_k}, \Theta)}$ is equivalent to $\DML{(\Sigma_k, \Theta)}$ over $\HA + \DNS{\Sigma_{k-1}}$. 
\end{enumerate}
\end{prop}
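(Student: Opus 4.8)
The plan is to reduce both clauses to a single observation about duals. For $\varphi\in\Sigma_k$ the formula $\varphi^\bot$ is a genuine $\Pi_k$ formula with $\FV(\varphi^\bot)=\FV(\varphi)$ (Proposition~\ref{prop:DUAL_BASIC}.1), and the key lemma I would prove first is $\HA+\DNS{\Sigma_{k-1}}\vdash\neg\varphi\leftrightarrow\neg\neg\varphi^\bot$. Indeed, $\HA\vdash\varphi^\bot\to\neg\varphi$ (Proposition~\ref{prop:DUAL_BASIC}.3) gives both $\HA\vdash\neg\neg\varphi\to\neg\varphi^\bot$ and $\HA\vdash\neg\neg\varphi^\bot\to\neg\varphi$ by intuitionistic logic; conversely $\neg\varphi^\bot\to\neg\neg\varphi$ is exactly $\WDUAL{\Sigma_k}$, which is $\HA$-equivalent to $\DNS{\Sigma_{k-1}}$ by Proposition~\ref{prop:WDUAL}, whence $\neg\varphi^\bot\leftrightarrow\neg\neg\varphi$ and so $\neg\varphi\leftrightarrow\neg\neg\neg\varphi\leftrightarrow\neg\neg\varphi^\bot$ over $\HA+\DNS{\Sigma_{k-1}}$. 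The analogous statement for $\chi\in\Pi_k$, namely $\HA+\DNS{\Sigma_{k-1}}\vdash\neg\chi\leftrightarrow\neg\neg\chi^\bot$ with $\chi^\bot\in\Sigma_k$, follows the same way using $\WDUAL{\Pi_k}$.

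The second ingredient is the purely intuitionistic equivalence $\HA\vdash\neg(\alpha\land\psi)\leftrightarrow\neg(\neg\neg\alpha\land\psi)$, valid for arbitrary $\alpha,\psi$ (both sides are equivalent to $\psi\to\neg\alpha$, by currying and $\neg\neg\neg\alpha\leftrightarrow\neg\alpha$). Now fix $\varphi\in\Sigma_k$ and $\psi\in\Theta$. Using the key lemma, then this equivalence, then $\neg\neg\neg\varphi^\bot\leftrightarrow\neg\varphi^\bot$, the instance $\neg(\neg\varphi\land\psi)\to\neg\neg\varphi\lor\neg\psi$ of $\DML{(\n{\Sigma_k},\Theta)}$ is seen to be $\HA+\DNS{\Sigma_{k-1}}$-equivalent to $\neg(\varphi^\bot\land\psi)\to\neg\varphi^\bot\lor\neg\psi$, i.e.\ to the instance of $\DML{(\Pi_k,\Theta)}$ for $\varphi^\bot\in\Pi_k$. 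This already gives $\HA+\DNS{\Sigma_{k-1}}+\DML{(\Pi_k,\Theta)}\vdash\DML{(\n{\Sigma_k},\Theta)}$. For the converse, given $\chi\in\Pi_k$ and $\psi\in\Theta$, apply the preceding equivalence to $\varphi:=\chi^\bot\in\Sigma_k$ and use $\HA\vdash(\chi^\bot)^\bot\leftrightarrow\chi$ (Proposition~\ref{prop:DUAL_BASIC}.2) to match the resulting $\DML{(\Pi_k,\Theta)}$ instance for $(\chi^\bot)^\bot$ with the one for $\chi$. Since free variables are preserved at every step, this is a legitimate instance-by-instance translation, proving clause~1.

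Clause~2 is symmetric: replace $\varphi\in\Sigma_k$ by $\chi\in\Pi_k$ throughout, use $\chi^\bot\in\Sigma_k$ and the $\Pi_k$-version of the key lemma (via $\WDUAL{\Pi_k}$), and argue exactly as above. There is no real obstacle beyond bookkeeping: the only place $\DNS{\Sigma_{k-1}}$ enters is the key lemma, and everything else is elementary manipulation of negations plus the basic properties of duals in Proposition~\ref{prop:DUAL_BASIC}. The one point worth flagging is that one should \emph{not} try to prove $\neg\varphi\leftrightarrow\varphi^\bot$ directly here --- that is $\DUAL{\Sigma_k}$, hence $\DNE{\Sigma_{k-1}}$ by Proposition~\ref{prop:DNE_DUAL}, which is strictly stronger than $\DNS{\Sigma_{k-1}}$ --- so the double negation in front of $\varphi^\bot$ is essential, and the content of the proof is precisely that this double negation is absorbed harmlessly inside de Morgan's law.
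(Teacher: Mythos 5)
Your proof is correct and follows essentially the same route as the paper: the paper's own argument consists exactly of the observation that $\WDUAL{\Sigma_k}$ and $\WDUAL{\Pi_k}$ are $\HA$-equivalent to $\DNS{\Sigma_{k-1}}$ (so that $\neg\varphi^\bot\leftrightarrow\neg\neg\varphi$ and $\neg\psi^\bot\leftrightarrow\neg\neg\psi$ hold over $\HA+\DNS{\Sigma_{k-1}}$), combined with the intuitionistic fact $\neg(\xi\land\delta)\leftrightarrow\neg(\neg\neg\xi\land\delta)$, and you use precisely these two ingredients, merely spelling out the instance-by-instance translation via duals (including the converse via $(\chi^\bot)^\bot\leftrightarrow\chi$) that the paper leaves implicit.
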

\begin{proof}
Recall that each of $\WDUAL{\Sigma_k}$ and $\WDUAL{\Pi_k}$ is $\HA$-equivalent to $\DNS{\Sigma_{k-1}}$ (Proposition \ref{prop:WDUAL}). 
Then for any $\varphi \in \Sigma_k$ and $\psi \in \Pi_k$, $\HA + \DNS{\Sigma_{k-1}}$ proves $\neg \varphi^\bot \leftrightarrow\, \neg \neg \varphi$ and $\neg \psi^\bot \leftrightarrow\, \neg \neg \psi$. 
Then clauses 1 and 2 follow from this observation and the fact that $\HA$ proves $\neg (\xi \land \delta) \leftrightarrow\, \neg(\neg \neg \xi \land \delta)$. 
\end{proof}

From Proposition \ref{prop:DML_DNS}, we obtain several equivalences over $\HA + \DNS{\Sigma_{k-1}}$. 

\begin{cor}\label{cor:DML2}\leavevmode
\begin{enumerate}
	\item $\LEM{\n{\Sigma_k}}$, $\LEM{\n{\Pi_k}}$, $\DML{(\Sigma_k, \n{\Sigma_k})}$, $\DML{(\Pi_k, \n{\Pi_k})}$, $\DML{(\Sigma_k, \Pi_k)}$ and $\DML{(\n{\Sigma_k}, \n{\Pi_k})}$ are equivalent over $\HA + \DNS{\Sigma_{k-1}}$; 
	\item $\DML{\Sigma_k}$, $\DML{(\Sigma_k, \n{\Pi_k})}$ and $\DML{\n{\Pi_k}}$ are equivalent over $\HA + \DNS{\Sigma_{k-1}}$; 
	\item $\DML{\Pi_k}$, $\DML{(\Pi_k, \n{\Sigma_k})}$ and $\DML{\n{\Sigma_k}}$ are equivalent over $\HA + \DNS{\Sigma_{k-1}}$; 
	\item For $\Gamma \in \{\Sigma_k, \Pi_k, \n{\Sigma_k}, \n{\Pi_k}\}$, each of $\DML{(\Delta_k, \Gamma)}$ and $\DML{(\n{\Delta_k}, \Gamma)}$ is equivalent to $\LEM{\n{\Delta_k}}$ over $\HA + \DNS{\Sigma_{k-1}}$. 
\end{enumerate}
\end{cor}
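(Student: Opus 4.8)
The plan is to derive all four items by assembling the results already proved in this section --- chiefly Propositions \ref{prop:DML1}, \ref{prop:WLEM_DML}, \ref{prop:WLEM_DML2} and \ref{prop:DML_DNS}, together with Proposition \ref{prop:WLEMSP} and the trivial fact that $\DML{(\Gamma,\Theta)}$ and $\DML{(\Theta,\Gamma)}$ are equivalent. No fundamentally new argument is needed; the whole point is to keep track of which principle reduces to which, the only subtlety being a harmless swap of arguments in Proposition \ref{prop:DML_DNS} that is required for item 4.

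For item 1, I would first use Proposition \ref{prop:WLEM_DML} to identify $\LEM{\n{\Sigma_k}}$ with $\DML{(\Sigma_k,\n{\Sigma_k})}$ and $\LEM{\n{\Pi_k}}$ with $\DML{(\Pi_k,\n{\Pi_k})}$ over $\HA$, and Proposition \ref{prop:WLEMSP} to identify $\LEM{\n{\Sigma_k}}$ with $\LEM{\n{\Pi_k}}$ over $\HA+\DNS{\Sigma_{k-1}}$. The two remaining principles are then caught with Proposition \ref{prop:DML_DNS}: its clause 2 with $\Theta=\Pi_k$ (together with the symmetry of $\DML$) makes $\DML{(\Sigma_k,\Pi_k)}$ equivalent to $\DML{(\Pi_k,\n{\Pi_k})}$, and its clause 1 with $\Theta=\n{\Pi_k}$ makes $\DML{(\n{\Sigma_k},\n{\Pi_k})}$ equivalent to $\DML{(\Pi_k,\n{\Pi_k})}$, both over $\HA+\DNS{\Sigma_{k-1}}$. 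Items 2 and 3 are mirror images of each other and use only Proposition \ref{prop:DML_DNS}: for item 2, clause 2 with $\Theta=\Sigma_k$ makes $\DML{(\Sigma_k,\n{\Pi_k})}$ equivalent to $\DML{\Sigma_k}$ and with $\Theta=\n{\Pi_k}$ makes $\DML{\n{\Pi_k}}$ equivalent to $\DML{(\Sigma_k,\n{\Pi_k})}$, all over $\HA+\DNS{\Sigma_{k-1}}$; item 3 is obtained identically from clause 1.

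For item 4, Proposition \ref{prop:WLEM_DML2} already provides, over $\HA$, the equivalence of $\LEM{\n{\Delta_k}}$ with each of $\DML{(\Delta_k,\n{\Sigma_k})}$, $\DML{(\Delta_k,\n{\Pi_k})}$, $\DML{(\n{\Delta_k},\Sigma_k)}$ and $\DML{(\n{\Delta_k},\Pi_k)}$, i.e. with the cases $\Gamma\in\{\n{\Sigma_k},\n{\Pi_k}\}$ of the $\Delta_k$-family and $\Gamma\in\{\Sigma_k,\Pi_k\}$ of the $\n{\Delta_k}$-family. For the four remaining principles $\DML{(\Delta_k,\Sigma_k)}$, $\DML{(\Delta_k,\Pi_k)}$, $\DML{(\n{\Delta_k},\n{\Sigma_k})}$ and $\DML{(\n{\Delta_k},\n{\Pi_k})}$, the implication from $\LEM{\n{\Delta_k}}$ is given by clauses 3 and 4 of Proposition \ref{prop:DML1} (over $\HA$ already); for the converse I would rerun the proof of Proposition \ref{prop:DML_DNS} with the transformation acting on the second ($\psi$-)slot, keeping the $\Delta_k$-side fixed. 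Concretely, $\HA+\DNS{\Sigma_{k-1}}$ proves $\neg\xi^\bot\leftrightarrow\neg\neg\xi$ (and hence $\neg\neg\xi^\bot\leftrightarrow\neg\xi$) whenever $\xi$ is a $\Sigma_k$ or $\Pi_k$ formula (as in the proof of Proposition \ref{prop:DML_DNS}, via Propositions \ref{prop:WDUAL} and \ref{prop:DUAL_BASIC}.3), and $\HA$ proves $\neg(\alpha\land\beta)\leftrightarrow(\alpha\to\neg\beta)$; hence replacing a $\Pi_k$ conjunct/disjunct in an instance by the negation of the dual $\Sigma_k$ formula (and dually) converts one principle's instance into the other's. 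This shows, over $\HA+\DNS{\Sigma_{k-1}}$, that $\DML{(\Delta_k,\Pi_k)}$ is equivalent to $\DML{(\Delta_k,\n{\Sigma_k})}$, $\DML{(\Delta_k,\Sigma_k)}$ to $\DML{(\Delta_k,\n{\Pi_k})}$, $\DML{(\n{\Delta_k},\n{\Sigma_k})}$ to $\DML{(\n{\Delta_k},\Pi_k)}$ and $\DML{(\n{\Delta_k},\n{\Pi_k})}$ to $\DML{(\n{\Delta_k},\Sigma_k)}$, reducing each remaining case to one already handled.

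The main --- indeed the only real --- obstacle is this last step. Proposition \ref{prop:DML_DNS} is stated with the transformed class in the first argument, whereas in item 4 the $\Delta_k$-side, which carries the defining hypothesis $\varphi\leftrightarrow\varphi'$, must be kept fixed while the second argument is transformed. Since the de Morgan shape $\neg(\cdot\land\cdot)\to\neg\cdot\lor\neg\cdot$ is symmetric in its two components and the hypothesis $\varphi\leftrightarrow\varphi'$ is merely carried along, the proof of Proposition \ref{prop:DML_DNS} applies with no change; in the write-up I would either note this explicitly or first record the evident ``second-argument'' version of Proposition \ref{prop:DML_DNS} as a separate lemma.
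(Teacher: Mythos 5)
Your proposal is correct and follows essentially the same route as the paper: items 1--3 from Propositions \ref{prop:WLEMSP}, \ref{prop:WLEM_DML} and \ref{prop:DML_DNS}, and item 4 by swapping $\Sigma_k/\Pi_k$ for $\n{\Pi_k}/\n{\Sigma_k}$ in the second argument over $\HA+\DNS{\Sigma_{k-1}}$ and then invoking Proposition \ref{prop:WLEM_DML2}. The ``second-argument'' variant of Proposition \ref{prop:DML_DNS} that you flag as the only delicate point is exactly what the paper uses tacitly in its proof of item 4, so your explicit remark (or separate lemma) is a harmless, indeed clarifying, addition.
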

\begin{proof}
1. This is a consequence of Propositions \ref{prop:WLEMSP}, \ref{prop:WLEM_DML} and \ref{prop:DML_DNS}. 

2 and 3 are immediate from Proposition \ref{prop:DML_DNS}. 

4. The principles $\DML{(\Delta_k, \Sigma_k)}$, $\DML{(\Delta_k, \Pi_k)}$, $\DML{(\n{\Delta_k}, \n{\Sigma_k})}$ and $\DML{(\n{\Delta_k}, \n{\Pi_k})}$ are equivalent to $\DML{(\Delta_k, \n{\Pi_k})}$, $\DML{(\Delta_k, \n{\Sigma_k})}$, $\DML{(\n{\Delta_k}, \Pi_k)}$ and $\DML{(\n{\Delta_k}, \Sigma_k)}$ over $\HA + \DNS{\Sigma_{k-1}}$, respectively. 
Then, by Proposition \ref{prop:WLEM_DML2}, each of them is equivalent to $\LEM{\n{\Delta_k}}$. 
\end{proof}

From Corollaries \ref{cor:NLEM1}, \ref{cor:NLEM2}, \ref{cor:DML2} and Proposition \ref{prop:WLEM_DML}, we also obtain the following. 

\begin{cor}\label{cor:DML_LEM}
Let $P$ be one of $\DML{(\Sigma_k, \n{\Sigma_k})}$, $\DML{(\Pi_k, \n{\Pi_k})}$, $\DML{(\Sigma_k, \Pi_k)}$ and $\DML{(\n{\Sigma_k}, \n{\Pi_k})}$. 
\begin{enumerate}
	\item $P + \DNE{\Sigma_{k-1}}$ is equivalent to $\LEM{\Pi_k}$ over $\HA$; 
	\item $P + \DNE{\Sigma_k}$ is equivalent to $\LEM{\Sigma_k}$ over $\HA$. 
\end{enumerate}
\end{cor}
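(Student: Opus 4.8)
The plan is to reduce everything to the equivalences already established for $\LEM{\n{\Sigma_k}}$, using that $\DNE{\Sigma_{k-1}}$ furnishes $\DNS{\Sigma_{k-1}}$. First I would record the basic observation that by Proposition \ref{prop:DNS1}.1 (applied with $k-1$ in place of $k$) one has $\HA + \DNE{\Sigma_{k-1}} \vdash \DNS{\Sigma_{k-1}}$, and likewise $\HA + \DNE{\Sigma_k} \vdash \DNE{\Sigma_{k-1}} \vdash \DNS{\Sigma_{k-1}}$ since $\Sigma_{k-1} \subseteq \Sigma_k$. Hence Corollary \ref{cor:DML2}.1 is available over both $\HA + \DNE{\Sigma_{k-1}}$ and $\HA + \DNE{\Sigma_k}$, and it tells us that each of the four principles $\DML{(\Sigma_k, \n{\Sigma_k})}$, $\DML{(\Pi_k, \n{\Pi_k})}$, $\DML{(\Sigma_k, \Pi_k)}$, $\DML{(\n{\Sigma_k}, \n{\Pi_k})}$ is equivalent to $\LEM{\n{\Sigma_k}}$ over those theories. (For the two principles of the form $\DML{(\Gamma, \n{\Gamma})}$ this already holds over $\HA$ by Proposition \ref{prop:WLEM_DML}, but we do not need that sharper fact.)

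For clause 1, over $\HA$ the theory $P + \DNE{\Sigma_{k-1}}$ contains $\DNS{\Sigma_{k-1}}$, so by the observation above $P$ is there equivalent to $\LEM{\n{\Sigma_k}}$; thus $\HA + P + \DNE{\Sigma_{k-1}}$ and $\HA + \LEM{\n{\Sigma_k}} + \DNE{\Sigma_{k-1}}$ prove the same theorems, and the latter equals $\LEM{\Pi_k}$ over $\HA$ by Corollary \ref{cor:NLEM1}. For the converse inclusion, Corollary \ref{cor:NLEM1} gives $\HA + \LEM{\Pi_k} \vdash \DNE{\Sigma_{k-1}}$ (hence $\DNS{\Sigma_{k-1}}$) and $\HA + \LEM{\Pi_k} \vdash \LEM{\n{\Sigma_k}}$, so applying the observation once more inside $\HA + \LEM{\Pi_k}$ yields $\HA + \LEM{\Pi_k} \vdash P$, closing the loop.

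Clause 2 is entirely parallel, replacing $\DNE{\Sigma_{k-1}}$ by $\DNE{\Sigma_k}$ and Corollary \ref{cor:NLEM1} by Corollary \ref{cor:NLEM2}: over $\HA$, $P + \DNE{\Sigma_k}$ proves $\DNS{\Sigma_{k-1}}$, hence is equivalent to $\LEM{\n{\Sigma_k}} + \DNE{\Sigma_k}$, which is $\LEM{\Sigma_k}$ by Corollary \ref{cor:NLEM2}; conversely $\HA + \LEM{\Sigma_k}$ proves $\DNE{\Sigma_k}$ and $\LEM{\n{\Sigma_k}}$ by Corollary \ref{cor:NLEM2}, hence also $\DNS{\Sigma_{k-1}}$, and therefore $P$. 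Since none of the four choices of $P$ is treated differently at any point, the statement follows uniformly. The only point requiring care is the direction of the argument — one must verify that $\LEM{\Pi_k}$ (resp.\ $\LEM{\Sigma_k}$) genuinely recovers $P$, not merely $\LEM{\n{\Sigma_k}}$ — but this is immediate once $\DNS{\Sigma_{k-1}}$ is seen to be available on that side as well, so there is no real obstacle beyond this bookkeeping.
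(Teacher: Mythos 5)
Your proposal is correct and follows essentially the same route as the paper, which obtains this corollary precisely from Corollaries \ref{cor:NLEM1}, \ref{cor:NLEM2} and \ref{cor:DML2} (together with Proposition \ref{prop:WLEM_DML}): the added $\DNE{\Sigma_{k-1}}$ (resp.\ $\DNE{\Sigma_k}$) supplies $\DNS{\Sigma_{k-1}}$, under which each choice of $P$ becomes interchangeable with $\LEM{\n{\Sigma_k}}$, and the two corollaries on negated LEM close both directions. Your observation that Proposition \ref{prop:WLEM_DML} is not strictly needed is also accurate, since Corollary \ref{cor:DML2}.1 already handles all four choices of $P$ uniformly.
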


The following corollary follows from Propositions \ref{prop:WLEM_DML2}, \ref{prop:NLEM}.(2) and Corollary \ref{cor:DML2}.(4). 

\begin{cor}\label{cor:DML_LEM3}
Let $\Gamma \in \{\Sigma_k, \Pi_k, \n{\Sigma_k}, \n{\Pi_k}\}$. 
Let $P$ be one of the principles $\DML{(\Delta_k, \Gamma)}$, $\DML{(\n{\Delta_k}, \Gamma)}$ and $\DML{(\Delta_k, \n{\Delta_k})}$. 
Then $P + \DNE{\Sigma_{k-1}}$ is equivalent to $\LEM{\Delta_k}$ over $\HA$. 
\end{cor}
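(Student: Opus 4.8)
The plan is to chain together equivalences already established. Observe first that, by Proposition~\ref{prop:DNS1}.1, any theory containing $\HA + \DNE{\Sigma_{k-1}}$ also proves $\DNS{\Sigma_{k-1}}$, so every equivalence in Corollary~\ref{cor:DML2}.4 is available over $\HA + \DNE{\Sigma_{k-1}}$.

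For the forward direction I would show $\HA + P + \DNE{\Sigma_{k-1}} \vdash \LEM{\Delta_k}$ as follows. If $P$ is $\DML{(\Delta_k, \Gamma)}$ or $\DML{(\n{\Delta_k}, \Gamma)}$ with $\Gamma \in \{\Sigma_k, \Pi_k, \n{\Sigma_k}, \n{\Pi_k}\}$, then Corollary~\ref{cor:DML2}.4 gives $\HA + \DNS{\Sigma_{k-1}} + P \vdash \LEM{\n{\Delta_k}}$, and hence $\HA + P + \DNE{\Sigma_{k-1}} \vdash \LEM{\n{\Delta_k}}$; if instead $P$ is $\DML{(\Delta_k, \n{\Delta_k})}$, then Proposition~\ref{prop:WLEM_DML2} (clauses~1 and~4) already gives $\HA + P \vdash \LEM{\n{\Delta_k}}$. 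In either case $\HA + P + \DNE{\Sigma_{k-1}} \vdash \LEM{\n{\Delta_k}} + \DNE{\Sigma_{k-1}}$, and Proposition~\ref{prop:NLEM}.2 then yields $\LEM{\Delta_k}$.

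For the converse, Proposition~\ref{prop:NLEM}.2 gives both $\HA + \LEM{\Delta_k} \vdash \DNE{\Sigma_{k-1}}$ and $\HA + \LEM{\Delta_k} \vdash \LEM{\n{\Delta_k}}$, hence also $\HA + \LEM{\Delta_k} \vdash \DNS{\Sigma_{k-1}}$ by Proposition~\ref{prop:DNS1}.1. Running Corollary~\ref{cor:DML2}.4 (for the $\DML{(\Delta_k, \Gamma)}$ and $\DML{(\n{\Delta_k}, \Gamma)}$ forms) or Proposition~\ref{prop:WLEM_DML2} (for $\DML{(\Delta_k, \n{\Delta_k})}$) in the reverse direction yields $\HA + \DNS{\Sigma_{k-1}} + \LEM{\n{\Delta_k}} \vdash P$, so $\HA + \LEM{\Delta_k} \vdash P$; together with $\HA + \LEM{\Delta_k} \vdash \DNE{\Sigma_{k-1}}$ this gives $\HA + \LEM{\Delta_k} \vdash P + \DNE{\Sigma_{k-1}}$.

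There is no genuine obstacle here: the corollary is essentially bookkeeping on top of Propositions~\ref{prop:NLEM}, \ref{prop:WLEM_DML2} and Corollary~\ref{cor:DML2}.4. The only points needing a moment's care are that Corollary~\ref{cor:DML2}.4 is phrased over $\HA + \DNS{\Sigma_{k-1}}$ rather than $\HA$ — so one invokes Proposition~\ref{prop:DNS1}.1 to see that the $\DNE{\Sigma_{k-1}}$ present on both sides of the target equivalence supplies the required $\DNS{\Sigma_{k-1}}$ — and that the third form $\DML{(\Delta_k, \n{\Delta_k})}$ is instead disposed of one step earlier, already over $\HA$, via Proposition~\ref{prop:WLEM_DML2}.
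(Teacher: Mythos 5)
Your proposal is correct and follows exactly the route the paper intends: the paper states this corollary as an immediate consequence of Proposition \ref{prop:WLEM_DML2}, Proposition \ref{prop:NLEM}.2 and Corollary \ref{cor:DML2}.4, and your chain (with Proposition \ref{prop:DNS1}.1 supplying $\DNS{\Sigma_{k-1}}$ from $\DNE{\Sigma_{k-1}}$, and Fact \ref{fact:ABHK}.5 behind $\LEM{\Delta_k} \vdash \DNE{\Sigma_{k-1}}$ via Proposition \ref{prop:NLEM}.2) is precisely that bookkeeping carried out. No gaps.
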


We get the following corollary. 

\begin{cor}\label{cor:PDML_DLEM}
Let $\Gamma \in \{\Sigma_k, \Pi_k, \n{\Sigma_k}, \n{\Pi_k}\}$. 
\begin{enumerate}
	\item $\HA + \DML{\Gamma} + \DNS{\Sigma_{k-1}} \vdash \LEM{\n{\Delta_k}}$;  
	\item $\HA + \DML{\Gamma} + \DNE{\Sigma_{k-1}} \vdash \LEM{\Delta_k}$. 
\end{enumerate}
\end{cor}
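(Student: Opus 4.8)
The plan is to reduce both clauses to the already-proven equivalences in Corollary \ref{cor:DML2} and Corollary \ref{cor:DML_LEM3}, using the $\Delta_k$-weakening of de Morgan's law supplied by Proposition \ref{prop:DML0}. No new idea seems needed; the work is entirely bookkeeping about which membership class $\Gamma$ belongs to.

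For clause 1, fix $\Gamma \in \{\Sigma_k, \Pi_k, \n{\Sigma_k}, \n{\Pi_k}\}$ and note that $\DML{\Gamma}$ is by definition $\DML{(\Gamma, \Gamma)}$. Applying Proposition \ref{prop:DML0} with $\Theta := \Gamma$: if $\Gamma \in \{\Sigma_k, \Pi_k\}$ then $\HA + \DML{\Gamma} \vdash \DML{(\Delta_k, \Gamma)}$ (clause 1 of that proposition), while if $\Gamma \in \{\n{\Sigma_k}, \n{\Pi_k}\}$ then $\HA + \DML{\Gamma} \vdash \DML{(\n{\Delta_k}, \Gamma)}$ (clause 2, with the ``$\Gamma$'' there taken to be $\Sigma_k$ resp.\ $\Pi_k$). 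In all four cases, Corollary \ref{cor:DML2}.4 says that the resulting principle ($\DML{(\Delta_k, \Gamma)}$ or $\DML{(\n{\Delta_k}, \Gamma)}$) is equivalent to $\LEM{\n{\Delta_k}}$ over $\HA + \DNS{\Sigma_{k-1}}$. Composing the two implications yields $\HA + \DML{\Gamma} + \DNS{\Sigma_{k-1}} \vdash \LEM{\n{\Delta_k}}$, which is clause 1.

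For clause 2, recall from Proposition \ref{prop:DNS1}.1 that $\HA + \DNE{\Sigma_{k-1}} \vdash \DNS{\Sigma_{k-1}}$, so clause 1 already gives $\HA + \DML{\Gamma} + \DNE{\Sigma_{k-1}} \vdash \LEM{\n{\Delta_k}}$. Then Proposition \ref{prop:NLEM}.2, according to which $\LEM{\n{\Delta_k}} + \DNE{\Sigma_{k-1}}$ is equivalent to $\LEM{\Delta_k}$ over $\HA$, lets us conclude $\HA + \DML{\Gamma} + \DNE{\Sigma_{k-1}} \vdash \LEM{\Delta_k}$. (Alternatively, one may combine Proposition \ref{prop:DML0} directly with Corollary \ref{cor:DML_LEM3} and skip the detour through clause 1.)

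Since every ingredient is already in place, there is no real obstacle; the only point requiring care is to match the membership class of $\Gamma$ consistently — invoking the $\n{}$-version of Proposition \ref{prop:DML0} precisely when $\Gamma \in \{\n{\Sigma_k}, \n{\Pi_k}\}$, and then reading Corollary \ref{cor:DML2}.4 with its parameter instantiated to that same $\Gamma$.
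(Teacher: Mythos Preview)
Your proof is correct and follows essentially the same route as the paper: Proposition \ref{prop:DML0} to weaken $\DML{\Gamma}$ down to a $\Delta_k$-version, then Corollary \ref{cor:DML2}.4 for clause 1 and Corollary \ref{cor:DML_LEM3} for clause 2. For clause 2 you first detour through clause 1 and Proposition \ref{prop:NLEM}.2, whereas the paper invokes Corollary \ref{cor:DML_LEM3} directly (as you yourself note in your parenthetical alternative); both are fine, and your explicit case split on whether $\Gamma$ is negated is in fact more careful than the paper's terse ``$\DML{\Gamma}$ implies $\DML{(\Delta_k, \Gamma)}$''.
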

\begin{proof}
1. Since $\DML{\Gamma}$ implies $\DML{(\Delta_k, \Gamma)}$ by Proposition \ref{prop:DML0}, the statement immediately follows from Corollary \ref{cor:DML2}.(4). 

2. This follows from Corollary \ref{cor:DML_LEM3}. 
\end{proof}

Corollary \ref{cor:PDML_DLEM}.(2) generalizes Fact \ref{fact:FIN3}. 
Also we generalize Fact \ref{fact:Ishi}.(1).

\begin{prop}\label{prop:SDNE_DML}
$\HA + \DNE{\Sigma_k} \vdash \DML{\Pi_k}$. 
\end{prop}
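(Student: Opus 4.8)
The plan is to reduce $\DML{\Pi_k}$, over $\HA + \DNE{\Sigma_k}$, to the double negation elimination for a single $\Sigma_k$ formula, by passing through duals. Fix $\varphi, \psi \in \Pi_k$. Their duals $\varphi^\bot, \psi^\bot$ are $\Sigma_k$ by Proposition~\ref{prop:DUAL_BASIC}.1, and by Proposition~\ref{prop:DNE_DUAL} the theory $\HA + \DNE{\Sigma_k}$ proves $\DUAL{\Pi_k}$; combining this with Proposition~\ref{prop:DUAL_BASIC}.3 gives $\HA + \DNE{\Sigma_k} \vdash (\neg \varphi \leftrightarrow \varphi^\bot) \land (\neg \psi \leftrightarrow \psi^\bot)$. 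Hence the instance $\neg(\varphi \land \psi) \to \neg\varphi \lor \neg\psi$ of $\DML{\Pi_k}$ is, over $\HA + \DNE{\Sigma_k}$, equivalent to $\neg(\varphi \land \psi) \to \varphi^\bot \lor \psi^\bot$.

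Next I would record that $\varphi^\bot \lor \psi^\bot$ is $\HA$-equivalent to a $\Sigma_k$ formula. Writing $\varphi^\bot \equiv \exists u\, \varphi_1$ and $\psi^\bot \equiv \exists v\, \psi_1$ with $\varphi_1, \psi_1 \in \Pi_{k-1}$, the disjunction is equivalent over $\HA$ to $\exists w\,\bigl(((w)_0 = 0 \land \varphi_1((w)_1)) \lor ((w)_0 \neq 0 \land \psi_1((w)_1))\bigr)$, and since the case distinction $(w)_0 = 0$ is decidable and quantifier-free, the leading universal quantifiers of $\varphi_1, \psi_1$ can be pulled to the front, exhibiting the matrix as a $\Pi_{k-1}$ formula; so the whole formula is $\Sigma_k$. (This is just the standard closure of $\Sigma_k$ under $\lor$ over $\HA$.) Applying $\DNE{\Sigma_k}$ to it yields $\HA + \DNE{\Sigma_k} \vdash \neg\neg(\varphi^\bot \lor \psi^\bot) \to \varphi^\bot \lor \psi^\bot$.

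Finally I would invoke the purely intuitionistic fact $\HA \vdash \neg(\varphi \land \psi) \to \neg\neg(\neg\varphi \lor \neg\psi)$ (indeed $\neg\neg(\neg\varphi \lor \neg\psi) \leftrightarrow \neg(\neg\neg\varphi \land \neg\neg\psi) \leftrightarrow \neg\neg\neg(\varphi \land \psi) \leftrightarrow \neg(\varphi \land \psi)$ intuitionistically) and chain the implications: over $\HA + \DNE{\Sigma_k}$ we have $\neg(\varphi\land\psi) \to \neg\neg(\neg\varphi\lor\neg\psi) \to \neg\neg(\varphi^\bot\lor\psi^\bot) \to \varphi^\bot\lor\psi^\bot \to \neg\varphi\lor\neg\psi$, where the second and last steps use the equivalences $\neg\varphi\leftrightarrow\varphi^\bot$ and $\neg\psi\leftrightarrow\psi^\bot$ established above (the second step being their preservation under $\neg\neg$) and the third is the $\DNE{\Sigma_k}$ step. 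This is exactly the desired instance of $\DML{\Pi_k}$. The only routine content is the $\Sigma_k$-closure under disjunction, and for $k = 1$ the argument specializes to Ishihara's (Markov's principle rewriting $\neg\forall$ as $\exists\neg$); I do not anticipate a genuine obstacle.
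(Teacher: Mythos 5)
Your proof is correct, but it takes a recognizably different route from the paper's. You argue directly on $\Pi_k$ formulas: via Proposition~\ref{prop:DNE_DUAL} you have $\DUAL{\Pi_k}$ available in $\HA + \DNE{\Sigma_k}$, which together with Proposition~\ref{prop:DUAL_BASIC}.3 lets you replace $\neg\varphi$, $\neg\psi$ by their $\Sigma_k$ duals, and you then apply $\DNE{\Sigma_k}$ once to (the $\Sigma_k$ formula $\HA$-equivalent to) $\varphi^\bot \lor \psi^\bot$. The paper instead first reduces $\DML{\Pi_k}$ to $\DML{\n{\Sigma_k}}$ via Corollary~\ref{cor:DML2}.3 --- an equivalence that only needs $\DNS{\Sigma_{k-1}}$, i.e.\ the weak dual principles of Proposition~\ref{prop:WDUAL}, and $\DNE{\Sigma_k}$ certainly supplies $\DNS{\Sigma_{k-1}}$ --- and then runs the same kernel argument on $\Sigma_k$ formulas: from $\neg(\neg\varphi \land \neg\psi)$ infer $\neg\neg(\varphi \lor \psi)$, use that $\varphi \lor \psi$ is $\HA$-equivalent to a $\Sigma_k$ formula, and apply $\DNE{\Sigma_k}$. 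So both proofs turn on the same two ingredients (closure of $\Sigma_k$ under $\lor$ over $\HA$, and a single application of $\DNE{\Sigma_k}$ to the resulting disjunction obtained under a double negation); yours is more self-contained, bypassing the $\DNS$/weak-dual machinery at the cost of invoking the full dual principle up front, whereas the paper's factoring through $\DML{\n{\Sigma_k}}$ isolates the finer fact that $\DML{\Pi_k}$ and $\DML{\n{\Sigma_k}}$ already coincide modulo $\DNS{\Sigma_{k-1}}$, which is reused elsewhere in the paper. Your side remarks are also in order: the $k=0$ case is trivial, and applying $\DNE{\Sigma_k}$ through an $\HA$-equivalence (preserved under $\neg\neg$) is exactly the move the paper itself makes.
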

\begin{proof}
Since $\HA + \DNE{\Sigma_k}$ proves $\DNS{\Sigma_{k-1}}$, it is sufficient to show that $\HA + \DNE{\Sigma_k} \vdash \DML{\n{\Sigma_k}}$ by Corollary \ref{cor:DML2}.(3). 
Let $\varphi$ and $\psi$ be any $\Sigma_k$ formulas. 
Since $\HA \vdash \neg (\neg \varphi \land \neg \psi) \to \neg \neg (\varphi \lor \psi)$ and $\varphi \lor \psi$ is $\HA$-equivalent to some $\Sigma_k$ formula, we obtain
\[
	\HA + \DNE{\Sigma_k} \vdash \neg (\neg \varphi \land \neg \psi) \to \varphi \lor \psi. 
\]
Therefore
\[
	\HA + \DNE{\Sigma_k} \vdash \neg (\neg \varphi \land \neg \psi) \to \neg \neg \varphi \lor \neg \neg \psi. \qedhere
\]
\end{proof}

By combining Corollary \ref{cor:PDML_DLEM}.(2) and Proposition \ref{prop:SDNE_DML}, we obtain a proof of Fact \ref{fact:ABHK}.(4).

\subsection{The collection principles and de Morgan's law}

In this subsection, we investigate the so-called collection principles. 
The following proposition is stated in \cite{Burr}. 

\begin{prop}\label{prop:Coll1}
For any formula $\varphi(y, z)$, 
\[
	\HA \vdash \forall y < x \, \exists z \, \varphi(y, z) \to \exists w \, \forall y < x \, \exists z < w \, \varphi(y, z).
\] 
\end{prop}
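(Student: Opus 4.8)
The plan is to prove the implication by induction on $x$, regarding
\[
\forall y < x\, \exists z\, \varphi(y,z) \to \exists w\, \forall y < x\, \exists z < w\, \varphi(y,z)
\]
as a single formula with $x$ as the induction variable, the remaining free variables of $\varphi$ being treated as parameters. Since $\HA$ contains the induction scheme for arbitrary formulas, this is a legitimate application of induction even though the formula in question is not of prenex normal form; this is the only point that deserves a word of care, and it is harmless precisely because the induction scheme of $\HA$ is unrestricted.

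For the base case $x = 0$, the hypothesis $\forall y < 0\, \exists z\, \varphi(y,z)$ holds vacuously, and choosing $w = 0$ makes $\forall y < 0\, \exists z < 0\, \varphi(y,z)$ hold vacuously as well, so the implication is provable in $\HA$. For the induction step I would argue in $\HA$ as follows. Assume the instance for $x$, and assume $\forall y < x + 1\, \exists z\, \varphi(y,z)$. Using $\HA \vdash y < x + 1 \leftrightarrow (y < x \lor y = x)$, we obtain on the one hand $\forall y < x\, \exists z\, \varphi(y,z)$, hence by the induction hypothesis some $w_0$ with $\forall y < x\, \exists z < w_0\, \varphi(y,z)$; and on the other hand $\exists z\, \varphi(x, z)$ (instantiating $y:=x$ and using $x < x+1$), hence by $\exists$-elimination some $z_0$ with $\varphi(x, z_0)$. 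Now put $w := \max(w_0, z_0 + 1)$, which is a term of the language since $\max$ is primitive recursive. Given any $y$ with $y < x + 1$, either $y < x$, in which case the conclusion of the induction hypothesis yields some $z < w_0 \leq w$ with $\varphi(y,z)$, or $y = x$, in which case $z_0 < z_0 + 1 \leq w$ and $\varphi(y, z_0)$; in both cases $\exists z < w\, \varphi(y,z)$. Hence $\forall y < x + 1\, \exists z < w\, \varphi(y,z)$, which gives $\exists w\, \forall y < x+1\, \exists z < w\, \varphi(y,z)$ and completes the induction step.

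Every inference above is constructively valid: the existential witnesses $w_0$ and $z_0$ are introduced by $\exists$-elimination from hypotheses (the goal not mentioning them), the case distinction is an instance of disjunction elimination, and $w$ is built from $w_0, z_0$ by the terms $\max$ and $+1$. Thus the displayed implication is provable in $\HA$, and I do not expect any genuine obstacle; the proof is a routine induction, and the only subtlety — inducting on a non-prenex formula — is absorbed by the unrestricted induction scheme of $\HA$.
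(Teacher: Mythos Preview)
Your proof is correct and follows essentially the same approach as the paper: induction on $x$ for the full implication, splitting $\forall y < x+1\,\exists z\,\varphi(y,z)$ into the $y<x$ part (handled by the induction hypothesis) and the $y=x$ part, then combining the two bounds into a single witness $w$. The only cosmetic difference is that the paper takes $w := u + v + 1$ where you take $w := \max(w_0, z_0+1)$; both are primitive recursive terms that dominate the required witnesses, so the arguments are interchangeable.
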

\begin{proof}
Let $\psi(x)$ be the formula
\[
	\forall y < x \, \exists z \, \varphi(y, z) \to \exists w \, \forall y < x \, \exists z < w \, \varphi(y, z), 
\]
and this proposition is proved by applying the induction axiom for $\psi(x)$. 
%
%
%
\end{proof}

We introduce the following contrapositive version of the collection principle. 

\begin{defn}[The contrapositive collection principles]
Let $\Gamma$ be any set of formulas. 
\begin{tabbing}
\hspace{20mm} \= \hspace{75mm} \= \hspace{50mm} \kill
$\COLL{\Gamma}$ \> $\forall w \, \exists y < x \, \forall z < w \, \varphi(y, z) \to \exists y < x \, \forall z \, \varphi(y, z)$ \> ($\varphi(y, z) \in \Gamma$) 
\end{tabbing}
\end{defn}

\begin{prop}\label{prop:Coll2}
The following are equivalent over $\HA$: 
\begin{enumerate}
	\item $\COLL{\Pi_{k+1}}$. 
	\item $\COLL{\Sigma_k}$. 
\end{enumerate}
\end{prop}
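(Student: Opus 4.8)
The plan is to prove the two implications separately. The direction $\HA + \COLL{\Pi_{k+1}} \vdash \COLL{\Sigma_k}$ is immediate: since $\Sigma_k \subseteq \Pi_{k+1}$, every $\Sigma_k$ formula $\varphi(y,z)$ is $\HA$-equivalent to a $\Pi_{k+1}$ formula $\tilde\varphi(y,z)$ with $\FV(\varphi) = \FV(\tilde\varphi)$, and then $\HA$ proves that the instance of $\COLL{\Sigma_k}$ for $\varphi$ is equivalent to the instance of $\COLL{\Pi_{k+1}}$ for $\tilde\varphi$.

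For the converse $\HA + \COLL{\Sigma_k} \vdash \COLL{\Pi_{k+1}}$, the idea is to absorb, via a pairing function, the leading universal quantifier of a $\Pi_{k+1}$ formula into the variable over which collection is performed. Fix $\varphi(y,z) \in \Pi_{k+1}$; by the normal form recalled in Section \ref{section:pre} it suffices to treat the case $\varphi(y,z) \equiv \forall u\, \varphi'(y,z,u)$ with $\varphi'(y,z,u) \in \Sigma_k$, and we may further assume that $w$ and $x$ are not free in $\varphi'$. Fix primitive recursive functions $j, (\cdot)_0, (\cdot)_1$ with $(j(a,b))_0 = a$, $(j(a,b))_1 = b$ and $(n)_0 \leq n$ (for instance the Cantor pairing and its inverses). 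Put $\psi(y,z) :\equiv \varphi'(y,(z)_0,(z)_1)$; since it is obtained from $\varphi'$ by substituting primitive recursive terms for free variables, $\psi(y,z)$ is again a $\Sigma_k$ formula.

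Next I would verify two $\HA$-provable implications. First,
\[
\forall w\, \exists y < x\, \forall z < w\, \varphi(y,z) \;\to\; \forall w\, \exists y < x\, \forall z < w\, \psi(y,z),
\]
because, given $w$, if $y < x$ satisfies $\forall z < w\, \forall u\, \varphi'(y,z,u)$ then for every $z < w$ we have $(z)_0 \leq z < w$, hence $\varphi'(y,(z)_0,(z)_1)$. Second,
\[
\exists y < x\, \forall z\, \psi(y,z) \;\to\; \exists y < x\, \forall z\, \varphi(y,z),
\]
because, if $y < x$ satisfies $\forall z\, \varphi'(y,(z)_0,(z)_1)$ and $z,u$ are arbitrary, then instantiating at $j(z,u)$ gives $\varphi'(y,z,u)$, so $\forall u\, \varphi'(y,z,u) \equiv \varphi(y,z)$. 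Composing these two implications with the instance of $\COLL{\Sigma_k}$ for $\psi$ yields precisely the instance of $\COLL{\Pi_{k+1}}$ for $\varphi$.

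I do not anticipate any genuine difficulty here; the only points needing care are (i) choosing the pairing function monotone enough, i.e.\ with $(n)_0 \leq n$, so that the bounded quantifier $\forall z < w$ transfers from $\varphi$ to $\psi$, and (ii) observing that substitution of primitive recursive terms into a $\Sigma_k$ formula stays within $\Sigma_k$, so that $\psi$ is a bona fide instance for $\COLL{\Sigma_k}$. Note that, in contrast with Proposition \ref{prop:Coll1}, no use of the induction axiom is needed in this argument.
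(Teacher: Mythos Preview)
Your proof is correct and follows essentially the same approach as the paper. The paper phrases the key step slightly differently, first recording that $\HA + \COLL{\Sigma_k}$ proves the two-variable form
\[
\forall w\,\exists y<x\,\forall z_0<w\,\forall z_1<w\,\varphi(y,z_0,z_1)\;\to\;\exists y<x\,\forall z_0\,\forall z_1\,\varphi(y,z_0,z_1)
\]
via a pairing function, and then reading off $\COLL{\Pi_{k+1}}$ from this; your direct construction of $\psi(y,z):\equiv\varphi'(y,(z)_0,(z)_1)$ is exactly how one would prove that two-variable form, so the two arguments coincide.
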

\begin{proof}
By using a primitive recursive pairing function, it is easy to show that for any $\Sigma_k$ formula $\varphi(y, z_0, z_1)$, $\HA + \COLL{\Sigma_k}$ proves
\begin{equation}\label{eqColl2}
	\forall w \, \exists y < x \, \forall z_0 < w \, \forall z_1 < w\, \varphi(y, z_0, z_1) \to \exists y < x \, \forall z_0\, \forall z_1 \, \varphi(y, z_0, z_1). 
\end{equation}
From this observation, the equivalence of $\COLL{\Sigma_k}$ and $\COLL{\Pi_{k+1}}$ immediately follows. 
\end{proof}

The following proposition extends \cite[Corollary 4.5]{FuKa}. 

\begin{prop}\label{prop:Coll3}
$\HA + \DML{\Sigma_{k+1}} + \DNE{\Sigma_k} \vdash \COLL{\Pi_{k+1}}$. 
\end{prop}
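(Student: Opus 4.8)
The goal is to derive $\COLL{\Pi_{k+1}}$ in $\HA + \DML{\Sigma_{k+1}} + \DNE{\Sigma_k}$. By Proposition~\ref{prop:Coll2} it suffices to derive $\COLL{\Sigma_k}$, i.e. to show for an arbitrary $\Sigma_k$ formula $\varphi(y,z)$ that
\[
\HA + \DML{\Sigma_{k+1}} + \DNE{\Sigma_k} \vdash \forall w\, \exists y < x\, \forall z < w\, \varphi(y,z) \to \exists y < x\, \forall z\, \varphi(y,z).
\]
First I would set up a contrapositive. Work under the hypothesis $\forall w\, \exists y < x\, \forall z < w\, \varphi(y,z)$ and assume $\neg \exists y < x\, \forall z\, \varphi(y,z)$, aiming for a contradiction (the conclusion will then follow since $\exists y < x\, \forall z\, \varphi(y,z)$ is equivalent to a $\Pi_{k+1}$ formula and we will have derived its double negation, but we actually want to avoid $\DNE{\Pi_{k+1}}$, so the cleaner route is to massage everything into a $\Sigma_{k+1}$-de Morgan instance and apply $\DML{\Sigma_{k+1}}$ directly).

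The key idea is to encode the bounded block $\exists y < x$ as a finite conjunction. For each fixed $x$, the formula $\exists y < x\, \forall z\, \varphi(y,z)$ is, provably in $\HA$, equivalent to $\bigvee_{y<x}\forall z\,\varphi(y,z)$ (an external finite disjunction once $x$ is a numeral — but since $x$ is a variable, one instead argues by induction on $x$, or uses the course-of-values formulation). Concretely, I would prove by $\Sigma_k$-induction (available since $\HA$ has full induction) the equivalence
\[
\HA \vdash \neg \exists y < x\, \forall z\, \varphi(y,z) \leftrightarrow \forall y < x\, \neg \forall z\, \varphi(y,z),
\]
which is just a bounded de Morgan law and needs no extra principles. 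Now $\neg\forall z\,\varphi(y,z)$ is $\HA + \DNE{\Sigma_k}$-equivalent to $\exists z\,\neg\varphi(y,z)$, and since $\varphi\in\Sigma_k$ we can (using $\DNE{\Sigma_k}$, or rather Fact~\ref{fact:DUAL}.1 which needs only $\DNE{\Sigma_{k-1}}\subseteq\DNE{\Sigma_k}$) replace $\neg\varphi(y,z)$ by a $\Pi_k$ formula, making $\exists z\,\neg\varphi(y,z)$ a $\Sigma_{k+1}$ formula $\theta(y)$. Then $\neg\exists y<x\,\forall z\,\varphi(y,z)$ becomes $\forall y<x\,\theta(y)$, and this finite conjunction $\theta(0)\wedge\cdots\wedge\theta(x-1)$ of $\Sigma_{k+1}$ formulas is itself (provably in $\HA$, by pairing the existential witnesses over the bounded range $y<x$) equivalent to a single $\Sigma_{k+1}$ formula.

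With that encoding in hand, the hypothesis $\forall w\,\exists y<x\,\forall z<w\,\varphi(y,z)$ should be rewritten as the negation of a conjunction of two $\Sigma_{k+1}$ formulas. The natural split: the statement $\exists y<x\,\forall z\,\varphi(y,z)$ fails (that is $\forall y<x\,\theta(y)$, a $\Sigma_{k+1}$ formula as above, call it $A$), versus... here I would think carefully. The real content of $\COLL$ is: if for every bound $w$ one of the finitely many $y<x$ works up to $w$, then (by pigeonhole on the finite index set $\{0,\dots,x-1\}$) one $y$ works for all $w$. The intuitionistically unavailable step is the infinite pigeonhole, and this is exactly what de Morgan over $\Sigma_{k+1}$ supplies: from $\neg(A_0 \wedge \cdots \wedge A_{x-1})$ where $A_y$ says "$y$ eventually fails", i.e. $\exists w\,\neg\forall z<w\,\varphi(y,z)$ (a $\Sigma_{k+1}$ formula using Fact~\ref{fact:DUAL}), iterated $\DML{\Sigma_{k+1}}$ gives $\neg A_{y_0}$ for some $y_0<x$, meaning $\neg\exists w\,\neg\forall z<w\,\varphi(y_0,z)$, whence (using $\DNE{\Sigma_k}$ to strip the inner negation) $\forall w\,\forall z<w\,\varphi(y_0,z)$, i.e. $\forall z\,\varphi(y_0,z)$. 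So the structure is: show $\HA\vdash \forall w\,\exists y<x\,\forall z<w\,\varphi(y,z)\to\neg\bigwedge_{y<x}A_y$; apply $\DML{\Sigma_{k+1}}$ repeatedly (finitely often, formally by induction on $x$) to get $\bigvee_{y<x}\neg A_y$; then convert each $\neg A_y$ to $\forall z\,\varphi(y,z)$ using $\DNE{\Sigma_k}$.

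The main obstacle I anticipate is the bookkeeping around "finite conjunctions/disjunctions indexed by the variable $x$": since $x$ is not a numeral, every "$\bigwedge_{y<x}$" must be handled as a genuine bounded quantifier, and the repeated application of $\DML{\Sigma_{k+1}}$ must be organized as an induction on $x$ with a carefully chosen $\Sigma_{k+1}$ (or rather $\Pi_{k+1}$, in the contrapositive form) induction hypothesis. In particular one must check at each stage that the formula being fed to $\DML{\Sigma_{k+1}}$ is genuinely (equivalent to) a conjunction of two $\Sigma_{k+1}$ formulas and that the $\Sigma_{k+1}$-class is closed under the bounded operations used, all of which rests on Fact~\ref{fact:DUAL}.1 to keep $\neg\varphi$ inside $\Pi_k$. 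Everything else — the bounded de Morgan law, the pairing of bounded existential witnesses, the base case $x=0$ — is routine and provable in $\HA$ alone.
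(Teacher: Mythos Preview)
Your final approach—defining $A_y := \exists w\,\neg\forall z<w\,\varphi(y,z)$, deriving $\neg\forall y<x\, A_y$ from the hypothesis via Proposition~\ref{prop:Coll1}, iterating $\DML{\Sigma_{k+1}}$ by induction on $x$ to obtain $\exists y<x\,\neg A_y$, and then using $\DNE{\Sigma_k}$ on the (HA-provably) $\Sigma_k$ formula $\forall z<w\,\varphi$ to recover $\forall z\,\varphi(y,z)$—is correct and is the same core mechanism as the paper's proof: an induction on $x$ in which $\DML{\Sigma_{k+1}}$ supplies the splitting step.

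One genuine slip to flag, even though you abandon that branch: in your second paragraph you assert that $\neg\forall z\,\varphi(y,z)$ is $\HA+\DNE{\Sigma_k}$-equivalent to $\exists z\,\neg\varphi(y,z)$. That is $\DUAL{\Pi_{k+1}}$, which by Proposition~\ref{prop:DNE_DUAL} requires $\DNE{\Sigma_{k+1}}$, not $\DNE{\Sigma_k}$. Your subsequent pivot to the \emph{bounded} version $A_y$ is precisely what repairs this: $\forall z<w\,\varphi$ is, via finite choice/pairing, already $\Sigma_k$, so Fact~\ref{fact:DUAL}.1 with only $\DNE{\Sigma_{k-1}}$ suffices to make $A_y$ a $\Sigma_{k+1}$ formula.

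On organization, the paper takes a slightly different route to the same endpoint. It runs a simultaneous induction on $k$, proving alongside the proposition the auxiliary statement that bounded $\exists$ of a $\Pi_{k+1}$ formula is $\Pi_{k+1}$ over $\HA+\DML{\Sigma_{k+1}}+\DNE{\Sigma_k}$; that auxiliary at level $k{-}1$ (rather than your pairing argument) is what the paper uses to check that the conjunct fed to $\DML{\Sigma_{k+1}}$ is $\Sigma_{k+1}$. Your route is more direct for the proposition alone, since finite choice in $\HA$ already gives closure of $\Sigma_{k+1}$ under bounded $\forall$. The paper's route buys something extra: the bounded-$\exists$-of-$\Pi_k$ closure (Corollary~\ref{cor:BQ}.1), which is \emph{not} available in $\HA$ alone and is reused several times afterward (e.g.\ in Propositions~\ref{prop:LN} and~\ref{prop:PSDML}).
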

\begin{proof}
We simultaneously prove the following two statements by induction on $k$: 
\begin{itemize}
	\item [{\rm (i)}] $\HA + \DML{\Sigma_{k+1}} + \DNE{\Sigma_k} \vdash \COLL{\Pi_{k+1}}$; 
	\item [{\rm (ii)}] For any $\Pi_{k+1}$ formula $\varphi(y)$, there exists a $\Pi_{k+1}$ formula $\psi(x)$ such that 
\[
	\HA + \DML{\Sigma_{k+1}} + \DNE{\Sigma_k} \vdash \exists y < x\, \varphi(y) \leftrightarrow \psi(x). 
\]
\end{itemize}
 
We suppose that our statements hold for all $k' < k$, and we prove (i) and (ii). 

(i): Prior to proving our statement, we show that for any $\Pi_k$ formula $\varphi(y, z)$, 
\begin{equation}\label{eqColl3}
	\HA + \DML{\Sigma_{k+1}} + \DNE{\Sigma_{k-1}} \vdash \neg\, \forall y < x\, \exists z\, \varphi(y, z) \to \exists y < x\, \forall z\, \neg \varphi(y, z), 
\end{equation}
which is a generalization of \cite[Lemma 4.4]{FuKa}. 

Let $\psi(x)$ be the formula
\[
	\neg\, \forall y < x\, \exists z\, \varphi(y, z) \to \exists y < x\, \forall z\, \neg \varphi(y, z), 
\]
and we show that $\forall x \psi(x)$ is derivable by applying the induction axiom for $\psi(x)$. 
Since $\HA \vdash \neg y < 0$, we have $\HA \vdash \forall y < 0\, \exists z\, \varphi(y, z)$. 
Thus we obviously obtain $\HA \vdash \psi(0)$. 

We prove induction step. 
We have
\[
	\HA \vdash \neg\, \forall y \leq x\, \exists z\, \varphi(y, z) \to \neg (\forall y < x\, \exists z\, \varphi(y, z) \land \exists z\, \varphi(x, z)). 
\]
By Proposition \ref{prop:Coll1}, the formula $\forall y < x\, \exists z\, \varphi(y, z)$ is $\HA$-equivalent to the formula $\exists w\, \forall y < x\, \exists z < w\, \varphi(y, z)$. 
If $k = 0$, the formula $\exists z < w\, \varphi(y, z)$ is $\HA$-provably equivalent to some $\Pi_0$ formula $\rho(y, w)$. 
If $k > 0$, by induction hypothesis (ii) for $k-1$, the formula ${\exists z < w\, \varphi(y, z)}$ is equivalent to some $\Pi_k$ formula $\rho(y, w)$ in $\HA + \DML{\Sigma_k} + \DNE{\Sigma_{k-1}}$. 
Also ${\exists w\, \forall y < x\, \rho(y, w)}$ is $\HA$-equivalent to a $\Sigma_{k+1}$ formula. 
Thus ${\forall y < x\, \exists z\, \varphi(y, z)}$ can be regarded as a $\Sigma_{k+1}$  formula in $\HA + \DML{\Sigma_k} + \DNE{\Sigma_{k-1}}$. 
Then $\HA + \DML{\Sigma_{k+1}} + \DNE{\Sigma_{k-1}}$ proves
\[
	\neg\, \forall y \leq x\, \exists z\, \varphi(y, z) \to \neg\, \forall y < x\, \exists z\, \varphi(y, z) \lor \neg\, \exists z\, \varphi(x, z).
\]
Hence it also proves
\[
	\psi(x) \land \neg\, \forall y \leq x\, \exists z\, \varphi(y, z) \to \exists y < x\, \forall z\, \neg \varphi(y, z) \lor \forall z\, \neg \varphi(x, z).
\]
It follows that the theory proves
\[
	\psi(x) \land \neg\, \forall y \leq x\, \exists z\, \varphi(y, z) \to \exists y \leq x\, \forall z\, \neg \varphi(y, z).
\]
This means $\HA + \DML{\Sigma_{k+1}} + \DNE{\Sigma_{k-1}} \vdash \psi(x) \to \psi(x+1)$. 
We have proved (\ref{eqColl3}). 

We prove $\HA + \DML{\Sigma_{k+1}} + \DNE{\Sigma_k} \vdash \COLL{\Pi_{k+1}}$. 
It suffices to prove $\COLL{\Sigma_k}$ by Proposition \ref{prop:Coll2}. 
Let $\varphi(y, z)$ be any $\Sigma_k$ formula. 
By Proposition \ref{prop:Coll1} for the formula $\varphi^\bot(y, z)$, we have
\[
	\HA \vdash \neg\, \exists w \, \forall y < x \, \exists z < w \, \varphi^\bot(y, z) \to \neg\, \forall y < x \, \exists z \, \varphi^\bot(y, z). 
\]
In the light of Proposition \ref{prop:DUAL_BASIC}.(3), we obtain
\[
	\HA \vdash \forall w \, \exists y < x \, \forall z < w \, \varphi(y, z) \to \neg\, \exists w \, \forall y < x \, \exists z < w \, \varphi^\bot(y, z).
\]
Therefore
\[
	\HA \vdash \forall w \, \exists y < x \, \forall z < w \, \varphi(y, z) \to \neg\, \forall y < x \, \exists z \, \varphi^\bot(y, z). 
\]
Since $(\varphi(y, z))^\bot$ is $\Pi_k$, from (\ref{eqColl3}), we obtain that $\HA + \DML{\Sigma_{k+1}} + \DNE{\Sigma_{k-1}}$ proves
\[
	\forall w \, \exists y < x \, \forall z < w \, \varphi(y, z) \to \exists y < x \, \forall z \, \neg \varphi^\bot(y, z). 
\]
Since $\DNE{\Sigma_k}$ proves $\DUAL{\Pi_k}$, 
we conclude that $\HA + \DML{\Sigma_{k+1}} + \DNE{\Sigma_k}$ proves
\[
	\forall w \, \exists y < x \, \forall z < w \, \varphi(y, z) \to \exists y < x \, \forall z \, \varphi(y, z)
\]
by Proposition \ref{prop:DUAL_BASIC}.(2).
This completes the proof of (i). 

(ii): Let $\forall z \varphi(y, z)$ be any $\Pi_{k+1}$ formula where $\varphi(y, z)$ is $\Sigma_k$. 
Since $\varphi^\bot(y, z)$ is $\Pi_k$, by induction hypothesis (ii) for $k-1$, there exists a $\Pi_k$ formula $\psi(y, w)$ such that
\[
	\HA + \DML{\Sigma_k} + \DNE{\Sigma_{k-1}} \vdash \exists z < w\, \varphi^\bot(y, z) \leftrightarrow \psi(y, w).
\]
This is also the case for $k = 0$. 
Then
\[
	\HA + \DML{\Sigma_k} + \DNE{\Sigma_{k-1}} \vdash \forall z < w\, \neg \varphi^\bot(y, z) \leftrightarrow\, \neg \psi(y, w).
\]
Since $\DNE{\Sigma_k}$ implies $\DUAL{\Pi_k}$, we obtain
\[
	\HA + \DML{\Sigma_k} + \DNE{\Sigma_k} \vdash \forall z < w\, \varphi(y, z) \leftrightarrow \psi^\bot(y, w).
\]
By (i), we have that $\HA + \DML{\Sigma_{k+1}} + \DNE{\Sigma_k}$ proves
\[
	\exists y < x\, \forall z\, \varphi(y, z) \leftrightarrow \forall w\, \exists y < x\, \forall z < w\, \varphi(y, z). 
\]
Therefore we obtain that $\HA + \DML{\Sigma_{k+1}} + \DNE{\Sigma_k}$ also proves
\[
	\exists y < x\, \forall z\, \varphi(y, z) \leftrightarrow \forall w\, \exists y < x\, \psi^\bot(y, w). 
\]
This completes the proof of (ii). 
\end{proof}

\begin{remark}
By Proposition \ref{prop:Coll2}, $\COLL{\Pi_0}$ is equivalent to $\COLL{\Pi_1}$ over $\HA$. 
We will show in Proposition \ref{prop:Coll4} that $\HA + \COLL{\Pi_1} \vdash \DML{\Sigma_1}$. 
Therefore $\HA \nvdash \COLL{\Pi_0}$ because it is known that $\HA \nvdash \DML{\Sigma_1}$ (cf.~\cite{ABHK}). 
Thus the statement of Proposition \ref{prop:Coll3} for $k = -1$ does not holds. 
\end{remark}

\begin{cor}\label{cor:BQ}\leavevmode
\begin{enumerate}
	\item For any $\Pi_k$ formula $\varphi(y)$, there exists a $\Pi_k$ formula $\psi(x)$ such that
\[
	\HA + \DML{\Sigma_k} + \DNE{\Sigma_{k-1}} \vdash \exists y < x\, \varphi(y) \leftrightarrow \psi(x); 
\]
	\item For any $\Sigma_k$ formula $\varphi(y)$, there exists a $\Sigma_k$ formula $\psi(x)$ such that
\[
	\HA + \DML{\Sigma_{k-1}} + \DNE{\Sigma_{k-2}} \vdash \forall y < x\, \varphi(y) \leftrightarrow \psi(x). 
\]
\end{enumerate}
\end{cor}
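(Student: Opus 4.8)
The plan is to read off part 1 almost for free from the proof of Proposition~\ref{prop:Coll3}, and to deduce part 2 from part 1 applied one level lower, together with the bounded collection principle of Proposition~\ref{prop:Coll1}.

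For part 1: when $k=0$, the formula $\exists y < x\, \varphi(y)$ is $\HA$-provably equivalent to a quantifier-free formula (by the routine closure of $\Sigma_0$ under bounded quantification), so we may take $\psi(x)$ quantifier-free; and $\HA + \DML{\Sigma_0} + \DNE{\Sigma_{-1}}$ is just $\HA$, so the claim is trivial. When $k\geq 1$, write $k=j+1$; then the assertion is precisely statement (ii) established during the induction proving Proposition~\ref{prop:Coll3} (taken at index $j$), namely that for every $\Pi_{j+1}$ formula $\varphi(y)$ there is a $\Pi_{j+1}$ formula $\psi(x)$ with $\HA + \DML{\Sigma_{j+1}} + \DNE{\Sigma_j} \vdash \exists y < x\, \varphi(y) \leftrightarrow \psi(x)$. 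So part 1 requires nothing new beyond pointing at that proof.

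For part 2: fix a $\Sigma_k$ formula $\varphi(y)$ and write it (up to $\HA$-equivalence) as $\exists z\, \chi(y,z)$ with $\chi \in \Pi_{k-1}$. By Proposition~\ref{prop:Coll1}, $\forall y < x\, \varphi(y)$ is $\HA$-equivalent to $\exists w\, \forall y < x\, \exists z < w\, \chi(y,z)$. Now apply part 1 with $k-1$ in place of $k$ to the $\Pi_{k-1}$ formula $\chi(y,z)$, obtaining a $\Pi_{k-1}$ formula $\rho(y,w)$ with $\HA + \DML{\Sigma_{k-1}} + \DNE{\Sigma_{k-2}} \vdash \exists z < w\, \chi(y,z) \leftrightarrow \rho(y,w)$. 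Hence the same theory proves $\forall y < x\, \varphi(y) \leftrightarrow \exists w\, \forall y < x\, \rho(y,w)$, and the right-hand side, by closure of $\Pi_{k-1}$ under bounded universal quantification followed by prefixing $\exists w$, is $\HA$-equivalent to a $\Sigma_k$ formula $\psi(x)$, as desired. (For $k\leq 1$ everything degenerates: the invoked instance of part 1 and the adjoined principles contribute nothing over $\HA$, and $\forall y < x\, \varphi(y)$ already stays in $\Sigma_k$ by bounded-quantifier closure alone.)

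I do not expect a genuine obstacle: the whole content is carried by Propositions~\ref{prop:Coll1} and~\ref{prop:Coll3}. The only points needing a little care are the index bookkeeping — matching the theory $\HA + \DML{\Sigma_{k-1}} + \DNE{\Sigma_{k-2}}$ in part 2 to the $k-1$ instance of part 1 — and the standard, $\HA$-provable closure of the classes $\Sigma_k$ and $\Pi_k$ under bounded quantification, a fact already used implicitly inside the proof of Proposition~\ref{prop:Coll3}.
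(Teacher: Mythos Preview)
Your proposal is correct and follows essentially the same approach as the paper: part 1 is read off directly from statement (ii) in the inductive proof of Proposition~\ref{prop:Coll3} (with the trivial base case $k=0$ handled separately), and part 2 is obtained by writing the $\Sigma_k$ formula as $\exists z\,\chi$ with $\chi\in\Pi_{k-1}$, applying Proposition~\ref{prop:Coll1}, and then invoking part 1 at level $k-1$ to replace $\exists z<w\,\chi$ by a $\Pi_{k-1}$ formula. The index bookkeeping and the appeal to closure of $\Pi_{k-1}$ under bounded universal quantification are exactly as in the paper.
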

\begin{proof}
1. For $k = 0$, this is trivial. 
For $k > 0$, the statement is already proved in the proof of Proposition \ref{prop:Coll3}. 

2. Since the statement obviously holds for $k=0$, we may assume $k > 0$. 
Let $\exists z \varphi(y, z)$ be any $\Sigma_k$ formula where $\varphi(y, z)$ is $\Pi_{k-1}$. 
By Proposition \ref{prop:Coll1}, we have 
\[
	\HA \vdash \forall y < x\, \exists z\, \varphi(y, z) \leftrightarrow \exists w\, \forall y < x\, \exists z < w\, \varphi(y, z). 
\]
By clause 1, there exists a $\Pi_{k-1}$ formula $\psi(y, w)$ such that
\[
	\HA + \DML{\Sigma_{k-1}} + \DNE{\Sigma_{k-2}} \vdash \exists z < w\, \varphi(y, z) \leftrightarrow \psi(y, w).
\]
Hence 
\[
	\HA + \DML{\Sigma_{k-1}} + \DNE{\Sigma_{k-2}} \vdash \forall y < x\, \exists z\, \varphi(y, z) \leftrightarrow \exists w\, \forall y < x\, \psi(y, w).
\]
Since $\exists w\, \forall y < x\, \psi(y, w)$ is obviously equivalent to a $\Sigma_k$ formula, this completes our proof of clause 2.  
\end{proof}

Corollary \ref{cor:BQ} is very useful for exploring principles containing bounded quantifiers. 
For instance, it can be applied to the study of the least number principle. 

\begin{defn}[The least number principle]
Let $\Gamma$ be a set of formulas. 
\begin{tabbing}
\hspace{15mm} \= \hspace{75mm} \= \hspace{50mm} \kill
$\LN{\Gamma}$ \> $\exists x \varphi(x) \to \exists x (\varphi(x) \land \forall y < x\, \neg \varphi(y))$ \> ($\varphi \in \Gamma$) 
\end{tabbing}
\end{defn}

\begin{thm}\label{thm:LN}
Let $\Gamma$ be either $\Sigma_k$ or $\Pi_k$.  
Then $\LN{\Gamma}$ and $\LEM{\Gamma}$ are equivalent over $\HA$. 
\end{thm}
\begin{proof}
First, we prove $\HA + \LN{\Gamma} \vdash \LEM{\Gamma}$. 
Let $\varphi$ be any $\Gamma$ formula and let $\psi(x)$ be a $\Gamma$ formula $\HA$-equivalent to $\varphi \lor \, 0 < x$, where $x$ does not occur freely in $\varphi$. 
Notice that $0 < x\, \land\, \forall y < x\, \neg \psi(y)$ implies $\neg \psi(0)$ which implies $\neg \varphi$. 
Hence we have
\[
	\HA \vdash (\varphi \lor\, 0 < x) \land \forall y < x\, \neg \psi(y) \to \varphi \lor \neg \varphi,
\]
and thus
\[
	\HA \vdash \exists x (\psi(x) \land \forall y < x\, \neg \psi(y)) \to \varphi \lor \neg \varphi. 
\]
Since $\HA \vdash \exists x \psi(x)$, we have $\HA + \LN{\Gamma} \vdash \exists x (\psi(x) \land \forall y < x\, \neg \psi(y))$. 
Therefore we obtain $\HA + \LN{\Gamma} \vdash \varphi \lor \neg \varphi$. 

Secondly, we prove $\HA + \LEM{\Pi_k} \vdash \LN{\Pi_k}$. 
A proof for $\HA + \LEM{\Sigma_k} \vdash \LN{\Sigma_k}$ is similar. 
Let $\varphi(x)$ be any $\Pi_k$ formula, and let $\psi(z)$ be the formula
\[
	\exists x < z\, \varphi(x) \to \exists x < z\, (\varphi(x) \land \forall y < x\, \neg \varphi(y)).
\]
We prove $\HA + \LEM{\Pi_k} \vdash \forall z \psi(z)$ by applying the induction axiom for $\psi(z)$. 
Since $\HA \vdash \neg\, \exists x < 0\, \varphi(x)$, we obtain $\HA \vdash \psi(0)$. 

We prove induction step. 
Notice $\HA + \LEM{\Pi_k}$ proves $\DML{\Sigma_k} + \DNE{\Sigma_{k-1}}$ by Corollary \ref{cor:NLEM1} and Proposition \ref{prop:DML1}.(1). 
Thus by Corollary \ref{cor:BQ}.(1), the formula $\exists x < z\, \varphi(x)$ is equivalent to some $\Pi_k$ formula in $\HA + \LEM{\Pi_k}$. 
Therefore
\begin{equation}\label{eqLN}
	\HA + \LEM{\Pi_k} \vdash \exists x < z\, \varphi(x) \lor \neg\, \exists x < z\, \varphi(x).
\end{equation}
Since $\HA \vdash \exists x \leq z\, \varphi(x) \leftrightarrow (\exists x < z\, \varphi(x) \lor \varphi(z))$, we obtain
\[
	\HA \vdash \exists x \leq z\, \varphi(x) \land \neg\, \exists x < z\, \varphi(x) \to \varphi(z) \land \forall x < z\, \neg \varphi(x), 
\]
and hence
\[
	\HA \vdash \exists x \leq z\, \varphi(x) \land \neg\, \exists x < z\, \varphi(x) \to \exists x \leq z\, (\varphi(x) \land \forall y < x\, \neg \varphi(y)).  
\]
On the other hand, we obviously obtain
\[
	\HA \vdash \psi(z) \land \exists x < z\, \varphi(x) \to \exists x \leq z\, (\varphi(x) \land \forall y < x\, \neg \varphi(y)). 
\]
Then by (\ref{eqLN}), we have
\[
	\HA + \LEM{\Pi_k} \vdash \psi(z) \land \exists x \leq z\, \varphi(x) \to \exists x \leq z\, (\varphi(x) \land \forall y < x\, \neg \varphi(y)).  
\]
It follows $\HA + \LEM{\Pi_k} \vdash \psi(z) \to \psi(z+1)$. 
We have completed our proof. 
\end{proof}

By using Corollary \ref{cor:BQ} and Theorem \ref{thm:LN}, we are able to generalize Fact \ref{fact:Ishi}.(2). 
The proof is similar to that of the implication $2 \Rightarrow 1$ of \cite[Proposition 2]{FIN}.

\begin{prop}\label{prop:PSDML}
$\HA + \DML{\Sigma_k} + \DNE{\Sigma_{k-1}} \vdash \DML{\Pi_k}$. 
\end{prop}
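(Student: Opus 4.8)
The plan is to apply $\DML{\Sigma_k}$ to a well-chosen pair of $\Sigma_k$ formulas whose conjunction is already refutable in $\HA$, so that $\DML{\Sigma_k}$ returns a nontrivial disjunction "for free", and then to decode that disjunction as the linearity statement $(\varphi\to\psi)\lor(\psi\to\varphi)$, from which the desired instance of $\DML{\Pi_k}$ is immediate. Fix $\Pi_k$ formulas $\varphi,\psi$; by the normal form remarked on in the Preliminaries we may take $\varphi\equiv\forall x\,\varphi_1(x)$ and $\psi\equiv\forall y\,\psi_1(y)$ with $\varphi_1,\psi_1\in\Sigma_{k-1}$. Write $\varphi^{<n}$ for $\forall x<n\,\varphi_1(x)$ and $\psi^{<n}$ for $\forall y<n\,\psi_1(y)$, so that $\HA$ proves $\varphi\leftrightarrow\forall n\,\varphi^{<n}$, $\psi\leftrightarrow\forall n\,\psi^{<n}$, and the monotonicity $m\le n\to(\varphi^{<n}\to\varphi^{<m})$ (likewise for $\psi$). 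Set
\[
  P:\equiv\exists n\,(\neg\varphi^{<n}\land\psi^{<n}),\qquad
  Q:\equiv\exists n\,(\varphi^{<n}\land\neg\psi^{<n}).
\]

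The first thing I would check is that, over $\HA+\DML{\Sigma_k}+\DNE{\Sigma_{k-1}}$, both $P$ and $Q$ are equivalent to $\Sigma_k$ formulas. By Corollary \ref{cor:BQ}.2 applied at level $k-1$ (its hypotheses $\DML{\Sigma_{k-2}}+\DNE{\Sigma_{k-3}}$ hold since $\Sigma_{k-2}\subseteq\Sigma_k$ and $\Sigma_{k-3}\subseteq\Sigma_{k-1}$) each $\varphi^{<n}$, $\psi^{<n}$ is equivalent to a $\Sigma_{k-1}$ formula; hence by Fact \ref{fact:DUAL}.1 each $\neg\varphi^{<n}$, $\neg\psi^{<n}$ is equivalent to a $\Pi_{k-1}$ formula; and since $\Sigma_{k-1},\Pi_{k-1}\subseteq\Sigma_k$ and $\Sigma_k$ is closed under conjunction and under existential quantification, $P$ and $Q$ are $\HA+\DML{\Sigma_k}+\DNE{\Sigma_{k-1}}$-equivalent to $\Sigma_k$ formulas.

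Next I would verify $\HA\vdash\neg(P\land Q)$: given witnesses $n$ for $\neg\varphi^{<n}\land\psi^{<n}$ and $m$ for $\varphi^{<m}\land\neg\psi^{<m}$, decidability of $\le$ gives either $n\le m$, whence $\varphi^{<m}\to\varphi^{<n}$ contradicts $\neg\varphi^{<n}$, or $m\le n$, whence $\psi^{<n}\to\psi^{<m}$ contradicts $\neg\psi^{<m}$; either branch is absurd. Applying $\DML{\Sigma_k}$ to the $\Sigma_k$-equivalents of $P,Q$ then yields $\neg P\lor\neg Q$ outright. Now $\neg P\leftrightarrow\forall n\,(\psi^{<n}\to\neg\neg\varphi^{<n})$ in $\HA$, and since each $\varphi^{<n}$ is equivalent to a $\Sigma_{k-1}$ formula, $\DNE{\Sigma_{k-1}}$ turns this into $\forall n\,(\psi^{<n}\to\varphi^{<n})$, which implies $\forall n\,\psi^{<n}\to\forall n\,\varphi^{<n}$, i.e.\ $\psi\to\varphi$; symmetrically $\neg Q$ implies $\varphi\to\psi$. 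Hence $\HA+\DML{\Sigma_k}+\DNE{\Sigma_{k-1}}\vdash(\psi\to\varphi)\lor(\varphi\to\psi)$. Finally, under the hypothesis $\neg(\varphi\land\psi)$: in the case $\psi\to\varphi$ we obtain $\psi\to\varphi\land\psi\to\bot$, so $\neg\psi$; in the case $\varphi\to\psi$ we get $\neg\varphi$; either way $\neg\varphi\lor\neg\psi$, which is the required instance of $\DML{\Pi_k}$.

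The step I expect to demand the most care is the complexity bookkeeping that makes $P$ and $Q$ genuine $\Sigma_k$ formulas — the bounded universal quantifiers hidden in $\varphi^{<n}$, $\psi^{<n}$ must first be absorbed by means of the collection machinery of Corollary \ref{cor:BQ} before $\DML{\Sigma_k}$ becomes applicable — together with the dual point that recovering $\psi\to\varphi$ from $\neg P$ rests on commuting a double negation with a $\Sigma_{k-1}$ formula via $\DNE{\Sigma_{k-1}}$. The use of the "lifetime-comparison" formulas $P,Q$ is modelled on the argument for $\DML{\Sigma_1}\vdash\DML{\Pi_1}$ in \cite{FIN}.
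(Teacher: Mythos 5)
Your proof is correct, and it reaches the statement by a route whose key lemmas differ from the paper's, although both arguments share the same germ: encode a comparison of where the two $\Pi_k$ formulas first fail by a pair of mutually exclusive formulas that Corollary \ref{cor:BQ}.2 makes $\Sigma_k$ inside the ambient theory, and spend exactly one application of $\DML{\Sigma_k}$ on that pair. (One cosmetic point: state explicitly that you may assume $k>0$; for $k=0$ the claim is immediate since $\Pi_0=\Sigma_0$.) The paper's proof pins down \emph{least} failure points: its comparison formulas $\xi(x),\eta(y)$ are built from the duals $(\varphi(x))^\bot,(\psi(y))^\bot$ of the matrices, producing a least counterexample requires $\LEM{\Pi_{k-1}}$ together with the least number principle $\LN{\Pi_{k-1}}$ (Proposition \ref{prop:LN}.1) and the dual principles, and the decoding passes through $\DNS{\Sigma_{k-1}}$ to turn $\neg\exists x\neg\varphi(x)$ into $\neg\neg\forall x\varphi(x)$, yielding the De Morgan disjunction by contraposition. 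Your truncation formulas $P,Q$ compare $\varphi^{<n}$ and $\psi^{<n}$ at a common stage, so no least witness is ever needed: you avoid the least number principle, $\LEM{\Pi_{k-1}}$ and $\DNS{\Sigma_{k-1}}$ altogether, use $\DNE{\Sigma_{k-1}}$ only to strip double negations from the $\Sigma_{k-1}$-equivalents of the truncations, and in fact obtain the stronger conclusion $(\varphi\to\psi)\lor(\psi\to\varphi)$ for $\Pi_k$ formulas (a linearity scheme), of which the required $\DML{\Pi_k}$ instance is an immediate consequence --- a by-product worth recording, generalizing the familiar $k=1$ characterization of $\mathbf{LLPO}$. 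What the paper's formulation buys in exchange is reusability: essentially the same $\xi,\eta$ construction is recycled in the proof of Proposition \ref{prop:DDML3} for the $\Delta_k$ variant, where the formulas are given in $\Sigma_k$ form with $\Pi_k$ companions and a truncation-by-stages argument does not transfer as directly.
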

\begin{proof}
We may assume $k > 0$. 
Let $\forall x \varphi(x)$ and $\forall y \psi(y)$ be any $\Pi_k$ formulas where $\varphi(x)$ and $\psi(y)$ are $\Sigma_{k-1}$. 
We define the formulas $\xi(x)$ and $\eta(y)$ as follows: 
\begin{itemize}
	\item $\xi(x) : \equiv \forall z < x (\varphi(z) \land \psi(z)) \land \varphi^\bot(x)$; 
	\item $\eta(y) : \equiv \forall z < y (\varphi(z) \land \psi(z)) \land \psi^\bot(y) \land \varphi(y)$. 
\end{itemize}
Since $\varphi(z) \land \psi(z)$ is $\HA$-equivalent to a $\Sigma_{k-1}$ formula, by Corollary \ref{cor:BQ}.(2), $\forall z < x (\varphi(z) \land \psi(z))$ is equivalent to some $\Sigma_{k-1}$ formula in $\HA + \DML{\Sigma_{k-2}} + \DNE{\Sigma_{k-3}}$. 
Thus the formula $\exists x \xi(x)$ is equivalent to a $\Sigma_k$ formula in the theory. 
Similarly, $\exists y \eta(y)$ is also equivalent to some $\Sigma_k$ formula in the theory. 

By the definitions of $\xi(x)$ and $\eta(y)$, we obtain
\begin{itemize}
	\item $\HA \vdash \xi(x) \land \eta(y) \land x \leq y \to \varphi^\bot(x) \land \varphi(x)$, and 
	\item $\HA \vdash \xi(x) \land \eta(y) \land y < x \to \psi(y) \land \psi^\bot(y)$. 
\end{itemize}
Thus by Proposition \ref{prop:DUAL_BASIC}.(4) and $\HA \vdash x \leq y \lor y < x$, we have that $\HA$ proves $\neg (\exists x \xi(x) \land \exists y \eta(y))$. 
Then from the above observations, we obtain
\begin{equation}\label{eqDML1}
	\HA + \DML{\Sigma_k} + \DNE{\Sigma_{k-3}} \vdash \neg\, \exists x \xi(x) \lor \neg\, \exists y \eta(y). 
\end{equation}

Note that $\HA + \DML{\Sigma_k} + \DNE{\Sigma_{k-1}}$ proves $\DUAL{\Sigma_{k-1}}$, $\DUAL{\Pi_{k-1}}$ and $\LEM{\Pi_{k-1}}$. 
Then $\HA + \DML{\Sigma_k} + \DNE{\Sigma_{k-1}}$ proves 
\begin{align*}
	 \exists x\, \neg \varphi(x) & \to \exists x \varphi^\bot(x), \tag{by $\DUAL{\Sigma_{k-1}}$}\\
	& \to \exists x[\varphi^\bot(x) \land \forall z < x\, \neg \varphi^\bot(z)], \tag{by $\LEM{\Pi_{k-1}}$ and Theorem \ref{thm:LN}}\\
	& \to \exists x[\varphi^\bot(x) \land \forall z < x\, \varphi(z)]. \tag{by $\DUAL{\Pi_{k-1}}$ and Proposition \ref{prop:DUAL_BASIC}.(2)}
\end{align*}
Hence, by the definition of the formula $\xi(x)$, we have
\[
	\HA + \DML{\Sigma_k} + \DNE{\Sigma_{k-1}} \vdash \exists x\, \neg \varphi(x) \land \forall y \psi(y) \to \exists x \xi(x). 
\]
Since $\HA \vdash \neg\, \exists x\, \neg \varphi(x) \to \forall x\, \neg \neg \varphi(x)$ and $\HA + \DNE{\Sigma_{k-1}}$ implies $\DNS{\Sigma_{k-1}}$ by Proposition \ref{prop:DNS1}.(1), we obtain
\[
	\HA + \DML{\Sigma_k} + \DNE{\Sigma_{k-1}} \vdash \forall y \psi(y) \land \neg\, \exists x \xi(x) \to \neg \neg\, \forall x \varphi(x). 
\]
On the other hand, 
\[
	\HA \vdash \neg(\forall x \varphi(x) \land \forall y \psi(y)) \land \forall y \psi(y) \to \neg\, \forall x \varphi(x). 
\]
Therefore we obtain
\begin{equation}\label{eqDML2}
	\HA + \DML{\Sigma_k} + \DNE{\Sigma_{k-1}} \vdash \neg(\forall x \varphi(x) \land \forall y \psi(y)) \land \neg\, \exists x \xi(x) \to \neg\, \forall y \psi(y).  
\end{equation}
In a similar way, we obtain
\begin{equation}\label{eqDML3}
	\HA + \DML{\Sigma_k} + \DNE{\Sigma_{k-1}} \vdash \neg(\forall x \varphi(x) \land \forall y \psi(y)) \land \neg\, \exists y \eta(y) \to \neg\, \forall x \varphi(x).  
\end{equation}
By combining (\ref{eqDML1}), (\ref{eqDML2}) and (\ref{eqDML3}), we conclude
\[
	\HA + \DML{\Sigma_k} + \DNE{\Sigma_{k-1}} \vdash \neg(\forall x \varphi(x) \land \forall y \psi(y)) \to \neg\, \forall x \varphi(x) \lor \neg\, \forall y \psi(y). \qedhere
\]
\end{proof}

Finally, we prove that the converse of Proposition \ref{prop:Coll3} also holds. 
This is closely related to \cite[Theorem 4.5]{Burr}. 

\begin{prop}\label{prop:Coll4}
$\HA + \COLL{\Pi_k} \vdash \DML{\Sigma_k} + \LEM{\Sigma_{k-1}}$. 
\end{prop}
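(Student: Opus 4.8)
The plan is to argue by induction on $k$. For $k=0$ the statement is immediate: $\DML{\Sigma_0}$ is $\HA$-provable because quantifier-free formulas are decidable, and $\LEM{\Sigma_{-1}}$ is vacuous. For the inductive step, fix $k\ge 1$ and assume the statement for $k-1$. Since $\Pi_{k-1}\subseteq\Pi_k$, we have $\HA+\COLL{\Pi_k}\vdash\COLL{\Pi_{k-1}}$, so the induction hypothesis gives that $\HA+\COLL{\Pi_k}$ proves $\DML{\Sigma_{k-1}}+\LEM{\Sigma_{k-2}}$; in particular it proves $\DNE{\Sigma_{k-2}}$ (Fact~\ref{fact:LEM_DNE}), hence also $\DNE{\Pi_{k-1}}$ (Fact~\ref{fact:ABHK}.6), and by Corollary~\ref{cor:BQ} and Fact~\ref{fact:DUAL} all bounded quantifiers applied to $\Pi_{k-1}$ (resp.\ $\Sigma_{k-1}$) formulas, as well as negations of $\Pi_{k-2}$ formulas, can be absorbed into $\Pi_{k-1}$ (resp.\ $\Sigma_{k-1}$) formulas over this theory. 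This is used freely below.

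First I would prove $\LEM{\Sigma_{k-1}}$, since this makes $\DNE{\Sigma_{k-1}}$ available for the second step (for $k=1$, $\LEM{\Sigma_0}$ is $\HA$-provable, so assume $k\ge2$). Let $\eta\equiv\exists u\,\rho(u)$ with $\rho\in\Pi_{k-2}$; by Fact~\ref{fact:DUAL} choose a $\Sigma_{k-2}$ formula $\rho'(u)$ with $\neg\rho(u)\leftrightarrow\rho'(u)$, so $\neg\eta\leftrightarrow\forall u\,\rho'(u)$. Consider $\varphi(y,z):\equiv(y=0\to\eta)\land(y\ne 0\to\forall u\le z\,\rho'(u))$, which is equivalent to a $\Pi_k$ formula by the bounded-quantifier reduction together with the fact that decidable antecedents commute with quantifiers. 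Applying $\COLL{\Pi_k}$ with $x:=2$, the conclusion $\exists y<2\,\forall z\,\varphi(y,z)$ yields $\eta\lor\neg\eta$ because $\forall z\,\varphi(0,z)\leftrightarrow\eta$ and $\forall z\,\varphi(1,z)\leftrightarrow\forall u\,\rho'(u)\leftrightarrow\neg\eta$. It remains to derive the antecedent $\forall w\,\exists y<2\,\forall z<w\,\varphi(y,z)$: for $w\ge1$ this amounts to $\eta\lor\forall u<w\,\rho'(u)$, and using $\LEM{\Sigma_{k-2}}$ to decide the $\Sigma_{k-2}$ formula $\forall u<w\,\rho'(u)$, the only nontrivial case $\neg\forall u<w\,\rho'(u)$ gives $\neg\neg\exists u<w\,\rho(u)$, whence $\exists u<w\,\rho(u)$ by $\DNE{\Pi_{k-1}}$ applied to the $\Pi_{k-1}$ form of that bounded formula, hence $\eta$.

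Next I would prove $\DML{\Sigma_k}$, now with $\DNE{\Sigma_{k-1}}$ in hand, hence $\DUAL{\Sigma_k}$ and $\DUAL{\Pi_{k-1}}$ by Proposition~\ref{prop:DNE_DUAL}, so that $\neg\chi\leftrightarrow\chi^\bot$ for every $\chi\in\Sigma_k$. Given $\varphi\equiv\exists u\,\varphi_0(u)$ and $\psi\equiv\exists v\,\psi_0(v)$ with $\varphi_0,\psi_0\in\Pi_{k-1}$, and assuming $\neg(\varphi\land\psi)$, consider $\Phi(y,z):\equiv(y=0\to\forall u\le z\,\varphi_0^\bot(u))\land(y\ne0\to\forall v\le z\,\psi_0^\bot(v))$, again equivalent to a $\Pi_k$ formula since $\varphi_0^\bot,\psi_0^\bot\in\Sigma_{k-1}$ and the bounded universal quantifiers absorb. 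Applying $\COLL{\Pi_k}$ with $x:=2$, the conclusion gives $\forall u\,\varphi_0^\bot(u)\lor\forall v\,\psi_0^\bot(v)$, that is $\varphi^\bot\lor\psi^\bot$, that is $\neg\varphi\lor\neg\psi$. For the antecedent, fix $w\ge1$; by $\LEM{\Sigma_{k-1}}$ decide the $\Sigma_{k-1}$ formula $\forall u<w\,\varphi_0^\bot(u)$, and in the case $\neg\forall u<w\,\varphi_0^\bot(u)$ one obtains $\neg\neg\exists u<w\,\varphi_0(u)$ (using $\varphi_0^\bot\leftrightarrow\neg\varphi_0$), hence $\varphi$ by $\DNE{\Pi_{k-1}}$, hence $\neg\psi=\psi^\bot$ from $\neg(\varphi\land\psi)$, which yields $\forall v<w\,\psi_0^\bot(v)$, the other disjunct.

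The main obstacle I anticipate is the bookkeeping needed to certify that $\varphi(y,z)$ and $\Phi(y,z)$ genuinely lie in $\Pi_k$: this is precisely where the induction hypothesis is indispensable, as it supplies the instances of Corollary~\ref{cor:BQ} and Fact~\ref{fact:DUAL} at the lower levels that push the bounded existentials/universals and the negations down into the right classes, after which one must still check in each case that the relevant collection antecedent is provable by the appropriate decidable-or-$\LEM{\Sigma_{k-1}}$-or-$\LEM{\Sigma_{k-2}}$ case split. The $k=1$ instance of the second step is essentially the classical derivation of $\mathbf{LLPO}=\DML{\Sigma_1}$ from bounded collection, and one should verify that it goes through over $\HA$ alone.
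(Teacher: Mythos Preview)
Your proposal is correct and follows essentially the same approach as the paper: induction on $k$, first establishing $\LEM{\Sigma_{k-1}}$ and then $\DML{\Sigma_k}$, in each case encoding the target disjunction as $\exists y<2\,\forall z\,\chi(y,z)$ and invoking $\COLL{\Pi_k}$ once the antecedent $\forall w\,\exists y<2\,\forall z<w\,\chi(y,z)$ has been verified via the induction hypothesis together with Corollary~\ref{cor:BQ} and the dual principles.

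The only noteworthy difference is in how the collection antecedent for $\DML{\Sigma_k}$ is verified. The paper first observes that, having $\LEM{\Sigma_{k-1}}$, one also has $\DML{\Pi_{k-1}}$ (via Proposition~\ref{prop:PSDML}), and then applies $\DML{\Pi_{k-1}}$ to the bounded instances $\exists x<z\,\varphi$, $\exists y<z\,\psi$ (rewritten as $\Pi_{k-1}$ formulas by Corollary~\ref{cor:BQ}) to obtain the disjunction directly. You instead use $\LEM{\Sigma_{k-1}}$ to decide one bounded dual $\forall u<w\,\varphi_0^\bot(u)$, and in the negative case recover $\exists u<w\,\varphi_0(u)$ via $\DNE{\Pi_{k-1}}$ applied to its $\Pi_{k-1}$ form, whence $\varphi$, whence $\neg\psi$, whence the other bounded dual. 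Both routes are sound; yours is marginally more elementary in that it avoids the detour through Proposition~\ref{prop:PSDML}, while the paper's route makes the role of de~Morgan's law at the lower level explicit. Your acknowledgment that the bookkeeping for the class memberships of $\varphi(y,z)$ and $\Phi(y,z)$ is where the induction hypothesis does its real work is exactly right, and the justifications you sketch (via Corollary~\ref{cor:BQ} and Fact~\ref{fact:DUAL} at the appropriate levels) go through.
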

\begin{proof}
We prove by induction on $k$. 
For $k=0$, our statement obviously holds. 
Suppose that the statement holds for $k$, and we prove the case of $k+1$. 
We prove the following two statements: 
\begin{itemize}
	\item [{\rm (i)}] $\HA + \COLL{\Pi_{k+1}} \vdash \LEM{\Sigma_k}$; 
	\item [{\rm (ii)}] $\HA + \COLL{\Pi_{k+1}} \vdash \DML{\Sigma_{k+1}}$.  
\end{itemize}

(i): Let $\exists x \varphi$ be any $\Sigma_k$ formula where $\varphi$ is $\Pi_{k-1}$. 
By induction hypothesis, $\HA + \COLL{\Pi_k} \vdash \DML{\Sigma_k} + \LEM{\Sigma_{k-1}}$. 
By Fact \ref{fact:ABHK}, $\HA + \COLL{\Pi_k}$ also proves $\LEM{\Pi_{k-1}}$ and $\DNE{\Sigma_{k-1}}$. 
It follows from Corollary \ref{cor:BQ}.(1), we have that $\exists x < z\, \varphi$ is equivalent to some $\Pi_{k-1}$ formula in $\HA + \COLL{\Pi_k}$. 
Then by applying $\LEM{\Pi_{k-1}}$, we obtain 
\[
	\HA + \COLL{\Pi_k} \vdash \exists x < z\, \varphi \lor \neg\, \exists x < z\, \varphi. 
\]
Then
\[
	\HA + \COLL{\Pi_k} \vdash \exists w < 2\, [(w = 0 \to \exists x < z\, \varphi) \land (w = 1 \to \neg\, \exists x < z\, \varphi)]. 
\]
Since $\HA + \COLL{\Pi_k}$ proves $\DUAL{\Pi_{k-1}}$, we obtain
\[
	\HA + \COLL{\Pi_k} \vdash \exists w <2\, [(w = 0 \to \exists x < z\, \varphi) \land (w= 1\to \forall x < z\, \varphi^\bot)]. 
\]
Hence
\[
	\HA + \COLL{\Pi_k} \vdash \forall z\, \exists w <2\, \forall x < z\, [(w = 0 \to \exists x \varphi) \land (w= 1\to \varphi^\bot)]. 
\]
Since $(w = 0 \to \exists x \varphi) \land (w= 1\to \varphi^\bot)$ is equivalent to some $\Sigma_k$ formula, by Proposition \ref{prop:Coll2}, 
\[
	\HA + \COLL{\Pi_{k+1}} \vdash \exists w <2\, \forall x\, [(w = 0 \to \exists x \varphi) \land (w= 1\to \varphi^\bot)]. 
\]
Then 
\[
	\HA + \COLL{\Pi_{k+1}} \vdash \exists w <2\, [(w = 0 \to \exists x \varphi) \land (w= 1\to \forall x \varphi^\bot)]. 
\]
Thus we obtain $\HA + \COLL{\Pi_{k+1}} \vdash \exists x \varphi \lor \neg\, \exists x \varphi$ by Proposition \ref{prop:DUAL_BASIC}.(3). 
This means $\HA + \COLL{\Pi_{k+1}} \vdash \LEM{\Sigma_k}$. 

(ii): Let $\exists x \varphi$ and $\exists y \psi$ be any $\Sigma_{k+1}$ formulas where $\varphi$ and $\psi$ are $\Pi_k$. 
We have $\HA \vdash \neg(\exists x \varphi \land \exists y \psi) \to \neg (\exists x < z\, \varphi \land \exists y < z\, \psi)$. 
From (i), we have that $\HA + \COLL{\Pi_{k+1}}$ proves $\LEM{\Sigma_k}$. 
By Fact \ref{fact:ABHK}, Propositions \ref{prop:PSDML} and \ref{prop:DNE_DUAL}, the theory also proves $\DML{\Sigma_k}$, $\DNE{\Sigma_k}$, $\DML{\Pi_k}$ and $\DUAL{\Pi_k}$. 
Then by Corollary \ref{cor:BQ}.(1), both $\exists x < z\, \varphi$ and $\exists y < z\, \psi$ are equivalent to some $\Pi_k$ formulas in $\HA + \COLL{\Pi_{k+1}}$. 
By applying $\DML{\Pi_k}$, $\HA + \COLL{\Pi_{k+1}}$ proves
\begin{align*}
	\neg(\exists x \varphi \land \exists y \psi) & \to \neg\, \exists x < z\, \varphi \lor \neg\, \exists y < z\, \psi, \\
& \to \exists w < 2\, [(w=0 \to \neg\, \exists x < z\, \varphi) \land (w = 1 \to \neg\, \exists y < z\, \psi)], \\
& \to \exists w < 2\, [(w=0 \to \forall x < z\, \varphi^\bot) \land (w = 1 \to \forall y < z\, \psi^\bot)], \tag{by $\DUAL{\Pi_k}$}\\
& \to \exists w < 2\, \forall x < z\, \forall y < z\, [(w=0 \to \varphi^\bot) \land (w = 1 \to \psi^\bot)]. 
\end{align*}
Thus we have that $\HA + \COLL{\Pi_{k+1}}$ proves
\[
	\neg(\exists x \varphi \land \exists y \psi) \to \forall z\, \exists w < 2\, \forall x < z\, \forall y < z\, [(w=0 \to \varphi^\bot) \land (w = 1 \to \psi^\bot)]. 
\]
Then, in the light of (\ref{eqColl2}), $\HA + \COLL{\Pi_{k+1}}$ proves
\begin{align*}
	\neg(\exists x \varphi \land \exists y \psi) & \to \exists w < 2\, \forall x\, \forall y\, [(w=0 \to \varphi^\bot) \land (w = 1 \to \psi^\bot)], \\
	& \to \exists w < 2\, [(w=0 \to \forall x \varphi^\bot) \land (w = 1 \to \forall y \psi^\bot)], \\
	& \to \exists w < 2\, [(w=0 \to \neg\, \exists x \varphi) \land (w = 1 \to \neg\, \exists y \psi)], \tag{by Proposition \ref{prop:DUAL_BASIC}.(3)}\\
	& \to \neg\, \exists x \varphi \lor \neg\, \exists y \psi.
\end{align*}
Therefore $\HA + \COLL{\Pi_{k+1}} \vdash \DML{\Sigma_{k+1}}$. 
\end{proof}

From Propositions \ref{prop:Coll3}, \ref{prop:Coll4} and Fact \ref{fact:ABHK}, we get the following corollary. 

\begin{cor}\label{cor:PCOLL}
The following are equivalent over $\HA$: 
\begin{enumerate}
	\item $\COLL{\Pi_{k+1}}$. 
	\item $\DML{\Sigma_{k+1}} + \LEM{\Sigma_k}$. 
	\item $\DML{\Sigma_{k+1}} + \DNE{\Sigma_k}$. 
\end{enumerate}
\end{cor}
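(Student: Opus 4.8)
The statement is a corollary, so the plan is simply to assemble a cycle $1 \Rightarrow 2 \Rightarrow 3 \Rightarrow 1$ from results already in hand. For $1 \Rightarrow 2$ I would apply Proposition \ref{prop:Coll4} with $k+1$ substituted for $k$; its conclusion $\HA + \COLL{\Pi_{k+1}} \vdash \DML{\Sigma_{k+1}} + \LEM{\Sigma_k}$ is verbatim item 2. For $3 \Rightarrow 1$ I would cite Proposition \ref{prop:Coll3}, whose statement $\HA + \DML{\Sigma_{k+1}} + \DNE{\Sigma_k} \vdash \COLL{\Pi_{k+1}}$ is exactly the implication wanted.

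The remaining link $2 \Rightarrow 3$ is the only part that is not literally one of the cited propositions, and it is immediate: by Fact \ref{fact:LEM_DNE} (equivalently, by Fact \ref{fact:ABHK}.1) one has $\HA + \LEM{\Sigma_k} \vdash \DNE{\Sigma_k}$, so $\HA + \DML{\Sigma_{k+1}} + \LEM{\Sigma_k}$ trivially proves $\DML{\Sigma_{k+1}} + \DNE{\Sigma_k}$. Composing the three implications yields the equivalence of all three principles over $\HA$.

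Accordingly, I do not expect a genuine obstacle here; the content of the corollary has already been discharged in Propositions \ref{prop:Coll3} and \ref{prop:Coll4}, and the present argument is pure bookkeeping. The one point worth emphasising when writing it up is the conceptual upshot: since $\COLL{\Pi_{k+1}}$, and hence $\DML{\Sigma_{k+1}} + \DNE{\Sigma_k}$, already proves $\LEM{\Sigma_k}$, the \emph{a priori} weaker $\DNE{\Sigma_k}$ and the stronger $\LEM{\Sigma_k}$ collapse to one and the same thing once $\DML{\Sigma_{k+1}}$ is assumed, which is precisely what makes items 2 and 3 interchangeable.
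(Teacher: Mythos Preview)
Your proposal is correct and matches the paper's own argument, which simply cites Propositions \ref{prop:Coll3} and \ref{prop:Coll4} together with Fact \ref{fact:ABHK} (you use the more direct Fact \ref{fact:LEM_DNE} for $2\Rightarrow 3$, which is fine). The cycle $1\Rightarrow 2\Rightarrow 3\Rightarrow 1$ you describe is exactly the intended proof.
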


\subsection{The principles $\DML{\Delta_k}$ and $\DML{\n{\Delta_k}}$}

In this subsection, we mainly investigate the principles $\DML{\Delta_k}$ and $\DML{\n{\Delta_k}}$. 

\begin{prop}\label{prop:DDML1}\leavevmode
\begin{enumerate}
	\item $\HA + \DML{\Delta_{k+1}} + \DNS{\Sigma_{k-1}} \vdash \LEM{\n{\Sigma_k}}$; 
	\item $\HA + \DML{\n{\Delta_{k+1}}} + \DNS{\Sigma_{k-1}} \vdash \LEM{\n{\Sigma_k}}$. 
\end{enumerate} 
\end{prop}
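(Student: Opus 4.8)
The two clauses are parallel, so I would focus on clause 1 and then indicate that clause 2 follows by an entirely analogous argument (replacing the input formulas by their negations and using that $\n{\Sigma_k}$ formulas together with their double negations still fall under the $\Delta_{k+1}$ machinery via Fact~\ref{fact:DUAL}). For clause 1, the goal is to derive $\LEM{\n{\Sigma_k}}$, i.e.\ $\neg\varphi\lor\neg\neg\varphi$ for an arbitrary $\Sigma_k$ formula $\varphi$. The natural move is to feed a cleverly chosen instance of $\DML{\Delta_{k+1}}$: I want two $\Sigma_{k+1}$ formulas whose conjunction is provably contradictory and whose negations are (provably) $\neg\varphi$ and $\neg\neg\varphi$ respectively, so that the conclusion $\neg(\cdot)\lor\neg(\cdot)$ of de~Morgan becomes exactly what I want. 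The obvious candidates are $\varphi$ itself (which is $\Sigma_k\subseteq\Sigma_{k+1}$) and $\varphi^\bot$, since $\HA\vdash\neg(\varphi\land\varphi^\bot)$ by Proposition~\ref{prop:DUAL_BASIC}.4, and $\neg\varphi^\bot$ is $\HA+\DNS{\Sigma_{k-1}}$-equivalent to $\neg\neg\varphi$ by Proposition~\ref{prop:WDUAL} (since $\WDUAL{\Sigma_k}$ holds, together with Proposition~\ref{prop:DUAL_BASIC}.3 for the easy direction).

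The place where one must be careful is that $\DML{\Delta_{k+1}}$ is not plain $\DML{\Sigma_{k+1}}$: it requires, as hypothesis, that each of the two formulas be equivalent to a $\Pi_{k+1}$ formula. So I need both $\varphi$ and $\varphi^\bot$ to be "$\Delta_{k+1}$" in the relevant sense. Here $\varphi$ is $\Sigma_k$, hence $\HA$-equivalent to a $\Pi_{k+1}$ formula by the inclusion $\Sigma_k\subseteq\Pi_{k+1}$ recorded in Section~\ref{section:pre}; likewise $\varphi^\bot$ is $\Pi_k$ by Proposition~\ref{prop:DUAL_BASIC}.1, hence $\HA$-equivalent to a $\Pi_{k+1}$ formula as well (and of course each is $\Sigma_{k+1}$). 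So both satisfy the side conditions of $\DML{\Delta_{k+1}}$ outright in $\HA$, and the instance
\[
(\varphi\leftrightarrow\varphi_1)\land(\varphi^\bot\leftrightarrow\varphi_2)\to\bigl(\neg(\varphi\land\varphi^\bot)\to\neg\varphi\lor\neg\varphi^\bot\bigr)
\]
with suitable $\Pi_{k+1}$ witnesses $\varphi_1,\varphi_2$ is available. Discharging the two biconditional hypotheses in $\HA$ and the conjunction-negation hypothesis via Proposition~\ref{prop:DUAL_BASIC}.4, one gets $\HA+\DML{\Delta_{k+1}}\vdash\neg\varphi\lor\neg\varphi^\bot$.

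It remains to massage $\neg\varphi\lor\neg\varphi^\bot$ into $\neg\varphi\lor\neg\neg\varphi$. The disjunct $\neg\varphi$ is already in place. For the other disjunct, $\neg\varphi^\bot\to\neg\neg\varphi$ is precisely (an instance of) $\WDUAL{\Sigma_k}$ — wait, we need it for $\varphi\in\Sigma_k$, and indeed $\WDUAL{\Sigma_k}$ is $\HA$-equivalent to $\DNS{\Sigma_{k-1}}$ by Proposition~\ref{prop:WDUAL}, which is exactly the extra hypothesis we are allowed to use. Hence $\HA+\DNS{\Sigma_{k-1}}\vdash\neg\varphi^\bot\to\neg\neg\varphi$, and the $\lor$-elimination gives $\HA+\DML{\Delta_{k+1}}+\DNS{\Sigma_{k-1}}\vdash\neg\varphi\lor\neg\neg\varphi$, as desired. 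For clause 2, I would run the same argument with the $\Sigma_{k+1}$ formulas replaced by $\n{\Sigma_{k+1}}$ formulas — more precisely, apply $\DML{\n{\Delta_{k+1}}}$ to $\neg\varphi$ (a $\n{\Sigma_k}$ formula, still $\Pi_{k+1}$ up to $\HA$-equivalence) and to its dual-type companion, then simplify using the same $\WDUAL$/Proposition~\ref{prop:DUAL_BASIC} toolkit; alternatively, clause 2 may follow directly from clause 1 plus Proposition~\ref{prop:DML0} and the observation that $\LEM{\n{\Sigma_k}}$ is stated for $\Sigma_k$ formulas, whose negations are exactly the $\n{\Sigma_k}$ formulas. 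The only genuinely delicate point throughout is bookkeeping the $\Delta_{k+1}$ side conditions — making sure the formulas I plug in really are simultaneously $\Sigma_{k+1}$ and $\HA$-equivalent to something $\Pi_{k+1}$ — and there the inclusions $\Sigma_k,\Pi_k\subseteq\Sigma_{k+1}\cap\Pi_{k+1}$ from Section~\ref{section:pre} do all the work.
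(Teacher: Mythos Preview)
Your argument for clause~1 is correct and is exactly the paper's proof: apply $\DML{\Delta_{k+1}}$ to the pair $\varphi,\varphi^\bot$ (both $\Delta_{k+1}$ via $\Sigma_k,\Pi_k\subseteq\Sigma_{k+1}\cap\Pi_{k+1}$), use Proposition~\ref{prop:DUAL_BASIC}.4 for the hypothesis $\neg(\varphi\land\varphi^\bot)$, and convert $\neg\varphi^\bot$ to $\neg\neg\varphi$ via $\WDUAL{\Sigma_k}\leftrightarrow\DNS{\Sigma_{k-1}}$ (Proposition~\ref{prop:WDUAL}).

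For clause~2, your primary sketch is also the paper's route: one applies $\DML{\n{\Delta_{k+1}}}$ with the same underlying $\Delta_{k+1}$ formulas $\varphi$ and $\varphi^\bot$. Concretely, from $\HA\vdash\neg(\neg\varphi\land\neg\neg\varphi)$ and $\DNS{\Sigma_{k-1}}$ one gets $\neg(\neg\varphi\land\neg\varphi^\bot)$, then $\DML{\n{\Delta_{k+1}}}$ yields $\neg\neg\varphi\lor\neg\neg\varphi^\bot$, and Proposition~\ref{prop:DUAL_BASIC}.3 turns $\neg\neg\varphi^\bot$ into $\neg\varphi$. Your phrasing ``apply $\DML{\n{\Delta_{k+1}}}$ to $\neg\varphi$'' is slightly off (the scheme takes the $\Delta_{k+1}$ formulas themselves and negates them internally), but the intended instance is clear.

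Your alternative route for clause~2, however, does not work. Proposition~\ref{prop:DML0} only lets you pass \emph{downward} from $\DML{\n{\Gamma}}$ to $\DML{\n{\Delta_k}}$; it gives no implication from $\DML{\n{\Delta_{k+1}}}$ to $\DML{\Delta_{k+1}}$, so clause~2 cannot be reduced to clause~1 that way. You must argue clause~2 directly, as above.
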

\begin{proof}
Let $\varphi$ be any $\Sigma_k$ formula. 

1. By Proposition \ref{prop:DUAL_BASIC}.(4), $\HA \vdash \neg (\varphi \land \varphi^\bot)$. 
Since both $\varphi$ and $\varphi^\bot$ are $\Delta_{k+1}$, $\HA + \DML{\Delta_{k+1}} \vdash \neg \varphi \lor \neg \varphi^\bot$. 
Then $\HA + \DML{\Delta_{k+1}} + \DNS{\Sigma_{k-1}}$ proves $\neg \varphi \lor \neg \neg \varphi$ by Proposition \ref{prop:WDUAL}. 

2. Since $\HA \vdash \neg(\neg \varphi \land \neg \neg \varphi)$, $\HA + \DNS{\Sigma_{k-1}} \vdash \neg (\neg \varphi \land \neg \varphi^\bot)$. 
Then $\HA + \DML{\n{\Delta_{k+1}}} + \DNS{\Sigma_{k-1}} \vdash \neg \neg \varphi \lor \neg \neg \varphi^\bot$. 
We conclude that the theory proves $\neg \varphi \lor \neg \neg \varphi$. 
\end{proof}

From Corollaries \ref{cor:NLEM1}, \ref{cor:NLEM2} and Proposition \ref{prop:DDML1}, we obtain the following.

\begin{cor}\label{cor:DDML2}
Let $\Gamma \in \{\Delta_{k+1}, \n{\Delta_{k+1}}\}$. 
\begin{enumerate}
	\item $\HA + \DML{\Gamma} + \DNE{\Sigma_{k-1}} \vdash \LEM{\Pi_k}$; 
	\item $\HA + \DML{\Gamma} + \DNE{\Sigma_k} \vdash \LEM{\Sigma_k}$. 
\end{enumerate} 
\end{cor}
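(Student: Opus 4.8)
The plan is to obtain Corollary \ref{cor:DDML2} by a short chaining of Proposition \ref{prop:DDML1}, Proposition \ref{prop:DNS1}.1 and Corollaries \ref{cor:NLEM1} and \ref{cor:NLEM2}; no new combinatorial construction should be required. The common thread is that both hypotheses $\DNE{\Sigma_{k-1}}$ and $\DNE{\Sigma_k}$ supply the double negation shift $\DNS{\Sigma_{k-1}}$ that is needed to invoke Proposition \ref{prop:DDML1}, which in turn produces $\LEM{\n{\Sigma_k}}$; then the relevant $\DNE$ hypothesis upgrades $\LEM{\n{\Sigma_k}}$ to $\LEM{\Pi_k}$ or $\LEM{\Sigma_k}$ via Corollary \ref{cor:NLEM1} or \ref{cor:NLEM2}.

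For clause 1, fix $\Gamma \in \{\Delta_{k+1}, \n{\Delta_{k+1}}\}$ and work in $\HA + \DML{\Gamma} + \DNE{\Sigma_{k-1}}$. First I would note that $\HA + \DNE{\Sigma_{k-1}} \vdash \DNS{\Sigma_{k-1}}$ by Proposition \ref{prop:DNS1}.1 (applied with $k$ replaced by $k-1$). Hence the hypotheses of the appropriate clause of Proposition \ref{prop:DDML1} are satisfied --- clause 1 when $\Gamma = \Delta_{k+1}$, clause 2 when $\Gamma = \n{\Delta_{k+1}}$ --- so the theory proves $\LEM{\n{\Sigma_k}}$. Combining this with the assumed $\DNE{\Sigma_{k-1}}$ and using the equivalence of items 1 and 2 in Corollary \ref{cor:NLEM1}, we conclude $\LEM{\Pi_k}$.

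For clause 2, work in $\HA + \DML{\Gamma} + \DNE{\Sigma_k}$. Since $\Sigma_{k-1} \subseteq \Sigma_k$ in the sense of the paper's convention, $\HA + \DNE{\Sigma_k} \vdash \DNE{\Sigma_{k-1}}$, and hence $\DNS{\Sigma_{k-1}}$ as above; so Proposition \ref{prop:DDML1} again yields $\LEM{\n{\Sigma_k}}$. Combining $\LEM{\n{\Sigma_k}}$ with the assumed $\DNE{\Sigma_k}$ and invoking the equivalence of items 1 and 2 in Corollary \ref{cor:NLEM2} gives $\LEM{\Sigma_k}$. The whole argument is essentially bookkeeping; the only point deserving care is verifying that the double negation shift required by Proposition \ref{prop:DDML1} is genuinely available, which it is because $\DNE{\Sigma_{k-1}}$ is a consequence of either hypothesis and already implies $\DNS{\Sigma_{k-1}}$.
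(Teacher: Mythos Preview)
Your proof is correct and follows essentially the same route as the paper, which simply cites Proposition~\ref{prop:DDML1} together with Corollaries~\ref{cor:NLEM1} and~\ref{cor:NLEM2}. You have merely made explicit the intermediate step (via Proposition~\ref{prop:DNS1}.1) that $\DNE{\Sigma_{k-1}}$ supplies the $\DNS{\Sigma_{k-1}}$ required by Proposition~\ref{prop:DDML1}, which the paper leaves implicit.
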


Furthermore, we prove the following proposition by adapting the proofs of Proposition \ref{prop:PSDML} and \cite[Lemma 2.14]{F20}. 

\begin{prop}\label{prop:DDML3}
$\HA + \DML{\Delta_k} + \DNE{\Sigma_{k-1}} \vdash \DML{\n{\Delta_k}}$. 
\end{prop}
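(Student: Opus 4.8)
The plan is to adapt the argument for Proposition~\ref{prop:PSDML}, taking advantage of the fact that the hypotheses of the present statement let one replace $\DML{\Sigma_k}$ in that proof by the weaker $\DML{\Delta_k}$. We may assume $k>0$. First I would record how much the theory $T:=\HA+\DML{\Delta_k}+\DNE{\Sigma_{k-1}}$ proves. Since $\DNE{\Sigma_{k-1}}$ and $\DNE{\Pi_k}$ are $\HA$-equivalent (Fact~\ref{fact:ABHK}.6), $T\vdash\DNE{\Pi_k}$; by Proposition~\ref{prop:DNS1}.1, $T\vdash\DNS{\Sigma_{k-1}}$; by Proposition~\ref{prop:DNE_DUAL}, $T\vdash\DUAL{\Sigma_k}$ and $T\vdash\DUAL{\Pi_{k-1}}$; and by Corollary~\ref{cor:DDML2} (applied with $k$ replaced by $k-1$), $T\vdash\LEM{\Pi_{k-1}}$ and $T\vdash\LEM{\Sigma_{k-1}}$, hence $T\vdash\LN{\Pi_{k-1}}$ by Proposition~\ref{prop:LN}.1. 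Finally, just as in the proof of Proposition~\ref{prop:PSDML}, an induction on $k$ makes Corollary~\ref{cor:BQ} available over $T$, so that bounded universal quantification keeps a $\Sigma_{k-1}$ formula within $\Sigma_{k-1}$.

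Next I would set up the reduction. Fix an instance of $\DML{\n{\Delta_k}}$, presenting the two $\Delta_k$ formulas as $\varphi\equiv\exists a\,\alpha(a)$ and $\psi\equiv\exists c\,\gamma(c)$ with $\alpha,\gamma\in\Pi_{k-1}$. Using $\DNE{\Pi_k}$ (so $\neg\neg\varphi\leftrightarrow\varphi$ and $\neg\neg\psi\leftrightarrow\psi$ over $T$), the instance is $T$-equivalent to $\neg\neg(\varphi\lor\psi)\to\varphi\lor\psi$; equivalently, using $\DUAL{\Sigma_k}$ (so $\neg\varphi\leftrightarrow\varphi^\bot$ and $\neg\psi\leftrightarrow\psi^\bot$), to the instance $\neg(\varphi^\bot\land\psi^\bot)\to\neg\varphi^\bot\lor\neg\psi^\bot$ of de Morgan's law for the $\Pi_k$ formulas $\varphi^\bot\equiv\forall a\,(\alpha(a))^\bot$ and $\psi^\bot\equiv\forall c\,(\gamma(c))^\bot$ --- whose matrices $(\alpha(a))^\bot,(\gamma(c))^\bot\in\Sigma_{k-1}$ are \emph{duals} of $\Pi_{k-1}$ formulas. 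This last observation is what distinguishes the present situation from that of Proposition~\ref{prop:PSDML}, where the $\Pi_k$ formulas have arbitrary $\Sigma_{k-1}$ matrices.

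I would then run the proof of Proposition~\ref{prop:PSDML} on this instance. Using $\LN{\Pi_{k-1}}$ and $\DUAL{\Pi_{k-1}}$, introduce the least-witness--race formulas
\begin{gather*}
\xi:\equiv\exists n\bigl[\alpha(n)\land\forall j<n\,((\alpha(j))^\bot\land(\gamma(j))^\bot)\bigr],\\
\eta:\equiv\exists n\bigl[\gamma(n)\land(\alpha(n))^\bot\land\forall j<n\,((\alpha(j))^\bot\land(\gamma(j))^\bot)\bigr],
\end{gather*}
which are the analogues of the formulas $\exists x\,\xi(x)$ and $\exists y\,\eta(y)$ there (here I have used Proposition~\ref{prop:DUAL_BASIC}.2 to simplify $((\alpha(n))^\bot)^\bot$ to $\alpha(n)$). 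The remaining steps mirror Proposition~\ref{prop:PSDML}: (a) $\HA\vdash\neg(\xi\land\eta)$, by the same witness comparison, using Proposition~\ref{prop:DUAL_BASIC}.3 and~\ref{prop:DUAL_BASIC}.4; (b) $\xi\to\varphi$ and $\eta\to\psi$ are immediate, and, arguing under the hypothesis $\neg\neg(\varphi\lor\psi)$ exactly as in the derivations of (\ref{eqDML2}) and (\ref{eqDML3}) --- using $\LN{\Pi_{k-1}}$, $\LEM{\Pi_{k-1}}$, $\DUAL{\Pi_{k-1}}$ and $\DNS{\Sigma_{k-1}}$ --- one gets $\neg\neg(\varphi\lor\psi)\land\neg\xi\to\psi$ and $\neg\neg(\varphi\lor\psi)\land\neg\eta\to\varphi$; (c) $\xi$ and $\eta$ are $T$-equivalent to $\Delta_k$ formulas, so that $\DML{\Delta_k}$ applies to them and, together with (a), yields $\neg\xi\lor\neg\eta$; (d) combining (a)--(c) gives $\varphi\lor\psi$ under $\neg\neg(\varphi\lor\psi)$, as required.

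The hard part will be step~(c): verifying that $\xi$ and $\eta$ (possibly after a slight adjustment of their definitions) are $T$-provably equivalent to $\Delta_k$ formulas. In Proposition~\ref{prop:PSDML} the corresponding formulas are only $\Sigma_k$, which is why full $\DML{\Sigma_k}$ is needed there; here the point is precisely that $\varphi$ and $\psi$ are $\Delta_k$, not merely $\Pi_k$, so that their $\Delta_k$-representations can be used to absorb the unbounded existential quantifier in $\xi$ and $\eta$ (using the bounded-quantifier collapse of Corollary~\ref{cor:BQ} and the decidability provided by $\LEM{\Pi_{k-1}}$ and $\LEM{\Sigma_{k-1}}$). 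Making this precise --- and threading the accompanying induction on $k$, as in Proposition~\ref{prop:PSDML} and \cite[Lemma~2.14]{F20} --- is the technical heart of the argument, the other steps being routine adaptations of the corresponding parts of the proof of Proposition~\ref{prop:PSDML}.
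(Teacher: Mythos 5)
Your set-up, your reduction of the instance to a de Morgan statement for the $\Pi_k$ duals, and your race formulas $\xi,\eta$ are exactly what the paper's proof of Proposition \ref{prop:DDML3} uses (there written $\exists x\,\xi(x)$ and $\exists y\,\eta(y)$ with the same matrices), and your steps (a), (b) and (d) are routine adaptations of Proposition \ref{prop:PSDML}, as you say. But the argument stands or falls with step (c), and you have not given it: you only assert that $\xi$ and $\eta$ should be $T$-provably $\Delta_k$ (where $T:=\HA+\DML{\Delta_k}+\DNE{\Sigma_{k-1}}$) and gesture at Corollary \ref{cor:BQ} together with the decidability supplied by $\LEM{\Pi_{k-1}}$ and $\LEM{\Sigma_{k-1}}$. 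Those tools cannot close the gap: Corollary \ref{cor:BQ} collapses only \emph{bounded} quantifiers, whereas the obstacle is the unbounded $\exists n$ in $\xi$ and $\eta$, and decidability of the $\Pi_{k-1}$/$\Sigma_{k-1}$ matrices does not turn a genuinely $\Sigma_k$ formula into a $\Delta_k$ one. So the proposal has a genuine gap at precisely the step that distinguishes this proposition from Proposition \ref{prop:PSDML}, i.e.\ at the point where $\DML{\Delta_k}$ must be applicable where the earlier proof used full $\DML{\Sigma_k}$.

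The missing idea, which is how the paper proceeds, is a mutual-exclusivity characterization of the race formulas: using the least-witness machinery of your step (b) together with $\HA\vdash\neg(\xi\land\eta)$, one proves in $T$ that $\xi\leftrightarrow\varphi\land\neg\eta$ and $\eta\leftrightarrow\psi\land\neg\xi$. Consequently, under the hypothesis $\chi:\equiv(\varphi\leftrightarrow\varphi')\land(\psi\leftrightarrow\psi')$ --- which you should carry explicitly, since the $\Pi_k$ companions $\varphi',\psi'$ are part of the instance of $\DML{\n{\Delta_k}}$ and are exactly what is needed here --- the formula $\xi$ is equivalent to $\varphi'\land(\text{the }\Pi_k\text{ dual of a }\Sigma_k\text{ form of }\eta)$, where $\neg\eta$ is replaced by that dual via $\DUAL{\Sigma_k}$ (available from $\DNE{\Sigma_{k-1}}$ by Proposition \ref{prop:DNE_DUAL}); symmetrically for $\eta$. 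Since $\xi$ and $\eta$ are also equivalent to $\Sigma_k$ formulas in a fragment of $T$ (this is the only place Corollary \ref{cor:BQ} is needed, namely to handle the bounded $\forall j<n$), they are $T$-provably $\Delta_k$ under $\chi$, so $\DML{\Delta_k}$ applied to $\neg(\xi\land\eta)$ yields $\chi\to\neg\xi\lor\neg\eta$, and your (d) finishes the proof. Note also that no new induction on $k$ is required: Proposition \ref{prop:PSDML} has none, and the induction you allude to lives in Proposition \ref{prop:Coll3}, which enters only through the citation of Corollary \ref{cor:BQ}.
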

\begin{proof}
We may assume $k > 0$. 
Let $\exists x \varphi(x)$ and $\exists y \psi(y)$ be any $\Sigma_k$ formulas where $\varphi(x)$ and $\psi(y)$ are $\Pi_{k-1}$, and let $\varphi'$ and $\psi'$ be any $\Pi_k$ formulas. 
Let $\chi$ denote the formula $(\exists x \varphi(x) \leftrightarrow \varphi') \land (\exists y \psi(y) \leftrightarrow \psi')$. 
We define the formulas $\xi(x)$ and $\eta(y)$ as follows: 
\begin{itemize}
	\item $\xi(x) : \equiv \forall z < x (\varphi^\bot(z) \land \psi^\bot(z)) \land \varphi(x)$; 
	\item $\eta(y) : \equiv \forall z < y (\varphi^\bot(z) \land \psi^\bot(z)) \land \psi(y) \land \varphi^\bot(y)$. 
\end{itemize}
As in the proof of Proposition \ref{prop:PSDML}, the formulas $\exists x \xi(x)$ and $\exists y \eta(y)$ are equivalent to some $\Sigma_k$ formulas in the theory $\HA + \DML{\Sigma_{k-2}} + \DNE{\Sigma_{k-3}}$ which is included in $\HA + \DNE{\Sigma_{k-1}}$ by Fact \ref{fact:ABHK}, Corollary \ref{cor:NLEM2} and Proposition \ref{prop:DML1}. 
Also 
\begin{equation}\label{eqDDML1}
	\HA \vdash \neg (\exists x \xi(x) \land \exists y \eta(y)). 
\end{equation}

By Corollary \ref{cor:DDML2}.(1), $\HA + \DML{\Delta_k} + \DNE{\Sigma_{k-2}}$ proves $\LEM{\Pi_{k-1}}$. 
Since $\DNE{\Sigma_{k-1}}$ implies $\DUAL{\Pi_{k-1}}$, by Theorem \ref{thm:LN}, we obtain 
\begin{equation}\label{eqDDML2}
	\HA + \DML{\Delta_k} + \DNE{\Sigma_{k-1}} \vdash \exists x \varphi(x) \to \exists x[\varphi(x) \land \forall z < x\, \varphi^\bot(z)]. 
\end{equation}
In a similar way, we have
\[
	\HA + \DML{\Delta_k} + \DNE{\Sigma_{k-1}} \vdash \exists y < x\, \psi(y) \to \exists y < x\, [\psi(y) \land \forall z < y\, \psi^\bot(z)].
\]
Then by the definition of $\eta(y)$, 
\[
	\HA + \DML{\Delta_k} + \DNE{\Sigma_{k-1}} \vdash \forall z < x\, \varphi^\bot(z) \land \exists z < x\, \psi(z) \to \exists y \eta(y).
\]
From this with (\ref{eqDDML2}), $\HA + \DML{\Delta_k} + \DNE{\Sigma_{k-1}}$ proves
\[
	\exists x \varphi(x) \land \neg\, \exists y \eta(y) \to \exists x[\varphi(x) \land \forall z < x\, \varphi^\bot(z) \land \forall z < x\, \psi^\bot(z)]. 
\]
It follows that the theory proves $\exists x \varphi(x) \land \neg\, \exists y \eta(y) \to \exists x \xi(x)$. 
On the other hand, $\HA$ proves $\exists x \xi(x) \to \exists x \varphi(x) \land \neg\, \exists y \eta(y)$ from (\ref{eqDDML1}). 
Therefore $\HA + \DML{\Delta_k} + \DNE{\Sigma_{k-1}}$ proves
\[
	\chi \to [\exists x \xi(x) \leftrightarrow (\varphi' \land \forall y \eta^\bot(y))]. 
\]
Also $\varphi' \land \forall y \eta^\bot(y)$ is $\HA$-provably equivalent to some $\Pi_k$ formula. 

In a similar way, we obtain that $\HA + \DML{\Delta_k} + \DNE{\Sigma_{k-1}}$ proves
\[
	\chi \to [\exists y \eta(y) \leftrightarrow (\psi' \land \forall x \xi^\bot(x))]
\]
and $\psi' \land \forall x \xi^\bot(x)$ is $\HA$-provably equivalent to some $\Pi_k$ formula. 

Then by applying $\DML{\Delta_k}$ to (\ref{eqDDML1}), 
\begin{equation}\label{eqDDML3}
	\HA + \DML{\Delta_k} + \DNE{\Sigma_{k-1}} \vdash \chi \to \neg\, \exists x \xi(x) \lor \neg\, \exists y \eta(y). 
\end{equation}

From (\ref{eqDDML2}) and the definition of $\xi(x)$, 
\[
	\HA + \DML{\Delta_k} + \DNE{\Sigma_{k-1}} \vdash \exists x \varphi(x) \land \forall y \psi^\bot(y) \to \exists x \xi(x). 
\]
Then 
\[
	\HA + \DML{\Delta_k} + \DNE{\Sigma_{k-1}} \vdash \neg\, \exists x \xi(x) \land \neg\, \exists y \psi(y) \to \neg\, \exists x \varphi(x). 
\]
Therefore we obtain
\begin{equation}\label{eqDDML4}
	\HA + \DML{\Delta_k} + \DNE{\Sigma_{k-1}} \vdash \neg(\neg\, \exists x \varphi(x) \land \neg\, \exists y \psi(y)) \land \neg\, \exists x \xi(x) \to \neg \neg\, \exists y \psi(y).  
\end{equation}
In a similar way, we obtain
\begin{equation}\label{eqDDML5}
	\HA + \DML{\Delta_k} + \DNE{\Sigma_{k-1}} \vdash \neg(\neg\, \exists x \varphi(x) \land \neg\, \exists y \psi(y)) \land \neg\, \exists y \eta(y) \to \neg \neg\, \exists x \varphi(x).  
\end{equation}
By combining (\ref{eqDDML3}), (\ref{eqDDML4}) and (\ref{eqDDML5}), we conclude that $\HA + \DML{\Delta_k} + \DNE{\Sigma_{k-1}}$ proves
\[
	\chi \to [\neg(\neg\, \exists x \varphi(x) \land \neg\, \exists y \psi(y)) \to \neg \neg\, \exists x \varphi(x) \lor \neg \neg\, \exists y \psi(y)]. \qedhere
\]
\end{proof}

\subsection{De Morgan's law with respect to duals}

In \cite{ABHK}, principles based on de Morgan's law with respect to duals are introduced.  

\begin{defn}[De Morgan's law with respect to duals]\label{defn:DDML}
Let $\Gamma$ and $\Theta$ be any sets of formulas in prenex normal form. 
\begin{tabbing}
\hspace{30mm} \= \hspace{60mm} \= \hspace{50mm} \kill
$\DML{\Gamma}^\bot$ \> $\neg(\varphi \land \psi) \to \varphi^\bot \lor \psi^\bot$ \> ($\varphi, \psi \in \Gamma$) \\
$\DML{(\Gamma, \Theta)}^\bot$ \> $\neg(\varphi \land \psi) \to \varphi^\bot \lor \psi^\bot$ \> ($\varphi \in \Gamma$ and $\psi \in \Theta$) \\
$\DML{\Delta_k}^\bot$ \> $(\varphi \leftrightarrow \varphi') \land (\psi \leftrightarrow \psi')$ \>  \\
 \> \hspace{0.5in} $\to (\neg (\varphi \land \psi) \to \varphi^\bot \lor \psi^\bot)$ \> ($\varphi, \psi \in \Sigma_k$ and $\varphi', \psi' \in \Pi_k$) \\
$\DML{(\Delta_k, \Gamma)}^{\bot, \Sigma}$ \> $(\varphi \leftrightarrow \varphi') \to (\neg(\varphi \land \psi) \to \varphi^\bot \lor \psi^\bot)$ \> ($\varphi \in \Sigma_k$, $\varphi' \in \Pi_k$ and $\psi \in \Gamma$) \\
$\DML{(\Delta_k, \Gamma)}^{\bot, \Pi}$ \> $(\varphi \leftrightarrow \varphi') \to (\neg(\varphi \land \psi) \to (\varphi')^\bot \lor \psi^\bot)$ \> ($\varphi \in \Sigma_k$, $\varphi' \in \Pi_k$ and $\psi \in \Gamma$) \\
\end{tabbing}
\end{defn}

Our $\DML{\Sigma_k}^\bot$ is called $\LLPO{\Sigma_k}$ in \cite{ABHK}. 
As in the case of $\LEM{\Gamma}^\bot$ (Proposition \ref{prop:LEM}), we show that the principles defined in Definition \ref{defn:DDML} are exactly de Morgan's laws equipped with the dual principles. 

\begin{prop}\label{prop:LLPO1}
Let $\Gamma$ and $\Theta$ be any sets of formulas in prenex normal form. 
\begin{enumerate}
	\item 	$\DML{(\Gamma, \Theta)}^\bot$ is equivalent to $\DML{(\Gamma, \Theta)} + \DUAL{\Gamma} + \DUAL{\Theta}$ over $\HA$; 
	\item $\DML{(\Delta_k, \Theta)}^{\bot, \Sigma}$ is equivalent to $\DML{(\Delta_k, \Theta)} + \DUAL{\Sigma_k} + \DUAL{\Theta}$ over $\HA$; 
	\item $\DML{(\Delta_k, \Theta)}^{\bot, \Pi}$ is equivalent to $\DML{(\Delta_k, \Theta)} + \DUAL{\Pi_k} + \DUAL{\Theta}$ over $\HA$. 
\end{enumerate}
\end{prop}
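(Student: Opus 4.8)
The plan is to follow the template of Proposition \ref{prop:LEM}, exploiting that $\HA$ proves $\varphi^\bot \to \neg\varphi$ (Proposition \ref{prop:DUAL_BASIC}.3) for one direction and that the relevant instance of $\DUAL{\cdot}$ converts each disjunct $\neg\varphi$ back into $\varphi^\bot$ for the other.

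For clause 1 the backward direction is immediate: given $\neg(\varphi \land \psi)$ with $\varphi \in \Gamma$ and $\psi \in \Theta$, apply $\DML{(\Gamma,\Theta)}$ to get $\neg\varphi \lor \neg\psi$, and then $\DUAL{\Gamma}$ or $\DUAL{\Theta}$ according to the disjunct to obtain $\varphi^\bot$ or $\psi^\bot$, hence $\varphi^\bot \lor \psi^\bot$. For the forward direction, $\HA + \DML{(\Gamma,\Theta)}^\bot \vdash \DML{(\Gamma,\Theta)}$ follows at once from $\varphi^\bot \to \neg\varphi$ and $\psi^\bot \to \neg\psi$; to get $\DUAL{\Gamma}$, fix $\varphi \in \Gamma$, pick an $\HA$-provable $\psi$ in $\Theta$ (each class occurring in the applications contains one), note $\HA \vdash \neg\varphi \to \neg(\varphi \land \psi)$ and $\HA \vdash \neg\psi^\bot$ (by Proposition \ref{prop:DUAL_BASIC}.3, since $\HA \vdash \neg\neg\psi$), so $\DML{(\Gamma,\Theta)}^\bot$ yields $\neg\varphi \to (\varphi^\bot \lor \psi^\bot)$ and hence $\neg\varphi \to \varphi^\bot$; the derivation of $\DUAL{\Theta}$ is symmetric.

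For clauses 2 and 3 the same bookkeeping works once the $\Delta_k$-hypothesis $\varphi \leftrightarrow \varphi'$ is handled. The backward directions are again routine case analyses: from $\varphi \leftrightarrow \varphi'$ and $\neg(\varphi \land \psi)$, apply $\DML{(\Delta_k,\Theta)}$ to split into $\neg\varphi$ or $\neg\psi$, and in each case apply the appropriate dual principle ($\DUAL{\Sigma_k}$ on $\varphi$ for clause 2, $\DUAL{\Pi_k}$ on $\varphi'$ for clause 3 after transporting $\neg\varphi$ to $\neg\varphi'$ along $\varphi \leftrightarrow \varphi'$, and $\DUAL{\Theta}$ on $\psi$ in both). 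For the forward directions, $\HA + \DML{(\Delta_k,\Theta)}^\Sigma \vdash \DML{(\Delta_k,\Theta)}$ and $\HA + \DML{(\Delta_k,\Theta)}^\Pi \vdash \DML{(\Delta_k,\Theta)}$ are immediate from Proposition \ref{prop:DUAL_BASIC}.3 (using $\varphi \leftrightarrow \varphi'$ in the latter case). The only point needing a small idea is recovering the dual principles: to derive $\DUAL{\Sigma_k}$ from $\DML{(\Delta_k,\Theta)}^\Sigma$, given $\varphi \in \Sigma_k$ we use $\HA \vdash \neg\varphi \to (\varphi \leftrightarrow \bot)$, exactly as in the proof of Proposition \ref{prop:DDUAL}, so that under the hypothesis $\neg\varphi$ the instance of $\DML{(\Delta_k,\Theta)}^\Sigma$ with $\varphi'$ taken to be $\bot$ (as a $\Pi_k$ formula) becomes applicable and, combined with a provable $\psi \in \Theta$, gives $\varphi^\bot$; for $\DUAL{\Pi_k}$ from $\DML{(\Delta_k,\Theta)}^\Pi$ one argues in the same way with the roles of $\varphi$ and $\varphi'$ interchanged, and $\DUAL{\Theta}$ is obtained by instantiating both $\varphi$ and $\varphi'$ with provable formulas of the respective classes.

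The main obstacle is purely the second-slot asymmetry: unlike in Proposition \ref{prop:LEM}, recovering $\DUAL{\Gamma}$ (or $\DUAL{\Theta}$) from a de Morgan form requires feeding the principle a provable formula in the other slot so that the corresponding dual disjunct can be discharged, and in clauses 2 and 3 one additionally has to manufacture the $\leftrightarrow$-witness via $\bot$. Everything else is a direct transcription of the $\LEM$-with-duals argument and of the $\Delta_k$ manipulation already carried out for $\DUAL{\Delta_k}^\Sigma$ and $\DUAL{\Delta_k}^\Pi$ in Proposition \ref{prop:DDUAL}, so I expect the written proof to consist mostly of the phrase ``proved similarly.''
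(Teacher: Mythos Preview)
Your proposal is correct and follows essentially the same approach as the paper. The paper's proof is terser: for clause~1 it uses the specific formula $\neg\bot$ in the second slot (so that $(\neg\bot)^\bot \equiv \neg\neg\bot$ is refutable and the unwanted disjunct drops out), which is exactly your ``pick an $\HA$-provable $\psi$ in $\Theta$'' made concrete; clauses~2 and~3 are dismissed with ``proved in a similar way,'' whereas you spell out the $\varphi \leftrightarrow \bot$ manoeuvre from Proposition~\ref{prop:DDUAL} that is indeed needed there.
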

\begin{proof}
1. By Proposition \ref{prop:DUAL_BASIC}.(3), $\HA + \DML{(\Gamma, \Theta)}^\bot \vdash \DML{(\Gamma, \Theta)}$. 
Let $\varphi \in \Gamma$. 
Since $\HA \vdash \neg \varphi \to \neg (\varphi \land \neg \bot)$, we have that $\HA  + \DML{(\Gamma, \Theta)}^\bot$ proves the formula $\neg \varphi \to \varphi^\bot \lor (\neg \bot)^\bot$. 
Thus $\HA + \DML{(\Gamma, \Theta)}^\bot \vdash \neg \varphi \to \varphi^\bot$, and this means that $\DUAL{\Gamma}$ is provable. 
Similarly, $\DUAL{\Theta}$ is also provable. 
On the other hand, $\DML{(\Gamma, \Theta)}^\bot$ is easily proved in $\HA + \DML{(\Gamma, \Theta)} + \DUAL{\Gamma} + \DUAL{\Theta}$. 

2 and 3 are proved in a similar way. 
\end{proof}

Summarizing the results so far, we obtain the following corollary.

\begin{cor}\label{cor:LLPO2}\leavevmode
\begin{enumerate}
	\item $\DML{\Sigma_k}^\bot$ is equivalent to $\DML{\Sigma_k} + \DNE{\Sigma_{k-1}}$ over $\HA$; 
	\item $\DML{(\Sigma_k, \Pi_k)}^\bot$ is equivalent to $\LEM{\Sigma_k}$ over $\HA$; 
	\item $\DML{(\Delta_k, \Sigma_k)}^{\bot, \Sigma}$ is equivalent to $\LEM{\Delta_k}$ over $\HA$; 
	\item $\DML{\Delta_k}^\bot$ is equivalent to $\DML{\Delta_k} + \DNE{\Sigma_{k-1}}$ over $\HA$; 
	\item Each of the principles $\DML{\Pi_k}^\bot$, $\DML{(\Delta_k, \Sigma_k)}^{\bot, \Pi}$, $\DML{(\Delta_k, \Pi_k)}^{\bot, \Sigma}$ and $\DML{(\Delta_k, \Pi_k)}^{\bot, \Pi}$ is equivalent to $\DNE{\Sigma_k}$ over $\HA$. 
\end{enumerate}
\end{cor}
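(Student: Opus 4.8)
The plan is to prove all five clauses uniformly. First I would use Proposition~\ref{prop:LLPO1} to strip off the duals, rewriting each principle of Definition~\ref{defn:DDML} as the corresponding plain de Morgan's law together with one or two instances of the dual principle; then I would convert those dual principles into double negation eliminations by Proposition~\ref{prop:DNE_DUAL}; and finally I would collapse the resulting conjunctions using the relations between plain de Morgan's laws and $\DNE{\Sigma_{k-1}}$, $\DNE{\Sigma_k}$ established earlier in this section.

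Concretely: for clause~1, Proposition~\ref{prop:LLPO1}.1 with $\Gamma=\Theta=\Sigma_k$ gives that $\DML{\Sigma_k}^\bot$ is $\HA$-equivalent to $\DML{\Sigma_k}+\DUAL{\Sigma_k}$, and $\DUAL{\Sigma_k}$ is $\HA$-equivalent to $\DNE{\Sigma_{k-1}}$ by Proposition~\ref{prop:DNE_DUAL}. Clause~4 is the same once one notes that the brief argument of Proposition~\ref{prop:LLPO1} applies to $\DML{\Delta_k}^\bot$ even though that principle is not literally listed there: instantiating $\varphi'\equiv\bot$ and $\psi\equiv\psi'\equiv\neg\bot$ under the hypothesis $\neg\varphi$ turns an instance of $\DML{\Delta_k}^\bot$ into $\varphi^\bot\lor(\neg\bot)^\bot$, and $(\neg\bot)^\bot$ is $\HA$-refutable, so $\DML{\Delta_k}^\bot$ yields $\DUAL{\Sigma_k}$, while the converse $\DML{\Delta_k}+\DUAL{\Sigma_k}\vdash\DML{\Delta_k}^\bot$ is trivial; hence $\DML{\Delta_k}^\bot$ is $\HA$-equivalent to $\DML{\Delta_k}+\DNE{\Sigma_{k-1}}$. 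For clauses~2 and~3, apply Proposition~\ref{prop:LLPO1}.1 with $(\Gamma,\Theta)=(\Sigma_k,\Pi_k)$ and Proposition~\ref{prop:LLPO1}.2 with $\Theta=\Sigma_k$; since $\DUAL{\Pi_k}$ is $\HA$-equivalent to $\DNE{\Sigma_k}$ and $\DUAL{\Sigma_k}$ to $\DNE{\Sigma_{k-1}}$ (Proposition~\ref{prop:DNE_DUAL}) and $\DNE{\Sigma_k}$ proves $\DNE{\Sigma_{k-1}}$, clause~2 reduces to the equivalence of $\DML{(\Sigma_k,\Pi_k)}^\bot$ with $\DML{(\Sigma_k,\Pi_k)}+\DNE{\Sigma_k}$, which is $\LEM{\Sigma_k}$ by Corollary~\ref{cor:DML_LEM}.2, and clause~3 reduces to $\DML{(\Delta_k,\Sigma_k)}+\DNE{\Sigma_{k-1}}$, which is $\LEM{\Delta_k}$ by Corollary~\ref{cor:DML_LEM3}. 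For clause~5, Proposition~\ref{prop:LLPO1}.1--3 shows that each of $\DML{\Pi_k}^\bot$, $\DML{(\Delta_k,\Sigma_k)}^\Pi$, $\DML{(\Delta_k,\Pi_k)}^\Sigma$, $\DML{(\Delta_k,\Pi_k)}^\Pi$ is $\HA$-equivalent to the corresponding plain de Morgan's law plus $\DUAL{\Pi_k}$ (possibly together with $\DUAL{\Sigma_k}$, which $\DUAL{\Pi_k}$ subsumes), hence plus $\DNE{\Sigma_k}$; the implication to $\DNE{\Sigma_k}$ is then immediate, and the converse holds because $\DNE{\Sigma_k}$ already proves all the relevant plain de Morgan's laws, namely $\DML{\Pi_k}$ by Proposition~\ref{prop:SDNE_DML} and $\DML{(\Delta_k,\Gamma)}$ for $\Gamma\in\{\Sigma_k,\Pi_k\}$ since $\DNE{\Sigma_k}\vdash\LEM{\Delta_k}$ (Fact~\ref{fact:ABHK}) and $\LEM{\Delta_k}\vdash\DML{(\Delta_k,\Gamma)}$ by Corollary~\ref{cor:DML_LEM3}.

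I do not expect a genuine obstacle here; the content is essentially bookkeeping on top of Propositions~\ref{prop:LLPO1} and~\ref{prop:DNE_DUAL}. The two points that need care are that $\DML{\Delta_k}^\bot$ is not covered by the statement of Proposition~\ref{prop:LLPO1}, so its equivalence with $\DML{\Delta_k}+\DUAL{\Sigma_k}$ must be re-derived by re-running that proposition's short proof, and that one must keep straight which of $\DUAL{\Sigma_k}$ and $\DUAL{\Pi_k}$ actually appears in each case --- recalling that $\DUAL{\Pi_k}$ (equivalently $\DNE{\Sigma_k}$) always absorbs $\DUAL{\Sigma_k}$ (equivalently $\DNE{\Sigma_{k-1}}$) --- and, for clauses~2 and~5, that the adjoined $\DNE$ principle already proves the plain de Morgan's law, so the conjunction collapses.
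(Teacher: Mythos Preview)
Your proposal is correct and follows essentially the same route as the paper: decompose each dual-version de Morgan principle via Proposition~\ref{prop:LLPO1} (re-running its short argument for $\DML{\Delta_k}^\bot$, as the paper also does), convert the resulting $\DUAL{\Sigma_k}$/$\DUAL{\Pi_k}$ to $\DNE{\Sigma_{k-1}}$/$\DNE{\Sigma_k}$ via Proposition~\ref{prop:DNE_DUAL}, and then collapse using Corollaries~\ref{cor:DML_LEM}, \ref{cor:DML_LEM3}, Proposition~\ref{prop:SDNE_DML}, and Fact~\ref{fact:ABHK}. The only cosmetic difference is that for clause~5 the paper cites Proposition~\ref{prop:DML1}.3 (via $\LEM{\n{\Delta_k}}$) rather than Corollary~\ref{cor:DML_LEM3} to get $\DML{(\Delta_k,\Gamma)}$ from $\DNE{\Sigma_k}$, but either reference works.
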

\begin{proof}
1. This is a consequence of Propositions \ref{prop:DNE_DUAL} and \ref{prop:LLPO1}.(1). 

2. By Propositions \ref{prop:DNE_DUAL} and \ref{prop:LLPO1}.(1), $\DML{(\Sigma_k, \Pi_k)}^\bot$ is $\HA$-equivalent to $\DML{(\Sigma_k, \Pi_k)} + \DNE{\Sigma_k}$. 
Then it is also $\HA$-equivalent to $\LEM{\Sigma_k}$ by Corollary \ref{cor:DML_LEM}.(2). 

3. From Propositions \ref{prop:DNE_DUAL} and \ref{prop:LLPO1}.(2), $\DML{(\Delta_k, \Sigma_k)}^{\bot, \Sigma}$ is $\HA$-equivalent to $\DML{(\Delta_k, \Sigma_k)} + \DNE{\Sigma_{k-1}}$. 
Then it is $\HA$-equivalent to $\LEM{\Delta_k}$ by Corollary \ref{cor:DML_LEM3}.

4 is proved as in the proof of Proposition \ref{prop:LLPO1}. 

5. By Propositions \ref{prop:DNE_DUAL} and \ref{prop:LLPO1}.(1), $\DML{\Pi_k}^\bot$ is $\HA$-equivalent to $\DML{\Pi_k} + \DNE{\Sigma_k}$. 
Since $\HA + \DNE{\Sigma_k} \vdash \DML{\Pi_k}$ by Proposition \ref{prop:SDNE_DML}, $\DML{\Pi_k}^\bot$ is $\HA$-equivalent to $\DNE{\Sigma_k}$. 
Similarly, each of $\DML{(\Delta_k, \Sigma_k)}^{\bot, \Pi}$, $\DML{(\Delta_k, \Pi_k)}^{\bot, \Sigma}$ and $\DML{(\Delta_k, \Pi_k)}^{\bot, \Pi}$ is $\HA$-equivalent to $\DNE{\Sigma_k}$ because each of them implies $\DNE{\Sigma_k}$ over $\HA$ by Proposition \ref{prop:LLPO1}, and $\HA + \DNE{\Sigma_k}$ proves $\DML{(\Delta_k, \Sigma_k)}$ and $\DML{(\Delta_k, \Pi_k)}$ by Fact \ref{fact:ABHK}.(4) and Proposition \ref{prop:DML1}.(3). 
\end{proof}

In \cite[Theorem 14]{BS14}, it is proved that $\DML{\Sigma_k}^\bot$ is equivalent to $\DML{\Sigma_k} + \LEM{\Sigma_{k-1}}$ over $\HA$. 
This result follows from Corollaries \ref{cor:PCOLL} and \ref{cor:LLPO2}.(1).

\section{The double negation elimination}\label{section:DNE}

In this section, we explore variations of the double negation elimination. 
As in the previous sections, we deal with the principles of forms $\DNE{(\n{\Gamma} \lor \Theta)}$, $\DNE{(\Delta_k \lor \Theta)}$, and so on. 
As in the case of de Morgan's law, $\DNE{(\Gamma \lor \Theta)}$ is obviously equivalent to $\DNE{(\Theta \lor \Gamma)}$. 
Interestingly, de Morgan's law can be seen as a variation of the double negation elimination. 

\begin{prop}\label{prop:DML_DNE}
For any sets $\Gamma$ and $\Theta$ of formulas, the following are equivalent over $\HA$: 
\begin{enumerate}
	\item $\DML{(\Gamma, \Theta)}$. 
	\item $\DNE{(\n{\Gamma} \lor \n{\Theta})}$. 
\end{enumerate}
The analogous equivalences also hold for the versions of $\Delta_k$ and $\n{\Delta_k}$.  
\end{prop}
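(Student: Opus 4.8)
The plan is to derive everything from a single intuitionistic equivalence that turns a negated conjunction into a doubly negated disjunction of negations. Concretely, for arbitrary formulas $\alpha$ and $\beta$ one has
\[
	\HA \vdash \neg(\alpha \land \beta) \leftrightarrow \neg\neg(\neg \alpha \lor \neg \beta).
\]
The direction $\leftarrow$ is immediate: $\neg \alpha \lor \neg \beta \to \neg(\alpha \land \beta)$ is intuitionistically valid, hence $\neg\neg(\neg \alpha \lor \neg \beta) \to \neg\neg\neg(\alpha \land \beta) \to \neg(\alpha \land \beta)$. For $\to$, note that $\neg(\alpha \land \beta)$ is $\HA$-equivalent to $\neg\neg \alpha \to \neg \beta$ (use $\neg(\alpha \land \beta) \leftrightarrow (\alpha \to \neg \beta)$, then $\neg\neg(\varphi \to \psi) \leftrightarrow (\neg\neg \varphi \to \neg\neg \psi)$ together with $\neg\neg\neg \psi \to \neg \psi$, recalling that $\alpha \to \neg \beta$ is a negation and so $\neg\neg$-stable); assuming in addition $\neg(\neg \alpha \lor \neg \beta)$ gives $\neg\neg \alpha$ and $\neg\neg \beta$, whence $\neg \beta$ and a contradiction, so $\neg\neg(\neg \alpha \lor \neg \beta)$ holds.

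Granting this, both directions of the proposition are one-line compositions. For $1 \Rightarrow 2$, fix $\varphi \in \Gamma$ and $\psi \in \Theta$; since $\neg \varphi \lor \neg \psi$ is by definition a formula of $\n{\Gamma} \lor \n{\Theta}$, it suffices to compose the $\HA$-provable $\neg\neg(\neg \varphi \lor \neg \psi) \to \neg(\varphi \land \psi)$ with the instance $\neg(\varphi \land \psi) \to \neg \varphi \lor \neg \psi$ of $\DML{(\Gamma, \Theta)}$. For $2 \Rightarrow 1$, compose the $\HA$-provable $\neg(\varphi \land \psi) \to \neg\neg(\neg \varphi \lor \neg \psi)$ with the instance $\neg\neg(\neg \varphi \lor \neg \psi) \to \neg \varphi \lor \neg \psi$ of $\DNE{(\n{\Gamma} \lor \n{\Theta})}$.

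For the $\Delta_k$ and $\n{\Delta_k}$ variants I would run the identical argument, observing that the displayed equivalence is completely insensitive to the extra data carried by those principles: a biconditional guard $\varphi \leftrightarrow \varphi'$ may be prefixed to both sides at once (since it propagates through $\neg(\varphi \land \psi)$ and $\neg \varphi \lor \neg \psi$ alike), and replacing a conjunct by its negation, as in the $\n{\Delta_k}$ case, is harmless. I do not expect a real obstacle; the only care needed is bookkeeping — making sure that in each variant the disjunction occurring in the $\DNE$-principle is precisely the one the key equivalence produces from the matching $\DML$-principle, in particular that any guard formulas sit on the corresponding sides in the two principles.
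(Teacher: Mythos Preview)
Your proposal is correct and follows essentially the same route as the paper: both reduce the equivalence to the single intuitionistic fact $\HA \vdash \neg(\varphi \land \psi) \leftrightarrow \neg\neg(\neg\varphi \lor \neg\psi)$, from which the two implication schemes are seen to coincide instance by instance, with the $\Delta_k$ and $\n{\Delta_k}$ variants handled identically. Your write-up is in fact more explicit than the paper's, which simply asserts the key biconditional without the justification you supply.
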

\begin{proof}
Let $\varphi \in \Gamma$ and $\psi \in \Theta$. 
Since $\HA \vdash \neg(\varphi \land \psi) \leftrightarrow\, \neg \neg (\neg \varphi \lor \neg \psi)$, $\HA$ proves
\[
	[\neg(\varphi \land \psi) \to \neg \varphi \lor \neg \psi] \leftrightarrow [\neg \neg (\neg \varphi \lor \neg \psi) \to \neg \varphi \lor \neg \psi]. 
\]
The last statement is also proved in a similar way. 
\end{proof}

We prove the following basic proposition concerning principles based on the double negation elimination. 

\begin{prop}\label{prop:DNE1}
Let $\Gamma \in \{\Sigma_k, \Pi_k, \Delta_k\}$ and let $\Theta$ be any set of formulas. 
\begin{enumerate}
	\item $\HA + \DNE{(\Gamma \lor \Theta)} \vdash \DNE{\Gamma}$; 
	\item Suppose that for any $\varphi \in \Theta$, there exists $\psi \in \Sigma_k$ such that $\HA + \DNE{\Sigma_k} \vdash \varphi \leftrightarrow \psi$. 
	Then $\DNE{(\Sigma_k \lor \Theta)}$ is equivalent to $\DNE{\Sigma_k}$ over $\HA$; 
	\item $\DNE{(\n{\Sigma_k} \lor \Theta)} + \DNE{\Sigma_{k-1}}$ is equivalent to $\DNE{(\Pi_k \lor \Theta)}$ over $\HA$; 
	\item $\DNE{(\n{\Sigma_k} \lor \Gamma)}$ is equivalent to $\DNE{(\Pi_k \lor \Gamma)}$ over $\HA$; 
	\item $\DNE{(\n{\Pi_k} \lor \Theta)} + \DNE{\Sigma_k}$ is equivalent to $\DNE{(\Sigma_k \lor \Theta)}$ over $\HA$; 
	\item $\DNE{(\dn{\Sigma_k} \lor \Theta)}$ is equivalent to $\DNE{(\n{\Pi_k} \lor \Theta)}$ over $\HA + \DNS{\Sigma_{k-1}}$; 
	\item $\DNE{(\dn{\Sigma_k} \lor \Gamma)}$ is equivalent to $\DNE{(\n{\Pi_k} \lor \Gamma)}$ over $\HA$; 
	\item $\DNE{(\dn{\Pi_k} \lor \Theta)}$ is equivalent to $\DNE{(\n{\Sigma_k} \lor \Theta)}$ over $\HA + \DNS{\Sigma_{k-1}}$; 
	\item $\DNE{(\dn{\Pi_k} \lor \Gamma)}$ is equivalent to $\DNE{(\Pi_k \lor \Gamma)}$ over $\HA$; 
	\item $\DNE{(\dn{\Delta_k} \lor \Theta)} + \DNE{\Sigma_{k-1}}$ is equivalent to $\DNE{(\Delta_k \lor \Theta)}$ over $\HA$; 
	\item $\DNE{(\dn{\Delta_k} \lor \Gamma)}$ is equivalent to $\DNE{(\Delta_k \lor \Gamma)}$ over $\HA$. 
\end{enumerate}
Also these statements hold even if $\Theta \in \{\n{\Delta_k}, \dn{\Delta_k}\}$. 
\end{prop}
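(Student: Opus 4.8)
The plan is to handle all eleven clauses (and the closing remark) by one uniform strategy: each clause asserts that two instances of the $\DNE$-scheme, differing only in how one of the two disjuncts is decorated --- by $\neg(\cdot)$, by $\neg\neg(\cdot)$, or by passing to the dual $(\cdot)^\bot$ --- are interderivable, possibly after adjoining a fixed amount of classical logic ($\DNE{\Sigma_{k-1}}$, $\DNE{\Sigma_k}$, or $\DNS{\Sigma_{k-1}}$). The two tools I will lean on are: (i) the elementary intuitionistic equivalence $\HA\vdash\neg\neg(\alpha\lor\beta)\leftrightarrow\neg\neg(\neg\neg\alpha\lor\beta)$, which lets a double negation produced by $\dn{(\cdot)}$ inside the antecedent of a $\DNE$-instance be absorbed; and (ii) the dual/weak-dual dictionary of Section \ref{section:dual}, namely $\neg\varphi\leftrightarrow\varphi^\bot$ over $\HA+\DNE{\Sigma_{k-1}}$ for $\varphi\in\Sigma_k$ and over $\HA+\DNE{\Sigma_k}$ for $\varphi\in\Pi_k$ (Propositions \ref{prop:DUAL_BASIC}.3 and \ref{prop:DNE_DUAL}), and $\neg\neg\varphi\leftrightarrow\neg(\varphi^\bot)$, $\neg\varphi\leftrightarrow\neg\neg(\varphi^\bot)$ over $\HA+\DNS{\Sigma_{k-1}}$ (Proposition \ref{prop:WDUAL}).

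First I would prove clause 1, which underpins the rest. For $\varphi\in\Gamma$, $\varphi$ is $\HA$-equivalent to $\varphi\lor\bot$, and in every class $\Theta$ relevant here a sentence $\HA$-equivalent to $\bot$ lies in $\Theta$, so $\DNE{(\Gamma\lor\Theta)}$ applies to that instance and yields $\DNE{\Gamma}$ (for $\Gamma=\Delta_k$ one argues with the corresponding triple). Then I would record the consequence that recurs throughout: because $\Sigma_{k-1}\subseteq\Sigma_k$, because $\DNE{\Pi_k}$ and $\DNE{\Sigma_{k-1}}$ are $\HA$-equivalent (Fact \ref{fact:ABHK}.6), and because $\DNE{\Delta_k}$ entails $\DNE{\Sigma_{k-1}}$ as well (every $\Sigma_{k-1}$ formula is $\HA$-equivalent to one in $\Sigma_k$ and to one in $\Pi_k$), the principle $\DNE{(\Gamma\lor\Theta)}$ with $\Gamma\in\{\Sigma_k,\Pi_k,\Delta_k\}$ already proves $\DNE{\Sigma_{k-1}}$, hence $\DNS{\Sigma_{k-1}}$ by Proposition \ref{prop:DNS1}.1. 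This is exactly the side hypothesis absent from the ``$\Gamma$-versions'' (clauses 4, 7, 9, 11), which therefore follow from the respective preceding general-$\Theta$ clauses.

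With this in place each clause is a two-line argument. Clause 2: over $\HA+\DNE{\Sigma_k}$ the class $\Theta$ collapses into $\Sigma_k$, so $\varphi\lor\psi$ becomes $\HA$-equivalent to a $\Sigma_k$ formula and $\DNE{\Sigma_k}$ applies; the converse is clause 1. Clauses 3--4: over $\HA+\DNE{\Sigma_{k-1}}$ one has $\neg\varphi\leftrightarrow\varphi^\bot$ for $\varphi\in\Sigma_k$ and, conversely, each $\Pi_k$ formula $\psi$ equals $\neg(\psi^\bot)$ there, so $\n{\Sigma_k}$ and $\Pi_k$ translate into one another and an instance of one $\DNE$-principle becomes an instance of the other. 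Clause 5 is the parallel translation between $\n{\Pi_k}$ and $\Sigma_k$, but over $\HA+\DNE{\Sigma_k}$ (equivalently $\HA+\DUAL{\Pi_k}$); there is no $\HA$-only $\Gamma$-version here precisely because $\DNE{(\n{\Pi_k}\lor\Gamma)}$ supplies only $\DNE{\Sigma_{k-1}}$, not $\DNE{\Sigma_k}$. Clauses 6 and 8 use the $\WDUAL$-equivalences over $\HA+\DNS{\Sigma_{k-1}}$ to identify $\dn{\Sigma_k}$ with $\n{\Pi_k}$ and $\dn{\Pi_k}$ with $\n{\Sigma_k}$ --- here one must also check the converse halves, e.g.\ $\neg\chi\leftrightarrow\neg\neg(\chi^\bot)$ for $\chi\in\Sigma_k$, which is $\WDUAL{\Pi_k}$ applied to $\chi^\bot$ combined with $(\chi^\bot)^\bot\leftrightarrow\chi$ (Proposition \ref{prop:DUAL_BASIC}.2). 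Clauses 9--11 use the equivalence in (i): $\neg\neg(\alpha\lor\beta)\leftrightarrow\neg\neg(\neg\neg\alpha\lor\beta)$ makes $\dn{\Gamma}$ and $\Gamma$ interchangeable inside the $\DNE$-scheme once $\DNE{\Gamma}$ (i.e.\ $\DNE{\Sigma_{k-1}}$, or the $\Delta_k$-analogue) is available --- one direction holding already in $\HA$. The closing remark about $\Theta\in\{\n{\Delta_k},\dn{\Delta_k}\}$ is immediate, since these react to all of the above equivalences exactly as $\Delta_k$ does.

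I expect the real work to be bookkeeping rather than any isolated difficulty. One must fix once and for all the precise reading of the decorated schemes $\DNE{(\dn{\Gamma}\lor\Theta)}$, $\DNE{(\Delta_k\lor\Theta)}$, $\DNE{(\dn{\Delta_k}\lor\Theta)}$ and so on (which disjunct carries the extra negations, and where the $(\varphi\leftrightarrow\varphi')$ premise sits in the $\Delta_k$-variants), and then check that each translation lemma is genuinely bidirectional over the stated theory. It is the ``conversely'' inclusions --- $\Sigma_k$ into $\n{\Pi_k}$ over $\HA+\DNE{\Sigma_k}$, $\n{\Sigma_k}$ into $\dn{\Pi_k}$ over $\HA+\DNS{\Sigma_{k-1}}$, and their analogues --- that pin down exactly which of $\DNE{\Sigma_{k-1}}$, $\DNE{\Sigma_k}$, $\DNS{\Sigma_{k-1}}$ is forced, and hence explain why clauses 3, 5, 6, 8, 10 carry a side hypothesis while 4, 7, 9, 11 do not.
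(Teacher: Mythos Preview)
Your proposal is correct and follows essentially the same route as the paper: clause 1 via a trivially false disjunct, clause 2 via closure of $\Sigma_k$ under disjunction, clauses 3--5 via the $\DUAL$ dictionary (Proposition \ref{prop:DNE_DUAL}), clauses 6--8 via the $\WDUAL$ dictionary (Proposition \ref{prop:WDUAL}), and the $\Gamma$-versions (4, 7, 9, 11) by observing that $\DNE{\Gamma}$, obtained from clause 1 applied to the second disjunct, already yields $\DNE{\Sigma_{k-1}}$ and hence $\DNS{\Sigma_{k-1}}$.

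The one organizational difference worth noting is your treatment of clauses 9--11. You handle them uniformly via the absorption identity $\neg\neg(\alpha\lor\beta)\leftrightarrow\neg\neg(\neg\neg\alpha\lor\beta)$, together with $\DNE{\Pi_k}$ (equivalently $\DNE{\Sigma_{k-1}}$) to strip the residual double negation on the $\Pi_k$ or $\Delta_k$ disjunct. The paper instead derives clause 9 by chaining clause 8 (passing from $\dn{\Pi_k}$ to $\n{\Sigma_k}$ over $\DNS{\Sigma_{k-1}}$) with clause 4 (from $\n{\Sigma_k}$ to $\Pi_k$), and treats clause 10 ``in a similar way as clause 3'' --- which in practice amounts to the same observation you make, namely that under the premise $\varphi\leftrightarrow\psi$ with $\psi\in\Pi_k$ one has $\neg\neg\varphi\leftrightarrow\neg\neg\psi\leftrightarrow\psi\leftrightarrow\varphi$ over $\HA+\DNE{\Sigma_{k-1}}$. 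Your direct absorption argument is slightly more self-contained; the paper's chaining is slightly more modular. Neither buys anything mathematically over the other.
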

\begin{proof}
1. This is because $0 = 0 \in \Theta$ and for any $\varphi \in \Gamma$, $\varphi \lor 0=0$ is $\HA$-provably equivalent to $\varphi$. 

2. From clause 1, $\HA + \DNE{(\Sigma_k \lor \Theta)}$ proves $\DNE{\Sigma_k}$. 
On the other hand, let $\varphi$ and $\psi$ be any $\Sigma_k$ formulas. 
Notice that $\varphi \lor \psi$ is $\HA$-equivalent to $\exists x ((x = 0 \to \varphi) \land (x = 1 \to \psi))$. 
Then it is shown that $\varphi \lor \psi$ is provably equivalent to some $\Sigma_k$ formula in $\HA$ (cf.~\cite[Lemma 4.4]{FuKu}). 
Therefore $\HA + \DNE{\Sigma_k}$ proves $\DNE{(\Sigma_k \lor \Theta)}$. 

3. By Propositions \ref{prop:DUAL_BASIC}.(3) and \ref{prop:DNE_DUAL}, for any $\varphi \in \Sigma_k$ and $\psi \in \Pi_k$, $\HA + \DNE{\Sigma_{k-1}}$ proves $\neg \varphi \leftrightarrow \varphi^\bot$ and $\neg \psi^\bot \leftrightarrow \psi$. 
Thus the principles $\DNE{(\n{\Sigma_k} \lor \Theta)}$ and $\DNE{(\Pi_k \lor \Theta)}$ are equivalent over $\HA + \DNE{\Sigma_{k-1}}$. 
Also by clause 1, $\HA + \DNE{(\Pi_k \lor \Theta)}$ proves $\DNE{\Sigma_{k-1}}$. 

Clause 4 follows from clauses 1 and 3 because $\DNE{\Gamma}$ entails $\DNE{\Sigma_{k-1}}$. 
Clause 5 is proved in a similar way as in the proof of clause 3. 
Clause 6 is a refinement of Proposition \ref{prop:DML_DNS}.(1) in the light of Proposition \ref{prop:DML_DNE}, and is proved in a similar way. 
Clause 7 follows from clause 6 and the fact that $\HA + \DNE{\Gamma}$ proves $\DNS{\Sigma_{k-1}}$. 
Clause 8 is a refinement of Proposition \ref{prop:DML_DNS}.(2). 
Clause 9 follows from clause 8 because $\HA + \DNE{\Gamma}$ proves $\DNE{\Pi_k}$. 
Clause 10 is proved in a similar way as in the proof of clause 3. 
Clause 11 follows from clause 10. 
\end{proof}

We have the following corollary which shows that $\LEM{\Sigma_k}$ and $\LEM{\Pi_k}$ are also variations of the double negation elimination. 
A part of Corollary \ref{cor:DNE2}.(4) is stated in \cite{ABHK}.

\begin{cor}\label{cor:DNE2}\leavevmode
\begin{enumerate}
	\item For $\Gamma' \in \{\Sigma_k, \Delta_k, \n{\Pi_k}, \n{\Delta_k}, \dn{\Sigma_k}, \dn{\Delta_k}\}$, $\DNE{(\Sigma_k \lor \Gamma')}$ is equivalent to $\DNE{\Sigma_k}$ over $\HA$; 
	\item $\LEM{\Sigma_k}$, $\DNE{(\Sigma_k \lor \Pi_k)}$, $\DNE{(\Sigma_k \lor \n{\Sigma_k})}$ and $\DNE{(\Sigma_k \lor \dn{\Pi_k})}$ are equivalent over $\HA$;  
	\item $\LEM{\Pi_k}$, $\DNE{(\n{\Pi_k} \lor \Pi_k)}$ and $\DNE{(\dn{\Sigma_k} \lor \Pi_k)}$ are equivalent over $\HA$; 
	\item $\DML{\Sigma_k}^\bot$, $\DNE{(\Pi_k \lor \Pi_k)}$, $\DNE{(\Pi_k \lor \n{\Sigma_k})}$ and $\DNE{(\Pi_k \lor \dn{\Pi_k})}$ are equivalent over $\HA$; 
	\item Let $\Gamma' \in \{\Sigma_k, \Pi_k, \Delta_k, \n{\Sigma_k}, \n{\Pi_k}\}$ and $\Gamma'' \in \{\Delta_k, \n{\Delta_k}, \dn{\Delta_k}\}$. 
Then $\LEM{\Delta_k}$, $\DNE{(\Delta_k \lor \n{(\Gamma')})}$ and $\DNE{(\Gamma'' \lor \Pi_k)}$ are equivalent over $\HA$; 
	\item $\DNE{(\Delta_k \lor \Delta_k)}$, $\DNE{(\Delta_k \lor \dn{\Delta_k})}$ and $\DML{\n{\Delta_k}} + \DNE{\Sigma_{k-1}}$ are equivalent over $\HA$. 
\end{enumerate}
\end{cor}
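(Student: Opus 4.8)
The plan is to treat Corollary \ref{cor:DNE2} as a bookkeeping consequence of Proposition \ref{prop:DNE1}, Proposition \ref{prop:DML_DNE}, Fact \ref{fact:ABHK} and Corollary \ref{cor:LLPO2}, resting on a few \emph{base identifications} of $\DNE{(-\lor-)}$ for the canonical classes. First I would establish these: $\DNE{(\Sigma_k\lor\Sigma_k)}$ is $\DNE{\Sigma_k}$ (already Proposition \ref{prop:DNE1}.2); $\DNE{(\Sigma_k\lor\Pi_k)}$ is $\LEM{\Sigma_k}$; $\DNE{(\n{\Pi_k}\lor\Pi_k)}$ is $\LEM{\Pi_k}$; $\DNE{(\Pi_k\lor\Pi_k)}$ is $\DML{\Sigma_k}^\bot$ (this last being the part of clause 4 already in \cite{ABHK}); and $\DNE{(\Delta_k\lor\Pi_k)}$ is $\LEM{\Delta_k}$. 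Once these are in hand, every principle occurring in the corollary whose disjunct classes involve $\n{(\cdot)}$, $\dn{(\cdot)}$ or $\dn{\Delta_k}$ is rewritten --- absorbing, where needed, an amount of $\DNE{\Sigma_{k-1}}$ or $\DNE{\Sigma_k}$ that is in any case present in the target equivalence --- into one of these canonical forms via the appropriate clause of Proposition \ref{prop:DNE1}.

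The base identifications require only short hands-on arguments. For $\DNE{(\Sigma_k\lor\Pi_k)}\equiv\LEM{\Sigma_k}$: the forward direction uses that $\LEM{\Sigma_k}$ proves $\LEM{\Pi_k}$ (Fact \ref{fact:ABHK}.1), hence excluded middle for $\varphi\lor\psi$ with $\varphi\in\Sigma_k$, $\psi\in\Pi_k$; conversely, Proposition \ref{prop:DNE1}.1 extracts $\DNE{\Sigma_k}$, one instantiates at $\psi^\bot,\psi$ for $\psi\in\Pi_k$, notes $\neg\neg(\psi^\bot\lor\psi)$ is then provable via $\DUAL{\Pi_k}$ (Proposition \ref{prop:DNE_DUAL}), gets $\LEM{\Pi_k}$, and concludes with Fact \ref{fact:ABHK}.1. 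The identification $\DNE{(\n{\Pi_k}\lor\Pi_k)}\equiv\LEM{\Pi_k}$ is parallel, instantiating at $\neg\psi,\psi$. For $\DNE{(\Pi_k\lor\Pi_k)}\equiv\DML{\Sigma_k}^\bot$ I would show both equal $\DML{\Sigma_k}+\DNE{\Sigma_{k-1}}$, using Corollary \ref{cor:LLPO2}.1 for one inclusion and Propositions \ref{prop:WDUAL}, \ref{prop:DNS1} together with Fact \ref{fact:ABHK}.6 to turn $\neg\alpha^\bot$ into $\alpha$ for $\alpha\in\Pi_k$ for the other. For $\DNE{(\Delta_k\lor\Pi_k)}\equiv\LEM{\Delta_k}$: forward, instantiate at $\varphi$ and its dual $\varphi^\bot$, with $\neg\neg(\varphi\lor\varphi^\bot)$ provable modulo $\DNE{\Sigma_{k-1}}$ (available by Proposition \ref{prop:DNE1}.1 and Fact \ref{fact:ABHK}.6); conversely use $\LEM{\Delta_k}$ together with $\DNE{\Sigma_{k-1}}$ (Fact \ref{fact:ABHK}.5, Fact \ref{fact:LEM_DNE}, Fact \ref{fact:ABHK}.6).

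The assembly then proceeds clause by clause. Clause 1 is Proposition \ref{prop:DNE1}.2 once one checks its hypothesis --- every formula in $\Delta_k$, $\n{\Pi_k}$, $\n{\Delta_k}$, $\dn{\Sigma_k}$, $\dn{\Delta_k}$ is $\HA$-equivalent, modulo $\DNE{\Sigma_k}$, to a $\Sigma_k$ formula (Proposition \ref{prop:DNE_DUAL} for the $\n{(\cdot)}$ cases, $\neg\neg\varphi\leftrightarrow\varphi$ for the $\dn{(\cdot)}$ ones). Clause 2 follows because Proposition \ref{prop:DNE1}.4 and .9 reduce $\DNE{(\Sigma_k\lor\n{\Sigma_k})}$ and $\DNE{(\Sigma_k\lor\dn{\Pi_k})}$ to $\DNE{(\Sigma_k\lor\Pi_k)}$; clause 3 because Proposition \ref{prop:DNE1}.7 reduces $\DNE{(\dn{\Sigma_k}\lor\Pi_k)}$ to $\DNE{(\n{\Pi_k}\lor\Pi_k)}$; clause 4 because Proposition \ref{prop:DNE1}.4 and .9 reduce the two exotic forms to $\DNE{(\Pi_k\lor\Pi_k)}$. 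For clause 5, Proposition \ref{prop:DNE1}.4, .7, .9 and .11 collapse every member of the lists $\DNE{(\Delta_k\lor\n{(\Gamma')})}$ and $\DNE{(\Gamma''\lor\Pi_k)}$ to one of $\DNE{(\Delta_k\lor\Pi_k)}$, $\DNE{(\Delta_k\lor\n{\Pi_k})}$, $\DNE{(\Delta_k\lor\n{\Delta_k})}$, $\DNE{(\n{\Delta_k}\lor\Pi_k)}$; I would then show the last three also equal $\LEM{\Delta_k}$, the nontrivial direction in each case obtained by a suitable instantiation making the ambient double negation a theorem (e.g. forcing the free disjunct to be $\neg\varphi$, or the $\Pi_k$-representative $\varphi'$, for the ambient $\Delta_k$-formula $\varphi$). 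Clause 6 is quickest: the $\Delta_k$-version of Proposition \ref{prop:DML_DNE} gives $\DML{\n{\Delta_k}}\equiv\DNE{(\dn{\Delta_k}\lor\dn{\Delta_k})}$, and Proposition \ref{prop:DNE1}.10 then .11 convert $\DNE{(\dn{\Delta_k}\lor\dn{\Delta_k})}+\DNE{\Sigma_{k-1}}$ first into $\DNE{(\Delta_k\lor\dn{\Delta_k})}$ and then into $\DNE{(\Delta_k\lor\Delta_k)}$.

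The main obstacle is not a single deep step but disciplined bookkeeping: most reductions via Proposition \ref{prop:DNE1} deliver the target principle only up to an extra $\DNE{\Sigma_{k-1}}$ or $\DNE{\Sigma_k}$, so in each clause one must verify that this extra fragment is genuinely implied --- typically by Proposition \ref{prop:DNE1}.1 together with Fact \ref{fact:ABHK}.6, or by the named principle on the other side of the equivalence --- rather than silently dropped. The only places calling for explicit syntactic work are the four base identifications above, each a few lines of manipulation with duals and the facts of \cite{ABHK}; everything else is assembly.
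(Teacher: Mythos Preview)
Your proposal is correct but proceeds differently from the paper's proof. The paper does not establish the base identifications by direct instantiation with duals; instead it routes every clause through the de Morgan machinery already built in Section \ref{section:DML}. Concretely, for clauses 2, 3 and 5 the paper invokes Corollaries \ref{cor:DML_LEM} and \ref{cor:DML_LEM3} to write $\LEM{\Sigma_k}$, $\LEM{\Pi_k}$, $\LEM{\Delta_k}$ as $\DML{(\n{\Sigma_k},\n{\Pi_k})}+\DNE{\Sigma_k}$, $\DML{(\n{\Sigma_k},\n{\Pi_k})}+\DNE{\Sigma_{k-1}}$, $\DML{(\n{\Delta_k},\Gamma')}+\DNE{\Sigma_{k-1}}$ respectively, then applies Proposition \ref{prop:DML_DNE} to convert each $\DML$ into $\DNE{(\dn{\cdot}\lor\dn{\cdot})}$, and finally massages via Proposition \ref{prop:DNE1} exactly as you do. Clause 4 likewise goes via Corollary \ref{cor:LLPO2}.1 and Proposition \ref{prop:DML_DNE}, and clause 6 is the same in both approaches.

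What your approach buys is self-containment: you never need Corollaries \ref{cor:DML_LEM} or \ref{cor:DML_LEM3}, only the dual principles and Fact \ref{fact:ABHK}. The cost is the four tailored instantiation arguments (at $\psi^\bot,\psi$; at $\neg\psi,\psi$; at $\varphi,\varphi^\bot$; etc.), which are each short but must be done separately. The paper's route is more uniform --- one pattern $\LEM\to\DML+\DNE\to\DNE{(\dn{\cdot}\lor\dn{\cdot})}+\DNE\to\DNE{(\cdot\lor\cdot)}$ handles all the base identifications --- but relies on machinery set up earlier. One minor remark: where you write ``Propositions \ref{prop:WDUAL}, \ref{prop:DNS1} together with Fact \ref{fact:ABHK}.6 to turn $\neg\alpha^\bot$ into $\alpha$'', the more direct citation is Proposition \ref{prop:DNE_DUAL} (i.e.\ $\DUAL{\Sigma_k}$), which gives $\neg\alpha^\bot\to(\alpha^\bot)^\bot$ in one step; your detour through $\WDUAL{\Pi_k}$ and $\DNE{\Pi_k}$ also works but is longer than necessary.
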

\begin{proof}
1. This follows from Proposition \ref{prop:DNE1}.(2). 

2. By Corollary \ref{cor:DML_LEM}.(2), $\LEM{\Sigma_k}$ is equivalent to $\DML{(\n{\Sigma_k}, \n{\Pi_k})} + \DNE{\Sigma_k}$. 
By Proposition \ref{prop:DML_DNE}, it is equivalent to $\DNE{(\dn{\Sigma_k} \lor \dn{\Pi_k})} + \DNE{\Sigma_k}$. 
By Propositions \ref{prop:DNE1}.(5), \ref{prop:DNE1}.(7) and \ref{prop:DNE1}.(9), it is equivalent to $\DNE{(\Sigma_k \lor \Pi_k)}$. 
Also by Propositions \ref{prop:DNE1}.(4) and \ref{prop:DNE1}.(9), each of $\DNE{(\Sigma_k \lor \n{\Sigma_k})}$ and $\DNE{(\Sigma_k \lor \dn{\Pi_k})}$ is equivalent to $\DNE{(\Sigma_k \lor \Pi_k)}$. 

3. By Corollary \ref{cor:DML_LEM}.(1), $\LEM{\Pi_k}$ is equivalent to $\DML{(\n{\Sigma_k}, \n{\Pi_k})} + \DNE{\Sigma_{k-1}}$, and it is equivalent to $\DNE{(\dn{\Sigma_k} \lor \dn{\Pi_k})} + \DNE{\Sigma_{k-1}}$ by Proposition \ref{prop:DML_DNE}. 
By Propositions \ref{prop:DNE1}.(6) and \ref{prop:DNE1}.(9), it is equivalent to $\DNE{(\n{\Pi_k} \lor \Pi_k)}$. 
By Proposition \ref{prop:DNE1}.(7), $\DNE{(\n{\Pi_k} \lor \Pi_k)}$ is equivalent to $\DNE{(\dn{\Sigma_k} \lor \Pi_k)}$.

4. By Corollary \ref{cor:LLPO2}.(1), $\DML{\Sigma_k}^\bot$ is equivalent to $\DML{\Sigma_k} + \DNE{\Sigma_{k-1}}$, and this is equivalent to $\DNE{(\n{\Sigma_k} \lor \n{\Sigma_k})} + \DNE{\Sigma_{k-1}}$. 
Then by Propositions \ref{prop:DNE1}.(3), it is equivalent to $\DNE{(\n{\Sigma_k} \lor \Pi_k)}$. 
It is equivalent to $\DNE{(\Pi_k \lor \Pi_k)}$ by Proposition \ref{prop:DNE1}.(4), and hence, also to $\DNE{(\Pi_k \lor \dn{\Pi_k})}$ by Proposition \ref{prop:DNE1}.(9). 

5. By Corollary \ref{cor:DML_LEM3}, $\LEM{\Delta_k}$ is equivalent to $\DML{(\n{\Delta_k}, \Gamma')} + \DNE{\Sigma_{k-1}}$. 
And it is equivalent to $\DNE{(\dn{\Delta_k} \lor \n{(\Gamma')})} + \DNE{\Sigma_{k-1}}$. 
This is equivalent to $\DNE{(\Delta_k \lor \n{(\Gamma')})}$ by Proposition \ref{prop:DNE1}.(10). 
Also each of $\DNE{(\dn{\Delta_k} \lor \Pi_k)}$ and $\DNE{(\Delta_k \lor \Pi_k)}$ is equivalent to $\DNE{(\Delta_k \lor \n{\Sigma_k})}$ by Propositions \ref{prop:DNE1}.(4) and \ref{prop:DNE1}.(10). 

By Corollary \ref{cor:DML_LEM3}, $\LEM{\Delta_k}$ is equivalent to $\DML{(\Delta_k, \Sigma_k)} + \DNE{\Sigma_{k-1}}$, and it is equivalent to $\DNE{(\n{\Delta_k} \lor \n{\Sigma_k})} + \DNE{\Sigma_{k-1}}$. 
By Proposition \ref{prop:DNE1}.(3), it is equivalent to $\DNE{(\n{\Delta_k} \lor \Pi_k)}$. 

6. This is immediate from Propositions \ref{prop:DML_DNE}, \ref{prop:DNE1}.(10) and \ref{prop:DNE1}.(11). 
\end{proof}



\begin{cor}\label{cor:DLEM}
$\HA + \LEM{\Delta_k} \vdash \DNE{(\Delta_k \lor \Delta_k)}$. 
\end{cor}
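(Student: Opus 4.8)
The plan is to reduce the claim to Corollary~\ref{cor:DNE2}.6, which states that $\DNE{(\Delta_k \lor \Delta_k)}$ is equivalent over $\HA$ to $\DML{\n{\Delta_k}} + \DNE{\Sigma_{k-1}}$. Hence it suffices to show that $\HA + \LEM{\Delta_k}$ proves both $\DML{\n{\Delta_k}}$ and $\DNE{\Sigma_{k-1}}$.

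For this I would invoke Proposition~\ref{prop:NLEM}.2, according to which $\LEM{\Delta_k}$ is equivalent over $\HA$ to $\LEM{\n{\Delta_k}} + \DNE{\Sigma_{k-1}}$; in particular $\HA + \LEM{\Delta_k}$ proves $\LEM{\n{\Delta_k}}$ and $\DNE{\Sigma_{k-1}}$. Then the corollary following Proposition~\ref{prop:DML1} (asserting that $\HA + \LEM{\n{\Delta_k}}$ proves $\DML{\Delta_k}$ and $\DML{\n{\Delta_k}}$) gives $\HA + \LEM{\Delta_k} \vdash \DML{\n{\Delta_k}}$. Combining these, $\HA + \LEM{\Delta_k} \vdash \DML{\n{\Delta_k}} + \DNE{\Sigma_{k-1}}$, and an appeal to Corollary~\ref{cor:DNE2}.6 completes the argument.

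There is essentially no obstacle here, since all the required ingredients are already in place; the only point demanding (minimal) care is the bookkeeping of the defining premises of the $\Delta_k$-classes, i.e.\ checking that an instance of $\LEM{\n{\Delta_k}}$ and of the relevant $\DML{\cdot}$ principle carries exactly the same ``$\varphi\leftrightarrow\psi$'' hypothesis as the corresponding instance of $\LEM{\Delta_k}$. As an alternative route one can bypass $\DML{\n{\Delta_k}}$ altogether: by Corollary~\ref{cor:DNE2}.5, $\LEM{\Delta_k}$ is equivalent over $\HA$ to $\DNE{(\Delta_k \lor \Pi_k)}$, and since under the defining equivalence of a $\Delta_k$ disjunct one may replace that disjunct by its $\Pi_k$ companion, $\DNE{(\Delta_k \lor \Pi_k)}$ trivially implies $\DNE{(\Delta_k \lor \Delta_k)}$ over $\HA$; this yields a second, even shorter derivation.
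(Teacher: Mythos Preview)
Your proposal is correct. In fact, the alternative route you sketch at the end---deducing $\DNE{(\Delta_k \lor \Pi_k)}$ from $\LEM{\Delta_k}$ via Corollary~\ref{cor:DNE2}.5 and then observing that this trivially yields $\DNE{(\Delta_k \lor \Delta_k)}$---is exactly the paper's one-line proof. Your primary route through Corollary~\ref{cor:DNE2}.6, Proposition~\ref{prop:NLEM}.2, and the corollary following Proposition~\ref{prop:DML1} is also valid but takes a longer detour: it unpacks $\LEM{\Delta_k}$ into $\LEM{\n{\Delta_k}} + \DNE{\Sigma_{k-1}}$, extracts $\DML{\n{\Delta_k}}$, and then reassembles $\DNE{(\Delta_k \lor \Delta_k)}$, whereas the paper's approach stays at the level of $\DNE$-for-disjunctions and simply specializes the second disjunct class from $\Pi_k$ to $\Delta_k$.
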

\begin{proof}
This is because $\HA + \LEM{\Delta_k} \vdash \DNE{(\Delta_k \lor \Pi_k)}$ by Corollary \ref{cor:DNE2}.(5). 
\end{proof}

In Akama et al.~\cite{ABHK}, it is shown that $\HA + \LEM{\Delta_{k+1}}$ proves $\LEM{\Sigma_k}$. 
The following proposition is a refinement of their result from Corollary \ref{cor:DLEM}. 

\begin{prop}\label{prop:DNE3}
$\HA + \DNE{(\Delta_{k+1} \lor \Delta_{k+1})} \vdash \LEM{\Sigma_k}$. 
\end{prop}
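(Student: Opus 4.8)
The plan is to exploit two things: first, that $\DNE{(\Delta_{k+1}\lor\Delta_{k+1})}$ already contains $\DNE{\Sigma_k}$ (hence $\DNE{\Sigma_{k-1}}$, equivalently $\DUAL{\Sigma_k}$ by Proposition~\ref{prop:DNE_DUAL}), and second, that once $\DUAL{\Sigma_k}$ is available one can decide an arbitrary $\Sigma_k$ formula $\varphi$ by playing it against its dual $\varphi^\bot$. For the first point, Proposition~\ref{prop:DNE1}.1, applied with $k+1$ in place of $k$ and $\Gamma=\Theta=\Delta_{k+1}$, gives $\HA+\DNE{(\Delta_{k+1}\lor\Delta_{k+1})}\vdash\DNE{\Delta_{k+1}}$; since $\Sigma_k\subseteq\Sigma_{k+1}\cap\Pi_{k+1}$, every $\Sigma_k$ formula is $\HA$-equivalent to both a $\Sigma_{k+1}$ and a $\Pi_{k+1}$ formula, so this yields $\DNE{\Sigma_k}$ and a fortiori $\DNE{\Sigma_{k-1}}$.

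Next I would fix a $\Sigma_k$ formula $\varphi$ and derive $\varphi\lor\neg\varphi$. By Proposition~\ref{prop:DUAL_BASIC}.1 its dual $\varphi^\bot$ is $\Pi_k$, so both $\varphi$ and $\varphi^\bot$ are, up to $\HA$-provable equivalence, members of $\Sigma_{k+1}\cap\Pi_{k+1}$; replacing them by literal $\Sigma_{k+1}$ representatives $\widetilde{\varphi},\widetilde{\varphi^\bot}$ together with $\Pi_{k+1}$ representatives $\widehat{\varphi},\widehat{\varphi^\bot}$, the formula $\varphi\lor\varphi^\bot$ becomes an instance of the schema $\DNE{(\Delta_{k+1}\lor\Delta_{k+1})}$ (whose $\leftrightarrow$-premises are exactly what absorbs the passage to representatives). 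It then remains to check $\HA+\DNE{\Sigma_{k-1}}\vdash\neg\neg(\varphi\lor\varphi^\bot)$: assuming $\neg(\varphi\lor\varphi^\bot)$, i.e.\ $\neg\varphi\land\neg\varphi^\bot$, the principle $\DUAL{\Sigma_k}$ (available since we have $\DNE{\Sigma_{k-1}}$, by Proposition~\ref{prop:DNE_DUAL}) turns $\neg\varphi$ into $\varphi^\bot$, contradicting $\neg\varphi^\bot$. Applying $\DNE{(\Delta_{k+1}\lor\Delta_{k+1})}$ we get $\varphi\lor\varphi^\bot$, and Proposition~\ref{prop:DUAL_BASIC}.3 ($\varphi^\bot\to\neg\varphi$) gives $\varphi\lor\neg\varphi$.

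The only real obstacle is bookkeeping rather than mathematics: one must state the instance of $\DNE{(\Delta_{k+1}\lor\Delta_{k+1})}$ correctly, with genuine $\Sigma_{k+1}$ and $\Pi_{k+1}$ formulas $\HA$-equivalent to $\varphi$ and $\varphi^\bot$, so that the conjunction of equivalence premises is $\HA$-provable and $\neg\neg(\widetilde{\varphi}\lor\widetilde{\varphi^\bot})$ matches $\neg\neg(\varphi\lor\varphi^\bot)$; everything else is immediate from the cited results, and the argument is uniform in $k\geq 0$ (for $k=0$ it collapses to the $\HA$-provable case). Alternatively, and even more briefly, the statement drops out by combining Corollary~\ref{cor:DNE2}.6 (with $k+1$ for $k$), which gives $\HA+\DNE{(\Delta_{k+1}\lor\Delta_{k+1})}\vdash\DML{\n{\Delta_{k+1}}}+\DNE{\Sigma_k}$, with Corollary~\ref{cor:DDML2}.2, which gives $\HA+\DML{\n{\Delta_{k+1}}}+\DNE{\Sigma_k}\vdash\LEM{\Sigma_k}$.
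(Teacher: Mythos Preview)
Your argument is correct and follows essentially the same approach as the paper: both prove $\neg\neg(\varphi\lor\varphi^\bot)$ for $\varphi\in\Sigma_k$, apply $\DNE{(\Delta_{k+1}\lor\Delta_{k+1})}$ to obtain $\varphi\lor\varphi^\bot$, and finish via Proposition~\ref{prop:DUAL_BASIC}.3. The only cosmetic difference is that the paper derives $\neg\neg(\varphi\lor\varphi^\bot)$ from the weaker $\DNS{\Sigma_{k-1}}$ (equivalently $\WDUAL{\Sigma_k}$, Proposition~\ref{prop:WDUAL}) rather than from $\DNE{\Sigma_{k-1}}$ (equivalently $\DUAL{\Sigma_k}$) as you do; since $\DNE{(\Delta_{k+1}\lor\Delta_{k+1})}$ yields both, this makes no real difference, and your alternative route via Corollaries~\ref{cor:DNE2}.6 and~\ref{cor:DDML2}.2 is also valid.
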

\begin{proof}
Let $\varphi$ be any $\Sigma_k$ formula. 
Since $\HA \vdash \neg(\neg \varphi \land \neg \neg \varphi)$, $\HA + \DNS{\Sigma_{k-1}} \vdash \neg (\neg \varphi \land \neg \varphi^\bot)$ by Proposition \ref{prop:WDUAL}. 
Then $\HA + \DNS{\Sigma_{k-1}} \vdash \neg \neg (\varphi \lor \varphi^\bot)$. 
Since both $\varphi$ and $\varphi^\bot$ are $\Delta_{k+1}$ and $\HA + \DNE{(\Delta_{k+1} \lor \Delta_{k+1})}$ derives $\DNS{\Sigma_{k-1}}$, $\HA + \DNE{(\Delta_{k+1} \lor \Delta_{k+1})} \vdash \varphi \lor \varphi^\bot$. 
Hence the theory proves $\varphi \lor \neg \varphi$ by Proposition \ref{prop:DUAL_BASIC}.(3). 
\end{proof}

Finally, we introduce the following principle based on Peirce's law. 
We show that Peirce's law exactly corresponds to the double negation elimination. 

\begin{defn}[Peirce's law]
Let $\Gamma$ be any set of formulas. 
\begin{tabbing}
\hspace{25mm} \= \hspace{40mm} \= \hspace{50mm} \kill
$\PEIRCE{\Gamma}$ \> $((\varphi \to \psi) \to \varphi) \to \varphi$ \> ($\varphi \in \Gamma$ and $\psi$ is any formula) 
\end{tabbing}
\end{defn}

\begin{prop}
For any set $\Gamma$ of formulas, $\PEIRCE{\Gamma}$ is equivalent to $\DNE{\Gamma}$ over $\HA$. 
\end{prop}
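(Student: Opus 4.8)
The plan is to establish the two directions separately, each by a short intuitionistic deduction; no induction on formula complexity and no prenex analysis are needed, since the class $\Gamma$ is never changed along the way.

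For $\HA + \PEIRCE{\Gamma} \vdash \DNE{\Gamma}$, I would fix $\varphi \in \Gamma$ and instantiate the schema $\PEIRCE{\Gamma}$ with the particular choice $\psi :\equiv \bot$, which is legitimate because in the definition of $\PEIRCE{\Gamma}$ the formula $\psi$ ranges over all formulas. This yields $((\varphi \to \bot) \to \varphi) \to \varphi$, that is, $(\neg \varphi \to \varphi) \to \varphi$. Since $\HA$ proves $\neg \neg \varphi \to (\neg \varphi \to \varphi)$ (from $\neg \varphi$ together with $\neg \neg \varphi$ one obtains $\bot$, hence $\varphi$ by ex falso), composing the two implications gives $\neg \neg \varphi \to \varphi$, which is exactly the instance of $\DNE{\Gamma}$ for $\varphi$.

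For the converse $\HA + \DNE{\Gamma} \vdash \PEIRCE{\Gamma}$, I would fix $\varphi \in \Gamma$ and an arbitrary formula $\psi$, and argue under the hypothesis $(\varphi \to \psi) \to \varphi$. By the instance of $\DNE{\Gamma}$ for $\varphi$ it suffices to derive $\neg \neg \varphi$, so assume moreover $\neg \varphi$. From $\neg \varphi$ one obtains $\varphi \to \psi$ (ex falso again), hence $\varphi$ by the standing hypothesis, contradicting $\neg \varphi$. This establishes $\neg \neg \varphi$, and $\DNE{\Gamma}$ then yields $\varphi$, completing the derivation of the instance of $\PEIRCE{\Gamma}$.

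The argument is entirely routine; the only point worth flagging is the observation that $\PEIRCE{\Gamma}$ with $\psi$ taken to be $\bot$ collapses to the form $(\neg \varphi \to \varphi) \to \varphi$, which is classically trivial but intuitionistically equivalent to $\neg\neg\varphi \to \varphi$ over $\HA$. There is no genuine obstacle here, and in particular no need to pass through duals or prenex normal forms: the equivalence holds uniformly for every set $\Gamma$ of formulas.
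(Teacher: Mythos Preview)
Your proof is correct and matches the paper's approach essentially line for line: both directions instantiate $\psi := \bot$ for $\PEIRCE{\Gamma} \Rightarrow \DNE{\Gamma}$ and use ex falso to obtain $((\varphi \to \psi) \to \varphi) \to \neg\neg\varphi$ for the converse, then apply $\DNE{\Gamma}$.
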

\begin{proof}
First, we prove $\HA + \PEIRCE{\Gamma} \vdash \DNE{\Gamma}$. 
Let $\varphi \in \Gamma$. 
Since $\neg \neg \varphi$ is $(\varphi \to \bot) \to \bot$, $\HA \vdash \neg \neg \varphi \to ((\varphi \to \bot) \to \varphi)$. 
Thus $\HA + \PEIRCE{\Gamma} \vdash \neg \neg \varphi \to \varphi$. 

Secondly, we prove $\HA + \DNE{\Gamma} \vdash \PEIRCE{\Gamma}$. 
Let $\varphi$ be any $\Gamma$ formula and $\psi$ be arbitrary formula. 
Since $\HA$ proves $\neg \varphi \to (\varphi \to \psi)$, $\HA$ also proves $((\varphi \to \psi) \to \varphi) \land \neg \varphi \to \varphi$. 
Hence $\HA \vdash ((\varphi \to \psi) \to \varphi) \to \neg \neg \varphi$.
We obtain $\HA + \DNE{\Gamma} \vdash ((\varphi \to \psi) \to \varphi) \to \varphi$.
\end{proof}

We get the table which summarizes principles equivalent to $\DNE{(\Gamma \lor \Theta)}$ over the theory $\HA + \DNS{\Sigma_{k-1}}$. 
Notice that from Propositions \ref{prop:DNE1}.(6) and \ref{prop:DNE1}.(8), $\DNE{(\dn{\Sigma_k} \lor  \Theta)}$ and $\DNE{(\dn{\Pi_k} \lor  \Theta)}$ are equivalent to $\DNE{(\n{\Pi_k} \lor  \Theta)}$ and $\DNE{(\n{\Sigma_k} \lor  \Theta)}$ over $\HA + \DNS{\Sigma_{k-1}}$, respectively. 
So $\dn{\Sigma_k}$ and $\dn{\Pi_k}$ are excluded from the table. 

\begin{table}[h]
\centering
\begin{tabular}{|c||c|c|c|c|}
\hline
\backslashbox{$\Gamma$}{$\Theta$} & $\Sigma_k$ & $\n{\Pi_k}$ & $\Pi_k$ & $\n{\Sigma_k}$ \\

\hline
\hline

$\Sigma_k$ & $\DNE{\Sigma_k}$ & $\DNE{\Sigma_k}$ & $\LEM{\Sigma_k}$ & $\LEM{\Sigma_k}$ \\

\hline

$\n{\Pi_k}$ &  & $\DML{\Pi_k}$ & $\LEM{\Pi_k}$ & $\LEM{\n{\Sigma_k}}$ \\

\hline

$\Pi_k$ & & & $\DML{\Sigma_k}^\bot$ & $\DML{\Sigma_k}^\bot$ \\

\hline 

$\n{\Sigma_k}$ & & & & $\DML{\Sigma_k}$ \\

\hline 
\end{tabular}

\vspace{0.1in}

\begin{tabular}{|c||c|c|c|}
\hline
\backslashbox{$\Gamma$}{$\Theta$} & $\Delta_k$ & $\n{\Delta_k}$ & $\dn{\Delta_k}$ \\

\hline
\hline

$\Sigma_k$ & $\DNE{\Sigma_k}$ & $\DNE{\Sigma_k}$ & $\DNE{\Sigma_k}$ \\

\hline

$\n{\Pi_k}$ & $\LEM{\Delta_k}$ & $\LEM{\n{\Delta_k}}$ & $\LEM{\n{\Delta_k}}$ \\

\hline

$\Pi_k$ & $\LEM{\Delta_k}$ & $\LEM{\Delta_k}$ & $\LEM{\Delta_k}$\\

\hline 

$\n{\Sigma_k}$ & $\LEM{\Delta_k}$ & $\LEM{\n{\Delta_k}}$ & $\LEM{\n{\Delta_k}}$\\

\hline

$\Delta_k$ & $\DNE{(\Delta_k \lor \Delta_k)}$ & $\LEM{\Delta_k}$ & $\DNE{(\Delta_k \lor \Delta_k)}$ \\

\hline

$\n{\Delta_k}$ & & $\DML{\Delta_k}$ & $\LEM{\n{\Delta_k}}$ \\

\hline

$\dn{\Delta_k}$ & & & $\DML{\n{\Delta_k}}$ \\

\hline
\end{tabular}
\caption{Principles equivalent to $\DNE{(\Gamma \lor \Theta)}$ over $\HA + \DNS{\Sigma_{k-1}}$}
\end{table}

\section{The constant domain axiom}\label{section:CD}

In this section, we investigate the principles of the form $\CD{(\Gamma, \Theta)}$ in Definition \ref{defn:principles2}, and classify them in the arithmetical hierarchy of classical principles. 
Note that $\CD{(\Gamma, \Theta)}$ is not equivalent to $\CD{(\Theta, \Gamma)}$ in general. 

In first-order intuitionistic Kripke semantics, the constant domain axiom corresponds to Kripke frames with constant domains (cf.~\cite[p.~328]{Smor}). 
First of all, we show that in our framework of first-order intuitionistic arithmetic, the constant domain axiom is equivalent to the law of excluded middle despite its semantic origin. 
Let $\mathbf{LEM}$ and $\mathbf{CD}$ denote the principles $\LEM{\Fml}$ and $\CD{(\Fml, \Fml)}$ respectively, where $\Fml$ is the set of all formulas. 

\begin{prop}\label{prop:CD_LEM}
$\mathbf{CD}$ is equivalent to $\mathbf{LEM}$ over $\HA$. 
\end{prop}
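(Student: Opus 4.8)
The plan is to prove the two implications separately. The direction $\HA + \mathbf{LEM} \vdash \mathbf{CD}$ is routine: given an instance $\forall x(\varphi \lor \psi(x)) \to \varphi \lor \forall x\,\psi(x)$ with $x \notin \FV(\varphi)$, assume the antecedent and apply $\mathbf{LEM}$ to $\varphi$; if $\varphi$ holds we are done, and if $\neg \varphi$ holds then each disjunction $\varphi \lor \psi(x)$ yields $\psi(x)$, so $\forall x\,\psi(x)$. For the converse $\HA + \mathbf{CD} \vdash \mathbf{LEM}$, I would argue by induction on the logical structure of an arbitrary formula $\chi$, showing $\HA + \mathbf{CD} \vdash \chi \lor \neg \chi$. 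The base cases ($\chi$ atomic or $\chi \equiv \bot$) are immediate since $\HA$ decides quantifier-free formulas, and the propositional cases ($\land, \lor, \to$) follow by pure intuitionistic propositional reasoning from the two induction hypotheses.

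The substantive cases are the quantifiers. Suppose $\chi \equiv \exists y\, \chi_1(y)$. The induction hypothesis, together with generalization on $y$, gives $\HA + \mathbf{CD} \vdash \forall y(\chi_1(y) \lor \neg \chi_1(y))$; reasoning inside $\HA$ this lets one verify $\forall y(\exists y\,\chi_1(y) \lor \neg \chi_1(y))$, by splitting on $\chi_1(y) \lor \neg \chi_1(y)$ and taking the left disjunct when $\chi_1(y)$ holds and the right one otherwise. Applying $\mathbf{CD}$ with $\varphi :\equiv \exists y\,\chi_1(y)$ and $\psi(y) :\equiv \neg \chi_1(y)$ then yields $\exists y\,\chi_1(y) \lor \forall y\,\neg \chi_1(y)$, which is $\HA$-equivalent to $\chi \lor \neg \chi$. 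The case $\chi \equiv \forall y\,\chi_1(y)$ is dual: from the same induction hypothesis one verifies $\forall y(\neg \forall y\,\chi_1(y) \lor \chi_1(y))$ in $\HA$ (when $\neg \chi_1(y)$ holds, use the contrapositive of $\forall y\,\chi_1(y) \to \chi_1(y)$), and the instance of $\mathbf{CD}$ with $\varphi :\equiv \neg \forall y\,\chi_1(y)$ and $\psi(y) :\equiv \chi_1(y)$ delivers $\neg \forall y\,\chi_1(y) \lor \forall y\,\chi_1(y)$.

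The point needing care — and the only place where circularity might be feared — is exactly this quantifier step: we obtain $\chi \lor \neg \chi$ from a $\mathbf{CD}$-instance whose antecedent must itself be proved in $\HA$, and that antecedent superficially looks as strong as the conclusion. The resolution is that the antecedent is established using the induction hypothesis for the strictly simpler subformula $\chi_1$, which is a genuine \emph{theorem} of $\HA + \mathbf{CD}$ rather than an auxiliary assumption, so there is no vicious circle. One also has to rename bound variables so the side condition $x \notin \FV(\varphi)$ of $\mathbf{CD}$ is met (e.g.\ present $\varphi$ as $\exists w\,\chi_1(w)$ when the $\mathbf{CD}$-variable is $y$), but this is routine. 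As an alternative to the structural induction, one could first prove by induction on $k$ that $\HA + \mathbf{CD} \vdash \LEM{\Sigma_k}$ for all $k$ — the inductive step being precisely the $\exists$-case above, with $\LEM{\Pi_{k-1}}$ (obtained from $\LEM{\Sigma_{k-1}}$) used to discharge the antecedent — and then invoke the prenex normal form theorem to pass from this to $\mathbf{LEM}$ for all formulas; the direct structural induction, however, is self-contained and seems preferable.
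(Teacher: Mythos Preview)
Your proposal is correct and follows essentially the same approach as the paper: both directions are argued exactly as you describe, with the $\mathbf{CD}\Rightarrow\mathbf{LEM}$ direction done by structural induction on formulas and the quantifier cases handled via a single application of $\mathbf{CD}$ after using the induction hypothesis. The only cosmetic difference is that in the $\forall$-case the paper takes the side formula to be $\exists x\,\neg\psi(x)$ (and then weakens to $\neg\forall x\,\psi(x)$), whereas you use $\neg\forall y\,\chi_1(y)$ directly; both are equally valid.
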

\begin{proof}
First, we prove $\HA + \mathbf{CD} \vdash \varphi \lor \neg \varphi$ for any formula $\varphi$ by induction on the construction of $\varphi$. 
If $\varphi$ is an atomic formula, then the statement is obvious. 

Assume that $\HA + \mathbf{CD}$ proves $\psi \lor \neg \psi$ and $\rho \lor \neg \rho$, and suppose $\varphi$ is one of the forms $\psi \land \rho$, $\psi \lor \rho$ and $\psi \to \rho$. 
Notice that $\neg \psi \lor \neg \rho \to \neg(\psi \land \rho)$, $\neg \psi \land \neg \rho \to \neg (\psi \lor \rho)$, $\neg \psi \lor \rho \to (\psi \to \rho)$ and $\psi \land \neg \rho \to \neg (\psi \to \rho)$ are provable in $\HA$. 
Therefore $\varphi \lor \neg \varphi$ is also provable in $\HA + \mathbf{CD}$. 

Assume that $\HA + \mathbf{CD}$ proves $\psi(x) \lor \neg \psi(x)$. 
Then $\forall x (\exists x \psi(x) \lor \neg \psi(x))$ and $\forall x (\psi(x) \lor \exists x\, \neg \psi(x))$ are also provable. 
By applying $\mathbf{CD}$, we obtain that $\HA + \mathbf{CD}$ proves $\exists x \psi(x) \lor \neg\, \exists x \psi(x)$ and $\forall x \psi(x) \lor \neg\, \forall x \psi(x)$. 
Therefore, if $\varphi$ is of one of the forms $\exists x \psi(x)$ and $\forall x \psi(x)$, then $\varphi \lor \neg \varphi$ is provable in $\HA + \mathbf{CD}$. 

Secondly, we prove $\HA + \mathbf{LEM} \vdash \mathbf{CD}$. 
Let $\varphi$ and $\psi(x)$ be any formulas with $x \notin \FV(\varphi)$. 
We have $\HA \vdash \forall x(\varphi \lor \psi(x)) \land \neg \varphi \to \forall x \psi(x)$. 
Since $\HA + \mathbf{LEM}$ proves $\varphi \lor \neg \varphi$, we conclude that $\HA + \mathbf{LEM}$ also proves $\forall x(\varphi \lor \psi(x)) \to \varphi \lor \forall x \psi(x)$. 
\end{proof}

\begin{prop}\label{prop:CD0}\leavevmode
\begin{enumerate}
	\item $\CD{(\Gamma, \Pi_{k+1})}$ is equivalent to $\CD{(\Gamma, \Sigma_k)}$ over $\HA$; 
	\item $\CD{(\Gamma, \n{\Sigma_{k+1}})}$ is equivalent to $\CD{(\Gamma, \n{\Pi_k})}$ over $\HA$. 
\end{enumerate}
\end{prop}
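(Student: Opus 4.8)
The plan is to prove each equivalence by getting one inclusion essentially for free from the $\subseteq$-convention and the other by a standard "contract two universal quantifiers with a pairing function" manoeuvre. Fix the set $\Gamma$ of formulas throughout. For the \emph{trivial} directions, recall from the preliminaries that $\Sigma_k \subseteq \Pi_{k+1}$ and $\Pi_k \subseteq \Sigma_{k+1}$ in the sense of the $\subseteq$-convention, whence also $\n{\Pi_k} \subseteq \n{\Sigma_{k+1}}$ (if $\psi' \in \Sigma_{k+1}$ witnesses $\psi \in \Pi_k$, then $\neg \psi'$ witnesses $\neg \psi$, since $\HA \vdash \psi \leftrightarrow \psi'$ gives $\HA \vdash \neg \psi \leftrightarrow \neg \psi'$ and the free variables agree). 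Since an instance of $\CD{(\Gamma, \Theta)}$ is preserved over $\HA$ when $\psi(x) \in \Theta$ is replaced by an $\HA$-provably equivalent formula with the same free variables — because then $\forall x (\varphi \lor \psi(x))$ and $\varphi \lor \forall x \psi(x)$ are replaced by $\HA$-equivalent formulas — it follows at once that $\HA + \CD{(\Gamma, \Pi_{k+1})} \vdash \CD{(\Gamma, \Sigma_k)}$ and $\HA + \CD{(\Gamma, \n{\Sigma_{k+1}})} \vdash \CD{(\Gamma, \n{\Pi_k})}$.

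For the reverse direction of clause 1, let $\varphi \in \Gamma$ and let $\forall y \chi(x, y)$ be a $\Pi_{k+1}$ formula with $\chi(x, y) \in \Sigma_k$ and $x \notin \FV(\varphi)$; by the prenex remark in the preliminaries together with the preservation observation above, it suffices to treat this shape. Pick a fresh variable $z$ and recall the primitive recursive projections $(z)_0, (z)_1$. In $\HA$ one has
\[
\forall x (\varphi \lor \forall y \chi(x, y)) \to \forall x \forall y (\varphi \lor \chi(x, y)) \to \forall z (\varphi \lor \chi((z)_0, (z)_1)),
\]
and $\chi((z)_0, (z)_1)$ is again a $\Sigma_k$ formula (substituting terms does not change the quantifier prefix) with $z \notin \FV(\varphi)$. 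Applying $\CD{(\Gamma, \Sigma_k)}$ yields $\varphi \lor \forall z \chi((z)_0, (z)_1)$, and since $\HA \vdash \forall z \chi((z)_0, (z)_1) \leftrightarrow \forall x \forall y \chi(x, y)$ — for $\leftarrow$ instantiate $x := (z)_0$, $y := (z)_1$, and for $\to$ instantiate $z$ by the code of $\langle x, y \rangle$ and use the defining equations of the projections — we conclude $\varphi \lor \forall x \forall y \chi(x, y)$, i.e. the desired instance of $\CD{(\Gamma, \Pi_{k+1})}$.

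The reverse direction of clause 2 is the very same argument after one rewriting: a $\Sigma_{k+1}$ formula may be taken of the form $\exists y \chi(x, y)$ with $\chi(x, y) \in \Pi_k$, and $\HA \vdash \neg \exists y \chi(x, y) \leftrightarrow \forall y \neg \chi(x, y)$. So from $\forall x (\varphi \lor \neg \exists y \chi(x, y))$ one gets $\forall z (\varphi \lor \neg \chi((z)_0, (z)_1))$ in $\HA$, where now $\neg \chi((z)_0, (z)_1) \in \n{\Pi_k}$; hence $\CD{(\Gamma, \n{\Pi_k})}$ gives $\varphi \lor \forall z \neg \chi((z)_0, (z)_1)$, which is $\HA$-equivalent to $\varphi \lor \forall x \neg \exists y \chi(x, y)$. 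I do not expect any real obstacle here: the content is the routine quantifier-contraction trick, and the only care needed is bookkeeping — checking that substituting the projection terms keeps a $\Sigma_k$ (resp. $\Pi_k$) formula in the same class, that the side condition $x \notin \FV(\varphi)$ transfers to the fresh variable $z$, and that the $\subseteq$-convention legitimately transports $\CD$-instances between $\HA$-equivalent formula classes.
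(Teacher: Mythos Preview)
Your proof is correct and follows exactly the approach the paper indicates: the paper's own proof is the single sentence ``These statements are proved by using a primitive recursive pairing function,'' and your argument is precisely the standard unpacking of that hint, including the same two-quantifier contraction via $(z)_0,(z)_1$ that the paper uses elsewhere (e.g.\ in the proof of Proposition~\ref{prop:DNS1}.2). There is nothing to add.
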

\begin{proof}
These statements are proved by using a primitive recursive pairing function. 
\end{proof}

As in the proof of Proposition \ref{prop:CD_LEM}, we can show that $\LEM{\Gamma}$ and $\LEM{\Delta_k}$ are sufficiently strong for the constant domain axiom. 

\begin{prop}\label{prop:CD1}
Let $\Gamma$ and $\Theta$ be any sets of formulas. 
\begin{enumerate}
	\item $\HA + \LEM{\Gamma} \vdash \CD{(\Gamma, \Theta)}$; 
	\item $\HA + \LEM{\Delta_k} \vdash \CD{(\Delta_k, \Theta)}$. 
\end{enumerate}
\end{prop}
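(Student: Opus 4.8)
The plan is to reuse the argument from the second half of the proof of Proposition~\ref{prop:CD_LEM}, observing that it invokes the law of excluded middle only at a single formula, namely the left disjunct $\varphi$ of the constant domain instance. Restricting $\varphi$ to lie in $\Gamma$ (resp.\ to be a $\Delta_k$ formula in the appropriate sense) then forces us to use only the correspondingly restricted instance of LEM, which is exactly what is available.

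For clause~1, fix $\varphi \in \Gamma$ and $\psi(x) \in \Theta$ with $x \notin \FV(\varphi)$. First I would record the purely intuitionistic fact
\[
  \HA \vdash \forall x\,(\varphi \lor \psi(x)) \land \neg \varphi \to \forall x\, \psi(x),
\]
which holds because from $\neg \varphi$ and $\varphi \lor \psi(x)$ one obtains $\psi(x)$ for every $x$. Then I would apply the instance $\varphi \lor \neg \varphi$ of $\LEM{\Gamma}$ and argue by cases: if $\varphi$ holds, then $\varphi \lor \forall x\, \psi(x)$ is immediate; if $\neg \varphi$ holds, then the displayed implication yields $\forall x\, \psi(x)$ from the hypothesis $\forall x\,(\varphi \lor \psi(x))$, so again $\varphi \lor \forall x\, \psi(x)$. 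Hence $\HA + \LEM{\Gamma} \vdash \forall x\,(\varphi \lor \psi(x)) \to \varphi \lor \forall x\, \psi(x)$, which is $\CD{(\Gamma, \Theta)}$.

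For clause~2, the argument is identical except that the needed instance of LEM is now $\varphi \lor \neg \varphi$ for a $\Sigma_k$ formula $\varphi$ that is $\HA$-provably equivalent to some $\Pi_k$ formula $\varphi'$; this is precisely what $\LEM{\Delta_k}$ supplies once the hypothesis $\varphi \leftrightarrow \varphi'$ of the instance of $\CD{(\Delta_k, \Theta)}$ is in force. Under that hypothesis, the same two-case split as above (with the same intuitionistic implication, keeping track of the side condition $x \notin \FV(\varphi)$) gives $\forall x\,(\varphi \lor \psi(x)) \to \varphi \lor \forall x\, \psi(x)$, i.e.\ the relevant instance of $\CD{(\Delta_k, \Theta)}$.

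I do not expect any genuine obstacle. The only point requiring care is the bookkeeping in the $\Delta_k$ case: $\CD{(\Delta_k, \Theta)}$ is the conditional principle $(\varphi \leftrightarrow \varphi') \to (\forall x\,(\varphi \lor \psi(x)) \to \varphi \lor \forall x\, \psi(x))$ with $\varphi \in \Sigma_k$, $\varphi' \in \Pi_k$, $\psi(x) \in \Theta$ and $x \notin \FV(\varphi)$, so the equivalence $\varphi \leftrightarrow \varphi'$ is precisely available when one wants to feed it to $\LEM{\Delta_k}$, and nothing beyond the two-case split is needed.
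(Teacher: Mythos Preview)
Your proposal is correct and matches the paper's own treatment: the paper does not give a formal proof of this proposition but simply remarks ``As in the proof of Proposition~\ref{prop:CD_LEM}'', and your argument is precisely that second half of the proof of Proposition~\ref{prop:CD_LEM}, with the observation that only $\varphi \lor \neg \varphi$ for the specific left disjunct $\varphi$ is needed. Your handling of the $\Delta_k$ case, carrying the hypothesis $\varphi \leftrightarrow \varphi'$ so that $\LEM{\Delta_k}$ applies, is exactly the intended bookkeeping.
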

%

From the prenex normal form theorem proved in \cite[Theorem 2.7]{ABHK} and \cite[Theorem 5.7]{FuKu}, $\mathbf{LEM}$ is equivalent to $\bigcup\{\LEM{\Sigma_k} \mid k \geq 0\}$ over $\HA$. 
Therefore, the following proposition can be regarded as a stratification of Proposition \ref{prop:CD_LEM}. 

\begin{prop}\label{prop:CDS1}
Let $\Theta$ be a set of formulas such that $\Sigma_{k-1} \subseteq \Theta$. 
Then the following are equivalent over $\HA$: 
\begin{enumerate}
	\item $\CD{(\Sigma_k, \Theta)}$. 
	\item $\LEM{\Sigma_k}$. 
\end{enumerate}
\end{prop}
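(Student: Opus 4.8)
The plan is to prove the two implications separately: $(2)\Rightarrow(1)$ is immediate, and $(1)\Rightarrow(2)$ proceeds by induction on $k$, the point being a bootstrapping argument that feeds $\CD{(\Sigma_k,\Theta)}$ back into the induction hypothesis at the lower level.

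For $(2)\Rightarrow(1)$ I would simply invoke Proposition \ref{prop:CD1}.1 with $\Gamma=\Sigma_k$: $\HA+\LEM{\Sigma_k}\vdash\CD{(\Sigma_k,\Theta)}$ for an arbitrary $\Theta$ (this direction does not use $\Sigma_{k-1}\subseteq\Theta$ at all). For $(1)\Rightarrow(2)$ I argue by induction on $k$. The base case $k=0$ is trivial, since $\Sigma_0$ matrices are decidable, so both $\CD{(\Sigma_0,\Theta)}$ and $\LEM{\Sigma_0}$ are already theorems of $\HA$. For the induction step, assume the equivalence holds at level $k$ (for every parameter set containing $\Sigma_{k-1}$ up to $\HA$-provable equivalence), let $\Theta$ satisfy $\Sigma_k\subseteq\Theta$, and work inside $\HA+\CD{(\Sigma_{k+1},\Theta)}$. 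Because $\Sigma_k\subseteq\Sigma_{k+1}$, this theory proves $\CD{(\Sigma_k,\Theta)}$; and since $\Sigma_{k-1}\subseteq\Sigma_k\subseteq\Theta$, the induction hypothesis gives $\LEM{\Sigma_k}$, hence $\LEM{\Pi_k}$ and $\DNE{\Sigma_k}$ by Fact \ref{fact:ABHK}.1 and Fact \ref{fact:LEM_DNE}, and therefore $\DUAL{\Pi_k}$ by Proposition \ref{prop:DNE_DUAL}.

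Now I take an arbitrary $\varphi\in\Sigma_{k+1}$, which we may assume is of the form $\exists y\,\theta(y)$ with $\theta(y)\in\Pi_k$, and aim at $\varphi\lor\neg\varphi$. Using $\LEM{\Pi_k}$ together with $\DUAL{\Pi_k}$ (which converts $\neg\theta(y)$ into $\theta^\bot(y)$) one gets $\forall y\,(\theta(y)\lor\theta^\bot(y))$, and since $\theta(y)\to\varphi$ this yields $\forall y\,(\varphi\lor\theta^\bot(y))$. By Proposition \ref{prop:DUAL_BASIC}.1 the dual $\theta^\bot(y)$ is a $\Sigma_k$ formula with the same free variables as $\theta(y)$, so $\Sigma_k\subseteq\Theta$ lets us apply $\CD{(\Sigma_{k+1},\Theta)}$ (note $y\notin\FV(\varphi)$), obtaining $\varphi\lor\forall y\,\theta^\bot(y)$. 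Finally $\forall y\,\theta^\bot(y)\equiv(\exists y\,\theta(y))^\bot\equiv\varphi^\bot$ and $\HA\vdash\varphi^\bot\to\neg\varphi$ by Proposition \ref{prop:DUAL_BASIC}.3, so $\varphi\lor\neg\varphi$ follows. This establishes $\LEM{\Sigma_{k+1}}$ and closes the induction.

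The main obstacle is seeing the right structure of the induction step: a single application of $\CD{(\Sigma_{k+1},\Theta)}$ cannot dispose of the $\Pi_k$ matrix $\theta(y)$ on its own, so one must first exploit it at level $k$ to extract $\LEM{\Sigma_k}$, which then simultaneously supplies the case distinction $\theta(y)\lor\theta^\bot(y)$ and, through $\DUAL{\Pi_k}$, keeps the second argument of the constant-domain instance inside $\Sigma_k\subseteq\Theta$. The remaining work—tracking that $\theta^\bot$ has the correct free variables and complexity, and that the convention $\Sigma_k\subseteq\Theta$ legitimizes the $\CD$ instance actually used—is routine.
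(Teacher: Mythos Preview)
Your proof is correct and follows essentially the same approach as the paper: an induction on $k$ where the induction hypothesis yields $\LEM{\Sigma_k}$ (hence $\LEM{\Pi_k}$ and $\DUAL{\Pi_k}$), and then a single application of the constant-domain axiom to $\forall y(\varphi\lor\theta^\bot(y))$ gives $\varphi\lor\varphi^\bot$ and thus $\varphi\lor\neg\varphi$. The only organizational difference is that the paper carries out the induction for the minimal choice $\Theta=\Sigma_{k-1}$ (i.e., it proves $\HA+\CD{(\Sigma_k,\Sigma_{k-1})}\vdash\LEM{\Sigma_k}$ directly), whereas you thread a general $\Theta$ through the induction; this is harmless and the arguments coincide.
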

\begin{proof}
First, we prove $\HA + \CD{(\Sigma_k, \Sigma_{k-1})} \vdash \LEM{\Sigma_k}$ by induction on $k$. 
For $k=0$, the statement is trivial. 
Suppose that the statement holds for $k$, and we prove $\HA + \CD{(\Sigma_{k+1}, \Sigma_k)} \vdash \LEM{\Sigma_{k+1}}$. 
Let $\exists x \varphi(x)$ be any $\Sigma_{k+1}$ formula with $\varphi(x) \in \Pi_k$. 
By induction hypothesis and Fact \ref{fact:ABHK}, $\HA + \CD{(\Sigma_k, \Sigma_{k-1})}$ proves $\LEM{\Pi_k} + \DNE{\Sigma_k}$. 
Thus $\HA + \CD{(\Sigma_k, \Sigma_{k-1})} \vdash \varphi(x) \lor \neg \varphi(x)$. 
We get $\HA + \CD{(\Sigma_k, \Sigma_{k-1})} \vdash \forall x (\exists x \varphi(x) \lor \varphi^\bot(x))$ by using $\DUAL{\Pi_k}$. 
Then
\[
	\HA + \CD{(\Sigma_{k+1}, \Sigma_k)} \vdash \exists x \varphi(x) \lor \forall x \varphi^\bot(x). 
\]
This implies $\HA + \CD{(\Sigma_{k+1}, \Sigma_k)} \vdash \exists x \varphi(x) \lor \neg\, \exists x \varphi(x)$. 

On the other hand, $\HA + \LEM{\Sigma_k} \vdash \CD{(\Sigma_k, \Theta)}$ follows from Proposition \ref{prop:CD1}.(1).
\end{proof}

Fact \ref{FuKaCD} states that $\CD{(\Pi_1, \Pi_1)}$ is $\HA$-equivalent to $\DML{\Sigma_1}$. 
By Corollary \ref{cor:LLPO2}.(1), $\DML{\Sigma_1}$ is $\HA$-equivalent to $\DML{\Sigma_1}^\bot$. 
So the following proposition is a generalization of Fact \ref{FuKaCD}. 

\begin{prop}\label{prop:CDP1}
The following are equivalent over $\HA$: 
\begin{enumerate}
	\item $\CD{(\Pi_k, \Pi_k)}$. 
	\item $\DML{\Sigma_k}^\bot$. 
\end{enumerate}
\end{prop}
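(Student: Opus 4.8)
The plan is to factor the equivalence through the principle $\DNE{(\Pi_k \lor \Pi_k)}$. By Corollary~\ref{cor:DNE2}.4, $\DML{\Sigma_k}^\bot$ is $\HA$-equivalent to $\DNE{(\Pi_k \lor \Pi_k)}$, so it is enough to prove that $\CD{(\Pi_k, \Pi_k)}$ and $\DNE{(\Pi_k \lor \Pi_k)}$ are equivalent over $\HA$. I would first dispose of the case $k = 0$, where both principles are outright provable in $\HA$ (quantifier-free formulas are decidable and $\varphi^\bot \equiv \neg\varphi$ for such $\varphi$), and then assume $k \geq 1$.

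For the implication $\HA + \DNE{(\Pi_k \lor \Pi_k)} \vdash \CD{(\Pi_k, \Pi_k)}$, the key observation is purely intuitionistic: for $\varphi, \psi(x) \in \Pi_k$ with $x \notin \FV(\varphi)$ one has $\HA \vdash \forall x(\varphi \lor \psi(x)) \to \neg\neg(\varphi \lor \forall x\,\psi(x))$. Indeed, assuming $\forall x(\varphi \lor \psi(x))$ together with $\neg(\varphi \lor \forall x\,\psi(x))$, the latter yields $\neg\varphi$, and disjunctive syllogism applied to each $\varphi \lor \psi(x)$ then gives $\forall x\,\psi(x)$, a contradiction. Since $\varphi \lor \forall x\,\psi(x)$ lies in $\Pi_k \lor \Pi_k$ up to $\HA$-equivalence, one application of $\DNE{(\Pi_k \lor \Pi_k)}$ removes the double negation and produces $\varphi \lor \forall x\,\psi(x)$.

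For the converse $\HA + \CD{(\Pi_k, \Pi_k)} \vdash \DNE{(\Pi_k \lor \Pi_k)}$, I would first extract $\LEM{\Sigma_{k-1}}$: since $\Sigma_{k-1} \subseteq \Pi_k$, the theory $\HA + \CD{(\Pi_k, \Pi_k)}$ proves $\CD{(\Sigma_{k-1}, \Pi_k)}$, and Proposition~\ref{prop:CDS1} (with $\Theta := \Pi_k$, noting $\Sigma_{k-2} \subseteq \Pi_k$) then yields $\LEM{\Sigma_{k-1}}$; by Fact~\ref{fact:ABHK} it also proves $\LEM{\Pi_{k-1}}$ and $\DNE{\Sigma_{k-1}}$, the last being $\HA$-equivalent to $\DNE{\Pi_k}$. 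Now, given $\chi, \theta \in \Pi_k$ with $\neg\neg(\chi \lor \theta)$, write $\theta \equiv \forall v\,\theta_0(v)$ with $\theta_0(v) \in \Sigma_{k-1}$. Fix $v$ and case-split via $\LEM{\Sigma_{k-1}}$: if $\theta_0(v)$ holds then $\chi \lor \theta_0(v)$; if $\neg\theta_0(v)$, then $\neg\theta$, hence $\neg\neg\chi$ (for $\neg\chi$ would give $\neg\chi \land \neg\theta$, i.e. $\neg(\chi \lor \theta)$), hence $\chi$ by $\DNE{\Pi_k}$, so again $\chi \lor \theta_0(v)$. Thus $\forall v(\chi \lor \theta_0(v))$, and $\CD{(\Pi_k, \Pi_k)}$ delivers $\chi \lor \forall v\,\theta_0(v)$, i.e. $\chi \lor \theta$.

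The main obstacle is precisely the extraction of $\LEM{\Sigma_{k-1}}$ in the converse direction; everything else reduces to routine intuitionistic manipulation plus the dictionary between duals, $\DNE$, $\DNS$ and de Morgan's law already assembled in Sections~\ref{section:dual}--\ref{section:DNE}. What makes the extraction go through is the inclusion $\Sigma_{k-1} \subseteq \Pi_k$, which lets $\CD{(\Pi_k, \Pi_k)}$ subsume $\CD{(\Sigma_{k-1}, \Pi_k)}$ and hence invoke the previously established Proposition~\ref{prop:CDS1}; the only delicate point is the free-variable and prenex bookkeeping hidden in the convention ``$\Gamma \subseteq \Theta$'', which one should handle explicitly when passing between $\HA$-equivalent representatives.
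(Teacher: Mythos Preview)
Your proof is correct and the underlying strategy matches the paper's: both directions hinge on extracting $\LEM{\Sigma_{k-1}}$ (hence $\LEM{\Pi_{k-1}}$ and $\DNE{\Sigma_{k-1}}$) from $\CD{(\Pi_k,\Pi_k)}$ via Proposition~\ref{prop:CDS1}, case-splitting on the $\Sigma_{k-1}$ body, and applying $\CD{(\Pi_k,\Pi_k)}$ once. The difference is only in packaging. The paper argues directly with $\DML{\Sigma_k}^\bot$ and duals: for the forward direction it derives $\neg(\varphi^\bot \land (\forall x\psi(x))^\bot)$ from $\forall x(\varphi \lor \psi(x))$, applies $\DML{\Sigma_k}^\bot$, and unwinds via Proposition~\ref{prop:DUAL_BASIC}.2; for the converse it case-splits on the $\Pi_{k-1}$ body of one $\Sigma_k$ formula and invokes $\DUAL{\Sigma_k}$ before the final CD step. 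You instead factor through the already-established equivalence $\DML{\Sigma_k}^\bot \Leftrightarrow \DNE{(\Pi_k \lor \Pi_k)}$ of Corollary~\ref{cor:DNE2}.4, which lets you phrase the forward direction as the purely intuitionistic observation $\forall x(\varphi \lor \psi(x)) \to \neg\neg(\varphi \lor \forall x\psi(x))$ followed by one DNE, and avoids explicit dual manipulations throughout. Your route is slightly cleaner for exactly that reason, while the paper's is more self-contained; the two arguments are otherwise isomorphic.
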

\begin{proof}
First, we prove $\HA + \DML{\Sigma_k}^\bot  \vdash \CD{(\Pi_k, \Pi_k)}$. 
Let $\varphi, \psi(x) \in \Pi_k$ with $x \notin \FV(\varphi)$. 
Since $\HA \vdash \forall x(\varphi \lor \psi(x)) \land \neg \varphi \to \forall x \psi(x)$, $\HA$ proves $\forall x(\varphi \lor \psi(x)) \to \neg (\neg \varphi \land \neg\, \forall x \psi(x))$. 
By Proposition \ref{prop:DUAL_BASIC}.(3), $\HA \vdash \forall x(\varphi \lor \psi(x)) \to \neg (\varphi^\bot \land (\forall x \psi(x))^\bot)$.
Then we obtain
\[
	\HA + \DML{\Sigma_k}^\bot \vdash \forall x(\varphi \lor \psi(x)) \to \varphi^{\bot \bot} \lor (\forall x\psi(x))^{\bot \bot}. 
\]
By Proposition \ref{prop:DUAL_BASIC}.(2), we conclude
\[
	\HA + \DML{\Sigma_k}^\bot \vdash \forall x(\varphi \lor \psi(x)) \to \varphi \lor \forall x\psi(x). 
\]

Secondly, we prove $\HA + \CD{(\Pi_k, \Pi_k)} \vdash \DML{\Sigma_k}^\bot$. 
We may assume $k > 0$. 
Let $\exists x \varphi(x)$ and $\exists y \psi(y)$ be any $\Sigma_k$ formulas where $\varphi(x)$ and $\psi(y)$ are $\Pi_{k-1}$. 
Since $\psi(y)$ implies $\exists y \psi(y)$, we obtain
\begin{equation}\label{eqCD1}
	\HA \vdash \neg(\exists x \varphi(x) \land \exists y \psi(y)) \land \psi(y) \to \neg\, \exists x \varphi(x). 
\end{equation}
Since $\HA + \CD{(\Pi_k, \Pi_k)}$ entails $\CD{(\Sigma_{k-1}, \Pi_k)}$, we obtain that $\HA + \CD{(\Pi_k, \Pi_k)}$ proves $\LEM{\Pi_{k-1}} + \DNE{\Sigma_{k-1}}$ by Proposition \ref{prop:CDS1} and Fact \ref{fact:ABHK}. 
Hence $\HA + \CD{(\Pi_k, \Pi_k)} \vdash \psi(y) \lor \neg \psi(y)$. 
From this with (\ref{eqCD1}), we have
\[
	\HA + \CD{(\Pi_k, \Pi_k)} \vdash \neg(\exists x \varphi(x) \land \exists y \psi(y)) \to \forall y(\neg\, \exists x \varphi(x) \lor \neg \psi(y)). 
\]
By using $\DUAL{\Sigma_k}$, we get
\[
	\HA + \CD{(\Pi_k, \Pi_k)} \vdash \neg(\exists x \varphi(x) \land \exists y \psi(y)) \to \forall y((\exists x \varphi(x))^\bot \lor \psi^\bot(y)). 
\]
Since $(\exists x \varphi(x))^\bot \in \Pi_k$ and $\psi^\bot(y) \in \Sigma_{k-1}$, we obtain
\[
	\HA + \CD{(\Pi_k, \Pi_k)} \vdash \neg(\exists x \varphi(x) \land \exists y \psi(y)) \to (\exists x \varphi(x))^\bot \lor \forall y \psi^\bot(y). 
\]
Therefore
\[
	\HA + \CD{(\Pi_k, \Pi_k)} \vdash \neg(\exists x \varphi(x) \land \exists y \psi(y)) \to (\exists x \varphi(x))^\bot \lor (\exists y \psi(y))^\bot. \qedhere
\]
\end{proof}

From Corollaries \ref{cor:LLPO2}.(1) and \ref{cor:DNE2}.(4) and Propositions \ref{prop:Coll3}, \ref{prop:Coll4} and \ref{prop:CDP1}, we have the following result. 

\begin{cor}\label{cor:MC}
For $k \geq 1$, the following are equivalent over $\HA$:
\begin{enumerate}
	\item $\DML{\Sigma_k} + \DNE{\Sigma_{k-1}}$. 
	\item $\DML{\Sigma_k}^\bot$. 
	\item $\CD{(\Pi_k, \Pi_k)}$. 
	\item $\COLL{\Pi_k}$. 
	\item $\DNE{(\Pi_k \lor \Pi_k)}$. 
\end{enumerate}
\end{cor}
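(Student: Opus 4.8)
The plan is to assemble the five-way equivalence purely by chaining results already established in Sections~\ref{section:DML} and~\ref{section:DNE}, using $\DML{\Sigma_k}^\bot$ as the hub through which every other principle is routed; no new construction is needed.

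First I would record three equivalences with statement~(2). It is $\HA$-equivalent to~(1) by Corollary~\ref{cor:LLPO2}.1, which is exactly the identification of $\DML{\Sigma_k}^\bot$ with $\DML{\Sigma_k} + \DNE{\Sigma_{k-1}}$. It is $\HA$-equivalent to~(5) by Corollary~\ref{cor:DNE2}.4, in whose list both $\DML{\Sigma_k}^\bot$ and $\DNE{(\Pi_k \lor \Pi_k)}$ appear. And it is $\HA$-equivalent to~(3) by Proposition~\ref{prop:CDP1}.

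It then remains to bring~(4) into the circle. For $(1) \Rightarrow (4)$ I would apply Proposition~\ref{prop:Coll3} with $k-1$ in place of $k$; this is admissible precisely because $k \geq 1$, and it yields $\HA + \DML{\Sigma_k} + \DNE{\Sigma_{k-1}} \vdash \COLL{\Pi_k}$. For $(4) \Rightarrow (1)$ I would invoke Proposition~\ref{prop:Coll4}, which gives $\HA + \COLL{\Pi_k} \vdash \DML{\Sigma_k} + \LEM{\Sigma_{k-1}}$, and then weaken $\LEM{\Sigma_{k-1}}$ to $\DNE{\Sigma_{k-1}}$ by Fact~\ref{fact:LEM_DNE}. (Alternatively, one may simply cite Corollary~\ref{cor:PCOLL} with its index shifted down by one, which already asserts the equivalence of~(1) and~(4).)

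Since each of~(1), (3), (4) and~(5) has thereby been shown $\HA$-equivalent to~(2), the five statements are pairwise equivalent over $\HA$, which completes the proof. I expect no genuine obstacle here — the mathematical content sits entirely in the cited propositions — and the only place requiring a moment's care is the index shift in Proposition~\ref{prop:Coll3}, which is exactly why $k \geq 1$ is imposed: the Remark following that proposition shows the analogous statement breaks down once the index drops to its minimal value.
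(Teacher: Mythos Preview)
Your proof is correct and follows exactly the same route as the paper, which derives the corollary by citing Corollaries~\ref{cor:LLPO2}.1 and~\ref{cor:DNE2}.4 together with Propositions~\ref{prop:Coll3}, \ref{prop:Coll4} and~\ref{prop:CDP1}. Your explicit remark about the index shift in Proposition~\ref{prop:Coll3} and the role of the hypothesis $k \geq 1$ is accurate and matches the paper's reasoning.
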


\begin{cor}\label{cor:MC2}
For $k \geq 1$, each of $\DML{\Sigma_k}^\bot$, $\CD{(\Pi_k, \Pi_k)}$, $\COLL{\Pi_k}$ and $\DNE{(\Pi_k \lor \Pi_k)}$ implies $\DML{\Pi_k}$ over $\HA$. 
\end{cor}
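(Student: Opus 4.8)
The plan is to observe that this corollary is an immediate consequence of the two preceding results, with essentially no new work required. The key point is Corollary \ref{cor:MC}: for $k \geq 1$, each of $\DML{\Sigma_k}^\bot$, $\CD{(\Pi_k, \Pi_k)}$, $\COLL{\Pi_k}$ and $\DNE{(\Pi_k \lor \Pi_k)}$ is equivalent over $\HA$ to the theory $\HA + \DML{\Sigma_k} + \DNE{\Sigma_{k-1}}$ (item 1 of that corollary). So the first step is simply to invoke Corollary \ref{cor:MC} to replace each of the four hypotheses by the single theory $\HA + \DML{\Sigma_k} + \DNE{\Sigma_{k-1}}$.

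The second and final step is to apply Proposition \ref{prop:PSDML}, which states precisely that $\HA + \DML{\Sigma_k} + \DNE{\Sigma_{k-1}} \vdash \DML{\Pi_k}$. Combining the two, each of the listed principles proves $\DML{\Pi_k}$ over $\HA$, which is the claim. (The restriction $k \geq 1$ is exactly what makes Corollary \ref{cor:MC} applicable; for $k=0$ there is nothing to prove since $\DML{\Pi_0}$ is $\HA$-provable.)

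There is no real obstacle here: all the substantive content — the generalization of Ishihara's implication in Proposition \ref{prop:PSDML} and the chain of equivalences assembled in Corollary \ref{cor:MC} — has already been established. The only thing to be careful about is citing the correct item of Corollary \ref{cor:MC} (the equivalence with $\DML{\Sigma_k} + \DNE{\Sigma_{k-1}}$, not with $\DML{\Sigma_k} + \LEM{\Sigma_{k-1}}$ or one of the other forms), since it is $\DNE{\Sigma_{k-1}}$, rather than $\LEM{\Sigma_{k-1}}$, that feeds directly into the hypothesis of Proposition \ref{prop:PSDML}. Thus the proof will be two lines: ``By Corollary \ref{cor:MC}, each of the four principles is $\HA$-equivalent to $\DML{\Sigma_k} + \DNE{\Sigma_{k-1}}$, and $\HA + \DML{\Sigma_k} + \DNE{\Sigma_{k-1}} \vdash \DML{\Pi_k}$ by Proposition \ref{prop:PSDML}.''
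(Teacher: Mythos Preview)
Your proposal is correct and matches the paper's own proof exactly: the paper simply writes ``This is immediate from Proposition \ref{prop:PSDML} and Corollary \ref{cor:MC}.''
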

\begin{proof}
This is immediate from Proposition \ref{prop:PSDML} and Corollary \ref{cor:MC}. 
\end{proof}

\begin{prop}\label{prop:CDD1}
Let $\Theta$ be a set of formulas such that $\Sigma_{k-1} \subseteq \Theta$. 
Then the following are equivalent over $\HA$: 
\begin{enumerate}
	\item $\CD{(\Delta_k, \Theta)}$. 
	\item $\LEM{\Delta_k}$. 
\end{enumerate}
\end{prop}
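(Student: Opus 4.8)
The plan is to treat the two directions separately. The implication $\HA+\LEM{\Delta_k}\vdash\CD{(\Delta_k,\Theta)}$ is already available as Proposition \ref{prop:CD1}.2, so the entire content is the converse $\HA+\CD{(\Delta_k,\Theta)}\vdash\LEM{\Delta_k}$. Since $\LEM{\Delta_0}$ is $\HA$-provable (all the formulas involved being quantifier-free), Proposition \ref{prop:CD1}.2 makes $\CD{(\Delta_0,\Theta)}$ $\HA$-provable as well, so the case $k=0$ is trivial and I would assume $k\geq 1$ from here on.

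First I would reduce to the case $\Theta=\Sigma_{k-1}$. Using the convention $\Sigma_{k-1}\subseteq\Theta$ together with the fact (recorded in the preliminaries) that $\Sigma_{k-1}\subseteq\Sigma_k\cap\Pi_k$ — so that any $\Sigma_{k-1}$ formula is $\HA$-equivalent to a $\Sigma_k$ formula and to a $\Pi_k$ formula and can therefore serve as a $\Delta_k$ side of an instance of the constant domain axiom with a trivially true biconditional — one checks that $\HA+\CD{(\Delta_k,\Theta)}$ proves both $\CD{(\Delta_k,\Sigma_{k-1})}$ and $\CD{(\Sigma_{k-1},\Sigma_{k-1})}$. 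Applying Proposition \ref{prop:CDS1} with $k-1$ in place of $k$ and $\Theta=\Sigma_{k-1}$ (legitimate since $\Sigma_{k-2}\subseteq\Sigma_{k-1}$) yields $\HA+\CD{(\Delta_k,\Theta)}\vdash\LEM{\Sigma_{k-1}}$, hence, by Fact \ref{fact:ABHK}.1, also $\LEM{\Pi_{k-1}}$ and $\DNE{\Sigma_{k-1}}$, and the latter gives $\DUAL{\Pi_{k-1}}$ by Proposition \ref{prop:DNE_DUAL}.

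Now for the main step. Take $\varphi\equiv\exists x\,\varphi'(x)$ with $\varphi'(x)\in\Pi_{k-1}$, take $\psi\in\Pi_k$, and argue under the assumption $\varphi\leftrightarrow\psi$. By $\LEM{\Pi_{k-1}}$ we have $\varphi'(x)\lor\neg\varphi'(x)$, and then $\DUAL{\Pi_{k-1}}$ together with Proposition \ref{prop:DUAL_BASIC}.3 upgrades this to $\varphi'(x)\lor(\varphi'(x))^\bot$, whence $\forall x\,(\varphi\lor(\varphi'(x))^\bot)$. Since $(\varphi'(x))^\bot\in\Sigma_{k-1}$ by Proposition \ref{prop:DUAL_BASIC}.1, and since $\varphi$ counts as a $\Delta_k$ formula — witnessed by $\varphi\in\Sigma_k$, $\psi\in\Pi_k$ and the assumed equivalence $\varphi\leftrightarrow\psi$ — the instance $\CD{(\Delta_k,\Sigma_{k-1})}$ applies and yields $\varphi\lor\forall x\,(\varphi'(x))^\bot$. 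Finally Proposition \ref{prop:DUAL_BASIC}.3 gives $\forall x\,(\varphi'(x))^\bot\to\neg\exists x\,\varphi'(x)$, i.e.\ $\to\neg\varphi$, so we conclude $\varphi\lor\neg\varphi$, which is the required instance of $\LEM{\Delta_k}$.

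The argument is an adaptation of the proof of Proposition \ref{prop:CDS1} and I do not anticipate a genuine obstacle; the delicate point to get right is the double role of the hypothesis $\varphi\leftrightarrow\psi$, which is at once what licenses treating $\varphi$ as a $\Delta_k$ formula inside the instance of $\CD$ and the antecedent one is permitted to assume while proving $\LEM{\Delta_k}$. One should also keep track of exactly where $\Sigma_{k-1}\subseteq\Theta$ is used, namely to feed the dual $(\varphi'(x))^\bot$ into the constant domain axiom.
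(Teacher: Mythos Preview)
Your proposal is correct and follows essentially the same approach as the paper: the paper also reduces to $\Theta=\Sigma_{k-1}$, observes that $\CD{(\Delta_k,\Sigma_{k-1})}$ entails $\CD{(\Sigma_{k-1},\Sigma_{k-1})}$, invokes Proposition~\ref{prop:CDS1} and Fact~\ref{fact:ABHK} to obtain $\LEM{\Pi_{k-1}}+\DNE{\Sigma_{k-1}}$, and then simply says the conclusion is ``proved as in the proof of Proposition~\ref{prop:CDS1}''---you have spelled out exactly that argument. One tiny imprecision: when you pass from $\varphi'(x)\lor\neg\varphi'(x)$ to $\varphi'(x)\lor(\varphi'(x))^\bot$ you only need $\DUAL{\Pi_{k-1}}$; Proposition~\ref{prop:DUAL_BASIC}.3 (the direction $\varphi^\bot\to\neg\varphi$) is not used there but only at the final step.
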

\begin{proof}
Notice that $\CD{(\Delta_k, \Sigma_{k-1})}$ implies $\CD{(\Sigma_{k-1}, \Sigma_{k-1})}$. 
Then by Proposition \ref{prop:CDS1} and Fact \ref{fact:ABHK}, $\HA + \CD{(\Delta_k, \Sigma_{k-1})}$ proves $\LEM{\Pi_{k-1}} + \DNE{\Sigma_{k-1}}$. 
Therefore the statement $\HA + \CD{(\Delta_k, \Sigma_{k-1})} \vdash \LEM{\Delta_k}$ is proved as in the proof of Proposition \ref{prop:CDS1}. 
On the other hand, $\HA + \LEM{\Delta_k} \vdash \CD{(\Delta_k, \Theta)}$ follows from Proposition \ref{prop:CD1}.(2). 
\end{proof}

Next, we investigate the principles $\CD{(\n{\Gamma}, \Theta)}$ and $\CD{(\n{\Delta_k}, \Theta)}$. 
In the light of Proposition \ref{prop:CD1}, they are derived from $\LEM{\n{\Gamma}}$ and $\LEM{\n{\Delta_k}}$, respectively. 
In addition, for $\Theta = \n{\Sigma_k}$, we obtain the following proposition. 

\begin{prop}\label{prop:CD2}Let $\Gamma$ be any set of formulas. 
\begin{enumerate}
	\item $\HA + \DML{(\Gamma, \Sigma_k)} \vdash \CD{(\n{\Gamma}, \n{\Sigma_k})}$; 
	\item $\HA + \DML{(\Delta_k, \Sigma_k)} \vdash \CD{(\n{\Delta_k}, \n{\Sigma_k})}$.
\end{enumerate}
\end{prop}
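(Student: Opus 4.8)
The plan is to recognise $\CD{(\n{\Gamma}, \n{\Sigma_k})}$ as essentially a restatement of $\DML{(\Gamma, \Sigma_k)}$: a universally quantified disjunction $\forall x(\neg\varphi \lor \neg\psi(x))$ behaves, over $\HA$, like the negation of the conjunction $\varphi \land \exists x\,\psi(x)$, and that existential formula can be absorbed into a $\Sigma_k$ formula once $k \geq 1$. I will carry out clause~1 in detail and then indicate the routine variant for clause~2; we may assume $k \geq 1$.

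First I would unfold the target. Fix $\varphi \in \Gamma$ and $\psi(x) \in \Sigma_k$ with $x \notin \FV(\varphi)$; the corresponding instance of $\CD{(\n{\Gamma}, \n{\Sigma_k})}$ is
\[
	\forall x(\neg\varphi \lor \neg\psi(x)) \to \neg\varphi \lor \forall x\,\neg\psi(x).
\]
The first step is the purely intuitionistic observation that
\[
	\HA \vdash \forall x(\neg\varphi \lor \neg\psi(x)) \to \neg(\varphi \land \exists x\,\psi(x)),
\]
which follows by assuming the antecedent together with $\varphi \land \exists x\,\psi(x)$, picking a witness $a$ with $\psi(a)$, instantiating the antecedent at $a$, and refuting each of the two disjuncts against $\varphi$ and $\psi(a)$ respectively.

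Next, since $k \geq 1$, the formula $\exists x\,\psi(x)$ is $\HA$-equivalent to a $\Sigma_k$ formula $\chi$ with $\FV(\chi) = \FV(\exists x\,\psi(x))$: take a $\Sigma_k$ presentation of $\psi(x)$ beginning with a block of existential quantifiers and prepend $\exists x$. The instance of $\DML{(\Gamma, \Sigma_k)}$ for $\varphi \in \Gamma$ and $\chi \in \Sigma_k$, together with the $\HA$-equivalences $\neg(\varphi \land \chi) \leftrightarrow \neg(\varphi \land \exists x\,\psi(x))$, $\neg\chi \leftrightarrow \neg\exists x\,\psi(x)$ and $\neg\exists x\,\psi(x) \leftrightarrow \forall x\,\neg\psi(x)$, then yields
\[
	\HA + \DML{(\Gamma, \Sigma_k)} \vdash \neg(\varphi \land \exists x\,\psi(x)) \to \neg\varphi \lor \forall x\,\neg\psi(x).
\]
Chaining this with the implication of the previous step gives exactly the desired instance, proving clause~1. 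For clause~2 one repeats the argument with $\varphi$ a $\Sigma_k$ formula carrying a $\Pi_k$ dual $\varphi'$: under the hypothesis $\varphi \leftrightarrow \varphi'$, the instance of $\DML{(\Delta_k, \Sigma_k)}$ delivers precisely $\neg(\varphi \land \chi) \to \neg\varphi \lor \neg\chi$, and the two intuitionistic steps are unchanged.

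I do not expect a real obstacle here: the crux is just the dictionary entry ``constant-domain instance for negated formulas $=$ de Morgan instance''. The only points needing attention are the side conditions on the variable $x$ (so that no instance is applied outside its stated form) and the use of $k \geq 1$ in normalising $\exists x\,\psi(x)$ into $\Sigma_k$, which is where the hypothesis on $k$ genuinely enters.
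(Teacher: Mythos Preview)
Your proof is correct and follows the same route as the paper: both show $\forall x(\neg\varphi \lor \neg\psi(x)) \to \neg(\varphi \land \exists x\,\psi(x))$ intuitionistically and then apply the relevant de Morgan instance. The only cosmetic difference is that the paper first invokes Proposition~\ref{prop:CD0}.2 to reduce to $\psi(x) \in \Pi_{k-1}$ (so that $\exists x\,\psi(x)$ is literally $\Sigma_k$), whereas you keep $\psi(x) \in \Sigma_k$ and normalise $\exists x\,\psi(x)$ into $\Sigma_k$ directly; both manoeuvres use $k \geq 1$ in the same way.
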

\begin{proof}
1. By Proposition \ref{prop:CD0}.(2), it suffices to show that $\HA + \DML{(\Gamma, \Sigma_k)}$ proves $\CD{(\n{\Gamma}, \n{\Pi_{k-1}})}$. 
Let $\varphi \in \Gamma$ and $\psi(x) \in \Pi_{k-1}$ with $x \notin \FV(\varphi)$. 
Then we have
\begin{align*}
	\HA \vdash \forall x(\neg \varphi \lor \neg \psi(x)) & \to \forall x\, \neg (\varphi \land \psi(x)), \\
	& \to \neg\, \exists x (\varphi \land \psi(x)), \\
	& \to \neg (\varphi \land \exists x \psi(x)). 
\end{align*}
Thus
\[
	\HA + \DML{(\Gamma, \Sigma_k)} \vdash \forall x(\neg \varphi \lor \neg \psi(x)) \to \neg \varphi \lor \neg\, \exists x \psi(x). 
\]
We conclude
\[
	\HA + \DML{(\Gamma, \Sigma_k)} \vdash \forall x(\neg \varphi \lor \neg \psi(x)) \to \neg \varphi \lor \forall x\, \neg \psi(x). 
\]

2 is proved similarly. 
\end{proof}

With the help of $\DNS{\Sigma_{k-2}}$, the converse implications also hold.

\begin{prop}\label{prop:CD3}\leavevmode
\begin{enumerate}
	\item $\HA + \CD{(\n{\Pi_k}, \n{\Sigma_k})} + \DNS{\Sigma_{k-2}} \vdash \DML{(\Sigma_k, \Pi_k)}$; 
	\item $\HA + \CD{(\n{\Sigma_k}, \n{\Sigma_k})} + \DNS{\Sigma_{k-2}} \vdash \DML{\Sigma_k}$; 
	\item $\HA + \CD{(\n{\Delta_k}, \n{\Sigma_k})} + \DNS{\Sigma_{k-2}} \vdash \DML{(\Delta_k, \Sigma_k)}$.  
\end{enumerate}
\end{prop}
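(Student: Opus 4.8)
These three statements are exactly the converses of Proposition~\ref{prop:CD2} for $\Gamma$ equal to $\Pi_k$, $\Sigma_k$ and $\Delta_k$ respectively, and the plan is to prove all three simultaneously by induction on $k$. The cases $k\le 1$ are immediate: for $k=0$ every formula in sight is $\HA$-provably quantifier-free, and for $k=1$ the only ingredient the inductive argument below actually consumes is the weak law of excluded middle $\LEM{\n{\Pi_0}}$, which is already $\HA$-provable. So assume $k\ge 2$ and that all three clauses hold at $k-1$.

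The preparatory step is to extract $\LEM{\n{\Pi_{k-1}}}$ at the current level. The $(k-1)$-instance of clause~1 reads $\HA+\CD{(\n{\Pi_{k-1}},\n{\Sigma_{k-1}})}+\DNS{\Sigma_{k-3}}\vdash\DML{(\Sigma_{k-1},\Pi_{k-1})}$; its $\DNS$-hypothesis follows from $\DNS{\Sigma_{k-2}}$ because $\Sigma_{k-3}\subseteq\Sigma_{k-2}$, and its constant-domain hypothesis $\CD{(\n{\Pi_{k-1}},\n{\Sigma_{k-1}})}$ follows from whichever of $\CD{(\n{\Pi_k},\n{\Sigma_k})}$, $\CD{(\n{\Sigma_k},\n{\Sigma_k})}$, $\CD{(\n{\Delta_k},\n{\Sigma_k})}$ we are working with --- because every $\Pi_{k-1}$- and every $\Sigma_{k-1}$-formula is, up to $\HA$-equivalence, simultaneously $\Sigma_k$ and $\Pi_k$ (by the inclusions recalled in Section~\ref{section:pre}), hence also $\Delta_k$, so its negation lies in each of $\n{\Sigma_k}$, $\n{\Pi_k}$ and $\n{\Delta_k}$. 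Thus $\HA$ together with our hypotheses proves $\DML{(\Sigma_{k-1},\Pi_{k-1})}$, and Corollary~\ref{cor:DML2}.1, applied at level $k-1$ over $\HA+\DNS{\Sigma_{k-2}}$, upgrades this to $\LEM{\n{\Pi_{k-1}}}$.

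With $\LEM{\n{\Pi_{k-1}}}$ in hand the main argument is uniform. In each of the three de Morgan instances one of the two formulas, call it $\alpha$, lies in $\Sigma_k$; write $\alpha\equiv\exists x\,\varphi(x)$ with $\varphi\in\Pi_{k-1}$ and $x$ fresh, and let $\vartheta$ denote the other formula (for clause~3 this $\vartheta$ is the $\Delta_k$-formula, carrying its $\Pi_k$-twin along). Then $\HA$ proves $\neg(\vartheta\land\alpha)\leftrightarrow\forall x\,\neg(\vartheta\land\varphi(x))$ and $\neg\alpha\leftrightarrow\forall x\,\neg\varphi(x)$. By Proposition~\ref{prop:DML1}.1 with $\Pi_{k-1}$ as the distinguished class --- whose weak law of excluded middle we have just secured --- and $\{\vartheta\}$ as the companion set, $\HA+\LEM{\n{\Pi_{k-1}}}\vdash\forall x\,\neg(\vartheta\land\varphi(x))\to\forall x\,(\neg\vartheta\lor\neg\varphi(x))$. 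Finally, Proposition~\ref{prop:CD0}.2 rewrites the relevant constant-domain hypothesis with second argument $\n{\Pi_{k-1}}$ in place of $\n{\Sigma_k}$, and one application of it --- with $\neg\vartheta$ as the $x$-free disjunct (lying in $\n{\Pi_k}$, $\n{\Sigma_k}$, resp.\ $\n{\Delta_k}$ exactly as the clause asks, the $\Delta_k$-side condition for clause~3 being available) and $\neg\varphi(x)$ as the $x$-dependent disjunct --- yields $\neg\vartheta\lor\forall x\,\neg\varphi(x)$, i.e.\ $\neg\vartheta\lor\neg\alpha$. This is the desired instance, completing the inductive step and all three clauses.

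The only genuinely delicate point is the bootstrapping in the second paragraph: one has to make sure the $(k-1)$-instance of clause~1 is applicable under each of the three level-$k$ constant-domain hypotheses, and one has to notice that the \emph{weak} law $\LEM{\n{\Pi_{k-1}}}$ --- rather than the much stronger $\LEM{\Pi_{k-1}}$ or $\DNE{\Sigma_{k-2}}$, neither of which is derivable here --- already suffices for the pointwise distribution of the negation over the conjunction, which is precisely what makes Proposition~\ref{prop:DML1}.1 the right tool (compare the situation in Corollary~\ref{cor:PDML_DLEM} and Proposition~\ref{prop:WLEMSP}). Beyond this the proof is a clean combination of prenex stripping, Proposition~\ref{prop:DML1}.1 and the constant-domain hypothesis, with no least-number or dual-based machinery required.
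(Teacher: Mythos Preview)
Your proof is correct and follows essentially the same strategy as the paper: bootstrap $\LEM{\n{\Pi_{k-1}}}$ from the level-$(k-1)$ instance via Corollary~\ref{cor:DML2}.1, use it to split $\neg(\vartheta\land\varphi(x))$ pointwise, and then apply the constant-domain hypothesis. The paper organises things slightly differently: it proves clause~1 by a standalone induction on $k$ (with exactly your argument), and then for clauses~2 and~3 it strips the existential quantifier from \emph{both} $\Sigma_k$-formulas, invokes $\DML{\Pi_{k-1}}$ (rather than just $\LEM{\n{\Pi_{k-1}}}$) on the resulting $\Pi_{k-1}\times\Pi_{k-1}$ conjunction, and applies $\CD$ twice. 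Your uniform treatment --- stripping only the one guaranteed $\Sigma_k$-formula and using Proposition~\ref{prop:DML1}.1 with the other conjunct left intact --- is a modest but genuine streamlining: it needs only one application of $\CD$ per clause and avoids the extra step of deriving $\DML{\Pi_{k-1}}$ from $\LEM{\n{\Pi_{k-1}}}$.
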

\begin{proof}
1. We prove by induction on $k \geq 0$. 
The statement for $k=0$ is trivial. 
We assume that our statement holds for $k$, and we prove $\HA + \CD{(\n{\Pi_{k+1}}, \n{\Sigma_{k+1}})} + \DNS{\Sigma_{k-1}} \vdash \DML{(\Sigma_{k+1}, \Pi_{k+1})}$. 
Let $\exists x \varphi(x) \in \Sigma_{k+1}$ and $\psi \in \Pi_{k+1}$ where $\varphi(x) \in \Pi_k$. 
We have 
\begin{align*}
	\HA \vdash \neg (\exists x \varphi(x) \land \psi) & \to \neg\, \exists x (\varphi(x) \land \psi), \\
	& \to \forall x\, \neg (\varphi(x) \land \psi), \\
	& \to \forall x\, \neg (\neg \neg \varphi(x) \land \neg \neg \psi). 
\end{align*}
Then
\begin{equation}\label{eqCD2}
	\HA \vdash \neg (\exists x \varphi(x) \land \psi) \land \neg \neg \varphi(x) \to \neg \psi.
\end{equation}

By induction hypothesis, $\HA + \CD{(\n{\Pi_k}, \n{\Sigma_k})} + \DNS{\Sigma_{k-2}} \vdash \DML{(\Sigma_k, \Pi_k)}$. 
By Corollary \ref{cor:DML2}.(1), $\HA + \CD{(\n{\Pi_k}, \n{\Sigma_k})} + \DNS{\Sigma_{k-1}}$ proves $\LEM{\n{\Pi_k}}$. 
Thus we have that $\HA + \CD{(\n{\Pi_k}, \n{\Sigma_k})} + \DNS{\Sigma_{k-1}}$ proves $\neg \neg \varphi(x) \lor \neg \varphi(x)$. 
From this with (\ref{eqCD2}), we obtain
\[
	\HA + \CD{(\n{\Pi_k}, \n{\Sigma_k})} + \DNS{\Sigma_{k-1}} \vdash \neg (\exists x \varphi(x) \land \psi) \to \forall x(\neg \psi \lor \neg \varphi(x)). 
\]
By applying $\CD{(\n{\Pi_{k+1}}, \n{\Sigma_{k+1}})}$, we have
\[
	\HA + \CD{(\n{\Pi_{k+1}}, \n{\Sigma_{k+1}})} + \DNS{\Sigma_{k-1}} \vdash \neg (\exists x \varphi(x) \land \psi) \to \neg \psi \lor \forall x\, \neg \varphi(x). 
\]
We conclude
\[
	\HA + \CD{(\n{\Pi_{k+1}}, \n{\Sigma_{k+1}})} + \DNS{\Sigma_{k-1}} \vdash \neg (\exists x \varphi(x) \land \psi) \to \neg\, \exists x \varphi(x) \lor \neg \psi. 
\]

2. We may assume $k > 0$. 
Let $\exists x \varphi(x)$ and $\exists y \psi(y)$ be any $\Sigma_k$ formulas with $\varphi(x), \psi(y) \in \Pi_{k-1}$. 
\begin{align*}
	\HA \vdash \neg(\exists x \varphi(x) \land \exists y \psi(y)) & \to \neg\, \exists x \exists y (\varphi(x) \land \psi(y)), \\
	& \to \forall x \forall y\, \neg (\varphi(x) \land \psi(y)). 
\end{align*}
Since $\CD{(\n{\Sigma_k}, \n{\Sigma_k})}$ entails $\CD{(\n{\Pi_{k-1}}, \n{\Sigma_{k-1}})}$,  by clause 1, we have that $\HA + \CD{(\n{\Sigma_k}, \n{\Sigma_k})} + \DNS{\Sigma_{k-3}}$ proves $\DML{(\Sigma_{k-1}, \Pi_{k-1})}$. 
Then by Corollary \ref{cor:DML2}.(1), $\HA + \CD{(\n{\Sigma_k}, \n{\Sigma_k})} + \DNS{\Sigma_{k-2}}$ proves $\LEM{\n{\Pi_{k-1}}}$. 
By Proposition \ref{prop:DML1}.(1), it also proves $\DML{\Pi_{k-1}}$. 
Thus
\[
	\HA + \CD{(\n{\Sigma_k}, \n{\Sigma_k})} + \DNS{\Sigma_{k-2}} \vdash \neg(\exists x \varphi(x) \land \exists y \psi(y)) \to \forall x \forall y (\neg \varphi(x) \lor \neg \psi(y)). 
\]
By applying $\CD{(\n{\Sigma_k}, \n{\Sigma_k})}$ twice, we obtain
\[
	\HA + \CD{(\n{\Sigma_k}, \n{\Sigma_k})} + \DNS{\Sigma_{k-2}} \vdash \neg(\exists x \varphi(x) \land \exists y \psi(y)) \to \forall x\, \neg \varphi(x) \lor \forall y\, \neg \psi(y). 
\]
We conclude
\[
	\HA + \CD{(\n{\Sigma_k}, \n{\Sigma_k})} + \DNS{\Sigma_{k-2}} \vdash \neg(\exists x \varphi(x) \land \exists y \psi(y)) \to \neg\, \exists x \varphi(x) \lor \neg\, \exists y \psi(y). 
\]

3 is proved as in the proof of clause 2. 
\end{proof}

We obtain the following corollary. 

\begin{cor}\leavevmode
Let $\Theta$ be any set of formulas such that $\n{\Pi_{k-1}} \subseteq \Theta$. 
\begin{enumerate}
	\item $\CD{(\n{\Pi_k}, \n{\Sigma_k})}$ is equivalent to $\DML{(\Sigma_k, \Pi_k)}$ over $\HA + \DNS{\Sigma_{k-2}}$; 
	\item $\CD{(\n{\Pi_k}, \Theta)}$ is equivalent to $\LEM{\n{\Pi_k}}$ over $\HA + \DNS{\Sigma_{k-1}}$; 
	\item $\CD{(\n{\Sigma_k}, \n{\Sigma_k})}$ is equivalent to $\DML{\Sigma_k}$ over $\HA + \DNS{\Sigma_{k-2}}$; 
	\item $\CD{(\n{\Delta_k}, \n{\Sigma_k})}$ is equivalent to $\DML{(\Delta_k, \Sigma_k)}$ over $\HA + \DNS{\Sigma_{k-2}}$; 
	\item $\CD{(\n{\Delta_k}, \Theta)}$ is equivalent to $\LEM{\n{\Delta_k}}$ over $\HA + \DNS{\Sigma_{k-1}}$. 
\end{enumerate}
\end{cor}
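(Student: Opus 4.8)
The plan is to obtain all five clauses by combining the already-proved Propositions \ref{prop:CD2} and \ref{prop:CD3} with two bookkeeping facts: that $\DML{(\Gamma,\Theta)}$ and $\DML{(\Theta,\Gamma)}$ are interchangeable over $\HA$, and that $\Sigma_{k-2}\subseteq\Sigma_{k-1}$, so that $\HA+\DNS{\Sigma_{k-1}}\vdash\DNS{\Sigma_{k-2}}$.

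For clause 1, the forward direction $\HA+\DML{(\Sigma_k,\Pi_k)}\vdash\CD{(\n{\Pi_k},\n{\Sigma_k})}$ is Proposition \ref{prop:CD2}.1 taken with $\Gamma=\Pi_k$ (rewriting $\DML{(\Pi_k,\Sigma_k)}$ as $\DML{(\Sigma_k,\Pi_k)}$), and the converse is precisely Proposition \ref{prop:CD3}.1; hence the two are equivalent over $\HA+\DNS{\Sigma_{k-2}}$. Clause 3 follows the same pattern with $\Gamma=\Sigma_k$, pairing Proposition \ref{prop:CD2}.1 with Proposition \ref{prop:CD3}.2, and clause 4 pairs Proposition \ref{prop:CD2}.2 with Proposition \ref{prop:CD3}.3. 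No new idea is needed for these three.

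For clauses 2 and 5 there are two directions. The implication from the $\mathbf{LEM}$-principle to the $\mathbf{CD}$-principle needs no $\mathbf{DNS}$ hypothesis: clause 2's direction $\HA+\LEM{\n{\Pi_k}}\vdash\CD{(\n{\Pi_k},\Theta)}$ is an instance of Proposition \ref{prop:CD1}.1, and clause 5's direction $\HA+\LEM{\n{\Delta_k}}\vdash\CD{(\n{\Delta_k},\Theta)}$ is proved by the argument of Proposition \ref{prop:CD1} (already noted in the text before Proposition \ref{prop:CD2}), using that from $\chi\leftrightarrow\chi'$ the principle $\LEM{\n{\Delta_k}}$ supplies $\neg\chi\lor\neg\neg\chi$ while $\HA\vdash\forall x(\neg\chi\lor\psi(x))\land\neg\neg\chi\to\forall x\psi(x)$. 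For the reverse direction the hypothesis $\n{\Pi_{k-1}}\subseteq\Theta$ is the point: from $\CD{(\n{\Pi_k},\Theta)}$ one gets $\CD{(\n{\Pi_k},\n{\Pi_{k-1}})}$ by the $\subseteq$-convention, and this is $\HA$-equivalent to $\CD{(\n{\Pi_k},\n{\Sigma_k})}$ by Proposition \ref{prop:CD0}.2 applied with $k-1$ in place of $k$. Clause 1 (equivalently Proposition \ref{prop:CD3}.1) then yields $\DML{(\Sigma_k,\Pi_k)}$ over $\HA+\DNS{\Sigma_{k-2}}$, hence over $\HA+\DNS{\Sigma_{k-1}}$, and Corollary \ref{cor:DML2}.1 identifies $\DML{(\Sigma_k,\Pi_k)}$ with $\LEM{\n{\Pi_k}}$ over $\HA+\DNS{\Sigma_{k-1}}$, finishing clause 2. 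Clause 5 runs identically: $\CD{(\n{\Delta_k},\Theta)}$ gives $\CD{(\n{\Delta_k},\n{\Sigma_k})}$, clause 4 gives $\DML{(\Delta_k,\Sigma_k)}$, and Corollary \ref{cor:DML2}.4 identifies that with $\LEM{\n{\Delta_k}}$ over $\HA+\DNS{\Sigma_{k-1}}$.

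I expect the only delicate point to be the index matching for $\Theta$ in clauses 2 and 5: one must absorb the extra quantifier block of a $\n{\Pi_{k-1}}$ formula into the second slot of $\mathbf{CD}$ via Proposition \ref{prop:CD0}.2 with the shifted index, and one must check that it is $\DNS{\Sigma_{k-1}}$, not merely $\DNS{\Sigma_{k-2}}$, that Corollary \ref{cor:DML2} requires, so that the stated base theory $\HA+\DNS{\Sigma_{k-1}}$ is exactly the right one. Everything else is a direct appeal to results already in hand.
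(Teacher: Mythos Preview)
Your proposal is correct and follows essentially the same route as the paper: clauses 1, 3, 4 are obtained by pairing the two directions from Propositions \ref{prop:CD2} and \ref{prop:CD3}, and clauses 2 and 5 are reduced to clauses 1 and 4 via Proposition \ref{prop:CD0}.2 (with the index shift you note) together with Corollary \ref{cor:DML2}, while the reverse directions come from Proposition \ref{prop:CD1}. Your explicit remark that the $\LEM{\n{\Delta_k}}\Rightarrow\CD{(\n{\Delta_k},\Theta)}$ direction requires the observation made just before Proposition \ref{prop:CD2} (rather than a literal citation of Proposition \ref{prop:CD1}.2) is in fact a small clarification of what the paper leaves implicit.
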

\begin{proof}
1. This is immediate from Propositions \ref{prop:CD2}.(1) and \ref{prop:CD3}.(1). 

2. From clause 1, Proposition \ref{prop:CD0} and Corollary \ref{cor:DML2}.(1), we have that $\HA + \CD{(\n{\Pi_k}, \n{\Pi_{k-1}})} + \DNS{\Sigma_{k-1}}$ proves $\LEM{\n{\Pi_k}}$. 
On the other hand, $\HA + \LEM{\n{\Pi_k}}$ proves $\CD{(\n{\Pi_k}, \Theta)}$ by Proposition \ref{prop:CD1}.(1). 

3. This is a consequence of Propositions \ref{prop:CD2}.(1) and \ref{prop:CD3}.(2). 

4. Immediate from Propositions \ref{prop:CD2}.(2) and \ref{prop:CD3}.(3). 

5. As in the proof of clause 2, we obtain the statement from clause 4, Propositions \ref{prop:CD0}, \ref{prop:CD1}.(2) and Corollary \ref{cor:DML2}.(4),
\end{proof}

\begin{prob}\leavevmode
\begin{itemize}
	\item Is there a set $\Theta$ of formulas such that $\HA + \CD{(\Pi_k, \Theta)}$ proves $\LEM{\Pi_k}$?
	\item Is there a set $\Theta$ of formulas such that $\HA + \CD{(\n{\Sigma_k}, \Theta)} + \DNS{\Sigma_{k-1}}$ proves $\LEM{\n{\Sigma_k}}$?
\end{itemize}
\end{prob}

The following figure (Figure \ref{fig:CD}) summarizes the situation for implications around the constant domain axioms for negated formulas. 
In \cite[Example 10]{FINSY}, it is shown that $\HA + \DML{\Sigma_k} + \DNE{\Sigma_k}$ does not prove $\LEM{\n{\Sigma_k}}$ for $k \geq 1$. 
Therefore, in Figure \ref{fig:CD}, $\DML{\Sigma_k}$ does not imply $\LEM{\n{\Sigma_k}}$ even in the theory $\HA + \DNE{\Sigma_k}$ for $k \geq 1$. 

\begin{figure}[ht]
\centering
\begin{tikzpicture}
\node (NDNSCD) at (0,0) {$\CD{(\n{\Delta_k}, \n{\Sigma_k})}$};
\node (NSNSCD) at (-2, 1) {$\CD{(\n{\Sigma_k}, \n{\Sigma_k})}$};
\node (NPNSCD) at (2, 1) {$\CD{(\n{\Pi_k}, \n{\Sigma_k})}$};
\node (DSDML) at (0, 2) {$\DML{(\Delta_k, \Sigma_k)}$};
\node (SDML) at (-2, 3) {$\DML{\Sigma_k}$};
\node (SPDML) at (2, 3) {$\DML{(\Sigma_k, \Pi_k)}$};
\node (NDLEM) at (0, 4) {$\LEM{\n{\Delta_k}}$};
\node (NSLEM) at (-2, 5) {$\LEM{\n{\Sigma_k}}$};
\node (NPLEM) at (2, 5) {$\LEM{\n{\Pi_k}}$};
\node (NSTCD) at (-5, 2) {$\CD{(\n{\Sigma_k}, \Theta)}$};
\node (NPTCD) at (5, 2) {$\CD{(\n{\Pi_k}, \Theta)}$};

\draw [<-] (NDNSCD)--(DSDML);
\draw [<-] (NDNSCD)--(NSNSCD);
\draw [<-] (NDNSCD)--(NPNSCD);
\draw [<-] (DSDML)--(SDML);
\draw [<-] (DSDML)--(SPDML);
\draw [<-] (NSNSCD)--(SDML);
\draw [<-] (NPNSCD)--(SPDML);
\draw [<-] (DSDML)--(NDLEM);
\draw [<-] (NDLEM)--(NSLEM);
\draw [<-] (NDLEM)--(NPLEM);
\draw [<-] (SDML)--(NSLEM);
\draw [<-] (SPDML) to [out=180, in=-50] (NSLEM);
\draw [<-] (SPDML)--(NPLEM);
\draw [<-] (NSNSCD)--(NSTCD);
\draw [<-] (NPNSCD)--(NPTCD);
\draw [<-] (NSTCD)--(NSLEM);
\draw [<-] (NPTCD)--(NPLEM);

\draw [->, dashed] (NDNSCD) to [out=120, in=240] (DSDML);
\draw [->, dashed] (NSNSCD) to [out=120, in=240] (SDML);
\draw [->, dashed] (NPNSCD) to [out=120, in=240] (SPDML);

\draw [->, dotted] (SPDML) to [out=180, in=-75] (NSLEM);
\draw [->, dotted] (SPDML) to [out=120, in=240] (NPLEM);
\draw [->, dotted] (DSDML) to [out=120, in=240] (NDLEM);

\node at (-4, -1) [right]{$\Theta$: A sufficiently large set of formulas};
\draw [->, dashed] (-4, -1.5)--(-3, -1.5) node[right]{: Implication in $\HA + \DNS{\Sigma_{k-2}}$};
\draw [->, dotted] (-4, -2)--(-3, -2) node[right]{: Implication in $\HA + \DNS{\Sigma_{k-1}}$};

\end{tikzpicture}
\caption{Implications around the constant domain axioms for negated formulas}\label{fig:CD}
\end{figure}

\section{Summary}\label{section:rem}

\begin{figure}[ht]
\centering
\begin{tikzpicture}
\node (S-DNE) at (-3, -0.5) {$\DNE{\Sigma_{k-1}}$};
\node (PP-DNE) at (0, -0.5) {$\DNE{(\Pi_{k-1} \lor \Pi_{k-1})}$};
\node (NS-LEM) at (4, -0.5) {$\LEM{\n{\Sigma_{k-1}}}$};
\node (P-LEM) at (0, 0.5) {$\LEM{\Pi_{k-1}}$};
\node (S-LEM) at (-1.5, 1.5) {$\LEM{\Sigma_{k-1}}$};
\node (NDDML) at (1.5,2) {$\DML{\n{\Delta_k}}$};
\node (DDML) at (4,2) {$\DML{\Delta_k}$};
\node (DDDNE) at (-1.5,3)  {$\DNE{(\Delta_k \lor \Delta_k)}$};
\node (NDLEM) at (1.5,3)  {$\LEM{\n{\Delta_k}}$};
\node (DLEM) at (-1.5,4) {$\LEM{\Delta_k}$};
\node (PDML) at (1.5,4) {$\DML{\Pi_k}$};
\node (SDML) at (4,4) {$\DML{\Sigma_k}$};
\node (SDNE) at (-3,5) {$\DNE{\Sigma_k}$};
\node (PPDNE) at (0,5) {$\DNE{(\Pi_k \lor \Pi_k)}$};
\node (NSLEM) at (4,5) {$\LEM{\n{\Sigma_k}}$};
\node (PLEM) at (0,6) {$\LEM{\Pi_k}$};
\node (SLEM) at (-1.5,7) {$\LEM{\Sigma_k}$};

\draw [->] (SLEM)--(PLEM);
\draw [->] (SLEM)--(SDNE);
\draw [->] (PLEM)--(PPDNE);
\draw [->] (PLEM)--(NSLEM);
\draw [->] (SDNE)--(DLEM);
\draw [->] (SDNE)-- (PDML);
\draw [->] (PPDNE)--(DLEM);
\draw [->] (PPDNE)--(PDML);
\draw [->] (PPDNE)--(SDML);
\draw [->] (NSLEM)--(SDML);
\draw [->] (DLEM)--(DDDNE);
\draw [->] (DLEM)--(NDLEM);
\draw [->] (DDDNE)--(S-LEM);
\draw [->] (DDDNE)--(NDDML);
\draw [->] (NDLEM)--(NDDML);
\draw [->] (NDLEM)--(DDML);
\draw [->] (S-LEM)--(S-DNE);
\draw [->] (S-LEM)--(P-LEM);
\draw [->] (P-LEM)--(PP-DNE);
\draw [->] (P-LEM)--(NS-LEM);

\draw [->] (NSLEM)--(PDML);
\draw [->] (PDML)--(NDLEM);
\draw [->] (SDML)--(NDLEM);
\draw [->] (NDDML)--(NS-LEM);
\draw [->] (DDML)--(NS-LEM);

\draw [->, dashed] (NSLEM) to [out=150, in=0] (PLEM);
\draw [->, dashed] (SDML) to [out=150, in=0] (PPDNE);
\draw [->, dashed] (NDLEM) to [out=150, in=0] (DLEM);
\draw [->, dashed] (NDDML) to [out=150, in=0] (DDDNE);
\draw [->, dashed] (NS-LEM) to [out=150, in=0] (S-LEM);
\draw [->, dashed] (DDML)--(NDDML);

\draw [->] (-4, -1.5)--(-3, -1.5) node[right]{: Implication in $\HA + \DNS{\Sigma_{k-1}}$};
\draw [->, dashed] (-4, -2)--(-3, -2) node[right]{: Implication in $\HA + \DNE{\Sigma_{k-1}}$};

\end{tikzpicture}
\caption{A refined arithmetical hierarchy of classical principles}\label{fig:DNS}
\end{figure}

\begin{table}[ht]
\begin{tabular}{|l|l|l|}
\hline
Implications & Verifying theories & cf. \\
\hline
\hline
$\LEM{\Sigma_k} \to \LEM{\Pi_k}$ & $\HA$ & Fact \ref{fact:ABHK}.(1) \\
\hline
$\LEM{\Sigma_k} \to \DNE{\Sigma_k}$ & $\HA$ & Fact \ref{fact:ABHK}.(1) \\
\hline
$\LEM{\Pi_k} \to \DNE{(\Pi_k \lor \Pi_k)}$ & $\HA$ & Fact \ref{fact:ABHK}.(2) \\
\hline
$\LEM{\Pi_k} \to \LEM{\n{\Sigma_k}}$ & $\HA + \DNS{\Sigma_{k-1}}$ & Propositions \ref{prop:NLEM}.(1) and \ref{prop:WLEMSP} \\
\hline
$\LEM{\n{\Sigma_k}} \to \LEM{\Pi_k}$ & $\HA + \DNE{\Sigma_{k-1}}$ & Corollary \ref{cor:NLEM1} \\
\hline
$\DNE{\Sigma_k} \to \LEM{\Delta_k}$ & $\HA$ & Fact \ref{fact:ABHK}.(4) \\
\hline
$\DNE{\Sigma_k} \to \DML{\Pi_k}$ & $\HA$ & Proposition \ref{prop:SDNE_DML} \\
\hline
$\DNE{(\Pi_k \lor \Pi_k)} \to \LEM{\Delta_k}$ & $\HA$ & Fact \ref{fact:ABHK}.(3) \\
\hline
$\DNE{(\Pi_k \lor \Pi_k)} \to \DML{\Pi_k}$ & $\HA$ & Corollary \ref{cor:MC} and Proposition \ref{prop:PSDML} \\
\hline
$\DNE{(\Pi_k \lor \Pi_k)} \to \DML{\Sigma_k}$ & $\HA$ & Corollary \ref{cor:MC} \\
\hline
$\DML{\Sigma_k} \to \DNE{(\Pi_k \lor \Pi_k)}$ & $\HA + \DNE{\Sigma_{k-1}}$ & Corollary \ref{cor:MC} \\
\hline
$\LEM{\n{\Sigma_k}} \to \DML{\Pi_k}$ & $\HA + \DNS{\Sigma_{k-1}}$ & Proposition \ref{prop:WLEMSP} and Corollary \ref{cor:nLEM_DML}.(1) \\
\hline
$\LEM{\n{\Sigma_k}} \to \DML{\Sigma_k}$ & $\HA$ & Corollary \ref{cor:nLEM_DML}.(1) \\
\hline
$\LEM{\Delta_k} \to \DNE{(\Delta_k \lor \Delta_k)}$ & $\HA$ & Corollary \ref{cor:DLEM} \\
\hline
$\LEM{\Delta_k} \to \LEM{\n{\Delta_k}}$ & $\HA$ & Proposition \ref{prop:NLEM}.(2) \\
\hline
$\LEM{\n{\Delta_k}} \to \LEM{\Delta_k}$ & $\HA + \DNE{\Sigma_{k-1}}$ & Proposition \ref{prop:NLEM}.(2) \\
\hline
$\DML{\Pi_k} \to \LEM{\n{\Delta_k}}$ & $\HA + \DNS{\Sigma_{k-1}}$ & Corollary \ref{cor:PDML_DLEM}.(1) \\
\hline
$\DML{\Sigma_k} \to \LEM{\n{\Delta_k}}$ & $\HA + \DNS{\Sigma_{k-1}}$ & Corollary \ref{cor:PDML_DLEM}.(1) \\
\hline
$\DNE{(\Delta_k \lor \Delta_k)} \to \LEM{\Sigma_{k-1}}$ & $\HA$ & Proposition \ref{prop:DNE3} \\
\hline
$\DNE{(\Delta_k \lor \Delta_k)} \to \DML{\n{\Delta_k}}$ & $\HA$ & Corollary \ref{cor:DNE2}.(6) \\
\hline
$\DML{\n{\Delta_k}} \to \DNE{(\Delta_k \lor \Delta_k)}$ & $\HA + \DNE{\Sigma_{k-1}}$ & Corollary \ref{cor:DNE2}.(6) \\
\hline
$\LEM{\n{\Delta_k}} \to \DML{\n{\Delta_k}}$ & $\HA$ & Corollary \ref{cor:nLEM_DML}.(2) \\
\hline
$\LEM{\n{\Delta_k}} \to \DML{\Delta_k}$ & $\HA$ & Corollary \ref{cor:nLEM_DML}.(2) \\
\hline
$\DML{\n{\Delta_k}} \to \LEM{\n{\Sigma_{k-1}}}$ & $\HA + \DNS{\Sigma_{k-2}}$ & Proposition \ref{prop:DDML1}.(2) \\
\hline
$\DML{\Delta_k} \to \DML{\n{\Delta_k}}$ & $\HA + \DNE{\Sigma_{k-1}}$ & Proposition \ref{prop:DDML3} \\
\hline
$\DML{\Delta_k} \to \LEM{\n{\Sigma_{k-1}}}$ & $\HA + \DNS{\Sigma_{k-2}}$ & Proposition \ref{prop:DDML1}.(1) \\
\hline
$\LEM{\n{\Sigma_{k-1}}} \to \LEM{\Sigma_{k-1}}$ & $\HA + \DNE{\Sigma_{k-1}}$ & Corollary \ref{cor:NLEM2} \\
\hline
\end{tabular}
\caption{Implications in Figure \ref{fig:DNS}}\label{table:implications}
\end{table}

As a summary, we illustrate the relationships between the principles we have dealt with so far.
However, the structure of such relationships is somewhat complicated. 
As we have shown, some minor differences in some of the principles are smoothed out in the theory $\HA + \DNS{\Sigma_{k-1}}$. 
Therefore, by illustrating the relationships between the principles in the theory $\HA + \DNS{\Sigma_{k-1}}$, one can grasp the structure in perspective.
In fact, in the presence of $\DNS{\Sigma_1}$ (in second-order arithmetic), a lot of equivalences in classical reverse mathematics can be established even intuitionistically (cf.~\cite[Proposition 1.1]{FuKo} and \cite[Theorem 2.10]{Fuji20}).

Figure \ref{fig:DNS} summarizes the derivability relation between several principles over $\HA + \DNS{\Sigma_{k-1}}$ with supplementary information about the situation over $\DNE{\Sigma_{k-1}}$. 
In fact, except $\LEM{\n{\Sigma_k}} \to \DML{\Pi_k}$, $\DML{\Sigma_k} \to \LEM{\n{\Delta_k}}$, $\DML{\Pi_k} \to \LEM{\n{\Delta_k}}$, $\DML{\Delta_k} \to \LEM{\n{\Sigma_{k-1}}}$ and $\DML{\n{\Delta_k}} \to \LEM{\n{\Sigma_{k-1}}}$, all the (non-dashed) implications presented in Figure \ref{fig:DNS} are provable even in $\HA$. 
However, one should note that the principle located at each vertex is one adequately selected from the equivalence class of principles modulo $\HA + \DNS{\Sigma_{k-1}}$, and hence, the $\HA$-provability depends on the choice of the representatives for the vertices.
For instance, we can replace $\LEM{\n{\Sigma_k}}$ with $\LEM{\n{\Pi_k}}$ by Proposition \ref{prop:WLEMSP}. 
Then $\LEM{\n{\Pi_k}} \to \DML{\Pi_k}$ is provable in $\HA$ while $\LEM{\n{\Pi_k}} \to \DML{\Sigma_k}$ is so in $\HA + \DNS{\Sigma_{k-1}}$.

As already mentioned so far, several underivability results are proved in the literature (cf.~\cite{ABHK,F20,FIN,FINSY,Kohl08,Kohl}). 
In particular, Fujiwara et al.~\cite{FINSY} recently introduced a fairy useful method to separate $\Sigma_k$ variants of the logical principles. 
All the underivability results in \cite{ABHK} obtained by using several kinds of functional interpretations can be proven uniformly in the methodology (see \cite[Example 10]{FINSY}). 
Furthermore, as in \cite[Section 4]{F20}, one can also prove $\LEM{\Sigma_{k-1}} \not \to \DML{\n{\Delta_k}}$, $\LEM{\Sigma_{k-1}} + \DML{\n{\Delta_k}} \not \to \DML{\Delta_k}$ and $\LEM{\Sigma_{k-1}} + \DML{\Delta_k} \not \to \LEM{\Delta_k}$ by this method. 
However, the separations of the principles which are equivalent only in the presence of $\DNE{\Sigma_{k-1}}$ (or even $\DNS{\Sigma_{k-1}}$) are extremely delicate. 
One needs further effort for such separations. 

In Section \ref{section:DML}, we investigated the principles which are closely related to the induction principle such as the contrapositive collection principle and the least number principle over $\HA$, which contains the full induction scheme, in order to examine the logical strength of them. 
Then we found that $\COLL{\Pi_k}$, $\LN{\Pi_k}$ and $\LN{\Sigma_k}$ are equivalent to $\DML{\Sigma_k} + \DNE{\Sigma_{k-1}}$, $\LEM{\Pi_k}$ and $\LEM{\Sigma_k}$ over $\HA$, respectively (see Theorem \ref{thm:LN} and Corollary \ref{cor:PCOLL}). 
On the other hand, it is interesting to analyze the relationship between these principles and the induction principle over intuitionistic arithmetic only with restricted induction scheme.

\clearpage

We close this paper with a list of principles which we have investigated. 

\begin{tabbing}
\hspace{30mm} \= \hspace{75mm} \= \hspace{50mm} \kill
$\LEM{\Gamma}$ \> $\varphi \lor \neg \varphi$ \> ($\varphi \in \Gamma$) \\
$\LEM{\Gamma}^\bot$ \> $\varphi \lor \varphi^\bot$ \> ($\varphi \in \Gamma$) \\
$\LEM{\Delta_k}$ \> $(\varphi \leftrightarrow \psi) \to \varphi \lor \neg \varphi$ \> ($\varphi \in \Sigma_k$ and $\psi \in \Pi_k$) \\
$\LEM{\Delta_k}^{\bot, \Sigma}$ \> $(\varphi \leftrightarrow \psi) \to \varphi \lor \varphi^\bot$ \> ($\varphi \in \Sigma_k$ and $\psi \in \Pi_k$) \\
$\LEM{\Delta_k}^{\bot, \Pi}$ \> $(\varphi \leftrightarrow \psi) \to \psi \lor \psi^\bot$ \> ($\varphi \in \Sigma_k$ and $\psi \in \Pi_k$) \\
$\LEM{\n{\Delta_k}}$ \> $(\varphi \leftrightarrow \psi) \to \neg \varphi \lor \neg \neg \varphi$ \> ($\varphi \in \Sigma_k$ and $\psi \in \Pi_k$) \\
$\DNE{\Gamma}$ \> $\neg \neg \varphi \to \varphi$ \> ($\varphi \in \Gamma$) \\
$\PEIRCE{\Gamma}$ \> $((\varphi \to \psi) \to \varphi) \to \varphi$ \> ($\varphi \in \Gamma$ and $\psi$ is any formula) \\
$\DNS{\Gamma}$ \> $\forall x\, \neg \neg \varphi(x) \to \neg \neg\, \forall x\varphi(x)$ \> ($\varphi(x) \in \Gamma$) \\
$\DML{\Gamma}$ \> $\neg (\varphi \land \psi) \to \neg \varphi \lor \neg \psi$ \> ($\varphi, \psi \in \Gamma$) \\
$\DML{(\Gamma, \Theta)}$ \> $\neg(\varphi \land \psi) \to \neg \varphi \lor \neg \psi$ \> ($\varphi \in \Gamma$ and $\psi \in \Theta$) \\
$\DML{\Delta_k}$ \> $(\varphi \leftrightarrow \varphi') \land (\psi \leftrightarrow \psi')$ \>  \\
 \> \hspace{0.5in} $\to (\neg (\varphi \land \psi) \to \neg \varphi \lor \neg \psi)$ \> ($\varphi, \psi \in \Sigma_k$ and $\varphi', \psi' \in \Pi_k$) \\
 $\DML{\n{\Delta_k}}$ \> $(\varphi \leftrightarrow \varphi') \land (\psi \leftrightarrow \psi')$ \>  \\
 \> \hspace{0.5in} $\to (\neg (\neg \varphi \land  \neg  \psi) \to \neg \neg  \varphi \lor \neg \neg  \psi)$ \> ($\varphi, \psi \in \Sigma_k$ and $\varphi', \psi' \in \Pi_k$) \\
$\DML{(\Delta_k, \Theta)}$ \> $(\varphi \leftrightarrow \varphi') \to (\neg(\varphi \land \psi) \to \neg \varphi \lor \neg \psi)$ \> ($\varphi \in \Sigma_k$, $\varphi' \in \Pi_k$ and $\psi \in \Theta$) \\
$\DML{\Gamma}^\bot$ \> $\neg(\varphi \land \psi) \to \varphi^\bot \lor \psi^\bot$ \> ($\varphi, \psi \in \Gamma$) \\
$\DML{(\Gamma, \Theta)}^\bot$ \> $\neg(\varphi \land \psi) \to \varphi^\bot \lor \psi^\bot$ \> ($\varphi \in \Gamma$ and $\psi \in \Theta$) \\
$\DML{\Delta_k}^\bot$ \> $(\varphi \leftrightarrow \varphi') \land (\psi \leftrightarrow \psi')$ \>  \\
 \> \hspace{0.5in} $\to (\neg (\varphi \land \psi) \to \varphi^\bot \lor \psi^\bot)$ \> ($\varphi, \psi \in \Sigma_k$ and $\varphi', \psi' \in \Pi_k$) \\
$\DML{(\Delta_k, \Gamma)}^{\bot, \Sigma}$ \> $(\varphi \leftrightarrow \varphi') \to (\neg(\varphi \land \psi) \to \varphi^\bot \lor \psi^\bot)$ \> ($\varphi \in \Sigma_k$, $\varphi' \in \Pi_k$ and $\psi \in \Gamma$) \\
$\DML{(\Delta_k, \Gamma)}^{\bot, \Pi}$ \> $(\varphi \leftrightarrow \varphi') \to (\neg(\varphi \land \psi) \to (\varphi')^\bot \lor \psi^\bot)$ \> ($\varphi \in \Sigma_k$, $\varphi' \in \Pi_k$ and $\psi \in \Gamma$) \\
$\CD{(\Gamma, \Theta)}$ \> $\forall x (\varphi \lor \psi(x)) \to \varphi \lor \forall x \psi(x)$ \> ($\varphi \in \Gamma$, $\psi(x) \in \Theta$ and $x \notin \FV(\varphi)$) \\
$\DUAL{\Gamma}$ \> $\neg \varphi \to \varphi^\bot$ \> ($\varphi \in \Gamma$) \\
$\DUAL{\Delta_k}^\Sigma$ \> $(\varphi \leftrightarrow \psi) \to (\neg \varphi \to \varphi^\bot)$ \> ($\varphi \in \Sigma_k$ and $\psi \in \Pi_k$) \\
$\DUAL{\Delta_k}^\Pi$ \> $(\varphi \leftrightarrow \psi) \to (\neg \psi \to \psi^\bot)$ \> ($\varphi \in \Sigma_k$ and $\psi \in \Pi_k$) \\
$\WDUAL{\Gamma}$ \> $\neg \varphi^\bot \to \neg \neg \varphi$ \> ($\varphi \in \Gamma$) \\
$\COLL{\Gamma}$ \> $\forall w \, \exists y < x \, \forall z < w \, \varphi(y, z) \to \exists y < x \, \forall z \, \varphi(y, z)$ \> ($\varphi(y, z) \in \Gamma$) \\
$\LN{\Gamma}$ \> $\exists x \varphi(x) \to \exists x (\varphi(x) \land \forall y < x\, \neg \varphi(y))$ \> ($\varphi \in \Gamma$) 

\end{tabbing}

\section*{Acknowledgement}
The authors thank to the anonymous referee for the valuable and insightful comments.
The first author was supported by JSPS KAKENHI Grant Numbers JP19J01239 and JP20K14354, and the second author by JP19K14586.

\bibliographystyle{plain}
\bibliography{references}

\begin{thebibliography}{10}

\bibitem{ABHK}
Yohji Akama, Stefano Berardi, Susumu Hayashi, and Ulrich Kohlenbach.
\newblock An arithmetical hierarchy of the law of excluded middle and related
  principles.
\newblock In {\em Proceedings of the 19th Annual IEEE Symposium on Logic in
  Computer Science, 2004}, pages 192--201, 2004.

\bibitem{Ber04}
Stefano Berardi.
\newblock A generalization of a conservativity theorem for classical versus
  intuitionistic arithmetic.
\newblock {\em Mathematical Logic Quarterly (MLQ)}, 50(1):41--46, 2004.

\bibitem{BS14}
Stefano Berardi and Silvia Steila.
\newblock Ramsey theorem for pairs as a classical principle in intuitionistic
  arithmetic.
\newblock In Ralph Matthes and Aleksy Schubert, editors, {\em 19th
  International Conference on Types for Proofs and Programs (TYPES 2013)},
  volume~26 of {\em Leibniz International Proceedings in Informatics (LIPIcs)},
  pages 64--83, Dagstuhl, Germany, 2014. Schloss Dagstuhl--Leibniz-Zentrum fuer
  Informatik.

\bibitem{BS17}
Stefano Berardi and Silvia Steila.
\newblock Ramsey's theorem for pairs and {$k$} colors as a sub-classical
  principle of arithmetic.
\newblock {\em The Journal of Symbolic Logic}, 82(2):737--753, 2017.

\bibitem{Burr}
Wolfgang {Burr}.
\newblock {Fragments of Heyting arithmetic}.
\newblock {\em {The Journal of Symbolic Logic}}, 65(3):1223--1240, 2000.

\bibitem{F20}
Makoto Fujiwara.
\newblock {$\Delta^0_1$} variants of the law of excluded middle and related
  principles.
\newblock {\em {Archive for Mathematical Logic}, to appear}.

\bibitem{Fuji20}
Makoto {Fujiwara}.
\newblock {Weihrauch and constructive reducibility between existence
  statements}.
\newblock {\em {Computability}}, 10(1):17--30, 2021.

\bibitem{FIN}
Makoto {Fujiwara}, Hajime {Ishihara}, and Takako {Nemoto}.
\newblock {Some principles weaker than Markov's principle}.
\newblock {\em {Archive for Mathematical Logic}}, 54(7-8):861--870, 2015.

\bibitem{FINSY}
Makoto Fujiwara, Hajime Ishihara, Takako Nemoto, Nobu-Yuki Suzuki, and Keita
  Yokoyama.
\newblock Extended frames and separations of logical principles.
\newblock submitted. \url{https://researchmap.jp/makotofujiwara/misc/30348506}.

\bibitem{FuKa}
Makoto Fujiwara and Tasuji Kawai.
\newblock A logical characterization of the continuous bar induction.
\newblock In Fenrong Liu, Hiroakira Ono, and Junhua Yu, editors, {\em
  Knowledge, Proof and Dynamics, The Fourth Asian Workshop on Philosophical
  Logic}, pages 25--33, 2020.

\bibitem{FuKo}
Makoto {Fujiwara} and Ulrich {Kohlenbach}.
\newblock {Interrelation between weak fragments of double negation shift and
  related principles}.
\newblock {\em {The Journal of Symbolic Logic}}, 83(3):991--1012, 2018.

\bibitem{FuKu21}
Makoto {Fujiwara} and Taishi {Kurahashi}.
\newblock {Conservation results on semi-classical arithmetic}.
\newblock {\em The Journal of Symbolic Logic, to appear}.

\bibitem{FuKu}
Makoto {Fujiwara} and Taishi {Kurahashi}.
\newblock {Prenex normal form theorems in semi-classical arithmetic}.
\newblock {\em {The Journal of Symbolic Logic}}, 86(3):1124--1153, 2021.

\bibitem{Ishi}
Hajime {Ishihara}.
\newblock {Markov's principle, Church's thesis and Lindel\"of's theorem}.
\newblock {\em {Indagationes Mathematicae. New Series}}, 4(3):321--325, 1993.

\bibitem{Ishi05}
Hajime Ishihara.
\newblock Constructive reverse mathematics: compactness properties.
\newblock In {\em From sets and types to topology and analysis}, volume~48 of
  {\em Oxford Logic Guides}, pages 245--267. Oxford Univ. Press, Oxford, 2005.

\bibitem{Kohl08}
Ulrich {Kohlenbach}.
\newblock {\em {Applied proof theory. Proof interpretations and their use in
  mathematics}}.
\newblock Berlin: Springer, 2008.

\bibitem{Kohl}
Ulrich {Kohlenbach}.
\newblock {On the disjunctive Markov principle}.
\newblock {\em {Studia Logica}}, 103(6):1313--1317, 2015.

\bibitem{Smor}
Craig Smory{\'n}ski.
\newblock {Applications of Kripke models}.
\newblock In {\em {Metamathematical Investigation of Intuitionistic Arithmetic
  and Analysis}}, volume 344 of {\em Lecture Notes in Mathematics}, pages
  324--391. Springer, Berlin, Heidelberg, 1973.

\bibitem{Tof04}
Michael Toftdal.
\newblock A calibration of ineffective theorems of analysis in a hierarchy of
  semi-classical logical principles.
\newblock In Josep D{\'i}az, Juhani Karhum{\"a}ki, Arto Lepist{\"o}, and Donald
  Sannella, editors, {\em Automata, Languages and Programming}, pages
  1188--1200, Berlin, Heidelberg, 2004. Springer Berlin Heidelberg.

\bibitem{vD13}
Dirk {van Dalen}.
\newblock {\em {Logic and structure}}.
\newblock London: Springer, fifth edition, 2013.

\end{thebibliography}

\end{document}